\newtheorem{defi}{Definition}
\newtheorem{prp}[defi]{Proposition}
\newtheorem{thm}[defi]{Theorem}
\newtheorem{cor}[defi]{Corollary}
\newtheorem{lmm}[defi]{Lemma}
\newtheorem*{exap}{Example}
\newtheorem*{rmk}{Remark}
\newtheorem{con}{Condition}
\title{Error Distribution for One-Dimensional Stochastic Differential Equation Driven By Fractional Brownian Motion}
\author{UEDA Kento}
\newcommand{\bitime}[1]{\tau_{#1}^{(m)}}
\newcommand{\bitimelast}[1]{\bitime{#1}\wedge t}
\newcommand{\timeunit}{(t-\bitime{r})}
\newcommand{\fBminc}[1]{\delta B_{#1}}
\newcommand{\fBmitin}{B}
\newcommand{\fBmitinunit}{\fBmitin_{\bitime{r},t}}
\newcommand{\fBmunit}{\fBminc{\bitime{r},t}}
\newcommand{\fBmdotunit}{\fBminc{\cdot_1,\cdot_2}}
\newcommand{\timeinunit}{\bitime{r}<t\leq \bitime{r+1}}
\newcommand{\nusol}[1]{\hat{Y}_{#1}^{(m)}}
\newcommand{\nusoldis}[1]{\nusol{\bitime{#1}}}
\newcommand{\exsoldis}[1]{\exsol{\bitime{#1}}}
\newcommand{\exsol}[1]{Y_{#1}}
\newcommand{\Jacobi}[1]{J_{#1}}
\newcommand{\polpar}{\rho}
\newcommand{\polsol}[2]{Y_{#1}^{(m,#2)}}
\newcommand{\polsolstd}[1]{\polsol{#1}{\polpar}}
\newcommand{\polsolstddis}[1]{\polsol{\bitime{#1}}{\polpar}}
\newcommand{\polJacobi}[2]{M_{#1}^{(m,#2)}}
\newcommand{\polJacobistd}[1]{M_{#1}^{(m,\polpar)}}
\newcommand{\error}[1]{\nusol{#1}-\exsol{#1}}
\newcommand{\Jacobiunit}[1]{\mu_{#1}^{(m,\rho)}}
\newcommand{\moment}[1]{\mu_{#1}}
\newcommand{\xunit}{\delta x_{s,t}}
\begin{document}
\maketitle

\begin{abstract}
This paper deals with asymptotic errors, limit theorems for errors between numerical and exact solutions of stochastic differential equation (SDE) driven by one-dimensional fractional Brownian motion (fBm).
The Euler-Maruyama, higher-order Milstein, and Crank-Nicolson schemes are among the most studied numerical schemes for SDE (fSDE) driven by fBm.
Most previous studies of asymptotic errors have derived specific asymptotic errors for these schemes as main theorems or their corollary.
Even in the one-dimensional case, the asymptotic error was not determined for the Milstein or the Crank-Nicolson method when the Hurst exponent is less than or equal to $1/3$ with a drift term.
We obtained a new evaluation method for convergence and asymptotic errors.
This evaluation method improves the conditions under which we can prove convergence of the numerical scheme and obtain the asymptotic error under the same conditions.
We have completely determined the asymptotic error of the Milstein method for arbitrary orders.
In addition, we have newly determined the asymptotic error of the Crank-Nicolson method for $1/4<H\leq 1/3$.
\end{abstract}

\newpage

\section{Introduction}
The fBm is a central Gaussian process with Hurst parameter $H\in (0,1)$ and characterized by the following covariance:
\begin{equation}
\text{Cov}^B(s,t) = E[B_sB_t] = \frac{1}{2}(|s|^{2H}+|t|^{2H}-|t-s|^{2H}).\label{Covariance_of_fBm}
\end{equation}
The fBm is a self-similar and stationary increment process and generalization of Brownian motion, which includes Brownian motion as the case where the Hurst parameter is $1/2$.
This process is non-Markovian when the Hurst parameter is not equal to $1/2$. Moreover, the fBm has long-term dependence if $H>1/2$ and short-term dependence if $H<1/2$.
The Hurst parameter is named after the Hurst exponent for stochastic processes, which measures the long-term dependence of a stochastic process.
The Hurst exponent of fBm corresponds to the Hurst parameter.
The fBm is expected to be a model for non-Markovian time series data, which are not only of mathematical interest but are also observed as real phenomena, such as the water level of the Nile River.
The Hurst exponent was introduced to predict this water level.

We consider SDEs driven by fBm:
\[dY_t=\sigma(Y_t)dB_t+b(Y_t)dt,\]
where $\sigma$ is a diffusion coefficient and $b$ is a drift coefficient.
The simplest SDEs are driven by standard Brownian motion $W_t$:
\[dY_t=\sigma(Y_t,t)dW_t+b(Y_t,t)dt.\]
Various time series models use equations of this form.
Because the solution to this equation is a Markov process for any coefficient, we wish to generalize these SDEs to represent non-Markovian phenomena.
The most natural extension is to replace the standard Brownian motion with fBm.
However, we failed to define SDEs driven by fBm as in the case of Brownian motion.
The standard SDE is well-defined by the martingale theory.
In this theory, any semimartingale $X_t$ can be a driving process.
Also, the solution of the equation is a semimartingale, but fBm is not a semimartingale.
Nevertheless, a general theory for stochastic integration by fBm has already been established.

We will define the stochastic integral driven by fBm as equivalent to the symmetric integral.
The symmetric integral is an extension of the Stratonovich-type stochastic integral for Brownian motion and the first formulation of the stochastic integral by one-dimensional fBm with arbitrary Hurst parameters.
Although the one-dimensional rough integral is degenerate and equivalent to the symmetric integral, the rough integral notation makes it easier to treat the solution of the SDE.

We assume the initial value of the numerical solution $\exsol{0}$ is already known and the initial values of numerical solutions $\nusol{0}$ is equals to $\exsol{0}$.
Also, we set to be $r\in \mathbb{Z}_{\geq0}$ and $\tau_r^{(m)}=2^{-m}r$.
Moreover, we define numerical solutions for any $r$ and $\timeinunit$ inductively.
Then, the following three numerical schemes have been studied in detail in the numerical analysis of SDEs, especially in the study of asymptotic errors:
\begin{itemize}
\item the Euler-Maruyama scheme
\begin{align}
\nusol{t}=\nusoldis{r}+\sigma(\nusoldis{r})\fBmunit +b(\nusoldis{r})\timeunit,
\end{align}
\item the (k)-Milstein scheme
\begin{align}
\nusol{t}=&\nusoldis{r}+\sum_{l=1}^{k} \frac{1}{l!}\mathcal{D}^{l-1}\sigma(\nusoldis{r}) (\fBmunit)^l+b(\nusoldis{r})\timeunit\nonumber\\
&+\frac{1}{2}bb'(\nusoldis{r})\timeunit^2+\frac{1}{2}(\sigma'b+b'\sigma)(\nusoldis{r})\timeunit \fBmunit,\label{defMilstein}
\end{align}
\item the Crank-Nicolson scheme
\begin{align}
\nusol{t}=\nusoldis{r}+\frac{1}{2}(\sigma(\nusoldis{r})+\sigma(\nusol{t}))\fBmunit +\frac{1}{2}(b(\nusoldis{r})+b(\nusol{t}))\timeunit.\label{defCN}
\end{align}
We remark that the Crank-Nicolson scheme is an implicit scheme that is well-defined when $(\|\sigma\|_\infty\|B_\cdot\|_{H_-}2^{-mH_-}+\|b\|_\infty 2^{-m})<1$.
\end{itemize}
These three are natural extensions of the Euler scheme, the $k$th-order Taylor scheme, and the trapezoidal rule in ODEs, respectively.

Now, we can naturally consider the limit theorem for the numerical solution $\hat{Y}^{(n)}$ in $n\to\infty$.
Among such limit theorems, those corresponding to asymptotic expansions of the first order are called asymptotic expansions.
For example, the asymptotic error of the Euler-Maruyama scheme is known as

\begin{itemize}
\item If $1/4<H<1/2$ and $\sigma\in C_b^{q+3},b\in C_b^2$, then

\begin{equation}
\lim_{m\to\infty}2^{(2H-1)m}(\error{\cdot})=\frac{1}{2}\Jacobi{\cdot}\int_0^\cdot \Jacobi{t}^{-1}\sigma'\sigma(\exsol{t})dt\ a.s,
\end{equation}

where 

\begin{equation}
\Jacobi{t}\coloneqq \exp\left(\int_0^t \sigma'(\exsol{s})dB_s+\int_0^t b'(\exsol{s})ds\right).\label{def_J}
\end{equation}

This is revealed by Neuenkirch and Nourdin \cite{Neuenkirch-Nourdin2007}.
\end{itemize}
These three schemes are more important in theory than in practice.
In ODEs, the Euler scheme and trapezoidal rule are low-order numerical schemes, while the Taylor scheme is unstable.
Therefore, several fast and stable numerical schemes, mainly Runge-Kutta-type schemes, have been developed and widely used.
The same goes for SDE.
For practical purposes, we should use some stable Runge-Kutta scheme.
However, there are reasons to use the above three schemes over those more practical schemes when studying asymptotic errors.

\begin{itemize}
\item Firstly, the Euler-Maruyama scheme is the simplest numerical scheme for SDE, so it is natural to consider it first.
\item Secondly, if we can derive the asymptotic error of the $(k)$-Milstein scheme, we can compute numerical solutions of the same order under the same conditions.
Also, most numerical solution schemes have the same asymptotic error except for the $(k)$-Milstein scheme of the same order and the function under integration.
If we can derive the entire asymptotic error of the $(k)$-Milstein scheme, we can also derive the asymptotic error of numerical schemes of the same order under the same conditions.
\item Finally, since the Crank-Nicolson scheme is a second-order implicit Runge-Kutta scheme, we can identify the asymptotic error as the $(2)$-Milstein scheme.
However, the asymptotic error behavior of the Crank-Nicolson scheme differs from that of the $(2)$-Milstein scheme.
Furthermore, we know that the $(2)$-Milstein scheme does not converge at $H\leq 1/4$, while the Crank-Nicolson scheme is only known to not converge at $H\leq 1/6$.
Therefore, if we identify the asymptotic error of the $(2)$-Milstein scheme and evaluate the Crank-Nicolson scheme similarly, the asymptotic error is still an open problem when $1/6<H\leq 1/4$.
Indeed, while we fully identified the asymptotic error of the $(k)$-Milstein scheme in this study, we failed to identify the asymptotic error of the Crank-Nicolson scheme in $1/6<H\leq 1/4$.
\end{itemize}

For individual numerical schemes, it is possible to check whether they behave differently from Milstein-type schemes and to construct numerical solution schemes that behave differently from Milstein-type schemes.
It is unknown if naturally constructed Runge-Kutta or other types of schemes exhibit this behavior.
In this paper, we present the error estimates required to derive asymptotic errors for general numerical schemes and derive asymptotic errors for the (k)-Milstein scheme and the Crank-Nicolson scheme as special cases.


Several studies have been conducted on the asymptotic error of numerical schemes, so we present our study of asymptotic errors restricted to one dimension, with and without a drift term.

\begin{itemize}
\item 
For SDE without drift, Gradinaru and Nourdin showed the asymptotic error for the ($k$)-Milstein scheme at $1/(k+1) < H < 1$ \cite{Gradinaru-Nourdin2009}.
The asymptotic error of the Crank-Nicolson scheme in $H\geq 1/2$ can also be shown as in this paper.
For the Crank-Nicolson scheme in $H\geq 1/2$, we need more advanced estimation.
Nourdin showed the asymptotic error distribution of the Crank-Nicolson scheme for a special diffusion coefficient for the Hurst index $1/6<H<1$ \cite{Nourdin2008}. 
For a general diffusion, Naganuma derived the asymptotic error distribution of the Crank-Nicolson scheme in $1/3 < H < 1/2$ \cite{Naganuma2015}. 
\item For SDE with drift, when $H\geq 1/2$, we can consider the drift term and the path of fBm in the same way.
Therefore, there is less theoretical need to restrict the problem to one dimension.
Few papers have dealt with this case, but the asymptotic error is easy to compute.

On the other hand, when $H<1/2$, Aida and Naganuma investigated the asymptotic error distribution of both the Crank-Nicolson and the ($2$)-Milstein schemes for $1/3 < H \leq 1/2$ \cite{Aida-Naganuma2020}. 
\end{itemize}

Furthermore, we present results on the convergence and divergence of numerical schemes.
Asymptotic error is defined only if the numerical scheme converges.
Nourdin proved that the ($k$)-Milstein scheme converges for $H> 1/(k+1)$ and it diverges when $H\leq 1/(k+1)$ and $H\leq 1/(k+2)$ for odd and even $k$, respectively, Similarly, he proved that the Crank-Nicolson scheme converges for $H>1/3$ and diverges for $H \leq 1/6$ \cite{Nourdin2005}.


We summarize the results of these studies in the Table \ref{Milstein_no_drift}, \ref{CN_no_drift}, \ref{Milstein_previous_result} and \ref{CN_previous_result}.

\begin{table}[htb]
\begin{tabular}{|c||c|c|}\hline
Hurst index&convergence&asymptotic error\\\hline\hline
$1/(k+1)<H<1$ &Yes &determined \\\hline
$1/(k+2)<H\leq 1/(k+1)$ &No/\emph{unknown} &None/\emph{undefined} \\\hline
$0<H\leq 1/(k+2)$ &No &None \\\hline
\end{tabular}
\caption{the (k)-Milstein scheme($k$:odd/even), without drift}
\label{Milstein_no_drift}
\end{table}
\begin{table}[htb]
\begin{tabular}{|c||c|c|}\hline
Hurst index&convergence&asymptotic error\\\hline\hline
$1/3<H<1$ &Yes &determined \\\hline
$1/6<H\leq 1/3$ &Only in special case, Yes &Only in special case, determined \\\hline
$0<H\leq 1/6$ &No &None \\\hline
\end{tabular}
\caption{the Crank-Nicolson scheme, without drift}
\label{CN_no_drift}
\end{table}

\begin{table}[htb]
\begin{tabular}{|c||c|c|}\hline
Hurst index&convergence&asymptotic error\\\hline\hline
$1/3<H<1$ &Yes &determined \\\hline
$1/(k+1)<H\leq 1/3$ &Yes &\emph{unsolved} \\\hline
$1/(k+2)<H\leq 1/(k+1)$ &No/\emph{unknown} &None/\emph{undefined} \\\hline
$0<H\leq 1/(k+2)$ &No &None \\\hline
\end{tabular}
\caption{the (k)-Milstein scheme($k$:odd/even), with drift}
\label{Milstein_previous_result}
\end{table}
\begin{table}[htb]
\begin{tabular}{|c||c|c|}\hline
Hurst index&convergence&asymptotic error\\\hline\hline
$1/3<H<1$ &Yes &determined \\\hline
$1/6<H\leq 1/3$ &\emph{unknown} &\emph{undefined} \\\hline
$0<H\leq 1/6$ &No &None \\\hline
\end{tabular}
\caption{the Crank-Nicolson scheme, with drift}
\label{CN_previous_result}
\end{table}


We have derived the asymptotic error in the following part of this paper. We have also shown in Theorem \ref{theorem:Theorem_of_distribution_M} the conditions for a numerical scheme that allows us to derive the asymptotic error using the same way.

We will now describe the results of our study. We first define conditions ($\mathfrak{A}$) and ($\mathfrak{B}$), which correspond to the conditions shown in \cite{Nourdin2005} that convergence is known and a part of conditions shown in the same paper that convergence is unknown, respectively. Next, when numerical schemes and index $H$ satisfy these conditions, we estimate the remainder terms to show these are negligible. Using this result, we determine the asymptotic error of the ($k$)-Milstein scheme in $1/(k+1)<H\leq 1/3$ for odd $k$ and in $1/(k+2)<H\leq 1/3$ for even $k$, and the Crank-Nicolson scheme in $1/4<H\leq 1/3$. In other words, we identify the convergence and asymptotic error of ($k$)-Milstein scheme completely. All previous studies, except for \cite{Nourdin2008}, are compatible with Condition ($\mathfrak{A}$), which implies that finding the asymptotic error for numerical schemes that satisfy both ($\mathfrak{A}$) and ($\mathfrak{B}$) can be attributed to the computation of the main term. Additionally, we perform several experiments to verify the asymptotic error distribution.

At first, we will show the asymptotic error of the ($k$)-Milstein scheme.

\begin{thm}(Theorem \ref{theorem:Theorem_of_distribution_M})
Let $\sigma\in C_b^{k+1}, b\in C_b^2,0<H<1,k\in \mathbb{Z}_{\geq 2}, l\in \mathbb{Z}_{\geq 1}$.

\begin{itemize}
\item Then, if $b$ does not vanish, the asymptotic error of the ($k$)-Milstein scheme is determined for each $k$ as (\ref{Milstein_asymptotic_novanish_first}) $\sim$ (\ref{Milstein_asymptotic_novanish_last}).
\item On the other hand, if $b$ vanishes, the asymptotic error of the ($k$)-Milstein scheme is determined as (\ref{Milstein_asymptotic_vanish_first}) $\sim$ (\ref{Milstein_asymptotic_vanish_last}).
\end{itemize}
\end{thm}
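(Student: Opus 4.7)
The plan is to apply the master theorem (Theorem \ref{theorem:Theorem_of_distribution_M}, the abstract error distribution result promised in the introduction) to the $(k)$-Milstein scheme. Under the hypotheses $\sigma\in C_b^{k+1}$, $b\in C_b^2$, scheme (\ref{defMilstein}) is a smooth Runge--Kutta-type perturbation of the exact flow, so the work reduces to (i) verifying the abstract Conditions $(\mathfrak{A})$ and $(\mathfrak{B})$ and (ii) identifying the main term explicitly, with separate bookkeeping for the two cases $b\equiv 0$ and $b\not\equiv 0$.

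First I would fix a subinterval with $\timeinunit$ and perform a one-step Taylor/rough expansion of the exact solution $\exsol{t}$ starting from $\exsoldis{r}$, producing $\exsol{t}-\exsoldis{r}$ as a sum of iterated integrals of $dB$ and $dt$ up to total weight slightly above $(k+1)H$, plus a controlled Hölder remainder. Subtracting (\ref{defMilstein}) from this expansion yields the one-step local error. After linearization around $\exsoldis{r}$ via the Jacobi process $\Jacobi{\cdot}$ defined in (\ref{def_J}), I would identify which monomials $(\fBmunit)^{p}\timeunit^{q}$ survive as the leading contribution; this pins down the critical scaling exponent $\alpha$ at which $2^{\alpha m}(\error{\cdot})$ should have a nondegenerate limit.

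Condition $(\mathfrak{A})$ is the consistency/stability package: it follows from Nourdin's convergence criterion \cite{Nourdin2005} together with standard Young or one-dimensional rough integral estimates applied to $\Jacobi{\cdot}$ and its inverse, giving uniform Hölder control of the transport flow and of the scheme. Condition $(\mathfrak{B})$ requires that the properly rescaled local error, evaluated along $\exsol{\cdot}$, converges as a driver on the dyadic grid to either a weighted fBm increment, a Hermite-type functional, or a Lebesgue-type integrator according to parity and regime. Once both are in place, the master theorem supplies the representation
\begin{equation*}
\lim_{m\to\infty} 2^{\alpha m}(\error{\cdot}) = \Jacobi{\cdot}\int_0^{\cdot} \Jacobi{s}^{-1}\,\Phi(\exsol{s})\,dZ_s,
\end{equation*}
where $\Phi$ is read off from the leading coefficient of the local error and $dZ_s$ is the appropriate limit object.

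The main obstacle is the dichotomy between $b\equiv 0$ and $b\not\equiv 0$. When $b\equiv 0$ the scheme reduces to a pure diffusion Taylor truncation, the leading local error has weight $(k+1)H$, and the limit driver is either a Stratonovich-type fBm integrator (odd $k$) or a Hermite-type correction to a Lebesgue integral (even $k$), matching (\ref{Milstein_asymptotic_vanish_first})--(\ref{Milstein_asymptotic_vanish_last}). When $b\not\equiv 0$ the cross monomials $bb'\,\timeunit^2$ and $(\sigma'b+b'\sigma)\,\timeunit\fBmunit$ already sit inside (\ref{defMilstein}), so the surviving leading error comes from the \emph{next} Taylor slice: a term of weight $kH+1$ or $(k+1)H$ whichever is smaller, which forces the exponent $\alpha$ and the driver $dZ_s$ to switch form across the boundaries $H=1/(k+1)$ and depending on the parity of $k$. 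Handling these sub-regimes uniformly, verifying that the mixed drift/diffusion monomials do not produce extra leading contributions beyond those recorded in (\ref{Milstein_asymptotic_novanish_first})--(\ref{Milstein_asymptotic_novanish_last}), and confirming that the weighted power-variation limits of $\fBmunit$ against $\Jacobi{s}^{-1}\Phi(\exsol{s})$ exist in the correct mode of convergence, is where essentially all of the bookkeeping concentrates; Conditions $(\mathfrak{A})$--$(\mathfrak{B})$ are designed precisely to absorb the remaining analytic difficulties.
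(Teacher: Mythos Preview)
Your overall architecture---expand the local error, feed it through a master estimate, then compute weighted power-variation limits---matches the paper. But you have misidentified the two structural inputs, and this is a real gap, not just cosmetics.

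First, the ``master theorem'' is not Theorem~\ref{theorem:Theorem_of_distribution_M}; that is the statement you are proving. The abstract result is Theorem~\ref{theorem:maintheorem}, which says that under Condition~($\mathfrak{A}$) or~($\mathfrak{B}$) the error is asymptotically $\Jacobi{\cdot}Z_\cdot^{M,(A)}$ up to a faster-decaying remainder. Second, and more seriously, Conditions~($\mathfrak{A}$) and~($\mathfrak{B}$) are \emph{not} what you describe. They are purely algebraic coefficient-matching conditions on the scheme (Definition~\ref{definition:condition_A_B}): writing $\overline{f}_l=\hat{f}_{[l]}-\tfrac{1}{l!}\mathcal{D}^{l-1}\sigma$, Condition~($\mathfrak{A}$) asks that $\overline{f}_l\equiv 0$ for all $1\le l\le q$, while Condition~($\mathfrak{B}$) asks that $q$ is odd and $\overline{f}_l\equiv 0$ for $1\le l\le q-1$. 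For the $(k)$-Milstein scheme these are verified by inspection of (\ref{defMilstein}): ($\mathfrak{A}$) holds when $H>1/(k+1)$, and ($\mathfrak{B}$) holds when $k$ is even and $1/(k+2)<H\le 1/(k+1)$. No Nourdin-type convergence argument or driver-convergence statement is involved at this stage; all of that analytic work is already packaged inside Theorem~\ref{theorem:maintheorem} and the interpolation machinery of Sections~\ref{section:interpolation}--\ref{section:proof_of_maintheorem}.

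What remains after invoking Theorem~\ref{theorem:maintheorem} is the computation of the limit of $Z^{M,(A)}=\mathcal{I}^{(m)}(J_{\cdot+}^{-1},d(\epsilon^{\dagger,(A)}))$. The paper does this in Section~\ref{section:calculation}: it expands $\epsilon^{1,*,(m,A)}$ explicitly for each regime (Section~\ref{subsection:decompose_epsilon}), then evaluates the individual Stieltjes-sum limits against $V^{(l)}$, $V^{10*}$, $V^{110}$, $V^{101}$, $V^{011}$ (Section~\ref{subsection:limit_of_individual_sum}, in particular Lemmas~\ref{lemma:integral10*} and~\ref{lemma:integral110}), and finally assembles them (Lemmas~\ref{lemma:hermite_variation_of_polynomial} and~\ref{lmm:compound_coefficient}). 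Your sketch of the regime boundaries is also off: for $b\not\equiv 0$ and large $k$ the dominant rate is $2H$ (coming from the $B^{10*},B^{110},B^{101},B^{011}$ terms), not $(k+1)H$ or $kH+1$; the pure-power terms of order $(k+1)H$ or $(k+2)H$ only dominate when $H$ is small enough. Getting these crossover points and the parity-dependent Hermite corrections right is exactly the content of the case split (\ref{Milstein_asymptotic_novanish_first})--(\ref{Milstein_asymptotic_novanish_last}), and your proposal does not yet reproduce it.
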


Using the same estimation, we will show the error distribution of the Crank-Nicolson scheme in the range of $1/4<H<1$.
\begin{thm}[Theorem \ref{theorem:Theorem_of_distribution_CN}]
Given $\sigma\in C_b^{q+3},b\in C_b^3$.

Then, when $1/4<H<1/2$, whether or not $b$ vanishes, the asymptotic error distribution of the Crank-Nicolson scheme is determined as (\ref{asymptotic_error_distribtion_CN}).
\end{thm}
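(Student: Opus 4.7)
The plan is to cast the implicit Crank-Nicolson scheme into an explicit one-step form and then invoke the abstract framework underlying Theorem \ref{theorem:Theorem_of_distribution_M} (Conditions $(\mathfrak{A})$ and $(\mathfrak{B})$), extracting the main term from the coefficient mismatch with the corresponding $(k)$-Milstein scheme.

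First I would make (\ref{defCN}) explicit by solving for the increment $\Delta=\nusol{t}-\nusoldis{r}$. The contraction bound recalled just after (\ref{defCN}) makes the defining fixed-point problem well-posed for $m$ large, so $\Delta$ may be Taylor-expanded in $\fBmunit$ and $\timeunit$ by iterated substitution. Carrying this out through cubic order gives, with all coefficients evaluated at $\nusoldis{r}$,
\[
\Delta=\sigma\fBmunit+\tfrac12\sigma'\sigma(\fBmunit)^2+\tfrac14\bigl(\sigma''\sigma^2+(\sigma')^2\sigma\bigr)(\fBmunit)^3+b\timeunit+\tfrac12(\sigma'b+b'\sigma)\fBmunit\timeunit+R,
\]
where $R$ is the remainder. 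The first two $\fBmunit$-coefficients, together with the drift coefficients, match exactly those of the $(2)$-Milstein scheme (\ref{defMilstein}), while the cubic coefficient $\tfrac14\mathcal{D}^2\sigma$ differs from the Milstein/Taylor value $\tfrac16\mathcal{D}^2\sigma$ by $\tfrac1{12}\mathcal{D}^2\sigma$. This per-step discrepancy is what drives the asymptotic error.

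Second, under $\sigma\in C_b^{q+3}$ and $b\in C_b^3$, I would verify Conditions $(\mathfrak{A})$ and $(\mathfrak{B})$ for the explicit expansion above. Condition $(\mathfrak{A})$ is inherited from the $(2)$-Milstein analysis used for $H>1/3$ in \cite{Aida-Naganuma2020} via the shared leading two-term structure, while Condition $(\mathfrak{B})$ requires controlling $R$ and all cross terms after summation over the $2^m$ subintervals using standard $p$-variation / rough-path estimates for fBm. The restriction $H>1/4$ enters precisely at the step where one needs $\sum_r(\fBmunit)^4$-type quantities to be negligible after multiplication by the target rescaling, reflecting the tightness of this bound.

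By Theorem \ref{theorem:Theorem_of_distribution_M}, the limit then reduces to the accumulated per-step discrepancy weighted by $\Jacobi{\cdot}\Jacobi{\bitime{r}}^{-1}$; the dominant contribution is
\[
\tfrac1{12}\sum_r\Jacobi{\cdot}\Jacobi{\bitime{r}}^{-1}\mathcal{D}^2\sigma(\exsoldis{r})(\fBmunit)^3,
\]
which, after the sign-specific rescaling appropriate to the cubic variation of fBm for $1/6<H<1/2$, converges to a stochastic integral against a Gaussian process independent of $B$, yielding (\ref{asymptotic_error_distribtion_CN}); the drift cross terms $\fBmunit\timeunit$ and $\timeunit^2$ contribute analogous but lower-order summands. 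The hard part will be the regime $H$ close to $1/4$: there the quartic and higher remainders are barely summable, so Condition $(\mathfrak{B})$ demands tight $p$-variation estimates and careful treatment of the implicit drift cross terms, which are absent in the no-drift analysis of \cite{Nourdin2008} and new relative to the $H>1/3$ case of \cite{Naganuma2015}.
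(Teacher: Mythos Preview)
Your overall strategy---expand the implicit scheme, identify the cubic mismatch $\tfrac{1}{12}\mathcal{D}^2\sigma$, then feed everything into the general machinery---is the same as the paper's. But there is a real gap in how you handle the quartic term, and your account of where the restriction $H>1/4$ comes from is incorrect.

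You treat $\sum_r(\fBmunit)^4$ as a remainder that is ``barely summable'' near $H=1/4$. In fact its deterministic part is of order $2^{-(4H-1)m}$, and for every $H<1/2$ this is \emph{larger} than the target rate $2^{-(3H-1/2)m}$; the quartic contribution would dominate the cubic Gaussian term, not be absorbed by it. The reason it disappears is an algebraic cancellation specific to the trapezoidal rule: writing $\tilde{g}_3^{CN}=\tfrac{1}{12}\mathcal{D}^2\sigma$ and $\tilde{g}_4^{CN}=\hat{g}_4^{CN}-\tfrac{1}{4!}\mathcal{D}^3\sigma-\sigma'\tilde{g}_3^{CN}$ (the last term coming from expanding $J_{\cdot_1}J_{\cdot_2}^{-1}$ against the cubic), one checks the identity
\[
\tilde{g}_4^{CN}-\tfrac{1}{2}\mathcal{V}\tilde{g}_3^{CN}\equiv 0,
\]
which is exactly what kills the $(t-s)^{4H}$ contribution after the Hermite decomposition (Lemma \ref{lemma:hermite_variation_of_polynomial}). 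Without noticing this identity your argument cannot close, regardless of how sharp your $p$-variation bounds are.

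Relatedly, the bound $H>1/4$ does not enter through summability of quartic terms. It enters because for $1/4<H\le 1/3$ one has $q=3$, which is odd, and $\overline{f}_1=\overline{f}_2=0$ while $\overline{f}_3=\tfrac{1}{12}\mathcal{D}^2\sigma\neq 0$; this is precisely Condition ($\mathfrak{B}$). For $H\le 1/4$ one has $q\ge 4$ and neither Condition ($\mathfrak{A}$) nor ($\mathfrak{B}$) holds, so the framework (Theorem \ref{theorem:maintheorem}, not Theorem \ref{theorem:Theorem_of_distribution_M}) does not apply.
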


Table \ref{Milstein_our_result} and \ref{CN_our_result} shows the results already presented and the obtained newly. Bolded text is the main result.

\begin{table}[htb]
\begin{tabular}{|c||c|c|}\hline
Hurst index&convergence&asymptotic error\\\hline\hline
$1/3<H<1$ &Yes &determined \\\hline
$1/(k+1)<H\leq 1/3$ &Yes &\textbf{determined} \\\hline
$1/(k+2)<H\leq 1/(k+1)$ &No/\textbf{Yes} &None/\textbf{determined} \\\hline
$0<H\leq 1/(k+2)$ &No &None \\\hline
\end{tabular}
\caption{the (k)-Milstein scheme($k$:odd/even)}
\label{Milstein_our_result}
\end{table}

\begin{table}[htb]
\begin{tabular}{|c||c|c|}\hline
Hurst index&convergence&asymptotic error\\\hline\hline
$1/3<H<1$ &Yes &determined \\\hline
$1/4<H\leq 1/3$ &\textbf{Yes} &\textbf{determined} \\\hline
$1/6<H\leq 1/4$ &\emph{unsolved} &\emph{unsolved} \\\hline
$0<H\leq 1/6$ &No &None \\\hline
\end{tabular}
\caption{the Crank-Nicolson scheme}
\label{CN_our_result}
\end{table}


We now present a rough sketch of the proof.
First, we can expand the exact solution:
\begin{equation}
\exsol{t}=\nusoldis{r}+\sum_{k=1}^q \frac{1}{k!}\mathcal{D}^{k-1}\sigma(\exsoldis{r})\fBminc{\bitime{r},t}^k +b(\exsoldis{r})(t-\bitime{r})+\epsilon_{\bitime{r},t}^{(1)},\\
\end{equation}
where $q= \lfloor H^{-1}\rfloor$.
Second, we can rewrite a numerical solution:
\begin{equation}
\nusol{t}=\nusoldis{r}+\sum_{k=1}^q \frac{1}{k!}\mathcal{D}^{k-1}\sigma(\nusoldis{r})\fBminc{\bitime{r},t}^k +b(\nusoldis{r})(t-\bitime{r})+\hat{\epsilon}_{\bitime{r},t}^{(m,1)}.\\
\end{equation}
Third, as a solution interpolated between $\exsol{t}$ and $\nusol{t}$, we introduce $\polsolstd{t}$ defined as
\begin{equation}
\polsolstd{t}=\polsolstddis{r}+\sum_{k=1}^q \frac{1}{k!}\mathcal{D}^{k-1}\sigma(\polsolstddis{r})\fBminc{\bitime{r},t}^k +b(\polsolstddis{r})(t-\bitime{r})+\rho\hat{\epsilon}_{\bitime{r},t}^{(m,1)}+(1-\rho)\epsilon_{\bitime{r},t}^{(1)}\\
\end{equation}
Then the error can be expressed as follows.
\begin{equation}
\nusol{t}-\exsol{t}=\int_0^1 \polJacobistd{t}\sum_{r=0}^{ \lceil 2^m t\rceil-1} (M_{\bitimelast{r+1}}^{(m,\rho)})^{-1}(\hat{\epsilon}_{\bitime{r},\bitime{r+1}\wedge t}^{(m,1)}-\epsilon_{\bitime{r},\bitime{r+1}\wedge t}^{(1)})d\rho,.
\end{equation}
where
\begin{equation}
\polJacobistd{t}=\prod_{r=0}^{ \lceil 2^m t\rceil-1}\left(1+\sum_{k=1}^q \frac{1}{k!}(\mathcal{D}^{k-1}\sigma)'(\polsolstddis{r})\fBminc{\bitime{r},\bitime{r+1}\wedge t}^k+b'(\polsolstd{\bitime{r}})(\bitime{r+1}\wedge t-\bitime{r})\right).
\end{equation}
In fact, we can obtain the approximation that $\polJacobistd{t}\approx \Jacobi{t}$, so that we have
\begin{equation}
\nusol{t}-\exsol{t}\simeq\Jacobi{t}\sum_{r=0}^{ \lceil 2^m t\rceil-1} \Jacobi{\bitime{r+1}}^{-1}(\hat{\epsilon}_{\bitime{r},\bitime{r+1}\wedge t}^{(m,1)}-\epsilon_{\bitime{r},\bitime{r+1}\wedge t}^{(1)}).
\end{equation}
Moreover, we can approximate the summand such as 
\[\Jacobi{\bitime{r+1}}^{-1}(\hat{\epsilon}_{\bitime{r},\bitime{r+1}\wedge t}^{(m,1)}-\epsilon_{\bitime{r},\bitime{r+1}\wedge t}^{(1)})\approx \Jacobi{\bitime{r}}^{-1}f(\exsoldis{r})\delta X_{\bitime{r},\bitime{r+1}\wedge t}\]
for some stochastic process $X_t$.
Then, when $m$ goes to infinity, we obtain the asymptotic error such that
\begin{equation}
\lim_{m\to\infty}(\nusol{t}-\exsol{t})=\Jacobi{t}\int_0^t \Jacobi{t}^{-1}f(\exsol{s})dX_s.
\end{equation}
We will justify this rough sketch in Section 4,5,6.
We will give the necessary estimates pathwisely in a similar way to the justification of the rough integral.

A new idea in this study is a unified evaluation for Stieltjes sums of arbitrary order.
To justify the approximation, we need a good treatment of Stieltjes sums for asymptotic expansions of order $q$.
To obtain the asymptotic error for any $H$, we need to use an evaluation method that can be applied to arbitrarily large $q$.
There are multiple ways of expressing the error, but the same problem arises no matter which way is used.
In this study, we have dealt with Stieltjes sums by corresponding them to identities in rough paths.

The paper is structured as follows.
In Section 2, we introduce the notations used throughout the paper.
In Section 3, we present the main theorem, which is the estimate of the remainder term (Theorem \ref{theorem:maintheorem}) for general $H$. Moreover, we determine the asymptotic error of the ($k$)-Milstein scheme and of the Crank-Nicolson scheme.
In Section 4, we decompose the error term into a main term and several remainder terms in the way similar to \cite{Aida-Naganuma2023}.
In Section 5, we demonstrate that the entire remainder term can be estimated by assuming some basic estimate formulas, as presented in Theorem \ref{theorem:maintheorem}.
At last, in Section 6, we check that the numerical schemes satisfy the assumption of Theorem \ref{theorem:maintheorem}.
Also, we calculate the coefficients of the asymptotic error distribution of the ($k$)-Milstein scheme and the Crank-Nicolson scheme.

\begin{rmk}


Additionally, we present studies of asymptotic errors for multi-dimensional fSDEs.
We can define numerical solutions for multi-dimensional fSDEs as one-dimensional, and we can consider asymptotic errors for those numerical solutions.
In addition, several studies have been devoted to studying asymptotic errors for multi-dimensional fSDEs.
Moreover, many time series models have been devised using SDEs of arbitrary dimensions.
However, we describe these studies separately from previous studies for the main results.
This is because the formulation of a multi-dimensional problem requires a different theory than the one-dimensional one, and the results are very different from the one-dimensional case.

The most classical result is for $H=1/2$, i.e., Brownian motion.
Jacod and Protter derived the asymptotic error of the Euler scheme for semimartingale SDEs including Ito type SDEs \cite{Jacod-Protter1998}.
Latter, Yan also derived the asymptotic error of the Milstein scheme for semimartingale SDEs \cite{Yan2005}.

For $H>1/2$, asymptotic errors have been studied by Hu, Liu, and Nualart in detail.
They determined the asymptotic error of the Euler scheme and modified Euler scheme for $1/2 < H < 1$ \cite{Hu-Liu-Nualart2016}.
Later, they also determined the asymptotic error distribution of the Crank-Nicolson scheme for $1/2 < H < 1$ \cite{Hu-Liu-Nualart2021}.

The modified Euler scheme is a scheme in which the second-order power $\fBminc{s,t}\otimes \fBminc{s,t}$ of the (2)-Milstein method is replaced by its expectation value $I_d(t-s)^{2H}$,
where $\fBminc{s,t}=B_t-B_s$, $B_t=\{B_t^{(i)}\}_{i=1}^d$ is a d-dimensional fBm, $\otimes$ is a direct product and $I_d$ is a $d$-order fundamental diagonal matrix.
This modified Euler scheme appears naturally as a transformation of the Euler-Maruyama scheme when converting the Ito SDE to the Stratonovich SDE.
When $H>1/2$, fSDE is defined using the Young integral, and the distinction between Ito SDE and Stratonovich SDE does not exist.
Both the Euler-Maruyama scheme and the modified Euler scheme are valid in this case.
The modified Euler scheme is known to have a higher convergence rate than the Euler method.

For $H<1/2$, despite efforts by many researchers, asymptotic errors are only known when the Hurst exponent is greater than $1/3$.
Liu and Tindel showed the asymptotic error of the modified Euler scheme for $1/3 < H < 1/2$ \cite{Liu-Tindel2019}.
Aida and Naganuma investigated the asymptotic error of the Crank-Nicolson scheme for $1/3 < H \leq 1/2$ \cite{Aida-Naganuma2023}, on which the primary technique of this paper is based.

On the other hand, more result is known about whether numerical schemes converge.
Any numerical scheme diverges for $H \leq 1/4$ \cite{Coutin-Qian2002}.
Also, the second-order Taylor scheme converges for $H>1/3$ \cite{Deya-Neuenkirch-Tindel2012} and the third-order Taylor scheme((3)-Milstein scheme for the multi-dimensional case) converges for $H>1/4$ \cite{Friz-Riedel2014}.
Therefore, the main theoretical interest is the asymptotic error in $H\in(1/3,1/4]$.
\end{rmk}
\begin{table}[htb]
\begin{tabular}{|c||c|c|}\hline
Hurst index&convergence&asymptotic error\\\hline\hline
$1/3<H<1$ &Yes &determined \\\hline
$1/4<H\leq 1/3$ &\emph{unknown} &\emph{undefined} \\\hline
$0<H\leq 1/4$ &No &None \\\hline
\end{tabular}
\caption{the modified Euler and Crank-Nicolson scheme in multi-dimension}
\end{table}

\section{Definition and Notation}\label{section:notation}

In this section, we will introduce the basic definitions and notation used in this paper.
Especially, we devote two subsections for the stochastic differential equation driven by fractional Brownian motion(fBm) and discrete H\"older norm.

\begin{itemize}
\item Throughout this paper, we discuss asymptotic error in $[0,T]$ with arbitrary finite time $T$ for simplification.
Without loss of generality, we assume that $T$ is an integer.

\item We have introduced discrete time $\bitime{r}$ as $\bitime{r}\coloneqq 2^{-m}r$ in introduction.
Additionally, we denote the set of discrete time in $[0,T]$ by $\mathbb{P}_m$.
\item We call a function defined on $\Omega\subset [0,T]$ a pass. For any pass $x;t\mapsto x_t$, we set $\xunit$ as $x_t-x_s$ for $0<s<t<T$.
Also, for any two-varuable funtion $\Xi$ defined on $[0,T]$, we define $\delta \Xi_{s,u,t}$ as $\Xi_{s,t}-\Xi_{s,u}-\Xi_{u,t}$ for $0<s<u<t<T$.
\item Let $x$ is a path defined in $\Omega\subset [0,T]$. For $0<\alpha<1$, we denote the $\alpha$-H\"older norm by $\|\cdot\|_\alpha; x\mapsto \|x_\cdot\|_\alpha$.
On the other hand, to avoid confusion, for $p\geq 1$, we write $L^p$-norm as $\|\cdot\|_{L^p}$.

We note that when $\Omega=\mathbb{P}_m$ for some $m\in \mathbb{Z}_{\geq 0}$, $\|x_\cdot\|_\alpha$ always takes a finite value.
\item As mentioned in introduction, we have defined fBm as the centered Gaussian process with the parameter $H$ characterized as (\ref{Covariance_of_fBm}).

\item Fix a Hurst index $H$ and let the integer $q$ as $(q+1)^{-1}<H\leq q^{-1}$. Then, we take an appropriate constant $H_-,H',H^c$ satisfying that
\begin{align*}
H_-\in (1/(q+1),H),&&H'\leq H_-&&H^c+H'>1,&&qH_->H^c.
\end{align*}
Especially $H_-$ shall be sufficiently close to $H$, if necessary.

A sample path of fBm is the most important example of H\"older continuous path. It is known that fBm can be regarded as a random variable with values in $C[0,T]$. Further, fBm as $C[0,T]$-valued random variable is in 
\begin{equation}
\Omega_0=\{\omega:[0,T]\to \mathbb{R}:\|\omega\|_{H_-}<\infty,\|\omega\|_{H}=\infty\}\label{Omega_0}
\end{equation}
almost surely.

So, we will treat fBm as an $\Omega_0$-valued random variable.
\item To represent iterated integrals driven by $B_t$ and $t$, we define a triple $(\mathbb{G},B^\cdot,|\cdot|)$ as follows:

\begin{itemize}
\item An index set $\mathbb{G}$ equals $\bigcup_{l=1}^\infty\{0,1\}^l$. Also, we denote $(\underbrace{1,1,\cdots,1,1}_{l})\in \mathbb{G}$ by $(l)$ briefly.
\item $B^\cdot:\Gamma\in \mathbb{G}\mapsto B^\Gamma \in C([0,T]^2\to L^2(\Omega)) $ is an iterated integral map defined by the following recurrence relation.

\begin{align*}
\fBmitin_{s,t}^{(0)}=t-s,&&\fBmitin_{s,t}^{(1)}=B_t-B_s,&&\fBmitin_{s,t}^{\Gamma\oplus 0}=\int_s^t \fBmitin_{s,u}^{\Gamma} du,&&\fBmitin_{s,t}^{\Gamma\oplus 1}=\int_s^t \fBmitin_{s,u}^{\Gamma} dB_u,
\end{align*}
where $\Gamma\oplus i\coloneqq (\gamma_1,\cdots,\gamma_n,i)$ for $i =0$ or $1$ and $\fBmitin_{s,t}^{\Gamma\oplus i}\ (i=0,1)$ are defined by Proposition \ref{proposition:Operation for a controlled path} as controlled paths.

(In this case, we obtain $\fBmitin_{s,t}^\Gamma$ uniquely from this definition.
However, defining the stochastic integral Proposition as \ref{proposition:Operation for a controlled path} is equals to defining $\fBmitin_{s,t}^{(l)}=\frac{1}{l!}\fBminc{s,t}^l$ for $1\leq l\leq q$.
In the sense of rough path theory, we set the rough path of $x$ to be $\mathbf{x}_{s,t}=(\xunit^1,\cdots,(q!)^{-1}\xunit^q)$.)

Now, we remark that $\fBmitin_{s,t}^{(l)}=\frac{1}{l!}\fBminc{s,t}^l$ for any $l$.
\item $|\cdot|$ is a size map defined as $|\Gamma|=\#_0 \Gamma+\#_1 \Gamma H$, where we denote the number of zeros(resp. ones) in $\Gamma$ by $\#_0 \Gamma$(resp. $\#_1 \Gamma$).
\end{itemize}
Additionally, given $r,R\in \mathbb{R}_{\geq 0}$,\ we introduce $\mathbb{G}_{r,R}\subset \mathbb{G}$ as
\begin{align}
\mathbb{G}_{r,R}\coloneqq& \{\Gamma\in \mathbb{G}|r<|\Gamma|\leq R\}.\label{subscriptA}
\end{align}
\item Similarly, to represent power series of $\delta \fBminc{s,t}$ and $(t-s)$, we define another triple $(\hat{\mathbb{G}},B^\cdot,|\cdot|)$ as follows:

\begin{itemize}
\item An index set $\hat{\mathbb{G}}$ is $\{(\hat{\Gamma}_1,\hat{\Gamma}_2)|\hat{\Gamma}_1,\hat{\Gamma}_2\in \mathbb{Z}_{\geq 0}\}$.
\item The power of increments $\fBmitin_{\cdot,\cdot}^{\hat{\Gamma}}$ as $\fBmitin_{s,t}^{\hat{\Gamma}}=\fBminc{s,t}^{\hat{\Gamma}_1}(t-s)^{\hat{\Gamma}_2}$.
\item The size map $|\cdot|;\hat{\Gamma}\in \hat{\mathbb{G}}\mapsto |\hat{\Gamma}| \in \mathbb{R}_{\geq 0}$ is defined as $|\hat{\Gamma}|=\hat{\Gamma}_1H+\hat{\Gamma}_2$.
\end{itemize}
Additionally, given $r,R\in \mathbb{R}_{\geq 0}$,\ we introduce $\hat{\mathbb{G}}_{r,R}\subset \hat{\mathbb{G}}$as
\begin{align}
\hat{\mathbb{G}}_{r,R}\coloneqq& \{\hat{\Gamma}\in \hat{\mathbb{G}}|r<|\hat{\Gamma}|\leq R\}.\label{subscripthatA}
\end{align}

\item We denote $\fBmitin^{(1,1,0)},\fBmitin^{(1,0,1)},\fBmitin^{(0,1,1)}$ by $\fBmitin^{110},\fBmitin^{101},\fBmitin^{011}$, respectively. In addition, we define modified iterated integrals as
\begin{align*}
\fBmitin_{s,t}^{10*}&=\fBmitin_{s,t}^{10}-\frac{1}{2}\fBminc{s,t}(t-s),&\fBmitin_{s,t}^{110*},&=\fBmitin_{s,t}^{110}-\frac{1}{2(1+2H)}(t-s)^{1+2H},\\
\fBmitin_{s,t}^{101*}&=\fBmitin_{s,t}^{101}+\frac{1-2H}{2(1+2H)}(t-s)^{1+2H},&\fBmitin_{s,t}^{011*}&=\fBmitin_{s,t}^{011}-\frac{1}{2(1+2H)}(t-s)^{1+2H}.
\end{align*}
\item Let $f$ and $g$ be in $C^1(\mathbb{R}^d;\mathbb{R}^d)$ for $d\in \mathbb{N}$. We define the differential operator $\mathcal{D}_g$ as
\begin{equation*}
\mathcal{D}_gf(u)=\sum_{i=1}^d g_i(u)\partial_{u_i}f(u).
\end{equation*}

Also, for $\Gamma\in \mathbb{G}$, we define $\mathcal{D}^{\Gamma}$ inductively such that $\mathcal{D}^{\Gamma\oplus 0}=\mathcal{D}_b(\mathcal{D}^{\Gamma} f), \mathcal{D}^{\Gamma\oplus 1}=\mathcal{D}_\sigma(\mathcal{D}^{\Gamma} f)$.
Moreover, we abbreviate $\mathcal{D}_\sigma$ as $\mathcal{D}$.
In addition, we define $\mathcal{V}f\equiv \mathcal{D}_\sigma f-\mathcal{D}_f\sigma$.
When $d=1$, we can write simply as $\mathcal{D}_gf=f'g$.
\item We define $\Xi[f;x,s,t],\Xi'[f;x,s,t]$ for a function $f\in C^q$, a path $x$, an integer $q$ defined as $(q+1)^{-1}<H\leq q^{-1}$ in above and times $s,t$ as follows:

\begin{align}
\Xi[f;x,s,t]\coloneqq \sum_{k=1}^q \frac{1}{k!}\mathcal{D}^{k-1}f(x_s)\xunit^k,&&\Xi'[f;x,s,t]\coloneqq \sum_{k=1}^q \frac{1}{k!}(\mathcal{D}^{k-1}f)'(x_s)\xunit^k.\label{taylor_of_f}
\end{align}
\item For a pass $x_\cdot^{(1)}$ and a two-valuable function $x_{\cdot,\cdot}^{(2)}$, we introduce the forward Stieltjes sum $\mathcal{I}_t^{(m)}(x_\cdot^{(1)},d(x^{(2)}))$ and backward Stieltjes sum $\mathcal{I}_t^{(m)}(x_{\cdot+}^{(1)},d(x^{(2)}))$ as

\begin{equation*}
\mathcal{I}_t^{(m)}(x_\cdot^{(1)},d(x^{(2)}))=\sum_{r=0}^{ \lfloor 2^m t\rfloor-1}x_{\bitime{r}}^{(1)}x_{\bitime{r},\bitime{r+1}}^{(2)}.
\end{equation*}

\begin{equation*}
\mathcal{I}_t^{(m)}(x_{\cdot+}^{(1)},d(x^{(2)}))=\sum_{r=0}^{ \lfloor 2^m t\rfloor-1}x_{\bitime{r+1}}^{(1)}x_{\bitime{r},\bitime{r+1}}^{(2)}.
\end{equation*}

Besides this, we denote $\mathcal{I}_t^{(m)}(1,d(x^{(2)}))$ briefly by $\mathcal{I}_t^{(m)}(d(x^{(2)}))$. Additionally, we also write $\mathcal{I}_t^{(m)}(x^{(1)},d(x_{\cdot_1,\cdot_2}^{(2)}))$ in the sense of $\mathcal{I}_t^{(m)}(x^{(1)},d(x^{(2)}))$. For example, $\mathcal{I}^{(m)}(x,d(x_{\cdot_1,\cdot_2}^2-(\cdot_2-\cdot_1)^{2H}))$ means $\mathcal{I}^{(m)}(x^{(1)},d(x^{(2)})),$ where $x_t^{(1)}=x_t, \xunit^{(2)}=\xunit^2-(t-s)^{2H}$.
Throughout this paper, we consider that each integral is defined on $\mathbb{P}_m$.

\item Let $\{X^\lambda\}_{\lambda\in\Lambda}$ be an indexed family in the normed vector space $V$, which is also indexed by the multi-index $\mathbf{t}$. When we estimate that

\begin{equation*}
\sup_{\lambda\in\Lambda}\|X_\cdot^\lambda\|_{V}\leq C f(\mathbf{t}).
\end{equation*}
with the constant $C$ dependent on $\sigma, b, B, H_-, H', H^c$, and independent of $\lambda$ and $\mathbf{t}$, we abbreviate this inequality as $\|X_\cdot^{\lambda}\|_V\lesssim_\lambda f(\mathbf{t})$.
\item We use the notation $O(f(\mathbf{x}))$ as Landau's symbol.
\end{itemize}

\begin{rmk}
The above definitions allow us to handle not only the $(k)$-Milstein and Crank-Nicolson methods discussed in this theorem, but also many numerical solution schemes, including arbitrary Runge-Kutta methods.
However, to handle some numerical solution schemes, the above theorem needs to be extended slightly.

For example, as we will prove in this paper, the asymptotic error of the $(k)$-Milstein method has a convergence rate of $2^{-2Hm}$ or $2^{-(jH-1)m}$ for appropriate $j$.
If a correction term that cancels out this error is added to increase the accuracy of the numerical solution, that correction term takes the form of $(t - \bitime{r})^{jH}$ or $(t - \bitime{r})^{1+2H}$, consisting of a power term and an appropriate exponential term.

However, numerical solutions involving such power terms are not included in the above definition and must be redefined appropriately. Nevertheless, we expect the proofs in the remainder of this paper to be valid for the redefined triple.
\end{rmk}
\subsection{Pathwise integral driven by H\"older path}

In this subsection, we will introduce the 1-dim pathwise integral for fBm.
As explained in introduction, this integral is described a simplified version of Gubinelli's rough integral for controlled rough path \cite{Gubinelli2004}, but it equals the symmetric integral \cite{Russo-Vallois1993}.
Note that we treat only the solution $Y$ as a 1-dim path but for the discussion of Section 5, we define $Y$ as a vector-valued path.

Firstly, we introduce pathwise integral driven by $\alpha$-H\"older path.

\begin{prp}[Integral driven by $x$]\label{prp:continuous_rough_integral}

Let $\alpha$ be in $(0,1)$, $x$ be $\alpha$-H\"older path and $q$ be an integer such that $(q+1)^{-1}<\alpha\leq q^{-1}$.
Provided $z=(z^{(1)},\cdots,z^{(q)})\in C^\alpha([0,T];\mathbb{R}^{d\times q})$ satisfies the following estimation:
\begin{equation}\label{controlledpath}
z_t^{(k)}-z_s^{(k)}-\sum_{j=1}^{q-k}\frac{1}{j!}z_s^{(k+j)}\xunit^j=O(|t-s|^{(q-k+1)\alpha}),
\end{equation}
for all $1\leq k\leq q$ and $s,t\in [0,T]$.

Then, there exists $\mathcal{I}(z)\in C^\alpha([0,T];\mathbb{R}^d)$ uniquely such that
\begin{equation*}
\mathcal{I}(z)_t-\mathcal{I}(z)_s-\sum_{k=1}^q \frac{1}{k!} z_s^{(k)}\xunit^{k}=O(|t-s|^{(q+1)\alpha}).
\end{equation*}
for arbitrary $s,t\in [0,T]$.
\end{prp}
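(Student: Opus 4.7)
The plan is to realize $\mathcal{I}(z)$ as the output of the sewing lemma applied to the germ
\[
\Xi_{s,t} \coloneqq \sum_{k=1}^{q}\frac{1}{k!}\, z_s^{(k)} (\delta x_{s,t})^k.
\]
Since $(q+1)\alpha > 1$ by the choice of $q$, it suffices to verify the cocycle-type estimate $|\delta \Xi_{s,u,t}| \lesssim |t-s|^{(q+1)\alpha}$ for every $0\le s\le u\le t\le T$; the sewing lemma will then produce a unique (up to an additive constant, which is pinned by the stated increment condition and the normalization $\mathcal{I}(z)_0=0$) path $\mathcal{I}(z)$ with the announced remainder. Its $\alpha$-Hölder regularity is automatic because its increment equals $z_s^{(1)}\delta x_{s,t} + O(|t-s|^{2\alpha})$.

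The core computation is a regrouping by powers of $\delta x_{u,t}$. First I would expand $(\delta x_{s,t})^{k}=(\delta x_{s,u}+\delta x_{u,t})^{k}$ by the binomial theorem, cancel the terms with $j=0$ against the $\Xi_{s,u}$ part, and reindex by $i=k-j$ to obtain
\[
\delta \Xi_{s,u,t}=-\sum_{j=1}^{q}\frac{(\delta x_{u,t})^{j}}{j!}\left(z_u^{(j)}-\sum_{i=0}^{q-j}\frac{1}{i!}\, z_s^{(j+i)}(\delta x_{s,u})^{i}\right).
\]
Now the content of hypothesis (\ref{controlledpath}) at level $k=j$ is precisely that the parenthesis equals $O(|u-s|^{(q-j+1)\alpha})$, while the $\alpha$-Hölder regularity of $x$ gives $(\delta x_{u,t})^{j}=O(|t-u|^{j\alpha})$. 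Multiplying and using $|u-s|\vee|t-u|\le|t-s|$ yields a bound of order $|t-s|^{(q+1)\alpha}$ for each of the finitely many terms, which is the required cocycle estimate.

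For uniqueness I would use the standard Hölder-exponent argument: if $\mathcal{I}_1(z),\mathcal{I}_2(z)$ both satisfied the remainder estimate, then their difference would have increments of order $|t-s|^{(q+1)\alpha}$ with exponent strictly greater than $1$, hence would be constant and thus zero under the normalization. The principal obstacle is the combinatorial bookkeeping in the binomial regrouping; everything hinges on aligning the inner Taylor-type sum inside the parenthesis with exactly the expression appearing in condition (\ref{controlledpath}), so that each level $j$ of the controlled-path hierarchy is applied at the correct order $(q-j+1)\alpha$. No cancellation beyond the telescoping already built into the sewing construction is needed, and the only analytic input is the definition of $\alpha$-Hölder continuity of $x$.
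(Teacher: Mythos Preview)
Your proposal is correct and follows the same sewing-lemma approach that the paper implicitly uses: the paper does not spell out a proof of this proposition, but it later remarks (around Proposition~\ref{proposition:discrete_sewing_for_rough_path}) that the required $\delta\Xi_{s,u,t}$ estimate ``appears in proof of Proposition~\ref{prp:continuous_rough_integral}'', which is exactly your binomial regrouping that matches each level-$j$ parenthesis to the controlled-path remainder of order $(q-j+1)\alpha$. Your uniqueness argument via the super-unit H\"older exponent is also the standard one and consistent with the paper's framework.
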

We call this path $z=(z^{(1)},\cdots,z^{(q)})$ a controlled path (of $x$) if it satisfies above condition.
Also, we call $\mathcal{I}(z)$ the integral of $z$ driven by $x$ and denote it by $\int_0^\cdot z_tdx_t$.
By abuse of notation, when we treat only the first component $z^{(1)}$ of $z$, we denote $z^{(1)}$ by $z$.

Secondly, we describe a few example of controlled path.

\begin{prp}\ \label{proposition:elementary path as a controlled path}

\begin{enumerate}
\item For any continuous path $h$, $(\int_0^\cdot h_tdt,0,\cdots,0)$ is a controlled path.
\item The driving path $(x,1,0,\cdots,0)$ is a controlled path.
\item An integral $\left(\int_0^\cdot z_t dx_t, z^{(1)},\cdots, z^{(q-1)}\right)$ is a controlled path.
\end{enumerate}
\end{prp}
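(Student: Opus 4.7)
The plan is to verify, case by case, that each candidate triple satisfies the defining estimation (\ref{controlledpath}) of a controlled path. The key observation is that since $(q+1)^{-1} < \alpha \leq q^{-1}$, we have $q\alpha \leq 1$ and $(q+1)\alpha > 1$, so powers of $|t-s|$ up to exponent $1$ can be absorbed into remainders $O(|t-s|^{q\alpha})$ (and similarly for the shifted component estimates). In each case the inequalities to check reduce either to trivial identities or to one application of Proposition \ref{prp:continuous_rough_integral}.

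For (1), with $z^{(1)}_t = \int_0^t h_u\,du$ and $z^{(k)} = 0$ for $k \geq 2$, the condition at level $k=1$ reduces to $z^{(1)}_t - z^{(1)}_s = \int_s^t h_u\, du$ which is $O(|t-s|)$ by continuity (hence boundedness on $[0,T]$) of $h$; since $q\alpha \leq 1$, this is $O(|t-s|^{q\alpha})$ as required. For $k \geq 2$ every term is zero. For (2), with $z^{(1)} = x$, $z^{(2)} = 1$ and $z^{(k)} = 0$ otherwise, the $k=1$ condition reads $x_t - x_s - 1 \cdot \xunit = 0$, the $k=2$ condition reads $1 - 1 = 0$, and higher levels are trivial.

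The substantive case is (3). Writing $w = (w^{(1)}, \dots, w^{(q)})$ with $w^{(1)} = \int_0^\cdot z_u\,dx_u$ and $w^{(k)} = z^{(k-1)}$ for $2 \leq k \leq q$, the $k=1$ condition becomes
\begin{equation*}
\mathcal{I}(z)_t - \mathcal{I}(z)_s - \sum_{j=1}^{q-1} \frac{1}{j!} z^{(j)}_s \xunit^j = \frac{1}{q!} z^{(q)}_s \xunit^q + O(|t-s|^{(q+1)\alpha}),
\end{equation*}
where the right-hand side comes from isolating the $k=q$ term in Proposition \ref{prp:continuous_rough_integral}; since $z^{(q)}$ is continuous (hence bounded) on $[0,T]$ and $|\xunit| \lesssim |t-s|^\alpha$, the leading term is $O(|t-s|^{q\alpha})$, which matches the required $(q-1+1)\alpha$. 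For $2 \leq k \leq q$, setting $k' = k-1$, the required estimate is obtained by truncating the controlled-path estimate for $z$ at level $k'$ one term earlier:
\begin{equation*}
z^{(k')}_t - z^{(k')}_s - \sum_{j=1}^{q-k'-1} \frac{1}{j!} z^{(k'+j)}_s \xunit^j = \frac{1}{(q-k')!} z^{(q)}_s \xunit^{q-k'} + O(|t-s|^{(q-k'+1)\alpha}),
\end{equation*}
whose leading term is $O(|t-s|^{(q-k')\alpha}) = O(|t-s|^{(q-k+1)\alpha})$, the target order.

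There is no real obstacle here; the only potential source of confusion is keeping the index shift between $z$ and $w$ consistent, together with remembering that shortening the Taylor-type sum by one term costs exactly one power of $|t-s|^\alpha$ in the remainder, which is precisely what the indexing of (\ref{controlledpath}) demands. Consequently the whole proposition follows from (\ref{controlledpath}), Proposition \ref{prp:continuous_rough_integral}, and boundedness of continuous functions on $[0,T]$.
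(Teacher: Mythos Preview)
Your proof is correct. The paper does not include a proof of this proposition (it is stated as an elementary fact), and your direct verification of condition (\ref{controlledpath}) in each case, together with the index-shift bookkeeping in part (3) via Proposition \ref{prp:continuous_rough_integral}, is exactly the expected argument.
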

Based on this proposition, we consider a Riemann integral $\int_0^\cdot h_tdt$ driving path $x$, pathwise integral $\int_0^\cdot z_tdx_t$ as controlled paths.

Thirdly, we define operations for controlled paths.

\begin{prp}[Operation for a controlled path]\label{proposition:Operation for a controlled path}\
We set each of $z=(z^{(1)},\cdots,z^{(q)})$ and $\overline{z}=(\overline{z}^{(1)}\cdots,\overline{z}^{(q)})$ to be $d$-dimensional controlled paths of $x$, respectively.

Then, we can obtain the new controlled paths as follows:

\begin{enumerate}
\item $z+\overline{z}$ is controlled path.
\item When we write $z^{(j)}=(z^{(j,1)},\cdots,z^{(j,d)})^t$, $z_r=(z^{(1,r)},\cdots,z^{(q,r)})$ is $1$-dimensional controlled path.
\item Let $\circ$ is element-wise product. Then, we define $z\circ \overline{z}$ as

\begin{align}
\left(z^{(1)}\circ\overline{z}^{(1)},z^{(1)}\circ\overline{z}^{(2)}+z^{(2)}\circ\overline{z}^{(1)},\cdots,\sum_{k=1}^q \binom{q-1}{k-1} z^{(k)}\circ\overline{z}^{(q-k)}\right)\label{pathproduct}.
\end{align}
is a controlled path.
\item For $f\in C_b^{q+1}(\mathbb{R}^{d};\mathbb{R}^{d_2})$, we define $f(z)=(f(z)^{(1)},\cdots,f(z)^{(q)})$ as

\begin{align}
f(z)_s^{(1)}=f(z_s^{(1)}),f(z)_s^{(k)}=k!\sum_{|\Gamma|\leq k} \frac{\partial^{\Gamma} f}{\Gamma!}(z_s^{(1)})\sum_{\begin{smallmatrix}
r_1+\cdots+r_d=k,\\
r_{j,1}+\cdots+r_{j,\gamma_j}=r_j.
\end{smallmatrix}}\prod_{j=1}^d\frac{z^{(j,r_1)}\cdots z^{(j,r_j)}}{(r_{j,1})!\cdots (r_{j,r_j})!}\label{pathcomposition},
\end{align}
where multi-index $\Gamma$ is defined as $(\gamma_1,\cdots,\gamma_d)$.

Then, $f(z)$ is a controlled path.
\end{enumerate}
\end{prp}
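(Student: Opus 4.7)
The proof will verify, for each of the four constructions, that the candidate path satisfies the defining estimate (\ref{controlledpath}) level by level. I write, for brevity,
\[ R_{s,t}^{(k)}[z] := z_t^{(k)} - z_s^{(k)} - \sum_{j=1}^{q-k}\frac{1}{j!} z_s^{(k+j)} (\delta x_{s,t})^j, \]
so that the controlled-path hypothesis reads $|R_{s,t}^{(k)}[z]| = O(|t-s|^{(q-k+1)\alpha})$ for all $1 \le k \le q$. An immediate corollary is that each increment $z_t^{(k)} - z_s^{(k)}$ is itself $O(|t-s|^\alpha)$, since the first term of the sum in $R^{(k)}$ carries $|\delta x_{s,t}| \lesssim |t-s|^\alpha$. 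I will use this repeatedly.

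Items (1) and (2) are essentially automatic: for the sum, $R^{(k)}[z+\bar z] = R^{(k)}[z] + R^{(k)}[\bar z]$ by linearity, while projection onto one coordinate preserves every level's estimate. For item (3), setting $w = z \circ \bar z$ with components defined by (\ref{pathproduct}), the strategy is a discrete Leibniz rule: regarding $z^{(k)}/k!$ as the $k$-th Taylor coefficient of $z$, the $n$-th Taylor coefficient of $z \circ \bar z$ is $\sum_{k} \frac{z^{(k)}}{k!} \frac{\bar z^{(n-k)}}{(n-k)!}$, and multiplying by $n!$ reproduces the stated formula. Substituting the expansions $z_t^{(k)} = z_s^{(k)} + \sum_j z_s^{(k+j)}(\delta x_{s,t})^j/j! + R_{s,t}^{(k)}[z]$ and the analogue for $\bar z$ into $w_t^{(k)}$, expanding the product, and regrouping by powers of $\delta x_{s,t}$, the low-order cross-terms reassemble $\sum_{j=1}^{q-k} w_s^{(k+j)} (\delta x_{s,t})^j / j!$ via the same binomial identity, while every term containing at least one remainder factor $R^{(\cdot)}[\cdot]$, or a product of increments whose total $x$-weight exceeds $q-k$, is of order $(q-k+1)\alpha$ or better.

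For item (4), I will prove a discrete multivariate Faà di Bruno formula. Take $w = f(z)$ with components defined by (\ref{pathcomposition}). Applying Taylor's theorem of order $q$ to $f$ about $z_s^{(1)}$, using $f \in C_b^{q+1}$,
\[ f(z_t^{(1)}) = \sum_{|\Gamma| \le q} \frac{\partial^\Gamma f}{\Gamma!}(z_s^{(1)})\, (z_t^{(1)} - z_s^{(1)})^{\Gamma} + O\!\left(|z_t^{(1)} - z_s^{(1)}|^{q+1}\right), \]
one substitutes the controlled-path expansion of each component of $z_t^{(1)} - z_s^{(1)}$ into the monomials $(z_t^{(1)} - z_s^{(1)})^\Gamma$, expands in powers of $\delta x_{s,t}$, and regroups. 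The coefficient of $(\delta x_{s,t})^k/k!$ is precisely $f(z)_s^{(k)}$ as defined in (\ref{pathcomposition}); that identification is exactly the combinatorial content encoded in the partitions $r_{j,1}+\dots+r_{j,\gamma_j}=r_j$. For general $k$, one applies the same expansion to $\partial^{\mathbf{e}_j} f$ in place of $f$ and reads off the estimate for $R^{(k)}[w]$.

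The main obstacle is the combinatorial bookkeeping in (4): for each multi-index $\Gamma$ and every way of distributing powers of $\delta x_{s,t}$ across the factors coming from the components $z^{(j,r)}$, one must track how the level-wise remainders $R^{(k)}[z]$ and the Taylor remainder $O(|z_t^{(1)}-z_s^{(1)}|^{q+1})$ of $f$ combine, and then verify that every term not already absorbed into $\sum_{j=0}^{q-k} f(z)_s^{(k+j)}(\delta x_{s,t})^j/j!$ carries total weight at least $(q-k+1)\alpha$. The weight count combines $|R^{(\ell)}[z]| \lesssim |t-s|^{(q-\ell+1)\alpha}$ at each level $\ell$ with the $\alpha$-Hölder bound on $\delta x_{s,t}$; once this is done, item (3) is recovered as the special case $f(u,v) = u \cdot v$, though the direct treatment above is cleaner.
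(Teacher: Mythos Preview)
The paper does not actually prove this proposition: it is stated in Section~\ref{section:notation} as a preliminary fact about controlled paths, imported from Gubinelli's framework (the paper writes that its integral ``is described a simplified version of Gubinelli's rough integral for controlled rough path \cite{Gubinelli2004}''), and no proof is supplied. Your sketch follows the standard route one finds in the rough-path literature --- linearity for (1) and (2), the discrete Leibniz rule for (3), and Taylor expansion plus Fa\`a di Bruno bookkeeping for (4) --- and is correct in outline. The only thing worth tightening is the passage from level $k=1$ to general $k$ in item (4): rather than saying ``apply the same expansion to $\partial^{\mathbf{e}_j} f$,'' it is cleaner to note that the formula (\ref{pathcomposition}) for $f(z)^{(k)}$ is itself of the same structural form with $f$ replaced by suitable derivatives and $z$ unchanged, so the remainder estimate at level $k$ follows by the same substitution-and-regrouping argument with $q$ replaced by $q-k+1$ in the weight count.
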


Additionaly, we define an inner product $z\cdot \overline{z}=\sum_{r=1}^d z_r\circ \overline{z}_r=\sum_{r=1}^d (z\circ \overline{z})_r$.

Combining these definitions, we can define $B^{\Gamma}$ almost surely.

Fourthly, we introduce the SDE driven by fBm.

\begin{prp}[Stochastic differential equation]
\label{prp:SDE}

Given $q\in \mathbb{N}_{\geq 1}, \alpha\in((q+1)^{-1},q^{-1}], \sigma\in C_b^{q+1}(\mathbb{R}^d; \mathbb{R}^d),b\in C_b^1(\mathbb{R}^d;\mathbb{R}^d), x\in C^{\alpha}([0,T];\mathbb{R})$, a controlled path $z$ uniquely exists such that

\begin{equation}
z_t=z_0+\int_0^t \sigma(z_s)dx_s+\int_0^t b(z_s)ds.
\end{equation}
for any $t\in [0,T]$.

We write this equation $dz_t = \sigma(z_t)dx_t+ b(z_t)dt$ briefly.

For the solution $z$, we can see that $z_t^{(i)}= \mathcal{D}^{i-2}\sigma(z_t^{(1)})$ for $2\leq i\leq q$.
Additionally, for $f\in C_b^{q+1}(\mathbb{R}^{d};\mathbb{R}^{d_2})$ with any $d_2$, we have $f(z_t)^{(i)}= \mathcal{D}^{i-2}\sigma(z_t^{(1)})$.
\end{prp}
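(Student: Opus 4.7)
The plan is to construct $z$ via a Picard fixed-point iteration in the complete metric space of controlled paths of $x$, which is the standard rough-integration approach. Define the candidate solution map
\begin{equation*}
\Phi(z)_t \coloneqq z_0 + \int_0^t \sigma(z_s)\, dx_s + \int_0^t b(z_s^{(1)})\, ds
\end{equation*}
on $d$-dimensional controlled paths $z=(z^{(1)},\dots,z^{(q)})$ of $x$. Because $\sigma\in C_b^{q+1}$, Proposition \ref{proposition:Operation for a controlled path}.4 shows that $\sigma(z)$ is itself a controlled path, so Proposition \ref{prp:continuous_rough_integral} produces the rough integral as a controlled path; Proposition \ref{proposition:elementary path as a controlled path}.1 takes care of the Riemann-integral part; and Proposition \ref{proposition:Operation for a controlled path}.1 combines the two. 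Thus $\Phi$ sends controlled paths of $x$ to controlled paths of $x$.

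Next, fix a small $T_0\in(0,T]$ and work on the space of controlled paths $z$ on $[0,T_0]$ starting at $z_0$, endowed with the natural norm built from the $\alpha$-H\"older seminorms of $z^{(1)},\dots,z^{(q)}$ together with the $(q-k+1)\alpha$-remainder seminorms from (\ref{controlledpath}). On the closed ball of some fixed radius $R$ around the constant controlled path $(z_0,0,\dots,0)$, the composition estimates from Proposition \ref{proposition:Operation for a controlled path}.4, the sewing-type bound embedded in Proposition \ref{prp:continuous_rough_integral}, and the Lipschitz bound on $b$ each produce an overall factor of the form $C(R)\,T_0^{\eta}$ for some $\eta>0$ (coming from the excess regularity $(q+1)\alpha-1>0$). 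Hence $\Phi$ maps the ball to itself and is a strict contraction once $T_0$ is sufficiently small in terms of $\|\sigma\|_{C^{q+1}}$, $\|b\|_{C^1}$, and $\|x\|_\alpha$. Banach's fixed point theorem yields existence and uniqueness of the solution on $[0,T_0]$, and since $T_0$ depends only on a priori data, I iterate on $[kT_0,(k+1)T_0]$ and concatenate to obtain a unique solution on $[0,T]$.

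Finally, the identification $z_t^{(i)}=\mathcal{D}^{i-2}\sigma(z_t^{(1)})$ for $2\le i\le q$ follows by induction on $i$ from the fixed-point equation. By the uniqueness clause of Proposition \ref{prp:continuous_rough_integral}, the $(i)$-th component of $\int_0^\cdot \sigma(z_s)\,dx_s$ must equal the $(i-1)$-th component of the controlled path $\sigma(z)$; by the composition formula (\ref{pathcomposition}) applied with $f=\sigma$ and the inductive identities for $z^{(2)},\dots,z^{(i-1)}$, this coincides with $\mathcal{D}^{i-2}\sigma(z_t^{(1)})$. The Riemann-integral term contributes only to the first component, so the higher components are unaffected. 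The analogous identification of $f(z)$ for general $f\in C_b^{q+1}$ follows from the same composition formula.

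The main obstacle is the contraction step: one has to control all $q$ components and all $q$ remainder seminorms of $\Phi(z)-\Phi(\tilde z)$ simultaneously, and the rough-integral term couples different components in a nontrivial way. Near the critical regularity $\alpha\approx 1/(q+1)$ the H\"older exponents are tight, so the small-time contraction factor $T_0^{\eta}$ must be extracted carefully, essentially by isolating the $(q+1)\alpha$-H\"older remainder in Proposition \ref{prp:continuous_rough_integral} and the $C^{q+1}$-Lipschitz estimate for $\sigma(z)-\sigma(\tilde z)$ across all Gubinelli levels. Once this factor is secured, the Banach iteration, the patching argument, and the algebraic identification of the higher components are routine.
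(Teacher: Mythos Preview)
The paper does not prove Proposition~\ref{prp:SDE}; it is stated as a background result, with the rough-integral framework attributed to Gubinelli's controlled path theory. Your Picard iteration in the space of controlled paths is precisely the standard argument for this result, and your outline is correct, including the point that the small-time contraction factor comes from the excess regularity $(q+1)\alpha-1>0$ in the sewing bound of Proposition~\ref{prp:continuous_rough_integral}.

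One remark on the identification step. Your inductive argument via the composition formula~(\ref{pathcomposition}) is valid, but verifying that the combinatorial expression in~(\ref{pathcomposition}) collapses to $\mathcal{D}^{i-2}\sigma$ under the inductive hypotheses $z^{(j)}=\mathcal{D}^{j-2}\sigma(z^{(1)})$ for $j<i$ is a genuine (if routine) algebraic computation that you have not carried out. The paper, when it needs the analogous discrete identity in Lemma~\ref{lmm:Ymrho_bounded}, sidesteps this combinatorics entirely: it passes to a Lipschitz driver $w$, where the solution is a classical ODE, uses the degenerate It\^o formula to obtain one Taylor-type expansion of $f(y_t)$, uses ordinary Taylor expansion of $f$ composed with the expansion of $y_t$ to obtain a second, and then matches coefficients by uniqueness of power series as $t\to s$. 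This yields $g_i=\frac{1}{i!}\mathcal{D}^{i-1}f$ without ever unpacking~(\ref{pathcomposition}). Either route works; the paper's is shorter to write down, while yours stays within the controlled-path formalism throughout.
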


\begin{prp}[It\^{o}'s formula]\label{proposition:Itoformula}

We set a $d$-dimension controlled path $z$ to be 

\[z_t = \int_0^t \overline{z}_u dx_u+\int_0^t \hat{z}_udu\]
 with some $d$-dimension controlled paths $\overline{z}$ and $\hat{z}$.

Then, for $f=(f_1,\cdots,f_{d_2})\in C^{q+2}(\mathbb{R}^d,\mathbb{R}^{d_2})$, we have

\begin{equation*}
f(z_t)=\int_0^t \nabla f(z_u)\cdot \overline{z}_u dx_u+\int_0^t \nabla f(z_u)\cdot \hat{z}_udu,
\end{equation*}
where $\nabla f(z)\cdot \overline{z}=(\nabla f_1(z)\cdot \overline{z},\cdots,\nabla f_{d_2}(z)\cdot \overline{z})$

\end{prp}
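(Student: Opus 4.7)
The plan is to invoke the uniqueness built into Proposition \ref{prp:continuous_rough_integral}: I will show that $t\mapsto f(z_t)$ and the candidate right-hand side admit the same local expansion to order $(q+1)\alpha$, and since $(q+1)\alpha>1$, a telescoping argument forces them to coincide up to the initial value.

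Concretely, set $v_t := f(z_0) + \int_0^t \nabla f(z_u)\cdot \overline{z}_u\, dx_u + \int_0^t \nabla f(z_u)\cdot \hat{z}_u\, du$. Since $\nabla f\in C^{q+1}$, Proposition \ref{proposition:Operation for a controlled path} item 4 makes $\nabla f(z)$ a controlled path of $x$, and items 1--3 together with the inner product structure make $\nabla f(z)\cdot \overline{z}$ and $\nabla f(z)\cdot \hat{z}$ controlled paths as well. Hence $v$ is well defined and, combining the local expansion of Proposition \ref{prp:continuous_rough_integral} with the Taylor expansion of a Riemann integrand, one has
\[
v_t - v_s = \sum_{k=1}^{q} \frac{1}{k!}\,(\nabla f(z)\cdot \overline{z})_s^{(k)}\,\delta x_{s,t}^{k} + \nabla f(z_s)\cdot \hat{z}_s\,(t-s) + O(|t-s|^{(q+1)\alpha}),
\]
where the $O(|t-s|^{1+\alpha})$ Riemann remainder is absorbed because $1+\alpha\geq (q+1)\alpha$ when $q\alpha\leq 1$.

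The heart of the proof is to produce the same expansion from $f(z_t)-f(z_s)$. Writing $z_t-z_s = \xi_{s,t}+\eta_{s,t}+R_{s,t}$ with $\xi_{s,t}:=\sum_{k=1}^q \frac{1}{k!}\,\overline{z}_s^{(k)}\delta x_{s,t}^k$, $\eta_{s,t}:=\hat{z}_s(t-s)$, and $R_{s,t}=O(|t-s|^{(q+1)\alpha})$, I insert this into the degree-$q$ Taylor expansion of $f$ at $z_s$. The Taylor remainder is $O(|z_t-z_s|^{q+1})=O(|t-s|^{(q+1)\alpha})$, and replacing $(z_t-z_s)^{\otimes m}$ by $(\xi_{s,t}+\eta_{s,t})^{\otimes m}$ produces an error bounded by $|R_{s,t}|\cdot|z_t-z_s|^{m-1}$, again of size $O(|t-s|^{(q+1)\alpha})$. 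Inside the $(\xi+\eta)^{\otimes m}$ expansion, cross terms containing at least one $\eta$-factor with $m\geq 2$ are of order $|t-s|^{1+(m-1)\alpha}$, and pure $\eta^{\otimes j}$ terms for $j\geq 2$ are $O(|t-s|^2)$; both are absorbed into $O(|t-s|^{(q+1)\alpha})$ using $q\alpha\leq 1$ and $(q+1)\alpha\leq (q+1)/q\leq 2$. Thus only the pure $\xi^{\otimes m}$ contributions and $\nabla f(z_s)\cdot \eta_{s,t}$ from $m=1$ survive. Reorganising the pure-$\xi$ contributions in powers of $\delta x_{s,t}$ and using the Fa\`{a} di Bruno-type identities (\ref{pathproduct}) and (\ref{pathcomposition}), the coefficient of $\delta x_{s,t}^K/K!$ in $\sum_{m=1}^q \frac{1}{m!}D^m f(z_s)\,\xi_{s,t}^{\otimes m}$ equals $(\nabla f(z)\cdot \overline{z})_s^{(K)}$, yielding $f(z_t)-f(z_s) = v_t-v_s + O(|t-s|^{(q+1)\alpha})$.

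Setting $\phi(t):=f(z_t)-v_t$, one has $\phi(0)=0$ and $|\phi(t)-\phi(s)|\leq C|t-s|^{(q+1)\alpha}$ with $(q+1)\alpha>1$. Partitioning $[0,t]$ into $N$ equal sub-intervals and summing the local bounds gives $|\phi(t)-\phi(0)|\leq CN^{1-(q+1)\alpha}t^{(q+1)\alpha}\to 0$ as $N\to\infty$, so $\phi\equiv 0$ and the identity follows. The principal obstacle is the combinatorial coefficient-matching step: one must verify that reshuffling the multi-index sums arising from $D^m f(z_s)\xi_{s,t}^{\otimes m}$ reproduces exactly the product rule (\ref{pathproduct}) applied to the composition rule (\ref{pathcomposition}), i.e.\ the ``chain rule for controlled paths''. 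Although this identity is built into the definitions, the bookkeeping of partitions and derivative orders is the nontrivial technical part of the argument.
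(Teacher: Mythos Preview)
The paper states this proposition without proof, treating it as a standard preliminary fact from the theory of controlled paths (Section~2.1). Your argument is the canonical one: show that $f(z_\cdot)$ and the candidate $v_\cdot$ have identical local expansions up to order $(q+1)\alpha$, then invoke the superlinearity $(q+1)\alpha>1$ via telescoping to conclude equality. The ingredients you use---Taylor expansion of $f$ at $z_s$, substitution of the controlled-path increment $z_t-z_s=\xi_{s,t}+\eta_{s,t}+R_{s,t}$, discarding cross terms through $q\alpha\leq 1$ and $(q+1)\alpha\leq 2$, and the Fa\`a~di~Bruno matching of the $\delta x_{s,t}^K$ coefficients against $(\nabla f(z)\cdot\overline{z})^{(K)}$---are all correct and standard. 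The combinatorial identity you flag as the ``principal obstacle'' is precisely the consistency of the composition rule~(\ref{pathcomposition}) with the product rule~(\ref{pathproduct}) applied to $z^{(k+1)}=\overline{z}^{(k)}$; it holds by construction of those formulas.

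One cosmetic point: you apply Proposition~\ref{proposition:Operation for a controlled path} item~4 to $\nabla f\in C^{q+1}$, whereas that proposition is stated for $C_b^{q+1}$. Since $z$ is continuous on $[0,T]$ its range is compact, so a routine localisation (smooth cutoff) disposes of this; the paper itself is loose on this distinction.
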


Since $B$ is $H_-$-H\"older continuous almost surely, we can formulate the equation $d\exsol{t} = \sigma(\exsol{t})dB_t+ b(\exsol{t})dt$ in the sense of pathwise integral.

\subsection{Discrete H\"older norm}\label{subsection:discrete_Hoelder_norm}

In this subsection, we discuss H\"older norm for path defined in $\mathcal{P}_m$.
This subsection aims to see a H\"older estimate of $\mathcal{I}(d(B^{\Gamma}))$.
At first, we show the Garsia-Rodemich-Ramsey (GRR) inequality, a key statement to estimate the H\"older norm of a continuous stochastic process.

\begin{prp}[Garsia-Rodemich-Ramsey inequality]\label{proposition:GRR}
Let $x:C^0([0,T]:\mathbb{R}),p>1,0<\theta<1$.

Then,

\begin{equation*}
\|x_\cdot\|_\theta^p\leq C_{p,\theta}\int_0^Tdt\int_0^tds \frac{|\xunit|^p}{|t-s|^{2+p\theta}},
\end{equation*}
where $C_{p,\theta}$ is a constant independent of $x$.

\end{prp}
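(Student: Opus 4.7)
The plan is to give the classical proof of the Garsia--Rodemich--Rumsey inequality via a dyadic chaining argument. Write
\[
B \coloneqq \int_0^T\!\!\int_0^T \frac{|x_t-x_s|^p}{|t-s|^{2+p\theta}}\,ds\,dt,
\]
which is finite by hypothesis (otherwise the inequality is trivial). The strategy is to fix an arbitrary pair $s_0<t_0$ in $[0,T]$ with $d = t_0 - s_0$, and to bound $|x_{t_0}-x_{s_0}|$ by telescoping along a sequence of intermediate points $u_0 = s_0, u_1, u_2,\dots$ chosen so that successive increments are geometrically small in $d$.

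The dyadic construction goes as follows. For each $n\geq 0$ consider the slab $A_n = [s_0, s_0 + d\, 2^{-n}]$ of length $d_n = d\,2^{-n}$. For a fixed $u\in A_n$, define the averaged quantity
\[
F_n(u) \coloneqq \frac{1}{d_n^2}\int_{A_n}\!\!\int_{A_n} \frac{|x_v-x_w|^p}{|v-w|^{2+p\theta}}\,dv\,dw,
\]
and observe that $d_n^2\,F_n(u) \leq B$. By a double application of Chebyshev's inequality on the inner integrals one can select a point $u_{n+1} \in A_{n+1}$ whose average distance in the $L^p$ sense to both $u_n$ and to a second auxiliary point is controlled: more precisely, one picks $u_{n+1}$ so that
\[
\frac{|x_{u_n}-x_{u_{n+1}}|^p}{|u_n-u_{n+1}|^{2+p\theta}} \leq \frac{4B}{d_n^2}
\]
holds together with $|u_n-u_{n+1}| \leq d_n$. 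This is the standard pigeonhole step: the set of $u\in A_{n+1}$ where the bound fails has measure strictly less than $d_{n+1}$, so a valid $u_{n+1}$ exists.

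Rearranging and using $|u_n - u_{n+1}| \leq d_n = d\,2^{-n}$ yields
\[
|x_{u_n}-x_{u_{n+1}}| \leq (4B)^{1/p}\, d_n^{2/p}\cdot d_n^\theta\cdot d_n^{-2/p} = (4B)^{1/p}\, d^{\theta}\, 2^{-n\theta}.
\]
Summing the geometric series in $n$ and invoking continuity of $x$ so that $u_n \to s_0$ gives $|x_{s_0}-x_{u_0'}| \leq C_{p,\theta}\,B^{1/p}\,d^\theta$ for a point $u_0'\in A_0$; applying the symmetric procedure starting from $t_0$ produces a matching estimate, and combining the two with the single increment $|x_{u_0'}-x_{u_0''}|$ (which is itself controlled by the same argument) yields
\[
|x_{t_0}-x_{s_0}| \leq C_{p,\theta}\, B^{1/p}\, |t_0-s_0|^\theta.
\]
Since $s_0<t_0$ were arbitrary, taking the supremum over pairs and raising to the $p$-th power delivers the claim.

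The only genuinely delicate point is the bookkeeping in the Chebyshev step: one must verify that the bad set on which the pointwise bound fails has measure strictly less than $d_{n+1}$, which forces the constant $4$ (a constant strictly larger than $2$ would do) and dictates where the factor $C_{p,\theta}$, blowing up as $\theta\downarrow 0$ through the $1/\theta$ from summing the geometric series, comes from. No deep ingredient beyond Chebyshev's inequality, geometric summation, and continuity of $x$ is needed.
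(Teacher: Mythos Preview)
The paper does not prove this proposition; it is stated as a classical result and then immediately applied in the proof of Corollary~\ref{corollary:stochasticGRR}. So there is no paper proof to compare against.

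Your outline is the standard Garsia--Rodemich--Rumsey chaining argument and is correct in spirit, but a couple of points are written loosely. First, your $F_n(u)$ as displayed does not actually depend on $u$; in the usual version one works with the one-variable function $I(u)=\int_0^T |x_u-x_v|^p/|u-v|^{2+p\theta}\,dv$, notes that $\int I(u)\,du=B$ by Fubini, and then uses Chebyshev on $I$ to locate each $u_{n+1}$ in a set of controlled measure inside $A_{n+1}$ on which both $I(u_{n+1})$ and the pointwise kernel against $u_n$ are small. Second, the selection step needs two simultaneous constraints (against the previous point $u_n$ \emph{and} a bound on $I(u_{n+1})$ so that the next step can proceed), which is why the measure bookkeeping requires the constant $4$ rather than $2$; you allude to this but do not spell it out. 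With those clarifications the argument is complete, and the final geometric summation producing the $1/(1-2^{-\theta})$ factor is exactly as you describe.
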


\begin{defi}

We define $\|X_\cdot\|_{L^p,\theta}$ as 

\[\|X_\cdot\|_{L^p,\theta}=\sup_{0<s<t<T}\frac{\|\delta X_{s,t}\|_{L^p}}{|t-s|^\theta}.\]

\end{defi}

\begin{cor}\label{corollary:stochasticGRR}

Let $X$ to be continuous stochastic process such that $\|X_\cdot\|_{L^p,\theta'}\lesssim 1$.

Then, for $\theta<\theta'-p^{-1}$, $X$ is $\theta$-H\"older continuous almost surely. Moreover, $\|\ \|X_\cdot\|_\theta\|_{L^p}\lesssim \|X_\cdot\|_{L^p,\theta'}$.

\end{cor}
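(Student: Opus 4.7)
The plan is to apply the deterministic GRR inequality (Proposition \ref{proposition:GRR}) pathwise and then take expectations, exchanging the order of integration via Fubini. Concretely, starting from
\[\|X_\cdot\|_\theta^p \leq C_{p,\theta}\int_0^T dt\int_0^t ds\,\frac{|\delta X_{s,t}|^p}{|t-s|^{2+p\theta}},\]
I would take expectation on both sides and use Fubini to move $E$ inside the double integral, obtaining
\[E\bigl[\|X_\cdot\|_\theta^p\bigr] \leq C_{p,\theta}\int_0^T dt\int_0^t ds\,\frac{E[|\delta X_{s,t}|^p]}{|t-s|^{2+p\theta}}.\]

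Next I would substitute the hypothesis $\|\delta X_{s,t}\|_{L^p}\leq \|X_\cdot\|_{L^p,\theta'}\,|t-s|^{\theta'}$, which gives $E[|\delta X_{s,t}|^p]\leq \|X_\cdot\|_{L^p,\theta'}^p\,|t-s|^{p\theta'}$, so that
\[E\bigl[\|X_\cdot\|_\theta^p\bigr] \leq C_{p,\theta}\,\|X_\cdot\|_{L^p,\theta'}^p\int_0^T dt\int_0^t ds\,|t-s|^{p(\theta'-\theta)-2}.\]
The double integral is finite precisely when $p(\theta'-\theta)-2>-1$, i.e.\ when $\theta<\theta'-p^{-1}$, which is exactly the assumption; in that range it is bounded by a constant depending on $T,p,\theta,\theta'$. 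Taking $p$-th roots yields $\|\,\|X_\cdot\|_\theta\,\|_{L^p}\lesssim \|X_\cdot\|_{L^p,\theta'}$, and since a random variable with finite $L^p$-norm is finite almost surely, $\|X_\cdot\|_\theta<\infty$ a.s., i.e.\ $X$ is $\theta$-H\"older continuous a.s.

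There is essentially no hard step here: the only thing to be careful about is the justification of Fubini (which is immediate since the integrand is nonnegative and measurable) and the correct bookkeeping of the exponent condition $\theta<\theta'-p^{-1}$ that ensures integrability of $|t-s|^{p(\theta'-\theta)-2}$ near the diagonal $s=t$. No pathwise continuity issue arises because $X$ is assumed continuous from the outset, so the supremum defining $\|X_\cdot\|_\theta$ is measurable as a countable supremum over rationals.
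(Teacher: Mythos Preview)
Your proof is correct and follows essentially the same approach as the paper: apply the pathwise GRR inequality, take expectations (with Fubini), substitute the moment bound $E[|\delta X_{s,t}|^p]\leq \|X_\cdot\|_{L^p,\theta'}^p|t-s|^{p\theta'}$, and verify that the resulting integral $\int_0^T\!\int_0^t |t-s|^{p(\theta'-\theta)-2}\,ds\,dt$ is finite exactly under the hypothesis $\theta<\theta'-p^{-1}$. The paper's proof is the same, merely writing the exponent as $|t-s|^{-(1-\epsilon)}$ with $\epsilon=p\theta'-1-p\theta>0$ and computing the integral explicitly.
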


\begin{proof}

Let $\epsilon=p\theta'-1-p\theta$.
Then, $\epsilon>0$.
From GRR inequality, we have

\[\|X_\cdot\|_\theta^p\leq C_{p,\theta}\int_0^Tdt\int_0^tds\frac{|\delta X_{s,t}|^p}{|t-s|^{2+p\theta}}\]

Taking the expected value of both sides, we see

\begin{equation}
E[\|X_\cdot\|_\theta^p]\leq C_{p,\theta}\int_0^Tdt\int_0^tds \frac{E[|\delta X_{s,t}|^p]}{|t-s|^{2+p\theta}}.\label{GRR_expected}
\end{equation}
By assumption, $E[|\delta X_{s,t}|^p]=\|\delta X_{s,t}\|_{L^p}^p\leq \|X_\cdot\|_{L^p,\theta'}^p(t-s)^{p\theta'}$.
Since $p\theta'=1+p\theta+\epsilon$, we get 
\[\frac{E[|\delta X_{s,t}|^p]}{|t-s|^{2+p\theta}}\leq \frac{\|X_\cdot\|_{L^p,\theta'}^p}{|t-s|^{1-\epsilon}}.\]
From 
\[\int_0^Tdt\int_0^tds \frac{1}{|t-s|^{1-\epsilon}}=\frac{T^{1+\epsilon}}{ \epsilon(1+\epsilon)}<\infty,\]

we obtain immediately $E[\|X_\cdot\|_\theta^p]\simeq \|X_\cdot\|_{L^p,\theta'}^p$.
Raising both sides to the power of $1/p$, we have the second statement.
Also this gives $E[\|X_\cdot\|_\theta^p]<\infty$.
Hence, $\|X_\cdot\|_\theta$ is finite almost surely, which is the first statement.
\end{proof}

Next, we estimate H\"older norm of linear interpolation of discrete process.

\begin{lmm}\label{lemma:discretecontinuous}
Let $X^{(m)}$ be discrete process defined in $\mathbb{P}_m$ and $\overline{X}^{(m)}$ the linear interpolation of $X^{(m)}$.
Also, take $\theta\in (0,1)$ arbitrary.

Then, $\|X_\cdot^{(m)}\|_\theta\leq \|\overline{X}_\cdot^{(m)}\|_{\theta}\lesssim \|X_\cdot^{(m)}\|_\theta$.\newline
Moreover, $\|X_\cdot^{(m)}\|_{L^p,\theta}\leq \|\overline{X}_\cdot^{(m)}\|_{L^p,\theta}\lesssim \|X_\cdot^{(m)}\|_{L^p,\theta}$.

\end{lmm}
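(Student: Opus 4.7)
The plan is to prove both chains of inequalities by a direct case analysis on the positions of $s$ and $t$ relative to the dyadic grid $\mathbb{P}_m$. Each leftmost inequality is immediate: since $\overline{X}^{(m)}$ restricted to $\mathbb{P}_m$ coincides with $X^{(m)}$, the supremum defining $\|X^{(m)}\|_\theta$ (resp. $\|X^{(m)}\|_{L^p,\theta}$) is taken over a subset of the pairs used to define $\|\overline{X}^{(m)}\|_\theta$ (resp. $\|\overline{X}^{(m)}\|_{L^p,\theta}$). So the real content lies in the rightmost inequality.

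For the rightmost inequality, fix $0 \leq s < t \leq T$, and choose $r, r'$ with $s \in [\bitime{r}, \bitime{r+1})$ and $t \in [\bitime{r'}, \bitime{r'+1})$. I would split into two cases. In Case 1 ($r = r'$), linearity of $\overline{X}^{(m)}$ on $[\bitime{r}, \bitime{r+1}]$ gives $\overline{X}_t^{(m)} - \overline{X}_s^{(m)} = 2^m(t-s)\bigl(X_{\bitime{r+1}}^{(m)} - X_{\bitime{r}}^{(m)}\bigr)$. Since $|X_{\bitime{r+1}}^{(m)} - X_{\bitime{r}}^{(m)}| \leq \|X^{(m)}\|_\theta \, 2^{-m\theta}$ and $2^m(t-s) \leq 1$, the elementary bound $u \leq u^\theta$ for $u \in [0,1]$ converts the factor $2^m(t-s)$ into $(2^m(t-s))^\theta$, yielding $|\overline{X}_t^{(m)} - \overline{X}_s^{(m)}| \leq \|X^{(m)}\|_\theta (t-s)^\theta$.

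In Case 2 ($r < r'$), I would decompose
\begin{equation*}
\overline{X}_t^{(m)} - \overline{X}_s^{(m)} = \bigl(\overline{X}_t^{(m)} - X_{\bitime{r'}}^{(m)}\bigr) + \bigl(X_{\bitime{r'}}^{(m)} - X_{\bitime{r+1}}^{(m)}\bigr) + \bigl(X_{\bitime{r+1}}^{(m)} - \overline{X}_s^{(m)}\bigr).
\end{equation*}
The middle piece is bounded by $\|X^{(m)}\|_\theta (\bitime{r'} - \bitime{r+1})^\theta \leq \|X^{(m)}\|_\theta (t-s)^\theta$. For the two end pieces, the Case 1 argument applied on $[\bitime{r'}, \bitime{r'+1}]$ and $[\bitime{r}, \bitime{r+1}]$ bounds them by $\|X^{(m)}\|_\theta (t - \bitime{r'})^\theta$ and $\|X^{(m)}\|_\theta (\bitime{r+1} - s)^\theta$ respectively, both of which are at most $\|X^{(m)}\|_\theta (t-s)^\theta$. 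Summing produces a constant (at most $3$) times the desired bound.

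For the $L^p$ version, I would repeat the same case analysis but take $L^p$-norms throughout, using Minkowski's inequality to distribute over the three-term decomposition in Case 2; each term is again controlled by $\|X^{(m)}\|_{L^p,\theta} (t-s)^\theta$ up to the same absolute constant. There is no genuine obstacle here: the only thing to be careful about is the bookkeeping of which interval each point lives in and the trivial observation $u \leq u^\theta$ for $u \in [0,1]$ with $\theta \in (0,1)$, which lets a first-order interpolation factor be absorbed into a $\theta$-Hölder factor at the price of a dimensionless constant.
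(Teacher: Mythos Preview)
Your proposal is correct and follows essentially the same approach as the paper's proof: both split into the same two cases (same dyadic cell versus different cells), use linearity on a single cell together with the inequality $(2^m(t-s))^{1-\theta}\leq 1$ (your $u\leq u^\theta$), and in the second case decompose the increment into three pieces to obtain the constant $3$. The $L^p$ version is handled identically in both.
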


\begin{proof}

By definition of $\overline{X}$, $\|X_\cdot^{(m)}\|_{\theta}\leq \|\overline{X}_\cdot^{(m)}\|_{\theta}$ a.s. and $\|X_\cdot^{(m)}\|_{L^p,\theta'}\leq \|\overline{X}_\cdot^{(m)}\|_{L^p,\theta'}$.
So, we only have to show right-hand side inequalities.
For $s<t$, we denote $2^{-m}(\lceil 2^mt\rceil-1),\ 2^{-m}\lfloor 2^mt\rfloor,\ 2^{-m}\lceil 2^mt\rceil$ by $\eta^{(m)}(t),\ \tilde{\eta}^{(m)}(t),\ \eta_+^{(m)}(t)$, respectively.

If $\tilde{\eta}^{(m)}(t)=\tilde{\eta}^{(m)}(s)$, we can see that 

\[\delta \overline{X}_{s,t}^{(m)}=\frac{t-s}{\eta_+^{(m)}(t)-\tilde{\eta}^{(m)}(s)}\delta X_{\tilde{\eta}^{(m)}(s),\eta_+^{(m)}(t)}^{(m)}=2^m(t-s)\delta X_{\tilde{\eta}^{(m)}(s),\eta_+^{(m)}(t)}^{(m)}\]

Also, $|\delta X_{\tilde{\eta}^{(m)}(s),\eta_+^{(m)}(t)}^{(m)}|\leq 2^{-m\theta}\|X_\cdot^{(m)}\|_{\theta}$

Hence, $|\delta \overline{X}_{s,t}^{(m)}|\leq (2^m(t-s))^{1-\theta}\cdot (t-s)^{\theta}\|X_\cdot^{(m)}\|_{\theta}\leq (t-s)^{\theta}\|X_\cdot^{(m)}\|_{\theta}$.

On the other hand, if $\tilde{\eta}^{(m)}(t)\neq \tilde{\eta}^{(m)}(s)$,

\begin{align*}
\delta \overline{X}_{s,t}^{(m)}=&\delta \overline{X}_{s,\eta_+^{(m)}(s)}^{(m)}+\delta \overline{X}_{\eta_+^{(m)}(s),\tilde{\eta}^{(m)}(t)}^{(m)}+\delta \overline{X}_{\tilde{\eta}^{(m)}(t),t}^{(m)}\\
=&2^m(\eta_+^{(m)}(s)-s)\delta X_{\tilde{\eta}^{(m)}(s),\eta_+^{(m)}(s)}^{(m)}+\delta X_{\eta_+^{(m)}(s),\tilde{\eta}^{(m)}(t)}^{(m)}+2^m(t-\tilde{\eta}^{(m)}(t))\delta X_{\tilde{\eta}^{(m)}(t),\eta_+^{(m)}(s)}^{(m)},\\
|\delta \overline{X}_{s,t}^{(m)}|\leq& (\eta_+^{(m)}(s)-s)^\theta \|X_\cdot^{(m)}\|_\theta+(\tilde{\eta}^{(m)}(t)-\eta_+^{(m)}(s))^\theta \|X_\cdot^{(m)}\|_\theta+(t-\tilde{\eta}^{(m)}(t))^\theta \|X_\cdot^{(m)}\|_\theta\\
\leq& 3(t-s)^\theta\|X_\cdot^{(m)}\|_\theta.
\end{align*}
Now, we can conclude that $|\delta \overline{X}_{s,t}^{(m)}|\leq 3(t-s)^\theta\|X_\cdot^{(m)}\|_\theta$.

Simlarly, we have $\|\delta \overline{X}_{s,t}^{(m)}\|_{L^p}\leq 3(t-s)^\theta\|X_\cdot^{(m)}\|_{L^p,\theta}$.
\end{proof}

\begin{lmm}\label{lemma:discreteGRR}
Let $p>1,\theta'>p^{-1}$.
Also, let $\{X^{(m)}\}_{m=1}^\infty$ be a sequence of discrete stochastic processes such that $X^{(m)}$ is defined in $\mathbb{P}_m$ and $\|X_\cdot^{(m)}\|_{L^p,\theta'}\lesssim_m 1$.

Then, for any $\epsilon_1>0$ and $\theta<\theta'-p^{-1}$, $\sup_m 2^{-m \epsilon_1}\|X_\cdot^{(m)}\|_{\theta}<\infty$ a.s.
\end{lmm}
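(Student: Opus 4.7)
The plan is to reduce the discrete problem to the continuous one via linear interpolation and then combine the stochastic GRR estimate of Corollary \ref{corollary:stochasticGRR} with a standard Markov--Borel--Cantelli argument.

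First, I would let $\overline{X}^{(m)}$ denote the linear interpolation of $X^{(m)}$. By Lemma \ref{lemma:discretecontinuous}, the hypothesis $\|X_\cdot^{(m)}\|_{L^p,\theta'}\lesssim_m 1$ yields $\|\overline{X}_\cdot^{(m)}\|_{L^p,\theta'}\lesssim_m 1$ as well, so the $L^p$--H\"older bound survives the passage to the continuous interpolant with the same uniformity in $m$.

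Second, since $\overline{X}^{(m)}$ is now an honest continuous stochastic process, I would invoke Corollary \ref{corollary:stochasticGRR} with the same exponents $p$ and any $\theta<\theta'-p^{-1}$ to conclude that
\begin{equation*}
\bigl\|\,\|\overline{X}_\cdot^{(m)}\|_\theta\,\bigr\|_{L^p}\lesssim \|\overline{X}_\cdot^{(m)}\|_{L^p,\theta'}\lesssim_m 1.
\end{equation*}
Raising to the $p$-th power, $E\bigl[\|\overline{X}_\cdot^{(m)}\|_\theta^p\bigr]$ is bounded uniformly in $m$.

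Third, I would apply Markov's inequality and Borel--Cantelli to upgrade this $L^p$-boundedness to an almost-sure statement. For any fixed $\epsilon_1>0$,
\begin{equation*}
P\bigl(2^{-m\epsilon_1}\|\overline{X}_\cdot^{(m)}\|_\theta>1\bigr)\leq 2^{-mp\epsilon_1}\,E\bigl[\|\overline{X}_\cdot^{(m)}\|_\theta^p\bigr]\lesssim 2^{-mp\epsilon_1},
\end{equation*}
which is summable in $m$ because $p\epsilon_1>0$. Borel--Cantelli gives $2^{-m\epsilon_1}\|\overline{X}_\cdot^{(m)}\|_\theta\leq 1$ for all sufficiently large $m$ almost surely, and combined with the finitely many remaining terms this yields $\sup_m 2^{-m\epsilon_1}\|\overline{X}_\cdot^{(m)}\|_\theta<\infty$ a.s. Finally, using once more the inequality $\|X_\cdot^{(m)}\|_\theta\leq \|\overline{X}_\cdot^{(m)}\|_\theta$ from Lemma \ref{lemma:discretecontinuous}, I would transfer the conclusion back to the discrete process $X^{(m)}$ itself.

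No step is genuinely hard; the main conceptual point worth emphasizing is that the infinitesimal blow-up factor $2^{m\epsilon_1}$ is exactly what makes the Markov tail summable. Without it, Corollary \ref{corollary:stochasticGRR} only gives $L^p$-tightness of $\|\overline{X}_\cdot^{(m)}\|_\theta$ uniformly in $m$, which is not enough for an almost-sure uniform bound; the $\epsilon_1$-loss in the rate is the price paid for the Borel--Cantelli step.
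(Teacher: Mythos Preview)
Your proof is correct and uses the same two ingredients as the paper: Lemma~\ref{lemma:discretecontinuous} to pass to the linear interpolant and Corollary~\ref{corollary:stochasticGRR} to obtain the uniform bound $\|\,\|\overline{X}_\cdot^{(m)}\|_\theta\,\|_{L^p}\lesssim_m 1$. The only difference is in the final step: where you use Markov plus Borel--Cantelli, the paper simply bounds $\bigl\|\sup_m 2^{-m\epsilon_1}\|X_\cdot^{(m)}\|_\theta\bigr\|_{L^p}$ by the corresponding sum and applies the $L^p$ triangle inequality, which in fact yields the slightly stronger conclusion that the supremum lies in $L^p$ rather than merely being finite almost surely.
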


\begin{proof}
We can show that

\begin{align*}
&\left\|\sup_m 2^{-m \epsilon_1}\|X_\cdot^{(m)}\|_{\theta}\right\|_{L^p}\leq \left\|\sum_{m=1}^\infty 2^{-m \epsilon_1}\|X_\cdot^{(m)}\|_{\theta}\right\|_{L^p}\leq \sum_{m=1}^\infty 2^{-m \epsilon_1}\|\ \|X_\cdot^{(m)}\|_{\theta}\|_{L^p}\\
\leq& \sum_{m=1}^\infty 2^{-m \epsilon_1}\|\ \|\overline{X}_\cdot^{(m)}\|_{\theta}\|_{L^p}\lesssim \sum_{m=1}^\infty 2^{-m \epsilon_1}\|\overline{X}_\cdot^{(m)}\|_{L^p,\theta'}\lesssim \sum_{m=1}^\infty 2^{-m \epsilon_1}\|X_\cdot^{(m)}\|_{L^p,\theta'}\lesssim 1,
\end{align*}	
where each inequality follows from the positivity of $2^{-m \epsilon_1}\|X_\cdot^{(m)}\|_{\theta}$, triangle inequality of $L^p$ norm, Lemma \ref{lemma:discretecontinuous}, Corollary \ref{corollary:stochasticGRR}, Lemma \ref{lemma:discretecontinuous}, the assumption, respectively.

Hence, we can conclude that $\sup_m 2^{-m \epsilon_1}\|X_\cdot^{(m)}\|_{\theta}<\infty$, almost surely.
\end{proof}

Next, we show a better H\"older estimate for Wiener chaos using hypercontractivity. For Wiener chaos, see \cite{Nualart}

\begin{rmk}
As known (for example, shown in \cite{Aida-Naganuma2020}, Appendix A), we can regard $\fBminc{s,u}$ and $\fBmitin_{u,v}^{10*}$ as elements of first Wiener chaos $\mathscr{H}_1$.
$H_r((t-s)^{-H}\fBminc{s,t})$ is in $r$-th Wiener chaos $\mathscr{H}_r$., where $H_r(x)$ is Hermite polynomial.
\end{rmk}

\begin{prp}[Hypercontractivity of Wiener chaos]\label{proposition:hypercontractivity}
Suppose $\mathcal{X}$ is a random variable in $r$-th Wiener chaos.

Then, $\|\mathcal{X}\|_{L^p}\leq C_{p,q,r} \|\mathcal{X}\|_{L^q}$ for any $1<p,q,r<\infty$, where $C_{p,q,r}$ is constant independent of $\mathcal{X}$.
\end{prp}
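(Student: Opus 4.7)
The plan is to invoke Nelson's hypercontractivity theorem for the Ornstein--Uhlenbeck semigroup on the Wiener space, which is the standard engine behind the stated proposition. First, I would recall the Ornstein--Uhlenbeck semigroup $\{P_t\}_{t\geq 0}$ associated with the Gaussian structure generated by $B$; by the Wiener chaos decomposition, $P_t$ acts diagonally with $P_t \mathcal{X} = e^{-rt}\mathcal{X}$ for every $\mathcal{X}\in \mathscr{H}_r$. Nelson's theorem asserts that for $1<q\leq p<\infty$ and $t\geq 0$ satisfying $e^{2t}\geq (p-1)/(q-1)$, the operator $P_t$ is a contraction from $L^q$ to $L^p$.

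Second, I would split the argument into the two cases $p>q$ and $p\leq q$. For $p>q$, choose $t=\tfrac{1}{2}\log\frac{p-1}{q-1}$ so that equality holds in Nelson's condition, and apply $P_t$ to $\mathcal{X}\in \mathscr{H}_r$: this gives
\[
e^{-rt}\|\mathcal{X}\|_{L^p}=\|P_t\mathcal{X}\|_{L^p}\leq \|\mathcal{X}\|_{L^q},
\]
so that $\|\mathcal{X}\|_{L^p}\leq \left(\tfrac{p-1}{q-1}\right)^{r/2}\|\mathcal{X}\|_{L^q}$. For $p\leq q$, Jensen's (or Hölder's) inequality on the probability space already yields $\|\mathcal{X}\|_{L^p}\leq \|\mathcal{X}\|_{L^q}$. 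Taking $C_{p,q,r}=\max\{1,((p-1)/(q-1))^{r/2}\}$ then settles the claim, with the constant depending only on $p,q,r$ and not on $\mathcal{X}$.

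The main obstacle, if one wished to avoid citing Nelson's theorem, would be establishing the hypercontractive inequality itself; this requires either a Gaussian two-point inequality combined with the central limit theorem and tensorization, or a logarithmic Sobolev inequality for the Ornstein--Uhlenbeck generator. Since the paper already refers the reader to \cite{Nualart} for Wiener chaos background, the cleanest presentation is to quote Nelson's inequality from that reference and carry out the short deduction above, which is why I would keep the proof to only a couple of lines rather than redeveloping hypercontractivity from scratch.
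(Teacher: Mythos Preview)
Your proposal is correct and is the standard argument behind this fact. Note, however, that the paper does not actually prove this proposition: it is stated as a classical result (with the reader referred to \cite{Nualart} for Wiener chaos background) and is used without proof in the subsequent corollary. Your sketch via Nelson's hypercontractivity for the Ornstein--Uhlenbeck semigroup, together with the eigenvalue relation $P_t\mathcal{X}=e^{-rt}\mathcal{X}$ on $\mathscr{H}_r$, is exactly the derivation one finds in standard references, and your explicit constant $C_{p,q,r}=\max\{1,((p-1)/(q-1))^{r/2}\}$ is the sharp one.
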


\begin{cor}\label{corollary:discreteGRRforwienerchaos}
Suppose $\{X_t^{(m)}\}$ is a sequence of discrete stochastic processes defined in $\mathbb{P}_m$ such that $X_t^{(m)}$ in $r$-th Wiener chaos for each $t$.
Furthermore, assume $\|X_\cdot^{(m)}\|_{L^p,\theta'}\lesssim 1$ for some $p>1,0<\theta'<1$.

Then, for any $\epsilon_1>0$, $2^{-m \epsilon_1}\|X_\cdot^{(m)}\|_{\theta}\lesssim 1$ for any $\theta<\theta'$.
\end{cor}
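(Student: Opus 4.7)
The plan is to apply Lemma \ref{lemma:discreteGRR} after using hypercontractivity to upgrade the given $L^p$ estimate to an $L^{p'}$ estimate for arbitrarily large $p'$. The previous lemma loses $p^{-1}$ in the H\"older exponent; by pushing $p'$ to infinity, this loss can be made smaller than any prescribed $\theta' - \theta > 0$, at the cost of the harmless prefactor $2^{-m\epsilon_1}$.

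First, I would note that because $X_t^{(m)} \in \mathscr{H}_r$ for every $t$ and $m$, the increment $\delta X_{s,t}^{(m)} = X_t^{(m)} - X_s^{(m)}$ also lies in $\mathscr{H}_r$, since $\mathscr{H}_r$ is a linear subspace of $L^2(\Omega)$. Applying Proposition \ref{proposition:hypercontractivity} to each such increment, for any $p' > p$ there is a constant $C_{p',p,r}$ independent of $s,t,m$ with
\[
\|\delta X_{s,t}^{(m)}\|_{L^{p'}} \leq C_{p',p,r}\, \|\delta X_{s,t}^{(m)}\|_{L^p}.
\]
Dividing by $(t-s)^{\theta'}$ and taking supremum over $0 \le s < t \le T$ gives $\|X_\cdot^{(m)}\|_{L^{p'},\theta'} \lesssim \|X_\cdot^{(m)}\|_{L^p,\theta'} \lesssim 1$, with the implicit constant independent of $m$.

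Then, given any target $\theta < \theta'$ and any $\epsilon_1 > 0$, I would choose $p'$ so large that $p' > (\theta' - \theta)^{-1}$, so that $\theta < \theta' - (p')^{-1}$. Invoking Lemma \ref{lemma:discreteGRR} with $p$ replaced by this $p'$ yields $\sup_m 2^{-m\epsilon_1}\|X_\cdot^{(m)}\|_\theta < \infty$ almost surely, which is exactly the statement to prove.

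There is essentially no serious obstacle here: the proof is a clean combination of hypercontractivity with the preceding lemma. The only point that deserves a moment's attention is the uniformity of the hypercontractivity constant $C_{p',p,r}$ in $(s,t,m)$; this uniformity is what lets the hypothesis $\|X_\cdot^{(m)}\|_{L^p,\theta'} \lesssim 1$ pass to $\|X_\cdot^{(m)}\|_{L^{p'},\theta'} \lesssim 1$ with the implicit constant still independent of $m$, as required by Lemma \ref{lemma:discreteGRR}.
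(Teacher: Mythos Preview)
Your proof is correct and follows essentially the same approach as the paper: use hypercontractivity to upgrade the $L^p$ bound to an $L^{q}$ bound for arbitrarily large $q$, then invoke Lemma~\ref{lemma:discreteGRR} with $q$ large enough that $\theta < \theta' - q^{-1}$. Your explicit observation that increments $\delta X_{s,t}^{(m)}$ remain in $\mathscr{H}_r$ (so that hypercontractivity applies to them) is a detail the paper leaves implicit.
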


\begin{proof}

Combining the assumption and hypercontractivity, we see $\|X_\cdot^{(m)}\|_{L^q,\theta'}\lesssim_m 1$ for any $q>1$.
Hence, by Lemma \ref{lemma:discreteGRR}, for any $\epsilon_1>0$ and $\theta<\theta'-q^{-1}$,

\begin{equation*}
\sup_m 2^{-m \epsilon_1}\|X_\cdot^{(m)}\|_{\theta}<\infty.
\end{equation*}
Taking $q$ sufficiently large, we can say the statement.
\end{proof}

Next, for some $\Gamma$, we show that $\mathcal{I}^{(m)}(d(B^{\Gamma}))$ are discrete H\"older bounded and converge to standard Brownian motion in law.

\begin{lmm}[\cite{Aida-Naganuma2020},Proposition 4.5]\label{lemma:standardBrown}
Let $k\geq 2$ be integer. Given $0<H< 1/2$, we denote $(t-s)^{1/2}H_l((t-s)^{-H}\fBminc{s,t})$ by $V_{s,t}^{(l)}$.

Then, we have
\begin{align*}
&\left(B_\cdot,\mathcal{I}_\cdot^{(m)}(d(V^{(2)}),\cdots,\mathcal{I}_\cdot^{(m)}(d(V^{(k)}))\right)\to\left(B_\cdot,C_{(2)}W_\cdot^{(2)},\cdots,C_{(k)}W_\cdot^{(k)}\right)
\end{align*}
in law, where each $W^{(l)}$ is standard Brownian motion independent of $B$ and each other, and each constant $C_{(l)}$ for $l\geq 2$ is defined as
\begin{align*}
C_{(l)}=&\sqrt{l!\left(1+\frac{1}{2^l}\sum_{r=1}^\infty(|r+1|^{2H}+|r-1|^{2H}-2|r|^{2H})^l\right)}.
\end{align*}
Moreover, for any $\epsilon_1,\epsilon_2>0$, $2^{-\epsilon_1m}\|\mathcal{I}_\cdot^{(m)}(d(V^{(l)}))\|_{1/2-\epsilon_2}\lesssim_m 1\ a.s.$

\end{lmm}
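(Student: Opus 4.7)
The plan is to split the statement into two parts handled independently: the almost-sure discrete H\"older bound, and the joint convergence in law. The H\"older bound will be proved first from an $L^2$ moment computation combined with Corollary~\ref{corollary:discreteGRRforwienerchaos}; this bound then doubles as the tightness input that will upgrade finite-dimensional convergence on the grid $\mathbb{P}_m$ to convergence in law on $C([0,T])^k$. The Wiener-chaos structure is essential throughout: $B$ sits in the first chaos $\mathscr{H}_1$, while each $\mathcal{I}^{(m)}_t(d(V^{(l)}))$ sits in $\mathscr{H}_l$ for $l \geq 2$, and cross-chaos orthogonality will be what forces the limiting $W^{(l)}$'s to be mutually independent and independent of $B$.

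For the H\"older bound I would fix grid points $s<t$ in $\mathbb{P}_m$ and compute $\|\delta\mathcal{I}^{(m)}_{s,t}(d(V^{(l)}))\|_{L^2}^2$ directly. Writing $V^{(l)}_{\bitime{r},\bitime{r+1}}=2^{-m/2}H_l(2^{mH}\fBminc{\bitime{r},\bitime{r+1}})$ and invoking the Hermite identity $E[H_l(X)H_l(Y)]=l!(E[XY])^l$ for jointly standard Gaussian $X,Y$, one arrives at
\[
\|\delta\mathcal{I}^{(m)}_{s,t}(d(V^{(l)}))\|_{L^2}^2 = l!\cdot 2^{-m}\sum_{r,r'}\rho_{r-r'}^l,
\]
where $\rho_k=\tfrac12(|k+1|^{2H}+|k-1|^{2H}-2|k|^{2H})$ is the normalized correlation of consecutive fBm increments. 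Because $H<1/2$ gives $|\rho_k|\lesssim |k|^{2H-2}$ for $|k|\geq 1$, the series $\sum_k|\rho_k|^l$ is summable for every $l\geq 2$, so the double sum is bounded by $2^m(t-s)\sum_k|\rho_k|^l$. This yields $\|\mathcal{I}^{(m)}_\cdot(d(V^{(l)}))\|_{L^2,1/2}\lesssim 1$ uniformly in $m$. Since $\mathcal{I}^{(m)}_\cdot(d(V^{(l)}))$ lives entirely in $\mathscr{H}_l$, Corollary~\ref{corollary:discreteGRRforwienerchaos} applied with $\theta'=1/2$ upgrades this to the stated almost-sure bound $2^{-\epsilon_1 m}\|\mathcal{I}^{(m)}_\cdot(d(V^{(l)}))\|_{1/2-\epsilon_2}\lesssim_m 1$.

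For the convergence in law I would fix grid times $0<t_1<\cdots<t_n\leq T$ and consider the vector $\mathbf{X}^{(m)}=(B_{t_j},\mathcal{I}^{(m)}_{t_j}(d(V^{(l)})))_{1\le j\le n,\,2\leq l\leq k}$, each coordinate of which lies in a fixed Wiener chaos. The Peccati--Tudor multivariate fourth-moment criterion reduces joint Gaussian convergence of $\mathbf{X}^{(m)}$ to (a) convergence of the covariance matrix and (b) marginal CLT for each coordinate separately. For (a), the Riemann-sum version of the computation above gives $\mathrm{Cov}(\mathcal{I}^{(m)}_s(d(V^{(l)})),\mathcal{I}^{(m)}_t(d(V^{(l)})))\to l!(\sum_{k\in\mathbb{Z}}\rho_k^l)\min(s,t)$, and the constant on the right is exactly $C_{(l)}^2$ after regrouping the sum over $k\ge 1$ via symmetry and using $\rho_0=1$. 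Cross-chaos covariances vanish identically by $L^2$-orthogonality of $\mathscr{H}_l$ with $\mathscr{H}_{l'}$ for $l\neq l'$, so the limit covariance is block-diagonal, which is precisely the structure that realizes $B$ and the $W^{(l)}$'s as mutually independent standard Brownian motions. For (b), marginal convergence of $\mathcal{I}^{(m)}_\cdot(d(V^{(l)}))$ is the classical Breuer--Major CLT applied to the stationary standard Gaussian sequence $\{2^{mH}\fBminc{\bitime{r},\bitime{r+1}}\}_r$, whose Hermite rank with respect to $H_l$ is exactly $l$ and whose correlations satisfy the needed summability $\sum_k|\rho_k|^l<\infty$. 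Tightness in $C([0,T])$ follows from the previous paragraph via Lemma~\ref{lemma:discretecontinuous} applied to the linear interpolation, and combining tightness with FDD convergence delivers the claimed joint convergence in law.

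The main obstacle is the Peccati--Tudor step, or equivalently the underlying contraction-norm estimates $\|f_l\otimes_p f_l\|\to 0$ for $1\le p\le l-1$, where $f_l$ is the symmetric kernel representing $\mathcal{I}^{(m)}_t(d(V^{(l)}))$ as an $l$-fold multiple Wiener--It\^o integral. Verifying these contractions requires estimating sums of mixed products of $\rho_k$'s that do not reduce directly to $\sum_k|\rho_k|^l$, and is the most delicate combinatorial piece. Since the statement is already proved in \cite{Aida-Naganuma2020} as their Proposition~4.5 in essentially the same discrete-time setup, I would import their contraction bookkeeping rather than reprove it; the remainder of the argument is a packaging of Breuer--Major and the multivariate fourth-moment theorem into the grid convention used throughout this paper.
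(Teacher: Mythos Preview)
Your proposal is correct and follows the paper's own approach: both defer the convergence-in-law statement to Proposition~4.5 of \cite{Aida-Naganuma2020} (you simply unpack more of the Breuer--Major/Peccati--Tudor structure that underlies that result), and both obtain the H\"older bound by computing the $L^2$ moment $\|\delta\mathcal{I}^{(m)}_{s,t}(d(V^{(l)}))\|_{L^2}\lesssim (t-s)^{1/2}$ and feeding it into Corollary~\ref{corollary:discreteGRRforwienerchaos}.
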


\begin{proof}
Similar to Proposition 4.5 in \cite{Aida-Naganuma2020}, we can prove convergence in law.
Moreover, the following estimate is shown in the proof of the proposition.
\begin{align*}
\|\mathcal{I}_\cdot^{(m)}(d(V^{(l)}))\|_{L^2}\lesssim_m (t-s)^{1/2}.
\end{align*}
Hence, using Corollary \ref{corollary:discreteGRRforwienerchaos}, we see that
\begin{align*}
2^{-\epsilon_1m}\|\mathcal{I}_\cdot^{(m)}(d(V^{(l)}))\|_{1/2-\epsilon_2}\lesssim_m 1\ a.s.
\end{align*}

\end{proof}
Next, for $B^{10*},B^{110*},B^{101*},B^{011*}$ we obtain similar to Lemma \ref{lemma:standardBrown}.
However, required discussion is beyond the scope of \cite{Aida-Naganuma2020}, so we describe these discussions.
Now, we can obtain the following proposition from the definition of H\"older norm.

\begin{prp}\label{proposition:degeneratediscreteHoelder}
Suppose that $X^{(m)}$ is discrete-time stochastic process defined over $\mathbb{P}_m$ and $0<\theta_1,\theta_2<1$ such that $\theta_1+\theta_2<1$.

Then, $\|X_\cdot^{(m)}\|_{\theta_1+\theta_2}\leq 2^{m\theta_1}\|X_\cdot^{(m)}\|_{\theta_2}$.
\end{prp}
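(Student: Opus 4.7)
The plan is to exploit the key feature of the discrete time grid $\mathbb{P}_m$, namely that any two distinct points are separated by at least $2^{-m}$. This gives us a crude but effective bound on $|t-s|^{-\theta_1}$ whenever $s,t\in\mathbb{P}_m$, and the rest is algebraic manipulation of the Hölder quotient.

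First I would fix arbitrary $s,t\in\mathbb{P}_m$ with $s\neq t$. Since $s$ and $t$ are dyadic rationals with denominator $2^m$, we automatically have $|t-s|\geq 2^{-m}$, hence
\[
\frac{1}{|t-s|^{\theta_1}}\leq 2^{m\theta_1}.
\]
Next I would split the Hölder quotient at exponent $\theta_1+\theta_2$ as the product of the quotient at exponent $\theta_2$ and the factor $|t-s|^{-\theta_1}$:
\[
\frac{|\delta X_{s,t}^{(m)}|}{|t-s|^{\theta_1+\theta_2}}=\frac{|\delta X_{s,t}^{(m)}|}{|t-s|^{\theta_2}}\cdot\frac{1}{|t-s|^{\theta_1}}\leq 2^{m\theta_1}\,\frac{|\delta X_{s,t}^{(m)}|}{|t-s|^{\theta_2}}.
\]
Taking the supremum over $s,t\in\mathbb{P}_m$ on both sides yields the claimed inequality.

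There is essentially no obstacle here: the statement is a direct consequence of the fact that the H\"older norm on a discrete grid is trivially finite and that increasing the exponent merely costs a factor controlled by the inverse minimal spacing. The hypothesis $\theta_1+\theta_2<1$ is not actually used in this short argument; it is presumably included because the notation $\|\cdot\|_\alpha$ has been reserved for H\"older exponents $\alpha\in(0,1)$ elsewhere in the paper, so one needs the sum to remain in that range in order for the left-hand side to make sense as a H\"older norm in the author's convention.
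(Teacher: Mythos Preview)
Your proof is correct and matches the paper's approach: the paper states this proposition without proof, noting only that it follows ``from the definition of H\"older norm,'' which is exactly the argument you gave. Your remark about the hypothesis $\theta_1+\theta_2<1$ being needed only for the notational convention is also accurate.
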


Also, substituting $(t-s)^{-H}\fBminc{s,t}$ for the inverse explicit expression formula of Hermite polynomials, we have the following decomposition of $\fBminc{s,t}^l$ with the Hermite polynomial $H_l$.
We get this result 
\begin{prp}\label{proposition:decomposition_from_power_to_Hermite}
\begin{align}
\fBminc{s,t}^l=&(t-s)^{lH}\sum_{j=0}^{ \lfloor l/2\rfloor}\frac{l!}{2^j(l-2j)!j!}H_{l-2j}((t-s)^{-H}\fBminc{s,t}),\label{decompose_power_B}
\end{align}
\end{prp}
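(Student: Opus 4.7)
The plan is to recognize this as a purely deterministic algebraic identity about Hermite polynomials, namely the inversion formula expressing the monomial $x^l$ as a linear combination of $H_{l-2j}(x)$. The randomness plays no role beyond setting $x = (t-s)^{-H}\delta B_{s,t}$ and factoring out the scaling.

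First I would set $X \coloneqq (t-s)^{-H}\fBminc{s,t}$, so that $\fBminc{s,t}^l = (t-s)^{lH}X^l$. The claim then reduces to the deterministic identity
\[
x^l = l!\sum_{j=0}^{\lfloor l/2\rfloor} \frac{1}{2^j(l-2j)!\,j!}H_{l-2j}(x), \qquad x \in \mathbb{R},
\]
evaluated at $x = X$.

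To prove this identity, I would use generating functions. The probabilist's Hermite polynomials satisfy $\exp(tx - t^2/2) = \sum_{n\geq 0} H_n(x)\,t^n/n!$, hence
\[
e^{tx} = e^{t^2/2}\sum_{n=0}^\infty H_n(x)\frac{t^n}{n!} = \sum_{j=0}^\infty \frac{t^{2j}}{2^j j!}\sum_{n=0}^\infty H_n(x)\frac{t^n}{n!}.
\]
Collecting terms with $l = n + 2j$ and comparing the coefficient of $t^l/l!$ on both sides of $e^{tx} = \sum_l x^l t^l/l!$ yields the identity above. Substituting $x = X$ and multiplying by $(t-s)^{lH}$ gives the proposition.

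There is no real obstacle here; the only point requiring care is the convention for Hermite polynomials (probabilist's $H_n$ with leading coefficient $1$ and weight $e^{-x^2/2}$, as opposed to the physicist's normalization), since the combinatorial constants $1/(2^j j!)$ depend on this choice. Once that is fixed consistently with the Hermite polynomials appearing elsewhere in the paper (in particular in Lemma \ref{lemma:standardBrown}), the identity is immediate from the generating function computation.
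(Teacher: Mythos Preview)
Your proposal is correct and matches the paper's approach exactly: the paper simply states (just before the proposition) that the formula is obtained by ``substituting $(t-s)^{-H}\fBminc{s,t}$ for the inverse explicit expression formula of Hermite polynomials'' and gives no further argument. You reproduce this reduction and additionally supply the standard generating-function derivation of the inverse formula, which the paper omits.
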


Now, combining Lemma \ref{lemma:standardBrown}, Proposition \ref{proposition:degeneratediscreteHoelder} and Proposition \ref{proposition:decomposition_from_power_to_Hermite}, we can see the next Corollary \ref{corollary:BdiscreteHoelder}.

\begin{cor}\label{corollary:BdiscreteHoelder}
Given $0<H< 1/2$, we have
\begin{align}
2^{(kH-H^c)m}\|\mathcal{I}_\cdot^{(m)}(d(\tilde{B}^{(k)}))\|_{H^c}\lesssim&_m 1\ a.s\ \text{for even k},\\
2^{(kH-H^c)m}\|\mathcal{I}_\cdot^{(m)}(d(B^k))\|_{H^c}\lesssim&_m 1\ a.s\ \text{for odd k},
\end{align}
where $\displaystyle\moment{k}=\frac{k!}{2^{ \lfloor k/2\rfloor}( \lfloor k/2\rfloor )!},\tilde{B}_{s,t}^{(k)}=\fBminc{s,t}^k-\moment{k}(t-s)^{kH}$.
\end{cor}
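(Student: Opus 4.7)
The plan is to combine the three preceding ingredients — the Hermite-polynomial expansion of Proposition \ref{proposition:decomposition_from_power_to_Hermite}, the H\"older estimate for the iterated integrals $\mathcal{I}^{(m)}(d(V^{(l)}))$ in Lemma \ref{lemma:standardBrown}, and the regularity trade-off in Proposition \ref{proposition:degeneratediscreteHoelder} — into a single pathwise bound. The main idea is that on each dyadic subinterval the power $\fBminc{s,t}^k$ splits into a linear combination of Hermite building blocks $V^{(l)}_{s,t}=(t-s)^{1/2}H_l((t-s)^{-H}\fBminc{s,t})$ modulated by a common deterministic prefactor, so that the discrete Stieltjes sum is, up to that prefactor, a linear combination of the objects already controlled by Lemma \ref{lemma:standardBrown}.

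Concretely, from Proposition \ref{proposition:decomposition_from_power_to_Hermite} I rewrite
\[
\fBminc{s,t}^k=(t-s)^{kH-1/2}\sum_{j=0}^{\lfloor k/2\rfloor}c_{k,j}V^{(k-2j)}_{s,t},\qquad c_{k,j}=\frac{k!}{2^{j}(k-2j)!\,j!}.
\]
For even $k$ the index $j=k/2$ produces $H_0\equiv 1$ with coefficient $c_{k,k/2}=\moment{k}$, so the corresponding summand is precisely the deterministic term $\moment{k}(t-s)^{kH}$ that is removed in the definition of $\tilde{B}^{(k)}_{s,t}$; hence $\tilde{B}^{(k)}_{s,t}=(t-s)^{kH-1/2}\sum_{j<k/2}c_{k,j}V^{(k-2j)}_{s,t}$. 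For odd $k$ no subtraction is needed and the sum runs over odd $l$ from $1$ to $k$.

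Restricting to $s,t\in\mathbb{P}_m$, the prefactor $(t-s)^{kH-1/2}$ is the constant $2^{-m(kH-1/2)}$ on every dyadic subinterval, so the discrete Stieltjes sums factor as
\[
\mathcal{I}^{(m)}_t(d(\tilde{B}^{(k)}))=2^{-m(kH-1/2)}\sum_{j}c_{k,j}\,\mathcal{I}^{(m)}_t(d(V^{(k-2j)})),
\]
and analogously for $\mathcal{I}^{(m)}_t(d(B^{k}))$ in the odd case. The parameter constraints $H^c+H'>1$ and $H'\leq H_-<1/2$ force $H^c>1/2$, so for each $l\geq 2$ I bound $\|\mathcal{I}^{(m)}(d(V^{(l)}))\|_{H^c}$ by first applying Lemma \ref{lemma:standardBrown} to obtain $\|\mathcal{I}^{(m)}(d(V^{(l)}))\|_{1/2-\epsilon_2}\lesssim_m 2^{m\epsilon_1}$ for arbitrary $\epsilon_1,\epsilon_2>0$, and then applying Proposition \ref{proposition:degeneratediscreteHoelder} with $\theta_2=1/2-\epsilon_2$ and $\theta_1=H^c-1/2+\epsilon_2$ to upgrade to $\|\mathcal{I}^{(m)}(d(V^{(l)}))\|_{H^c}\lesssim_m 2^{m(H^c-1/2+\epsilon_1+\epsilon_2)}$. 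Multiplying by $2^{m(kH-H^c)}$ together with the prefactor $2^{-m(kH-1/2)}$ cancels the $kH$ and $H^c$ contributions and leaves $2^{m(\epsilon_1+\epsilon_2)}$, which is absorbed by the slack built into the choice of $H^c$ relative to $H$ and $H_-$.

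The odd-$k$ case proceeds identically for the summands with $l=k-2j\geq 2$, but the end summand ($j=(k-1)/2$) contains $V^{(1)}_{s,t}=(t-s)^{1/2-H}\fBminc{s,t}$, which is not covered by Lemma \ref{lemma:standardBrown}. On dyadic points this piece telescopes to a scaled discretization of $B$ itself, namely $c_{k,(k-1)/2}\cdot 2^{-m(k-1)H}(B_{\bitime{\lfloor 2^m t\rfloor}}-B_{0})$, whose $H^c$-H\"older norm on $\mathbb{P}_m$ I control directly by combining $\|B_\cdot\|_{H_-}<\infty$ a.s.\ with Proposition \ref{proposition:degeneratediscreteHoelder} (with $\theta_2=H_-,\theta_1=H^c-H_-$) to trade regularity for a factor $2^{m(H^c-H_-)}$. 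The exponent balance then yields $2^{m(kH-H^c)}\cdot 2^{-m(k-1)H}\cdot 2^{m(H^c-H_-)}=2^{m(H-H_-)}$, again absorbed by choosing $H_-$ sufficiently close to $H$. The principal technical nuisance of the proof is precisely this bookkeeping of the $\epsilon$-losses across the Wiener-chaos estimate and the $V^{(1)}$ piece; no single step is deep, since the heavy lifting already sits in Lemma \ref{lemma:standardBrown} and Proposition \ref{proposition:degeneratediscreteHoelder}.
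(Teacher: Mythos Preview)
Your proof is correct and follows exactly the approach the paper indicates: the paper merely states that the corollary follows by ``combining Lemma \ref{lemma:standardBrown}, Proposition \ref{proposition:degeneratediscreteHoelder} and Proposition \ref{proposition:decomposition_from_power_to_Hermite}'', and you have spelled out precisely this combination, including the separate treatment of the $V^{(1)}$ summand in the odd case via the telescoping identity and the pathwise $H_-$-H\"older bound on $B$. The only point worth flagging is that your final ``absorbed by the slack'' step leaves a residual factor $2^{m\epsilon}$ (respectively $2^{m(H-H_-)}$) for arbitrarily small $\epsilon>0$; this is consistent with the paper's standing convention that $H_-$ may be taken sufficiently close to $H$, and indeed every application of the corollary in the paper uses the exponent $kH_-$ rather than $kH$, so the loss is harmless.
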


\begin{lmm}\label{lemma:estimateexpectedvalue}

For $0<H< 1/2, s,t,s',t'\in [0,T]$ such that $s\neq s',s<t,s'<t'$
\begin{align*}
E[\fBmitin_{s,t}^{10*}\fBmitin_{s',t'}^{10*}]=&O\left(|t-s|\cdot |t'-s'|\cdot |s-s'|^{2H}\left(\frac{|t'-s'|}{|s-s'|}+\frac{|t-s|}{|s-s'|}\right)^3\right),\\
E[\fBminc{s,t}\fBmitin_{s',t'}^{10*}]=&O\left(\frac{|t'-s'|^3}{|t-s'|^{2-2H}}+\frac{|t'-s'|^3}{|s-s'|^{2-2H}}\right),\\
E[\fBminc{s,t}\fBmitin_{t,t'}^{10*}]=&-\frac{1-2H}{4(1+2H)}|t'-t|^{1+2H}+O\left(\frac{|t'-t|^3}{|t-s|^{2-2H}}\right),
\end{align*}
where we use Landau's symbol $O(x)$ as mentioned in \ref{section:notation}.

\end{lmm}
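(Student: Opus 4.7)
The plan is to reduce every expectation to a combination of values of $\phi(x):=|x|^{2H}$ via the covariance identity
\[
E[\fBminc{a,b}\fBminc{c,d}]=\tfrac{1}{2}\bigl[\phi(b-c)+\phi(a-d)-\phi(a-c)-\phi(b-d)\bigr],
\]
and then to exploit the centering built into $\fBmitin^{10*}$ to extract the stated cancellations.

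For the third (adjacent) estimate, I would substitute $\fBmitin_{t,t'}^{10*}=\int_t^{t'}(B_u-B_t)\,du-\tfrac12(B_{t'}-B_t)(t'-t)$ and compute the two inner expectations $E[\fBminc{s,t}\fBminc{t,u}]$ and $E[\fBminc{s,t}\fBminc{t,t'}]$ explicitly. Setting $a=t-s$ and $b=t'-t$, the answer is a closed-form linear combination of $(a+b)^{2H+1},a^{2H+1},b^{2H+1}$ and $(a+b)^{2H},a^{2H},b^{2H}$ together with their products with $a$ and $b$. Taylor expanding $(a+b)^{2H+1}$ and $(a+b)^{2H}$ in powers of $b/a$, the coefficients of $a^{2H}b$ and $a^{2H-1}b^2$ vanish (this is precisely the effect of the trapezoidal correction), leaving $-\tfrac{1-2H}{4(1+2H)}b^{1+2H}$ as the first surviving term while the remaining contributions are $O(a^{2H-2}b^3)$, matching the stated remainder.

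For the first two estimates I would pass to the integration-by-parts representation
\[
\fBmitin_{s,t}^{10*}=\int_s^t\Bigl(\tfrac{s+t}{2}-u\Bigr)\,dB_u,
\]
which is valid as a Wiener integral against fBm because the integrand is linear. Its kernel is odd about the midpoint of $[s,t]$ and has zero integral, providing one order of cancellation. On disjoint intervals the expectations reduce to explicit Riemann integrals against $H(2H-1)|u-u'|^{2H-2}$; Taylor expanding this kernel around the relevant midpoint(s) and using the centering of $\tfrac{s'+t'}{2}-u'$ for the second estimate (respectively, both $\tfrac{s+t}{2}-u$ and $\tfrac{s'+t'}{2}-u'$ for the first), the constant term drops out and the first surviving contribution involves the size of $\phi''$ at the interval separation, of order $|s-s'|^{2H-2}$. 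Collecting the powers of the interval lengths against this factor produces the stated right-hand sides; the two terms in the first bound correspond to the two possible orderings of $[s,t]$ and $[s',t']$ relative to each other.

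The main technical obstacle will be uniformity across regimes. The Taylor expansion of $|u-u'|^{2H-2}$ is controlled only when the interval lengths are small relative to $|s-s'|$; in the opposite regime I will fall back on the crude bound $\|\fBmitin_{s,t}^{10*}\|_{L^2}\lesssim(t-s)^{1+H}$ (which follows from self-similarity and stationarity of the increments of $B$) and verify that the stated right-hand sides, which are inflated by the explicit $(\max(\cdot))^3$ or cubic factors, still dominate. Overlapping or nested configurations of the two intervals are handled by decomposing the integration domain into disjoint sub-intervals to which the previous argument applies.
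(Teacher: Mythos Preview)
Your plan is sound and parallels the paper's argument closely. The paper does not use the kernel representation $\fBmitin_{s,t}^{10*}=\int_s^t(\tfrac{s+t}{2}-u)\,dB_u$; instead it parametrises by writing $\fBmitin_{s,t}^{10*}=(t-s)\int_0^1\bigl(\fBminc{s,s+r(t-s)}-r\,\fBminc{s,t}\bigr)\,dr$, so that the required expectations become double integrals of $Q(r_1,r_2):=E[\fBminc{s,s+r_1(t-s)}\fBminc{s',s'+r_2(t'-s')}]$, and then Taylor-expands $(1+x)^{2H}$ directly in the small ratios $|t-s|/|s-s'|$, $|t'-s'|/|s-s'|$. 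Your Wiener-integral formulation with the odd kernel and Taylor expansion of $|u-u'|^{2H-2}$ is an equivalent repackaging of the same cancellation; the paper's route has the minor advantage that it never invokes the density $|u-u'|^{2H-2}$ and so treats overlapping configurations without a separate decomposition step. For the third (adjacent) estimate your computation is exactly the paper's. One small point: the paper does not prove uniformity over the regime where the interval lengths are comparable to the separation; it simply records in a remark that the estimates are only applied when the comparison functions are below $1$, so your crude-bound fallback is more than the paper actually does.
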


\begin{rmk}
In this paper, we use the above estimations only when the values of comparison functions are smaller than $1$ if $m\geq 2$.
Also, when $s=s',t=t'$, the right-hand side of the first and second estimate is infinity. However, we can obtain that
\begin{equation}
E[(\fBmitin_{s,t}^{10*})^2]=C_H^{10*,10*}(t-s)^{2+2H},E[\fBminc{s,t}\fBmitin_{s,t}^{10*}]=\frac{1}{2}(t-s)^{1+2H}
\end{equation}
from the covariance function of fBm, where $C_H^{10*,10*}=E[(\fBmitin_{0,1}^{10*})^2]$

\end{rmk}

\begin{proof}

Denote $E[\fBminc{s,s+r_1(t-s)},\fBminc{s',s'+r_2(t'-s')})]$ for $0<r_1,r_2<1$ by $\text{Cov}_*^B(r_1,r_2)$.

Firstly, to estimate $E[\fBmitin_{s,t}^{10*}\fBmitin_{s',t'}^{10*}]$, we calculate $\text{Cov}^B(r_1,r_2)$ exactly and factor out $|s'-s|^{2H}$. Then, we get
\begin{align*}
\text{Cov}_*^B(r_1,r_2)=&\frac{1}{2}(|s'-s+r_2(t'-s')|^{2H}+|s'-s-r_1(t-s)|^{2H}\\
&-|s'-s+r_2(t'-s')-r_1(t-s)|^{2H}-|s'-s|^{2H})\\
=&\frac{1}{2}|s'-s|^{2H}\left(\left|1+r_2\frac{|t'-s'|}{|s'-s|}\right|^{2H}+\left|1-r_1\frac{|t-s|}{|s'-s|}\right|^{2H}\right.\\
&\left.-\left|1+\frac{r_2(t'-s')-r_1(t-s)}{|s-s'|}\right|^{2H}-1\right).
\end{align*}

Secondly, by using Taylor expansion:
\begin{equation*}
(1+x)^\alpha = 1+\alpha x+\frac{1}{2}\alpha(\alpha-1)x^2+\frac{1}{6}\alpha(\alpha-1)(\alpha-2)x^3+O(x^4),
\end{equation*}
we estimate $\text{Cov}_*^B(r_1,r_2)$ as follows:
\begin{align*}
&Q(r_1,r_2)\\
=&\frac{1}{2}|s'-s|^{2H}\left(2H(2H-1)r_1r_2\frac{|t'-s'|}{|s-s'|}\frac{|t-s|}{|s-s'|}+O\left(\left(\frac{|t'-s'|}{|s-s'|}+\frac{|t-s|}{|s-s'|}\right)^3\right)\right).
\end{align*}

Thirdly, we denote $\text{Cov}^B(r_1,r_2)-r_1r_2\text{Cov}_*^B(1,1)$ by $\tilde{R}(r_1,r_2)$. Then, it naturally follows that
\begin{equation}
\tilde{R}(r_1,r_2)=O\left(|s-s'|^{2H}\left(\frac{|t'-s'|}{|s-s'|}+\frac{|t-s|}{|s-s'|}\right)^3\right)\label{estimate_of_tilde_R_1}.
\end{equation}
At the same time, by the definition, we can rewrite $E[\fBmitin_{s,t}^{10*}\fBmitin_{s',t'}^{10*}]$ as follows:
\begin{equation*}
E[\fBmitin_{s,t}^{10*}\fBmitin_{s',t'}^{10*}]=(t-s)(t'-s')\int_{[0,1]^2}\tilde{R}(r_1,r_2) dr_1dr_2.
\end{equation*}
Substituting (\ref{estimate_of_tilde_R_1}) to this, we conclude the estimate for $E[\fBmitin_{s,t}^{10*}\fBmitin_{s',t'}^{10*}]$.

Next, to estimate $E[\fBminc{s,t}\fBmitin_{s',t'}^{10*}]$, we transform $\text{Cov}_*^B(1,r_2)$ In a similar way:
\begin{align*}
&\text{Cov}_*^B(1,r_2)\\
=&\frac{1}{2}(|s'-s+r_2(t'-s')|^{2H}+|s'-t|^{2H}-|s'-t+r_2(t'-s')|^{2H}-|s'-s|^{2H})\\
=&\frac{1}{2}|s'-s|^{2H}\left(\left|1+r_2\frac{|t'-s'|}{|s'-s|}\right|^{2H}-1\right)-\frac{1}{2}|s'-t|^{2H}\left(\left|1+r_2\frac{|t'-s'|}{|s'-t|}\right|^{2H}-1\right).
\end{align*}
Hence, by using Taylor expansion, we get
\begin{equation}
\tilde{R}(1,r_2)=O\left(|t-s'|^{2H}\frac{|t'-s'|^2}{|t-s'|^2}\right)+O\left(|s-s'|^{2H}\frac{|t'-s'|^2}{|s-s'|^2}\right)\label{estimate_of_tilde_R_2}.
\end{equation}

At the same time, by the definition, we have
\begin{equation*}
E[\fBminc{s,t}\fBmitin_{s',t'}^{10*}]=(t'-s')\int_0^1 \tilde{R}(1,r_2)dr_2.
\end{equation*}
Substituting (\ref{estimate_of_tilde_R_2}) to this, we conclude the estimate for $E[\fBminc{s,t}\fBmitin_{s',t'}^{10*}]$.

At last, to estimate $E[\fBminc{s,t}\fBmitin_{t,t'}^{10*}]$, we fix $s'=t$. Then, we can calculate $\text{Cov}_*^B(1,r_2)$ as
\begin{align*}
\text{Cov}_*^B(1,r_2)=&\frac{1}{2}(|t-s+r_2(t'-t)|^{2H}-|r_2(t'-t)|^{2H}-|t-s|^{2H})\\
=&\frac{1}{2}|t-s|^{2H}\left(\left|1+r_2\frac{|t'-t|}{|t-s|}\right|^{2H}-1\right)-\frac{1}{2}|r_2(t'-t)|^{2H}.
\end{align*}
Now, using Taylor expansion, we can estimate $\tilde{R}(1,r_2)$:
\begin{equation*}
\tilde{R}(1,r_2)=\frac{1}{2}|r_2(t'-t)|^{2H}-\frac{1}{2}r_2|t'-t|^{2H}+O\left(|t-s|^{2H}\frac{|t'-t|^2}{|t-s|^2}\right).
\end{equation*}
Following the above two estimates, we get
\begin{equation*}
E[\fBminc{s,t}\fBmitin_{t,t'}^{10*}]=(t'-t)^{1+2H}\int_0^1 \frac{1}{2}|r_2|^{2H}-\frac{1}{2}|r_2|dr_2+O\left(\frac{|t'-t|^3}{|t-s|^{2-2H}}\right).
\end{equation*}
Since $\int_0^1\frac{1}{2}|r|^{2H}dt=\frac{1}{2(1+2H)}$, we get the estimate for $E[\fBminc{s,t}\fBmitin_{t,t'}^{10*}]$.

\end{proof}

Similar to $B^{10*}$ being contained in first Wiener chaos, $B^{110*},B^{101*},B^{011*}$ are contained in second Wiener chaos.
For these iterated integrals, we can get the estimates similar to Proposition 4.5 in \cite{Aida-Naganuma2020}. (For $B^{10*}$, we can obtain the estimates using Lemma \ref{lemma:estimateexpectedvalue}.
For others, we omit estimates to avoid repetition of discussion in \cite{Aida-Naganuma2020})
By the same discussion as Lemma \ref{lemma:standardBrown}, we have following estimates for $B^{10*},B^{110*},B^{101*},B^{011*}.$

\begin{prp}\label{proposition:Hoelder_estimate_iteerated_integral}
Let $\Gamma =10*,110*,101*,011*$. Given $0<H< 1/2$, we see that for any $\epsilon_1,\epsilon_2>0$,
\begin{align*}
2^{(H+1/2-\epsilon_1)m}\|\mathcal{I}_\cdot^{(m)}(d(B^\Gamma))\|_{1/2-\epsilon_2}\lesssim_m 1,\ a.s,\\
\end{align*}
Moreover, we have
\begin{align*}
&\left(B_\cdot,2^{(H+1/2)m}\mathcal{I}_\cdot^{(m)}(d(B^{10*}))\right)\to\left(B_\cdot,C_{10*}W_\cdot^{(k)}\right),
\end{align*}
where $C_{10*}$ is positive constant defined as
\begin{equation*}
C_{10*}=\sqrt{E\left[(\fBmitin_{0,1}^{10*})^2\right]+2\sum_{r=1}^\infty E[\fBmitin_{0,1}^{10*}\fBmitin_{r,r+1}^{10*}]}.
\end{equation*}
\end{prp}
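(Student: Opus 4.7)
The plan is to mimic the strategy used for Lemma \ref{lemma:standardBrown}: establish an $L^2$-estimate of the form $\|\delta \mathcal{I}^{(m)}_{s,t}(d(B^\Gamma))\|_{L^2} \lesssim 2^{-(H+1/2)m}(t-s)^{1/2}$, upgrade to an $L^p$ estimate via hypercontractivity (since $\mathcal{I}^{(m)}(d(B^{10*}))$ lies in the first Wiener chaos $\mathscr{H}_1$ and each of $\mathcal{I}^{(m)}(d(B^{110*})), \mathcal{I}^{(m)}(d(B^{101*})), \mathcal{I}^{(m)}(d(B^{011*}))$ lies in $\mathscr{H}_2$), and then invoke Corollary \ref{corollary:discreteGRRforwienerchaos} to obtain the almost sure discrete H\"older bound. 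Convergence in law of the $B^{10*}$ component is then handled by an asymptotic covariance computation together with a chaos CLT argument.

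For $\Gamma = 10*$, the starting point is the expansion
\begin{equation*}
\|\delta\mathcal{I}^{(m)}_{s,t}(d(B^{10*}))\|_{L^2}^2 = \sum_{r,r'} E[\fBmitin_{\bitime{r},\bitime{r+1}}^{10*}\fBmitin_{\bitime{r'},\bitime{r'+1}}^{10*}],
\end{equation*}
where the sum is over dyadic indices $r,r'$ with $\bitime{r},\bitime{r'}\in [s,t)$. The diagonal contributes $C_H^{10*,10*}\cdot 2^{-(2+2H)m}$ each, while Lemma \ref{lemma:estimateexpectedvalue} bounds the off-diagonal term with $|r-r'|=k\geq 1$ by a constant times $2^{-(2+2H)m}k^{2H-3}$. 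Summability of $\sum_{k\geq 1}k^{2H-3}$ (valid because $2H<1$) controls the full double sum by $O(N\cdot 2^{-(2+2H)m})$ with $N=2^m(t-s)$, yielding the desired $L^2$ estimate. Corollary \ref{corollary:discreteGRRforwienerchaos} with $\theta'=1/2$ and $r=1$ then delivers the stated almost sure bound with the factor $2^{-m\epsilon_1}$ arising from the corollary.

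For $\Gamma \in \{110*, 101*, 011*\}$, I would establish analogues of Lemma \ref{lemma:estimateexpectedvalue} for the corresponding second Wiener chaos iterated integrals, following the template of \cite{Aida-Naganuma2020}. This is the most delicate step and the main obstacle: the subtracted deterministic polynomial in $(t-s)$ (such as $\tfrac{1}{2(1+2H)}(t-s)^{1+2H}$ in $B^{110*}_{s,t}$) is precisely what produces enough cancellation to make the off-diagonal covariance decay at a rate $k^{-\beta}$ summable over $k$. Once a cross covariance bound of this form is in hand, the double-sum calculation above goes through verbatim, producing again $\|\delta \mathcal{I}^{(m)}_{s,t}(d(B^\Gamma))\|_{L^2}\lesssim 2^{-(H+1/2)m}(t-s)^{1/2}$. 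Proposition \ref{proposition:hypercontractivity} applied to $\mathscr{H}_2$ and Corollary \ref{corollary:discreteGRRforwienerchaos} then close the H\"older part of the proposition.

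For the convergence in law, tightness of $2^{(H+1/2)m}\mathcal{I}^{(m)}(d(B^{10*}))$ in $C^{1/2-\epsilon_2}([0,T])$ comes for free from the first part. For finite-dimensional convergence, stationary increments and self-similarity of $B$ let me rewrite
\begin{equation*}
2^{(1+2H)m} E\left[\bigl(\delta\mathcal{I}^{(m)}_{s,t}(d(B^{10*}))\bigr)^2\right] = 2^{-m}\sum_{r,r'}E[\fBmitin_{0,1}^{10*}\fBmitin_{r'-r,r'-r+1}^{10*}],
\end{equation*}
and the Riemann-type sum converges to $(t-s)C_{10*}^2$ thanks to the summability secured above. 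Joint convergence with $B$ and independence of the limiting Brownian motion from $B$ will follow from checking that cross covariances $E[\fBminc{u,v}\cdot 2^{(H+1/2)m}\delta\mathcal{I}^{(m)}_{s,t}(d(B^{10*}))]$ vanish in the limit, using the second and third estimates of Lemma \ref{lemma:estimateexpectedvalue}. Because $\mathcal{I}^{(m)}(d(B^{10*}))$ lives in the first Wiener chaos, convergence of the covariance structure is in fact sufficient for Gaussian convergence (no fourth-moment check is required), which completes the proof.
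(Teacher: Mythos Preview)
Your proposal is correct and follows essentially the same route as the paper: the paper also only writes out the $B^{10*}$ case explicitly, computing $E[(\mathcal{I}_{s,t}^{(m)}(d(B^{10*})))^2]$ via the first estimate of Lemma \ref{lemma:estimateexpectedvalue} to obtain the $O((t-s)2^{-(1+2H)m})$ bound, and then invokes Corollary \ref{corollary:discreteGRRforwienerchaos}; for $\Gamma\in\{110*,101*,011*\}$ and for the convergence in law it defers to the argument of Proposition 4.5 in \cite{Aida-Naganuma2020}, exactly as you do. Your additional remarks on tightness and on why first-chaos membership makes covariance convergence sufficient are accurate and slightly more explicit than the paper's own proof.
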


\begin{proof}
We only prove in the case of $B^{10*}$ using Lemma \ref{lemma:estimateexpectedvalue}. Let $s,t\in \mathbb{P}_m$ such that $s<t$.
\begin{align*}
&E[(\mathcal{I}_{s,t}^{(m)}(d(B^{10*})))^2]=\sum_{r_1,r_2 =\lfloor 2^ms\rfloor}^{ \lfloor 2^mt\rfloor-1}E[\fBmitin_{\bitime{r_1}}^{10*}\fBmitin_{\bitime{r_2}}^{10*}]\\
=&\sum_{r_1,r_2 =\lfloor 2^ms\rfloor}^{ \lfloor 2^mt\rfloor-1}O\left(2^{-2m}\cdot (2^{-m}\vee |\bitime{r_1}-\bitime{r_2}|)^{2H}\cdot\left(\frac{2^{-m}}{2^{-m}\vee |\bitime{r_1}-\bitime{r_2}|}\right)^3\right)\\
=&\sum_{r_1,r_2 =\lfloor 2^ms\rfloor}^{ \lfloor 2^mt\rfloor-1}2^{-(2+2H)m}O\left(\frac{1}{1\vee |r_1-r_2|^{3-2H}}\right)\\
=&O\left((t-s) 2^{-(1+2H)m}\right)
\end{align*}

Combining this estimate and Corollary \ref{corollary:discreteGRRforwienerchaos}, we get the first estimate.
\end{proof}

Using Proposition \ref{proposition:degeneratediscreteHoelder}, we naturally see the next corollary.
\begin{cor}\label{corollary:estimate110}

Let $\Gamma=110*,101*,011*$. Given $0<H< 1/2$, we see that for any $\epsilon_1,\epsilon_2>0$,
\begin{align*}
2^{(2H+\epsilon_1)m}\|\mathcal{I}_\cdot^{(m)}(d(B^\Gamma))\|_{1-\epsilon_1-\epsilon_2}\lesssim_m 1,\ a.s.\\
\end{align*}

\end{cor}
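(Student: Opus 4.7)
My plan is to combine an $L^2$-Hölder estimate at regularity $1/2$ for $\mathcal{I}^{(m)}(dB^\Gamma)$ with the Hölder-boosting Proposition \ref{proposition:degeneratediscreteHoelder}. First I would observe that, for $\Gamma\in\{110*,101*,011*\}$, the $*$-centering in the definition of $B^\Gamma_{s,t}$ removes its leading deterministic $(t-s)^{1+2H}$ part, so its $L^2$ size is $\lesssim (t-s)^{1+2H}$ — one power of $H$ smaller than for $B^{10*}_{s,t}$. Repeating the diagonal/off-diagonal covariance bookkeeping in the proof of Proposition \ref{proposition:Hoelder_estimate_iteerated_integral}, now with the second-order analogues of Lemma \ref{lemma:estimateexpectedvalue} (whose derivations are worked out in \cite{Aida-Naganuma2020}), should yield
\[
\|\mathcal{I}^{(m)}_{s,t}(d(B^\Gamma))\|_{L^2}^2 \lesssim (t-s)\,2^{-(1+4H)m}.
\]
Since each $\mathcal{I}^{(m)}_t(d(B^\Gamma))$ lies in the second Wiener chaos, hypercontractivity (Proposition \ref{proposition:hypercontractivity}) and the stochastic GRR-type bound of Corollary \ref{corollary:discreteGRRforwienerchaos} then upgrade this to
\[
\|\mathcal{I}^{(m)}_\cdot(d(B^\Gamma))\|_{1/2-\delta}\lesssim_m 2^{-(1/2+2H-\epsilon)m}\quad\text{a.s.}
\]
for arbitrary $\delta,\epsilon>0$.

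Next I would apply Proposition \ref{proposition:degeneratediscreteHoelder} with the splitting $\theta_2=1/2-\delta$ and $\theta_1=1/2-\epsilon_1-\epsilon_2+\delta$, so that $\theta_1+\theta_2=1-\epsilon_1-\epsilon_2<1$, to obtain
\[
\|\mathcal{I}^{(m)}_\cdot(d(B^\Gamma))\|_{1-\epsilon_1-\epsilon_2}
\leq 2^{(1/2-\epsilon_1-\epsilon_2+\delta)m}\,\|\mathcal{I}^{(m)}_\cdot(d(B^\Gamma))\|_{1/2-\delta}
\lesssim 2^{-(2H+\epsilon_1+\epsilon_2-\delta-\epsilon)m}.
\]
Choosing $\delta,\epsilon>0$ small enough that $\delta+\epsilon<\epsilon_2$ then gives $2^{(2H+\epsilon_1)m}\|\mathcal{I}^{(m)}_\cdot(d(B^\Gamma))\|_{1-\epsilon_1-\epsilon_2}\lesssim_m 1$ almost surely, as required.

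The main obstacle lies in the first step, namely verifying the improved $L^2$ decay $2^{-(1+4H)m}$ rather than $2^{-(1+2H)m}$. The unified rate $H+1/2$ stated in Proposition \ref{proposition:Hoelder_estimate_iteerated_integral} is only tight for $B^{10*}$; by itself it yields at best a $2^{-Hm}$ bound after Hölder-boosting, which falls short of the target $2^{-2Hm}$. Reaching $2H$ therefore forces one to genuinely exploit the extra cancellation built into the $*$-centered second-order integrals $B^{110*},B^{101*},B^{011*}$ — i.e.\ to sharpen Lemma \ref{lemma:estimateexpectedvalue} by one order of $H$ at the level of the covariances $E[B^\Gamma_{s,t}B^\Gamma_{s',t'}]$ — and that is the step where all the analytical work is concentrated.
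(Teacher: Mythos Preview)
Your proposal is correct and matches the paper's approach, which simply invokes Proposition \ref{proposition:degeneratediscreteHoelder} on top of Proposition \ref{proposition:Hoelder_estimate_iteerated_integral}. Your diagnosis of the rate is well-taken: the uniform $H+\tfrac12$ stated in Proposition \ref{proposition:Hoelder_estimate_iteerated_integral} is not sharp for $\Gamma\in\{110*,101*,011*\}$, and the corollary tacitly relies on the improved $2H+\tfrac12$ rate coming from the second-chaos covariance estimates the paper defers to \cite{Aida-Naganuma2020}---exactly the refinement you identify as the real content of the argument.
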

\section{Main theorem}
\label{section:maintheorem}

In this section, we will state determine the asymptotic error of the ($k$)-Milstein scheme and the Crank-Nicolson scheme.
This result is consequence of estimation for general numerical schemes.
Since the case $H\geq 1/2$ have been studied well by many reseachers, we only discuss in the case of $H<1/2$.

At first, we set a general form of numerical schemes below:

\begin{defi}\label{defi:primalcondition}
We consder numerical schemes $\nusol{t}$ with the parameter $m\in \mathbb{Z}$ in the form of
\begin{equation}
\nusol{t}=\begin{cases}
\nusoldis{r}+F[\sigma,b,B,\nusoldis{r},\nusol{t},m],&\timeinunit ,r\in \mathbb{Z}_{\geq 0},\\
Y_0,&t=0.
\end{cases}\label{numerical_scheme_general}.
\end{equation}
\end{defi}
Next, we introduce a condition for numerical solutions taking the form (\ref{numerical_scheme_general}).
\begin{con}\label{condition:secondalycondition}

For integers $L_1$ and $L_2$, let $\sigma\in C_b^{L_1}(\mathbb{R})$ and $b\in C_b^{L_2}(\mathbb{R})$. 
Also, denote $\min \{L_1H,L_2H+1\}$ by $R$.
Then, we define the Condition \ref{condition:secondalycondition} for numerical scheme as following property:

For each $\hat{\Gamma}\in \hat{\mathbb{G}}_{0,R},$ we denote $\lceil H^{-1}(R-|\hat{\Gamma}|)\rceil$ by $L_{\hat{\Gamma}}$.
Then, there exists $L_{\hat{\Gamma}}$-differentiable function $\hat{f}_{\hat{\Gamma}}$ which is a polynomial of $\{\sigma^{(j)}\}_{j=0}^{L_1}$ and $\{b^{(j)}\}_{j=0}^{L_2}$ such that for any positive constant $A<R$ and sufficiently large integer $m$
\begin{equation*}
F[\sigma,b,B,\nusoldis{r},\nusol{t},m]=\sum_{\hat{\Gamma}\in \hat{\mathbb{G}}_{0,A}} \hat{f}_{\hat{\Gamma}}(\nusoldis{r}) \fBmitinunit^{\hat{\Gamma}}+\hat{\epsilon}_{\bitime{r},t}^{(m,A)},
\end{equation*}
where $\hat{\epsilon}_{\bitime{r},t}^{(m,A)}$ estimated as $|\hat{\epsilon}_{\bitime{r},t}^{(m,A)}|\lesssim \timeunit^{A+\kappa}$ for some $\kappa>0$.
\end{con}

This condition corresponds to stochastic Taylor expansion for the exact solution..

\begin{prp}[stochastic Taylor exapnsion]\label{proposition:taylorfory}

Given $\sigma\in C_b^{L_1}(\mathbb{R}),b\in C_b^{L_2}(\mathbb{R})$ with integers $L_1,L_2$, for any positive constant $A$ such that $\min \{L_1H,L_2H+1\}>A$ and sufficiently large integer $m$, $\exsol{t}-\exsol{s}$ can be expanded into the following series:

\begin{equation*}
\exsol{t}-\exsol{s}=\sum_{\Gamma\in \mathbb{G}_{0,A}} f_{\Gamma}(\exsol{s}) B^{\Gamma}_{s,t}+\epsilon_{s,t}^{(A)},
\end{equation*}
with $f_{\Gamma}\coloneqq \mathcal{D}^{\Gamma} (Id)(x)$ and $\epsilon_{s,t}^{(A)}$ is a remainder part estimated as $|\epsilon_{s,t}^{(A)}|\lesssim (t-s)^{A+\kappa}\ a.s$ for some $\kappa>0$, where $Id$ is identity function such that $Id(x)=x$.
\end{prp}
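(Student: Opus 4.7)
The plan is to prove this stochastic Taylor expansion by iterated application of It\^o's formula (Proposition~\ref{proposition:Itoformula}). Starting from
\[ Y_t - Y_s = \int_s^t \sigma(Y_u)\,dB_u + \int_s^t b(Y_u)\,du, \]
I would repeatedly apply to each surviving integrand the identity
\[ f(Y_u) = f(Y_s) + \int_s^u \mathcal{D}_\sigma f(Y_v)\,dB_v + \int_s^u \mathcal{D}_b f(Y_v)\,dv. \]
The first round produces the main terms $\sigma(Y_s) B_{s,t} + b(Y_s)(t-s)$ indexed by $\Gamma=(1)$ and $\Gamma=(0)$, plus four iterated double integrals. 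Recursively substituting and freezing innermost integrands at $Y_s$ then generates one new main term $f_\Gamma(Y_s)\,B^\Gamma_{s,t}$ per word $\Gamma$, with $f_\Gamma = \mathcal{D}^\Gamma Id$ according to the paper's definition of $\mathcal{D}^\Gamma$.

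I would stop iterating any branch indexed by $\Gamma$ as soon as $|\Gamma| > A$. Since each additional layer of integration adds at least $H$ to $|\Gamma| = \#_0\Gamma + \#_1\Gamma\cdot H$, the procedure terminates in finitely many rounds, so only finitely many $\Gamma$ appear throughout the whole expansion. What remains is a finite sum of iterated integrals
\[ \epsilon_{s,t}^{(A)} = \sum_{|\Gamma| > A} \int_s^t \int_s^{u_n}\!\!\cdots\int_s^{u_2} g_\Gamma(Y_{u_1})\,dB^{(\gamma_1)}_{u_1}\cdots dB^{(\gamma_n)}_{u_n}, \]
where each $g_\Gamma$ is a polynomial in the derivatives of $\sigma$ and $b$ evaluated along $Y$.

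The main obstacle is bounding each such remainder iterated integral by $|t-s|^{|\Gamma|}$ almost surely. For this I would treat $g_\Gamma(Y)$ as a controlled path of $B$ via Propositions~\ref{prp:SDE} and \ref{proposition:Operation for a controlled path}, so that the nested integral built through the rough-integral construction of Proposition~\ref{prp:continuous_rough_integral} inherits the H\"older scaling $|t-s|^{\#_0\Gamma + \#_1 \Gamma\cdot H_-}$ from the $H_-$-H\"older regularity of $B$. Choosing $H_-$ sufficiently close to $H$ and using finiteness of the index set to extract a uniform gap $\kappa > 0$ with $|\Gamma| \geq A+\kappa$ for every surviving $\Gamma$ yields $|\epsilon_{s,t}^{(A)}| \lesssim (t-s)^{A+\kappa}$ a.s. The hypothesis $A < \min\{L_1 H,\,L_2 H + 1\}$ is exactly what ensures that no $g_\Gamma$ requires more than $L_1$ derivatives of $\sigma$ or $L_2$ derivatives of $b$, so every $g_\Gamma$ lies in $C_b$. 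The delicate points are the combinatorial bookkeeping matching the composition of $\mathcal{D}_\sigma,\mathcal{D}_b$ to the index $\Gamma$ and making the recursive H\"older estimate for iterated rough integrals with controlled-path integrands uniform across the levels of the induction.
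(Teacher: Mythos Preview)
Your approach is correct and is the standard derivation of the stochastic Taylor expansion for solutions of rough differential equations: iterate the It\^o formula (Proposition~\ref{proposition:Itoformula}) on the integrands, freeze innermost values at $Y_s$, and bound the finitely many remaining iterated integrals using the controlled-path estimates from Propositions~\ref{prp:continuous_rough_integral}--\ref{prp:SDE}. The paper itself does not supply a proof for this proposition; it is stated as a known fact and used as input to the main argument, so there is nothing to compare against beyond noting that your outline is exactly the expected one.

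One small bookkeeping remark: in your stopping rule you should keep iterating a branch $\Gamma$ as long as $|\Gamma|\le A$ (so the main terms are exactly $\mathbb{G}_{0,A}$) and place into the remainder those words obtained by one further extension, i.e.\ the minimal $\Gamma$ with $|\Gamma|>A$. Your remainder bound then follows because for each such $\Gamma$ one has $\#_0\Gamma+\#_1\Gamma\,H_->A$ once $H_-$ is taken close enough to $H$ (which the paper explicitly allows), and the finite index set gives the uniform $\kappa>0$.
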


Now, it is obvious that $(k)$-Milstein scheme and Crank-Nicolson scheme is defined in the form of (\ref{numerical_scheme_general}).
Also, we can see the ($k$)-Milstein scheme satisfies Condition \ref{condition:secondalycondition} naturally.
At the same time, we can show that the Crank-Nicolson scheme also satisfies Condition \ref{condition:secondalycondition} when they are well-defined.
For symplicity, we ignore $b$. For $\timeinunit$,

\begin{align}
\sigma(\nusol{t})=&\sigma(\nusoldis{r})+\sigma'(\nusoldis{r})(\nusol{t}-\nusoldis{r})+\frac{1}{2}\sigma''(\nusoldis{r})(\nusol{t}-\nusoldis{r})^2\nonumber\\
&+O(|t-\bitime{r}|^3)\nonumber,\\
\nusol{t}=&\nusoldis{r}+\frac{1}{2}(\sigma(\nusoldis{r})+\sigma(\nusol{t}))\fBmunit\nonumber\\
=&\nusoldis{r}+\sigma(\nusoldis{r})\fBmunit+\frac{1}{2}\sigma'(\nusoldis{r})(\nusol{t}-\nusoldis{r})\fBmunit\nonumber\\
&+\frac{1}{4}\sigma''(\nusoldis{r})(\nusol{t}-\nusoldis{r})^2\fBmunit+O(|t-\bitime{r}|^4)\nonumber\\
=&\nusoldis{r}+\sigma(\nusoldis{r})\fBmunit+\frac{1}{2}\sigma\sigma'(\nusoldis{r})\fBmunit^2\label{expansionCN}\\
&+\frac{1}{4}(\sigma')^2(\nusoldis{r})(\nusol{t}-\nusoldis{r})\fBmunit^2\nonumber\\
&+\frac{1}{4}\sigma''(\nusoldis{r})(\nusol{t}-\nusoldis{r})^2\fBmunit+O(|t-\bitime{r}|^4)\nonumber\\
=&\nusoldis{r}+\sigma(\nusoldis{r})\fBmunit+\frac{1}{2}\sigma\sigma'(\nusoldis{r})\fBmunit^2\nonumber\\
&+\frac{1}{4}((\sigma')^2\sigma+\sigma''\sigma^2)(\nusoldis{r})\fBmunit^3+O(|t-\bitime{r}|^4),\nonumber
\end{align}
where we use $O(x)$ in the sense of $(x\to 0)$.

Next, we set further conditions.

\begin{con}\label{definition:condition_A_B}
We denote $(l,0)\in\in \hat{\mathbb{G}}$ by $[l]$ for $l\in \mathbb{N}_{\geq 1}$.
Also, for $\nusol{\cdot}$ satisfies Condition \ref{condition:secondalycondition},  we denote $\hat{f}_{[l]}-\frac{1}{l!}f_{(l)}$ by $\overline{f}_l$.
Besides, given $H\in (0,1)$, $q$ is an integer $(q+1)^{-1}<H\leq q^{-1}$

Then, we define the Condition ($\mathfrak{A}$) and ($\mathfrak{B}$) as follows:

\begin{itemize}
\item Condition ($\mathfrak{A}$): $\overline{f}_k$ vanishes for any $1\leq k\leq q$.
\item Condition ($\mathfrak{B}$): $q$ is odd, and $\overline{f}_k$ vanishes for $1\leq k\leq q-1$.
\end{itemize}
\end{con}

\begin{exap}

Let
\begin{align}
\Xi[f;x,s,t]\coloneqq \sum_{k=1}^q \frac{1}{k!}\mathcal{D}^{k-1}f(x_s)\xunit^k.\label{Xi_f}
\end{align}
Then, the exact solution $\exsol{}$ holds that $\exsol{t}-\exsol{s}=\Xi[\sigma;\exsol{},s,t]+\epsilon_{s,t}^{(1)}.$\newline
On the other hand, Condition ($\mathfrak{A}$) means $\nusol{t}-\nusol{\bitime{r}}=\Xi[\sigma;\nusol{},\bitime{r},t]+\epsilon_{s,t}^{(1)}.$\newline
Also, Condition ($\mathfrak{B}$) means $\nusol{t}-\nusol{\bitime{r}}=\Xi[\sigma;\nusol{},\bitime{r},t]+\overline{f}_q(\nusol{\bitime{r}})+\epsilon_{s,t}^{(1)}.$

\end{exap}

\begin{exap}

For the ($k$)-Milstein scheme, we can see that

\begin{equation*}
\overline{f}_j= \begin{cases}
0,&1\leq j\leq k\\
-\frac{1}{j!}\mathcal{D}^{j-1}\sigma,&j\geq k+1
\end{cases},
\end{equation*}
by the definition. Simultaneously, we can conclude that

\begin{itemize}
\item for arbitrary integer $k$, the ($k$)-Milstein scheme satisfies Condition ($\mathfrak{A}$) if\newline$H>1/(k+1)$,
\item for even integer $k$, the ($k$)-Milstein scheme satisfies Condition ($\mathfrak{B}$) if\newline$1/(k+2)<H\leq 1/(k+1)$.
\end{itemize}
At the same time, for the Crank-Nicolson scheme, we can see from (\ref{expansionCN}) that

\begin{align}
\hat{f}_{[1]}=\sigma&&\hat{f}_{[2]}=\frac{1}{2}\sigma'\sigma&&\hat{f}_{[3]}=\frac{1}{4}\mathcal{D}^2\sigma.
\end{align}

Immediately,

\begin{align}
\overline{f}_1,\overline{f}_2\equiv 0,&&\overline{f}_3\equiv \frac{1}{12}\mathcal{D}^2\sigma.
\end{align}

Hence, we conclude that

\begin{itemize}
\item the Crank-Nicolson scheme satisfies Condition ($\mathfrak{A}$) if $H>1/3$.
\item the Crank-Nicolson scheme satisfies Condition ($\mathfrak{B}$) if $1/4<H\leq 1/3$.
\end{itemize}
\end{exap}
Each of Condition ($\mathfrak{A}$) and ($\mathfrak{B}$) is a sufficient condition of that $\nusol{\cdot}$ converges to $Y$, which follows from Lemma \ref{lemma:apriori_estimate}.

\begin{prp}\label{proposition:apriori_convergnece}

Let $\sigma\in C_b^{q+3},b\in C_b^2$. If Condition ($\mathfrak{A}$) or ($\mathfrak{B}$) holds, then $\nusol{\cdot}$ converges to $Y$ in $D[0,T]$ with the Skorokhod topology almost surely.
\end{prp}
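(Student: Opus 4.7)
The plan is to deduce convergence from Lemma \ref{lemma:apriori_estimate}, which supplies uniform-in-$m$ discrete H\"older bounds on $\nusol{\cdot}$ together with a pathwise bound on $\nusol{\cdot}-\exsol{\cdot}$ in terms of the one-step discrepancy $\hat{\epsilon}_{\bitime{r},t}^{(m,1)}-\epsilon_{\bitime{r},t}^{(1)}$. Following the decomposition sketched in the introduction, $\nusol{t}-\exsol{t}$ equals, modulo negligible remainders, $\Jacobi{t}\sum_{r}\Jacobi{\bitime{r+1}}^{-1}(\hat{\epsilon}_{\bitime{r},\bitime{r+1}\wedge t}^{(m,1)}-\epsilon_{\bitime{r},\bitime{r+1}\wedge t}^{(1)})$, so convergence reduces to showing that this telescoped sum vanishes almost surely, uniformly in $t\in[0,T]$.

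Under Condition ($\mathfrak{A}$), the coefficients $\hat{f}_{[k]}$ of the numerical expansion coincide with $\frac{1}{k!}f_{(k)}$ from the Taylor expansion of $\exsol{}$ for $1\leq k\leq q$, so only terms of size strictly greater than one survive in $\hat{\epsilon}_{\bitime{r},t}^{(m,1)}-\epsilon_{\bitime{r},t}^{(1)}$; by Proposition \ref{proposition:taylorfory} and Condition \ref{condition:secondalycondition}, together with the assumption $(q+1)H>1$, this yields $|\hat{\epsilon}_{\bitime{r},t}^{(m,1)}-\epsilon_{\bitime{r},t}^{(1)}|\lesssim (t-\bitime{r})^{1+\kappa}$ for some $\kappa>0$. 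Summing over the $\lceil 2^mT\rceil$ intervals produces an error of order $2^{-m\kappa}$, and combining with the uniform H\"older control on $\Jacobi{\cdot}$ and $\polJacobistd{\cdot}$ supplied by the a priori estimate gives $\sup_{t\in[0,T]}|\nusol{t}-\exsol{t}|\to 0$ almost surely.

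Under Condition ($\mathfrak{B}$), $q$ is odd and the per-step discrepancy acquires an additional term $\overline{f}_q(\nusol{\bitime{r}})\fBmunit^q$ of magnitude $2^{-mqH}$ with $qH\leq 1$, so a naive summation diverges like $2^{m(1-qH)}$; this is the main obstacle. The decisive fact is the oddness of $q$: Proposition \ref{proposition:decomposition_from_power_to_Hermite} expresses $\fBmunit^q$ as a linear combination of Hermite polynomials of exclusively odd degree, and Corollary \ref{corollary:BdiscreteHoelder} then yields $\|\mathcal{I}_\cdot^{(m)}(d(B^q))\|_{H^c}\lesssim 2^{-(qH-H^c)m}\to 0$ for any $H^c<qH$. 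A discrete summation by parts transfers the smooth weight $\overline{f}_q(\nusol{\bitime{r}})\Jacobi{\bitime{r+1}}^{-1}$ outside the increment, leaving a Stieltjes-type sum driven by $\mathcal{I}^{(m)}(d(B^q))$; because the weights stay H\"older bounded uniformly in $m$ by the a priori estimate while the driver's H\"older norm vanishes, the sum tends to zero almost surely. Uniform convergence on $[0,T]$ in particular implies convergence in the Skorokhod topology on $D[0,T]$.
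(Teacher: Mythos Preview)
Your proposal follows the same skeleton as the paper: reduce to the one-step discrepancy, handle Condition~($\mathfrak{A}$) by a naive size count, and handle Condition~($\mathfrak{B}$) via Corollary~\ref{corollary:BdiscreteHoelder} (oddness of $q$) plus a Young-sum estimate. This is exactly what underlies Lemma~\ref{lemma:apriori_estimate}, which the paper simply cites.

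There is, however, a logical slip in how you set up the reduction. You start from the approximate identity $\nusol{t}-\exsol{t}\approx\Jacobi{t}\sum_r\Jacobi{\bitime{r+1}}^{-1}(\hat{\epsilon}-\epsilon)$ ``modulo negligible remainders.'' But the negligibility of those remainders (i.e.\ $\polJacobistd{}\approx\Jacobi{}$) is established in the paper only through Lemma~\ref{lemma:1-JMrho}, whose proof in turn invokes Lemma~\ref{lemma:apriori_estimate} --- the very statement you are trying to prove. The paper sidesteps this circularity by working with the \emph{exact} representation~(\ref{transform_error}) involving $\polJacobistd{}$ rather than $\Jacobi{}$: since $\|(\polJacobistd{\cdot})^{-1}\|_{H_-}\lesssim_{m,\rho}1$ is available independently from Corollary~\ref{corollary:Minfinity} and Lemma~\ref{lemma:Hoelder_of_M}, the Young-sum bound on $Z^{(m,\rho)}=\mathcal{I}^{(m)}((\polJacobistd{})^{-1},d(\epsilon^{(m)}))$ goes through directly, and then $\nusol{t}-\exsol{t}=\int_0^1\polJacobistd{t}Z_t^{(m,\rho)}d\rho$ inherits the decay. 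The fix is easy --- replace $\Jacobi{}$ by $\polJacobistd{}$ throughout your argument --- but as written the reduction is circular.
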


Next, we formulate asymptotic error distribution.

\begin{defi}[Asymptotic error distribution]
We regard numerical scheme $\nusol{\cdot}$ and exact solution $\exsol{\cdot}$ as random variables valued in $D[0,T]$.
For $\nusol{\cdot}$ converge to $\exsol{\cdot}$, when there exists non-identically zero process $G$ and positive strictly decreasing sequence $R(m)$ such that
\begin{equation}
\lim_{m\to\infty}R(m)^{-1}(\error{\cdot})=G_\cdot
\end{equation}
in the sense of distribution or almost surely, we call the above limit the asymptotic error of $\nusol{\cdot}$ and call R(m) a convergence rate of $\nusol{t}$.
\end{defi}
By definition, the asymptotic error is not unique and dependent on $R(m)$, but unique except for constant-scale differences.

At last, we state the main theorem.

\begin{defi}\label{definition:ZMA}

For $\sigma\in C_b^{q+3},b\in C_b^2, A>1$, we define $\epsilon_{\bitime{r},t}^{\dagger,(A)}$ and $Z_t^{M,(A)}$ as follows:

\begin{align}
\epsilon_{\bitime{r},t}^{\dagger,(A)}=\sum_{\hat{\Gamma}\in \hat{\mathbb{G}}_{0,A}} \hat{f}_{\hat{\Gamma}}(\exsoldis{r}) \fBmitinunit^{\hat{\Gamma}}-\sum_{\Gamma\in \mathbb{G}_{0,A}}f_{\Gamma}(\exsoldis{r})\fBmitinunit^{\Gamma},&&Z_\cdot^{M,(A)}=\mathcal{I}^{(m)}(J_{\cdot+}^{-1},d(\epsilon^{\dagger,(A)})),
\end{align}
where $r$ is taken such that $\timeinunit $ and $\Jacobi{t}$ is Jacobian process as

\begin{equation}
\Jacobi{t}\coloneqq \exp\left(\int_0^t \sigma'(\exsol{s})dB_s+\int_0^t b'(\exsol{s})ds\right).\label{definitionJ}
\end{equation}
\end{defi}.

\begin{rmk}
$\Jacobi{t}$ corresponds the partial derivative of $\exsol{t}$ with respect to the initial value.
\end{rmk}

Now, we can state estimate for general numerical schemes. 
We will present a proof of this theorem in Section \ref{section:proof_of_maintheorem}.

\begin{thm}[estimate for general scheme]\label{theorem:maintheorem}
For $\sigma\in C_b^{q+3},b\in C_b^2$ and $0<H<1/2$, fix a numerical scheme $\nusol{\cdot}$ which satisfies Condition \ref{condition:secondalycondition} and Condition ($\mathfrak{A}$)\ (resp. ($\mathfrak{B}$)).
Also, given positive strictry decreasing sequence $R(m)$ and $A>1$ such that $2^{-m(A-1)}=O(R(m))$, assume that 
\begin{equation}
\displaystyle\lim_{m\to\infty}R(m)^{-1}2^{-\lambda m}\max_{t\in [0,T]}|Z_t^{M,(A)}|=0\label{main_theorem_assumption_lambda}
\end{equation}
a.s for arbitrary $\lambda>0$.
Additionally, suppose $\hat{f}_{\hat{\Gamma}}$ is Lipschitz continuous for any $\hat{\Gamma}\in \hat{\mathbb{G}}_{1,A}$.

Then the following limit holds for some $\kappa>0$:
\begin{equation*}
2^{m\kappa}R(m)^{-1}\max_{t\in [0,T]}|\nusol{t}-\exsol{t}-\Jacobi{t} Z_t^{M,(A)}|\xrightarrow{m\to\infty} 0\ a.s.
\end{equation*}
(resp.$2^{m\kappa}R(m)^{-1}\left\|\error{\cdot}-\Jacobi{\cdot} Z_\cdot^{M,(A)}\right\|_{H'}\to 0$\ a.s. if $R(m)^{-1}2^{-\lambda m}\|Z_\cdot^{M,(A)}\|_{H'}.\to 0$)

\end{thm}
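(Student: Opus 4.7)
The plan is to make the sketch given in the introduction rigorous. First, I would combine the stochastic Taylor expansion of the exact solution (Proposition \ref{proposition:taylorfory}) with the assumed expansion of $\nusol{t}$ under Condition \ref{condition:secondalycondition} to write, on each subinterval $[\bitime{r}, \bitime{r+1}\wedge t]$, both $\exsol{t}$ and $\nusol{t}$ as explicit polynomials in the iterated integrals $B^{\Gamma}_{\bitime{r}, \cdot}$ and $B^{\hat{\Gamma}}_{\bitime{r}, \cdot}$ plus remainders $\epsilon^{(A)}$, $\hat{\epsilon}^{(m,A)}$ that are pathwise $O((t-\bitime{r})^{A+\kappa})$. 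I would then introduce the convex interpolation $\polsolstd{\cdot}$ indexed by $\rho \in [0,1]$ as in the introduction, noting that $\polsol{}{0} = \exsol{}$ and $\polsol{}{1} = \nusol{}$ (up to the appropriate remainders), so that
\begin{equation*}
\nusol{t} - \exsol{t} = \int_0^1 \frac{d}{d\rho} \polsolstd{t} \, d\rho.
\end{equation*}

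Second, I would differentiate the recursive definition of $\polsolstd{t}$ with respect to $\rho$, which produces the discrete Jacobian $\polJacobistd{t}$ as the integrating factor and a telescoping sum of the differences of remainders, yielding
\begin{equation*}
\nusol{t} - \exsol{t} = \int_0^1 \polJacobistd{t} \sum_{r=0}^{\lceil 2^m t\rceil - 1} \bigl(\polJacobistd{\bitime{r+1}\wedge t}\bigr)^{-1} \bigl(\hat{\epsilon}_{\bitime{r}, \bitime{r+1}\wedge t}^{(m,A)} - \epsilon_{\bitime{r}, \bitime{r+1}\wedge t}^{(A)}\bigr) \, d\rho.
\end{equation*}
The goal is then to show that, up to negligible errors, the $\rho$-integrand is independent of $\rho$ and its integrand matches $\Jacobi{t} \Jacobi{\bitime{r+1}}^{-1}$ times the increment of $\epsilon^{\dagger,(A)}$, which by construction sums to $\Jacobi{t} Z_t^{M,(A)}$.

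Third, to identify the main term I would perform three approximations, uniformly in $\rho$: (i) replace $\polJacobistd{t}$ by $\Jacobi{t}$ (which uses Proposition \ref{proposition:apriori_convergnece} together with the discrete H\"older estimates of Section 2.2, because $\polsol{}{\rho}$ converges to $\exsol{}$ uniformly when Condition ($\mathfrak{A}$) or ($\mathfrak{B}$) holds); (ii) rewrite the difference of remainders as the increment of $\epsilon^{\dagger,(A)}$ plus a difference $\hat{f}_{\hat{\Gamma}}(\nusoldis{r}) - \hat{f}_{\hat{\Gamma}}(\exsoldis{r})$, handled by Lipschitz continuity of $\hat{f}_{\hat{\Gamma}}$ and the a priori error $\error{\bitime{r}}$ which converges to zero; (iii) replace $\polJacobistd{\bitime{r+1}\wedge t}^{-1}$ by $\Jacobi{\bitime{r+1}}^{-1}$ to recognize the backward Stieltjes sum $Z_t^{M,(A)}$ from Definition \ref{definition:ZMA}. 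Each replacement generates a remainder which I would bound by a product of a deterministic power of $2^{-m}$ (from the a priori convergence rate) and a discrete Stieltjes sum whose size is controlled via the discrete GRR bounds of Subsection 2.2 and Corollary \ref{corollary:BdiscreteHoelder}.

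Finally, to pass from the crude bound $O(R(m)^{1+\delta})$ to the sharper $o(2^{-m\kappa} R(m))$, I would exploit the assumption that $R(m)^{-1} 2^{-\lambda m} \max_t |Z_t^{M,(A)}|$ vanishes for \emph{every} $\lambda > 0$, together with the pathwise estimate $|\hat{\epsilon}^{(m,A)} - \epsilon^{(A)}| \lesssim (t-\bitime{r})^{A+\kappa}$ and the hypothesis $2^{-m(A-1)} = O(R(m))$, giving a small exponential margin $2^{-m\kappa}$. For the stronger $H'$-H\"older version under Condition ($\mathfrak{B}$), I would run the same argument with the discrete H\"older norm $\|\cdot\|_{H'}$ in place of the sup norm, using Lemma \ref{lemma:discreteGRR} and Corollary \ref{corollary:discreteGRRforwienerchaos} to promote each pointwise estimate to a H\"older estimate. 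The main obstacle will be step three: the Stieltjes-sum rearrangement mixes H\"older paths of exponent below $1/2$ with sums whose order grows with $q$, so the control must be uniform in $\rho$ and exploit the matching of sizes $|\Gamma|$, $|\hat{\Gamma}|$ against the threshold $A$ — this is precisely where the unified rough-path style bookkeeping of Section 2 is indispensable, and it is this uniform discrete H\"older control of arbitrary-order Stieltjes sums that the paper advertises as its new technical contribution.
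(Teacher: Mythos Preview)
Your outline follows the same architecture as the paper (interpolation $\polsolstd{}$, differentiation in $\rho$, discrete Jacobian $\polJacobistd{}$, decomposition of the summand), but two of your steps hide real work that is not supplied by the references you cite.

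First, in step (i)/(iii) you justify replacing $\polJacobistd{}$ by $\Jacobi{}$ via Proposition~\ref{proposition:apriori_convergnece}. That proposition only gives qualitative convergence of $\nusol{}$ to $\exsol{}$; it says nothing about $\polJacobistd{}$ for intermediate $\rho$, nor about a rate. What is actually needed is the quantitative estimate $\|1-\Jacobi{\cdot}(\polJacobistd{\cdot})^{-1}\|_{H_-}\lesssim_{\rho} 2^{-m((q+1)H_--1)}$ (the paper's Lemma~\ref{lemma:1-JMrho}), and this in turn requires first proving $\|\polsolstd{\cdot}\|_{H_-}\lesssim_{m,\rho}1$ and $\|\polJacobistd{\cdot}\|_{H_-}\lesssim_{m,\rho}1$ \emph{uniformly in $m$ and $\rho$}. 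Because $\polsolstd{}$ is only defined recursively on $\mathbb{P}_m$ and is not the solution of any rough differential equation, these uniform H\"older bounds do not follow from general rough-path continuity; the paper obtains them by a Davie-type short-time/long-time argument (Lemmas~\ref{lemma:expansion_small_distance}--\ref{lemma:Hoelder_of_M}). Your sketch gives no mechanism for this.

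Second, your step four misidentifies the structure of the final estimate. The remainder pieces corresponding to your approximations (ii) and (i)/(iii) are \emph{not} bounded directly by $2^{-\kappa m}R(m)$; they are bounded by $2^{-\kappa m}\sup_\rho\|Z_\cdot^{(m,\rho)}\|$, i.e.\ by a small multiple of the very quantity you are trying to control (see Lemmas~\ref{lemma:m,1,2,A}, \ref{lemma:m,1-}, \ref{lemma:m,2}). This circularity is closed by an iteration (Lemma~\ref{lemma:norm_space_convergence}): one combines the a priori bound $\|Z^{(m,\rho)}\|\lesssim 2^{-\lambda_1 m}$ with $\|Z^{(m,\rho)}-JM^{-1}Z^{M,(A)}\|\lesssim 2^{-\kappa m}\|Z^{(m,\rho)}\|+2^{-\kappa m}R_\lambda(m)$ and iterates until the first term is absorbed. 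Your phrase ``pass from the crude bound $O(R(m)^{1+\delta})$'' suggests you expect a direct estimate, which is not what happens.
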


\subsection{Asymptotic error of the ($k$)-Milstein scheme and the Crank-Nicolson scheme}

In this subsection, we will show the asymptotic error we determined.
These distributions are calculated in Section \ref{section:calculation}.
Throughout this subsection, each limit is defined in the sense of the Skorokhod topology.

Firstly, we show the asymptotic error of the ($k$)-Milstein scheme. 
We recall Nourdin \cite{Nourdin2005} made counterexamples which fails to converege for $H$ and $k$ not in the following list. 
Hence asymptotic error of $(k)$-Milstein scheme is completely deterimined.

\begin{thm}\label{theorem:Theorem_of_distribution_M}
Let $\sigma\in C_b^{k+1}, b\in C_b^2,0<H<1,k\in \mathbb{Z}_{\geq 2}, l\in \mathbb{Z}_{\geq 1}$.

We set $\tilde{g}_{k,1}^{M},\ \tilde{g}_{k,2}^M,\ \overline{g}_{2l+1,1}^M$ and $\tilde{g}_{(1,2)}^{M}$ as follows:

\begin{align*}
\tilde{g}_{k,1}^{M}=&-\frac{1}{k!}\mathcal{D}^{k-1}\sigma&\tilde{g}_{k,2}^{M}=&\tilde{g}_{k,1}^{M}-\sigma'\tilde{g}_{k-1,1}^{M}&\overline{g}_{2l+1,1}^M=&\tilde{g}_{2l+2,2}^M-\frac{1}{2}\mathcal{V}\tilde{g}_{2l+1,1}^M,
\end{align*}

\begin{equation*}
\tilde{g}_{(1,2)}^{M}=-\frac{6H-1}{4(1+2H)}(\sigma\sigma''b+\sigma\sigma'b')-\frac{3-2H}{4(1+2H)}((\sigma')^2b+\sigma^2b''),
\end{equation*}
where $\mathcal{V}:f\mapsto \sigma f'-\sigma'f$ defined in Section \ref{section:notation}.

Also, for $l\geq 2$, we set positive constant $\moment{l}=\displaystyle\frac{l!}{2^{ \lfloor l/2\rfloor}( \lfloor l/2\rfloor )!}$.

\begin{itemize}
\item Then, if $b$ does not vanish, the asymptotic error of the ($k$)-Milstein scheme is determined for each $k$ as follows:

\begin{itemize}
\item for $k=2$,

\begin{itemize}
\item if $1/4<H<1/2$ and $\sigma\in C_b^{q+3},b\in C_b^2$, then

\begin{equation}
\lim_{m\to\infty}2^{(4H-1)m}(\error{\cdot})=\moment{3}\Jacobi{\cdot}\int_0^\cdot \Jacobi{t}^{-1}\overline{g}_{3,1}^M(\exsol{t})dt\ a.s,\label{Milstein_asymptotic_novanish_first}
\end{equation}
\end{itemize}

\item for $k=3$,

\begin{itemize}
\item if $1/4<H<1/2$ and $\sigma\in C_b^{q+3},b\in C_b^2$, then

\begin{equation}
\lim_{m\to\infty}2^{(4H-1)m}(\error{\cdot})=\moment{4}\Jacobi{\cdot}\int_0^\cdot \Jacobi{t}^{-1}\tilde{g}_{4,1}^M(\exsol{t})dt\ a.s,
\end{equation}
\end{itemize}
\item for even $k\geq 4$,

\begin{itemize}
\item if $1/k<H<1/2$ and $\sigma\in C_b^{q+3},b\in C_b^3$, then

\begin{equation}
\lim_{m\to\infty}2^{2Hm}(\error{\cdot})=\Jacobi{\cdot}\int_0^\cdot \Jacobi{t}^{-1}\tilde{g}_{(1,2)}^M (\exsol{t})dt\ a.s,
\end{equation}
\item if $H=1/k$ and $\sigma\in C_b^{q+3},b\in C_b^3$, then

\begin{equation}
\lim_{m\to\infty}2^{(2/k)m}(\error{\cdot})=\Jacobi{\cdot}\int_0^\cdot \Jacobi{t}^{-1}(\tilde{g}_{(1,2)}^M+\moment{k+1}\overline{g}_{k+1,1}^M) (\exsol{t})dt\ a.s,
\end{equation}
\item if $1/(k+2)<H<1/k$ and $\sigma\in C_b^{q+3},b\in C_b^2$, then

\begin{equation}
\lim_{m\to\infty}2^{((k+2)H-1)m}(\error{\cdot})=\moment{k+1}\Jacobi{\cdot}\int_0^\cdot \Jacobi{t}^{-1}\overline{g}_{k+1,1}^M(\exsol{t})dt\ a.s,
\end{equation}
\end{itemize}
\item for odd $k\geq 5$,

\begin{itemize}
\item if $1/(k-1)<H<1/2$ and $\sigma\in C_b^{q+3},b\in C_b^3$, then

\begin{equation}
\lim_{m\to\infty}2^{2Hm}(\error{\cdot})=\Jacobi{\cdot}\int_0^\cdot \Jacobi{t}^{-1}\tilde{g}_{(1,2)}^M (\exsol{t})dt\ a.s,
\end{equation}
\item if $H=1/(k-1)$ and $\sigma\in C_b^{q+3},b\in C_b^3$, then

\begin{equation}
\lim_{m\to\infty}2^{(2/(k-1))m}(\error{\cdot})=\Jacobi{\cdot}\int_0^\cdot \Jacobi{t}^{-1}(\tilde{g}_{(1,2)}^M+\moment{k+1}\tilde{g}_{k+1,1}^M) (\exsol{t})dt\ a.s,
\end{equation}
\item if $1/(k+1)<H<1/(k-1)$ and $\sigma\in C_b^{q+3},b\in C_b^2$, then

\begin{equation}
\lim_{m\to\infty}2^{((k+1)H-1)m}(\error{\cdot})=\moment{k+1}\Jacobi{\cdot}\int_0^\cdot \Jacobi{t}^{-1}\tilde{g}_{k+1,1}^M(\exsol{t})dt\ a.s.\label{Milstein_asymptotic_novanish_last}
\end{equation}
\end{itemize}
\end{itemize}

(For each $k$, the case of $H\leq 1/3$ is our new result. On the other hand, the case of $1/3\leq H<1/2$ is shown in \cite{Aida-Naganuma2020} or can be prove by the method of \cite{Aida-Naganuma2020})

\item On the other hand, if $b$ vanishes, the asymptotic error of the ($k$)-Milstein scheme is determined as follows:

\begin{itemize}
\item for even $k$, if $1/(k+2)<H<1/2$ and $\sigma\in C_b^{(k+3)\vee (q+3)}$, then

\begin{equation}
\lim_{m\to\infty}2^{((k+2)H-1)m}(\error{\cdot})=\moment{k+1}\Jacobi{\cdot}\int_0^\cdot \Jacobi{t}^{-1}\moment{2l+1}\overline{g}_{k+1,1}^M(\exsol{t})dt\ a.s,\label{Milstein_asymptotic_vanish_first}
\end{equation}

\item for odd $k$, if $1/(k+1)<H< 1/2$ and $\sigma\in C_b^{(k+2)\vee (q+3)}$, then

\begin{equation}
\lim_{m\to\infty}2^{((k+1)H-1)m}(\error{\cdot})=\moment{k+1}\Jacobi{\cdot}\int_0^\cdot \Jacobi{t}^{-1}\tilde{g}_{k+1,1}^M(\exsol{t})dt\ a.s.\label{Milstein_asymptotic_vanish_last}
\end{equation}
\end{itemize}

(For even $k$, the case of $1/(k+2)<H\leq 1/(k+1)$ is our new result. The other cases are shown in \cite{Gradinaru-Nourdin2009})
\end{itemize}
\end{thm}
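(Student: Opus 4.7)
The plan is to specialize Theorem \ref{theorem:maintheorem} to the $(k)$-Milstein scheme and then read off the limit from the dominant pieces of $Z_t^{M,(A)}$. First I would check the hypotheses: the scheme is visibly polynomial in $\delta B_{\bitime{r},t}$ and $(t-\bitime{r})$ with $C^\infty$-coefficients, so Condition \ref{condition:secondalycondition} holds with Lipschitz $\hat{f}_{\hat{\Gamma}}$ whenever $\sigma, b$ have enough derivatives. The Example after Condition \ref{definition:condition_A_B} already gives $\overline{f}_j = 0$ for $j \leq k$ and $\overline{f}_j = -\frac{1}{j!}\mathcal{D}^{j-1}\sigma$ for $j > k$, so Condition $(\mathfrak{A})$ holds when $H > 1/(k+1)$ and Condition $(\mathfrak{B})$ holds for even $k$ when $1/(k+2) < H \leq 1/(k+1)$; these are exactly the regimes listed in the theorem. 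Proposition \ref{proposition:apriori_convergnece} then ensures convergence, and Theorem \ref{theorem:maintheorem} reduces every statement to computing $\lim_{m\to\infty} R(m)^{-1}\Jacobi{\cdot}Z_\cdot^{M,(A)}$ for an appropriate $A>1$ with $2^{-(A-1)m} = o(R(m))$.

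Next I would write $\epsilon_{\bitime{r},t}^{\dagger,(A)}$ explicitly. On one side, the nonzero Milstein coefficients are $\hat{f}_{[l]} = \frac{1}{l!}\mathcal{D}^{l-1}\sigma$ $(1\leq l\leq k)$, $\hat{f}_{(0,1)} = b$, $\hat{f}_{(0,2)} = \frac{1}{2}bb'$, $\hat{f}_{(1,1)} = \frac{1}{2}(\sigma'b+b'\sigma)$; on the other side, the stochastic Taylor expansion from Proposition \ref{proposition:taylorfory} contributes every $f_{\Gamma} = \mathcal{D}^{\Gamma}\mathrm{Id}$ with $|\Gamma|\leq A$. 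Term by term, the surviving contributions fall into two families: pure powers $\tilde{g}_{l,1}^M(\exsoldis{r})(\fBminc{\bitime{r},t})^l$ for $l>k$ (since $B^{(l)} = \tfrac{1}{l!}\fBminc{}^l$ degenerates in one dimension), and mixed terms with one factor $(t-\bitime{r})$ coming from iterated integrals $B^{110}, B^{101}, B^{011}$ and from the drift-corrected Milstein pieces. The rate at which each term contributes to the Stieltjes sum $\mathcal{I}^{(m)}(J_{\cdot+}^{-1}, d(\epsilon^{\dagger,(A)}))$ is controlled by Corollary \ref{corollary:BdiscreteHoelder} (pure powers), Proposition \ref{proposition:Hoelder_estimate_iteerated_integral}, Corollary \ref{corollary:estimate110} (modified iterated integrals), and the Hermite-decomposition Proposition \ref{proposition:decomposition_from_power_to_Hermite}: the even powers $(\delta B)^{2l}$ produce a deterministic-order part $\moment{2l}(t-s)^{2lH}$ giving a sum of order $2^{-(2lH-1)m}$, while odd-rank Hermite pieces are uniformly of faster order $2^{-(lH-1/2)m}$.

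With this tabulation, the stated rates follow by matching exponents. For odd $k\geq 3$, the first surviving even power is $(\delta B)^{k+1}$, which contributes $\moment{k+1}\tilde{g}_{k+1,1}^M(\exsol{t})dt$ at rate $2^{-((k+1)H-1)m}$; the mixed $(t-s)(\delta B)^2$ and $(t-s)^{1+2H}$ terms contribute at rate $2^{-2Hm}$. Since $(k+1)H - 1 < 2H$ iff $H < 1/(k-1)$, only the pure-power term survives for $1/(k+1) < H < 1/(k-1)$, only the mixed $\tilde{g}_{(1,2)}^M$ term survives for $1/(k-1) < H$, and both appear additively at the boundary $H = 1/(k-1)$. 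For even $k$, the first surviving power is odd, $(\delta B)^{k+1}$, and on its own it produces only a lower-order Wiener-chaos term; the deterministic-order contribution must be generated by the next power $(\delta B)^{k+2}$ together with a one-step transfer in the Stieltjes sum, which manifests as the $\mathcal{V}$-correction $\overline{g}_{k+1,1}^M = \tilde{g}_{k+2,2}^M - \tfrac12 \mathcal{V}\tilde{g}_{k+1,1}^M$ in the theorem. When $b \equiv 0$ the mixed-order terms all vanish, giving the cleaner second list.

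The main obstacle is precisely the identification of $\overline{g}_{k+1,1}^M$ for even $k$: summation by parts and a careful Taylor expansion of $J_{\cdot+}^{-1}\hat{f}(\exsoldis{\cdot})$ must be done uniformly in the arbitrarily high order $q = \lfloor H^{-1}\rfloor$, which is exactly the unified Stieltjes-sum technology emphasized in the introduction. A secondary technical point is showing that, once the dominant contribution is isolated, the remaining Wiener-chaos terms satisfy the pathwise smallness hypothesis \eqref{main_theorem_assumption_lambda} of Theorem \ref{theorem:maintheorem}; this reduces to raising the discrete Hölder estimates of Lemma \ref{lemma:standardBrown} and Corollary \ref{corollary:BdiscreteHoelder} to an arbitrary polynomial moment via hypercontractivity (Proposition \ref{proposition:hypercontractivity}) and applying Lemma \ref{lemma:discreteGRR}. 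Once the dominant term is written as $\moment{k+1}\int_0^\cdot J_t^{-1}\overline{g}_{k+1,1}^M(\exsol{t})\,dt$ (or the $\tilde{g}$-variants at boundaries), the theorem follows by multiplying through by $\Jacobi{\cdot}$ as dictated by Theorem \ref{theorem:maintheorem}.
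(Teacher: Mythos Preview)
Your proposal is correct and follows essentially the same route as the paper: verify Conditions~\ref{condition:secondalycondition} and $(\mathfrak{A})$/$(\mathfrak{B})$, invoke Theorem~\ref{theorem:maintheorem}, expand $\epsilon^{\dagger,(A)}$, and identify the dominant contribution by rate-matching the Hermite decomposition of $(\delta B)^l$ against the mixed iterated-integral terms. The paper organizes the computation slightly differently: it first converts the backward sum $\mathcal{I}^{(m)}(J_{\cdot+}^{-1},d(\epsilon^{1,(m,A)}))$ into a forward sum $\mathcal{I}^{(m)}(J_{\cdot}^{-1},d(\epsilon^{1,*,(m,A)}))$ by expanding $J_sJ_t^{-1}=1-\sigma'(Y_s)\delta B_{s,t}+\cdots$, which is exactly what produces the $\tilde g_{j,2}^M=\tilde g_{j,1}^M-\sigma'\tilde g_{j-1,1}^M$ coefficients in Section~\ref{subsection:decompose_epsilon}; the $\mathcal{V}$-correction then enters separately in Lemma~\ref{lemma:hermite_variation_of_polynomial} via the controlled-path expansion of $J^{-1}f(Y)$. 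Your description folds these two $\sigma'$-type corrections into a single ``summation by parts and Taylor expansion'' step, which is fine but worth keeping distinct when carrying out the details. One point you underplay: the precise value of $\tilde g_{(1,2)}^M$ does not come from a simple rate count but from the limiting analysis of $\mathcal{I}^{(m)}(J^{-1}f(Y),d(B^{10*}))$ and of $B^{110},B^{101},B^{011}$ in Lemmas~\ref{lemma:integral10*}, \ref{lemma:integral110} and \ref{lmm:compound_coefficient}; the $B^{10*}$ part in particular needs the second-moment estimates of Lemma~\ref{lemma:estimateexpectedvalue} and a Wiener-chaos decomposition, not just the H\"older bounds you cite.
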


Next, we will show the error distribution of the Crank-Nicolson scheme in the range of $1/4<H<1$.

\begin{thm}\label{theorem:Theorem_of_distribution_CN}
Let $W^{10*}$ be a standard Brownian motion independent of $B$ and $W^{(3)}$. Also, we define $C_{10*}$ as

\[C_{10*}=\sqrt{E[(\fBmitin_{0,1}^{10*})^2]+2\sum_{r=1}^\infty E[\fBmitin_{0,1}^{10*}\fBmitin_{r,r+1}^{10*}]},\]
where $E[(\fBmitin_{0,1}^{10*})^2]=\frac{1-2H}{4(1+2H)}$ and

\begin{eqnarray*}
E[\fBmitin_{0,1}^{10*}\fBmitin_{r,r+1}^{10*}]&=&\frac{2r^{2H+2}-(r+1)^{2H+2}-(r-1)^{2H+2}}{2(2H+2)(2H+1)}\\
&&-\frac{(r+1)^{2H+1}-(r-1)^{2H+1}}{2(2H+1)}+\frac{2r^{2H}+(r+1)^{2H}+(r-1)^{2H}}{4}.
\end{eqnarray*}

Then, for $\sigma\in C_b^{q+3},b\in C_b^3, 1/4<H<1/2$, whether or not $b$ vanishes, the asymptotic error distribution of the Crank-Nicolson scheme is determined as follows:

\begin{equation}
\lim_{m\to\infty}2^{(3H-1/2)m}(\error{\cdot})=C_{(3)}\Jacobi{\cdot}\int_0^\cdot \Jacobi{t}^{-1}\tilde{g}_{3}^{CN}(\exsol{t})dW_t^{(3)}\ in\ law,\label{asymptotic_error_distribtion_CN}
\end{equation}
where $\tilde{g}_{3}^{CN}=\frac{1}{12}\mathcal{D}^2\sigma$, $W^{(3)}$ is standard Brownian motion independent of $B$, and $C_{(l)}$ is constant defined in Lemma \ref{lemma:standardBrown}.

(The case of $1/4<H\leq 1/3$ is our new result. On the other hand, the case of $1/3<H\leq 1/2$ and SDE has no drift is shown in \cite{Naganuma2015}. Also, the case of $1/3<H\leq 1/2$ and SDE has some drift is shown in \cite{Aida-Naganuma2020}.)

\end{thm}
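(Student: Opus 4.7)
The plan is to derive Theorem \ref{theorem:Theorem_of_distribution_CN} as a direct consequence of Theorem \ref{theorem:maintheorem} by setting $R(m) = 2^{-(3H-1/2)m}$ and identifying the in-law limit of $R(m)^{-1} \Jacobi{\cdot} Z_\cdot^{M,(A)}$ for $A$ chosen slightly larger than $3H + 1/2$. I first verify the hypotheses: the Crank-Nicolson scheme satisfies Condition \ref{condition:secondalycondition} by the expansion (\ref{expansionCN}); it satisfies Condition ($\mathfrak{A}$) when $1/3 < H < 1/2$ and Condition ($\mathfrak{B}$) when $1/4 < H \leq 1/3$, as recorded in the Example following Condition \ref{definition:condition_A_B}. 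In both regimes the only non-vanishing $\overline{f}_k$ with $k \le q$ is $\overline{f}_3 = \tfrac{1}{12}\mathcal{D}^2 \sigma = \tilde g_3^{CN}$. The Lipschitz hypothesis on $\hat f_{\hat\Gamma}$ follows from $\sigma \in C_b^{q+3}$ and $b \in C_b^3$.

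Next I would isolate the leading term of $\epsilon_{\tau_r, t}^{\dagger,(A)}$. The definition subtracts the iterated-integral Taylor expansion of $Y$ from the power-series expansion of the CN scheme. The only pure-power discrepancies at size $\leq 3H$ are $\overline{f}_k(Y_{\tau_r}) \delta B_{\tau_r,t}^k$ for $k = 1,2,3$, and the first two vanish. All other contributions (mixed $\hat\Gamma$ with $\hat\Gamma_2 \geq 1$, and genuine iterated integrals $B^\Gamma$ with $\Gamma$ containing both $0$s and $1$s) have size strictly greater than $3H$ or, after replacement by the modified integrals $B^{10*}, B^{110*}, B^{101*}, B^{011*}$, admit the sharp Hölder bounds of Proposition \ref{proposition:Hoelder_estimate_iteerated_integral} and Corollary \ref{corollary:estimate110}; when summed against $\Jacobi{\cdot+}^{-1}$ they contribute only $o(R(m))$. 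The surviving leading term is therefore $\tilde g_3^{CN}(Y_{\tau_r}) \delta B_{\tau_r,t}^3$. Applying Proposition \ref{proposition:decomposition_from_power_to_Hermite}, I decompose
\[
\delta B_{s,t}^3 = (t-s)^{3H - 1/2} V_{s,t}^{(3)} + 3 (t-s)^{2H} \delta B_{s,t},
\]
where $V^{(3)}$ is the Hermite-3 process of Lemma \ref{lemma:standardBrown}. The second summand produces a term of order $2^{-2Hm} = o(R(m))$ because $H < 1/2$, so only the first summand survives.

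It then remains to identify the in-law limit of $R(m)^{-1}\cdot \mathcal{I}^{(m)}\!\bigl(\Jacobi{\cdot+}^{-1} \tilde g_3^{CN}(Y),\, dV^{(3)}\bigr)$. Lemma \ref{lemma:standardBrown} supplies the joint in-law convergence $(B_\cdot, \mathcal{I}_\cdot^{(m)}(dV^{(3)})) \to (B_\cdot, C_{(3)} W_\cdot^{(3)})$ with $W^{(3)}$ independent of $B$, together with the uniform a.s.\ Hölder bound $2^{-\epsilon m}\|\mathcal{I}_\cdot^{(m)}(dV^{(3)})\|_{1/2-\epsilon'} \lesssim 1$. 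Combined with the a.s.\ continuity of $\Jacobi{\cdot}^{-1}\tilde g_3^{CN}(Y_\cdot)$ as a functional of $B$, a standard Riemann-sum / Skorokhod-representation argument upgrades this to
\[
\mathcal{I}_\cdot^{(m)}\!\bigl(\Jacobi{\cdot+}^{-1}\tilde g_3^{CN}(Y), dV^{(3)}\bigr) \xrightarrow{\text{law}} C_{(3)} \int_0^\cdot \Jacobi{s}^{-1} \tilde g_3^{CN}(Y_s)\, dW_s^{(3)}.
\]
Multiplying by $\Jacobi{\cdot}$ (continuous functional of $B$) and invoking Theorem \ref{theorem:maintheorem}, whose conclusion forces $R(m)^{-1}(\nusol{\cdot}-\exsol{\cdot}-\Jacobi{\cdot} Z_\cdot^{M,(A)}) \to 0$ a.s., yields (\ref{asymptotic_error_distribtion_CN}).

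The main obstacle is the third paragraph: upgrading Lemma \ref{lemma:standardBrown} from a plain in-law limit of the unweighted sum $\mathcal{I}^{(m)}(dV^{(3)})$ to the joint in-law limit of the $B$-measurable weighted sum, because a naive Slutsky argument does not apply when the integrand is correlated with the integrator across scales. A secondary burden is verifying the a.s.\ hypothesis (\ref{main_theorem_assumption_lambda}), which requires applying Corollary \ref{corollary:discreteGRRforwienerchaos} in each Wiener chaos appearing in $Z^{M,(A)}$ — in particular to the pieces carrying $V^{(3)}$, $B^{10*}$ and the three second-chaos modified integrals — and then bookkeeping that every sub-leading $\hat\Gamma$, $\Gamma$ contribution in $\hat{\mathbb{G}}_{3H,A}$, $\mathbb{G}_{3H,A}$ is $o(R(m))$ after summation against $\Jacobi{\cdot+}^{-1}$.
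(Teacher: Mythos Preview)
There is a genuine gap in your treatment of the Hermite decomposition of $\delta B_{s,t}^3$. You claim that the second summand $3(t-s)^{2H}\delta B_{s,t}$ contributes $O(2^{-2Hm})$ to the weighted sum, hence $o(R(m))$. This is false for $H<1/2$: the first-order Stieltjes sum $\sum_r J_{\tau_{r+1}}^{-1}\tilde g_3^{CN}(Y_{\tau_r})\,\delta B_{\tau_r,\tau_{r+1}}$ does \emph{not} converge to the rough integral. Writing the weight as a controlled path with Gubinelli derivative $J^{-1}\mathcal V\tilde g_3^{CN}(Y)$, the missing second-order correction is $-\tfrac12\sum_r J_{\tau_r}^{-1}\mathcal V\tilde g_3^{CN}(Y_{\tau_r})\,\delta B_{\tau_r,\tau_{r+1}}^2$, whose deterministic piece behaves like $2^{(1-2H)m}$. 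Consequently the $H_1$-piece of $\delta B^3$ contributes $O(2^{(1-4H)m})$ to $Z^{M,(A)}$, and after scaling by $R(m)^{-1}=2^{(3H-1/2)m}$ this is $2^{(1/2-H)m}\to\infty$. The same divergence afflicts the $\delta B^4$ contribution you dismissed: since $4H<3H+\tfrac12$ for $H<\tfrac12$, the index $[4]$ lies in $\hat{\mathbb G}_{0,A}$, and $\hat g_4^{CN}-\tfrac{1}{24}\mathcal D^3\sigma\not\equiv 0$, so the constant Hermite piece $3(t-s)^{4H}$ again produces a term of order $2^{(1/2-H)m}$ after scaling.

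The paper's proof (Lemma~\ref{lemma:hermite_variation_of_polynomial}) does not bypass these divergences but \emph{cancels} them against each other. After the backward-to-forward conversion $J_{\tau_{r+1}}^{-1}\mapsto J_{\tau_r}^{-1}(1-\sigma'\delta B+\cdots)$, the $\delta B^4$ coefficient becomes $\tilde g_4^{CN}=(\hat g_4^{CN}-\tfrac{1}{24}\mathcal D^3\sigma)-\sigma'\tilde g_3^{CN}$, and the crucial Crank--Nicolson--specific identity
\[
\tilde g_4^{CN}\;=\;\tfrac12\,\mathcal V\tilde g_3^{CN}
\]
makes the combined Riemann-integral coefficient $\tilde g_4^{CN}-\tfrac12\mathcal V\tilde g_3^{CN}$ vanish identically. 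This cancellation is the substantive reason the Crank--Nicolson rate is $2^{-(3H-1/2)m}$ rather than the $(2)$-Milstein rate, and your argument cannot succeed without tracking the $\delta B^4$ term and invoking it.
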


\begin{rmk}
Similar to the ($k$)-Milstein's case, Nourdin showed that the ($k$)-Milstein scheme does not converge generally when $H<1/6$.
Then, when $1/6<H\leq 1/4$, the convergence of the Crank-Nicolson scheme is still an open problem.
\end{rmk}
\section{Interpolation between exact and numerical solution}\label{section:interpolation}

In this section, we will separate the error term into individually estimable terms.
To explicitly write each term, we will introduce a parametrized stochastic process $\polsolstd{}$ interpolated between the exact solution $Y$ and the numerical solution $\nusol{}$.

For $\sigma\in C_b^{q+1},b\in C_b^1$, we define $\Omega^{(m)}$ as follows:

\begin{equation*}
\Omega^{(m)}=\left\{\omega\in \Omega_0:\sum_{k=1}^q \frac{1}{k!}\|(\mathcal{D}^{k-1}\sigma)'\|_\infty \|B_\cdot\|_{H_-}^k2^{-kH_-m}+\|b'\|_\infty 2^{-m}<1/2\right\},
\end{equation*}
where $\Omega_0$ is defined as (\ref{Omega_0}).
Note that $\{\Omega^{(m)}\}_{m=1}^\infty$ is increasing family and $\Omega_0=\bigcup_{m=1}^\infty \Omega^{(m)}$.
When we deal the Crank-Nicolson scheme, we assume $\omega\in \Omega^{(m)}$ for appropriate $m$.
Also, from this section onward, we assume $\nusol{}$ satisfies Condition ($\mathfrak{A}$) or ($\mathfrak{B}$).
Definitions in this chapter is based on \cite{Aida-Naganuma2023}.
\subsection{Representation of the error term}

In this subsection, we transform the error term $\error{\cdot}$ using $\polsolstd{\cdot}$ to decompose error.

First, we introduce $\polsolstd{t}$ such that $\polsol{t}{0}=\exsol{t},\ \polsol{t}{1}=\nusol{t}$.

\begin{defi}
We define $\polsolstd{t}$, stochastic process parametrized by $\polpar\in (0,1)$, subsequently as follows:

\begin{equation}
\polsolstd{t}=\begin{cases}
\polsolstd{\bitime{r}}+\Xi[\sigma,\polsolstd{},\bitime{r},t]+b(\polsolstd{\bitime{r}})\timeunit\\
+\rho\left(\overline{f}_q(\nusoldis{r})\fBmunit^q+\hat{\epsilon}_{\bitime{r},t}^{(m,(q+1)H)}\right)+(1-\rho)\epsilon_{\bitime{r},t}^{((q+1)H)},&\timeinunit\\
Y_0,&t=0,
\end{cases}\label{definitionY}
\end{equation}
where $\Xi$ is defined by (\ref{Xi_f}) and $\overline{f}_q$ is defined in Conidition \ref{definition:condition_A_B}.
\end{defi}

Next, we give an explicit indication of the error
\begin{lmm}
\label{lemma:representation of error}

For $\timeinunit$, let $\sigma\in C_b^{q+1},b\in C_b^2$.
We set $\epsilon_t^{(m)}$ and $\Jacobiunit{t}$ as follows:
\begin{equation*}
\epsilon_t^{(m)}\coloneqq \overline{f}_q(\nusoldis{r})\fBmitinunit^{(q)}+\hat{\epsilon}_{\bitime{r},t}^{(m,(q+1)H)}-\epsilon_{\bitime{r},t}^{((q+1)H)},
\end{equation*}
\begin{equation*}
\Jacobiunit{t}\coloneqq 1+\Xi'[\sigma,\polsolstd{},\bitime{r},t]+b'(\polsolstd{\bitime{r}})\timeunit.
\end{equation*}
Moreover, we set $\polJacobistd{t}$ below:
\begin{equation}
\polJacobistd{t}=\prod_{r=0}^{ \lceil 2^m t\rceil-1}\Jacobiunit{\bitimelast{r+1}}\label{definition_M}.
\end{equation}

Then, for any $m\in \mathbb{N}, t\in [0,T], \rho\in (0,1) \omega\in \Omega^{(m)}$, $\Jacobiunit{t}$ and $\polJacobistd{t}$ is strictly positive.
In other words, $(M_t^{m,\rho})^{-1}$ is well-defined. Moreover, we have

\begin{equation*}
\polsol{t}{\rho_0}-\exsol{t}=\int_0^{\rho_0} \polJacobistd{t}\sum_{r=0}^{ \lceil 2^m t\rceil-1} (M_{\bitimelast{r+1}}^{(m,\rho)})^{-1}\epsilon_{\bitimelast{r+1}}^{(m)}d\rho.
\end{equation*}

Especially, we can transform the error term as follows:

\begin{equation}
\nusol{t}-\exsol{t}=\int_0^1 \polJacobistd{t}\sum_{r=0}^{ \lceil 2^m t\rceil-1} (M_{\bitimelast{r+1}}^{(m,\rho)})^{-1}\epsilon_{\bitimelast{r+1}}^{(m)}d\rho\label{transform_error}.
\end{equation}

\end{lmm}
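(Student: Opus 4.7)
The plan is to establish the identity by differentiating the defining equation for $\polsolstd{t}$ with respect to $\rho$, recognizing the resulting linear recurrence, and then integrating. I would begin by checking the positivity claims. For $\omega\in\Omega^{(m)}$ and $\timeinunit$, the definition of $\Omega^{(m)}$ gives the pointwise bound
\begin{equation*}
|\Xi'[\sigma,\polsolstd{},\bitime{r},t]| + |b'(\polsolstd{\bitime{r}})|\timeunit \le \sum_{k=1}^q\frac{1}{k!}\|(\mathcal{D}^{k-1}\sigma)'\|_\infty\|B_\cdot\|_{H_-}^k 2^{-kH_-m} + \|b'\|_\infty 2^{-m} < \frac{1}{2},
\end{equation*}
uniformly in $\rho$, so $\Jacobiunit{t}>1/2$. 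Since $\polJacobistd{t}$ is a finite product of such factors it is strictly positive, hence $(\polJacobistd{\bitimelast{r+1}})^{-1}$ is well defined.

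Next I would differentiate (\ref{definitionY}) in $\rho$. Because the recursion defining $\polsolstd{t}$ gives $\polsolstd{t}$ explicitly in terms of $\polsolstd{\bitime{r}}$ (not implicitly), all terms are smooth in $\rho$. The chain rule applied to $\Xi[\sigma,\polsolstd{},\bitime{r},t]=\sum_{k=1}^q\frac{1}{k!}\mathcal{D}^{k-1}\sigma(\polsolstd{\bitime{r}})\fBmunit^k$ produces $\Xi'[\sigma,\polsolstd{},\bitime{r},t]\cdot\partial_\rho\polsolstd{\bitime{r}}$, the drift term contributes $b'(\polsolstd{\bitime{r}})\timeunit\cdot\partial_\rho\polsolstd{\bitime{r}}$, and the $\rho$-derivative of the linear interpolation $\rho\cdot(\text{numerical residual})+(1-\rho)\cdot(\text{exact residual})$ is precisely $\epsilon_t^{(m)}$ as defined in the statement. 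Collecting these yields, for $\timeinunit$, the one-step recurrence
\begin{equation*}
\partial_\rho\polsolstd{t} = \Jacobiunit{t}\cdot\partial_\rho\polsolstd{\bitime{r}} + \epsilon_t^{(m)}.
\end{equation*}
Iterating from $\partial_\rho\polsol{0}{\rho}=0$ (since $Y_0$ is $\rho$-independent) by induction on $r$ gives, for $t\in(\bitime{N},\bitime{N+1}]$ with $N=\lceil 2^mt\rceil-1$,
\begin{equation*}
\partial_\rho\polsolstd{t} = \polJacobistd{t}\sum_{r=0}^{N}(\polJacobistd{\bitimelast{r+1}})^{-1}\epsilon_{\bitimelast{r+1}}^{(m)},
\end{equation*}
after regrouping the telescoping product with the convention $\bitimelast{r+1}=\bitime{r+1}\wedge t$ that collapses to $t$ on the final subinterval.

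To conclude, I would identify $\polsol{t}{0}=\exsol{t}$: setting $\rho=0$ in (\ref{definitionY}) reduces the defining equation to
\begin{equation*}
\polsol{t}{0}-\polsol{\bitime{r}}{0} = \Xi[\sigma,\polsol{}{0},\bitime{r},t] + b(\polsol{\bitime{r}}{0})\timeunit + \epsilon_{\bitime{r},t}^{((q+1)H)},
\end{equation*}
which is exactly the relation satisfied by the exact solution $\exsol{}$ through Proposition \ref{proposition:taylorfory}; with matching initial value $Y_0$, both recurrences determine the same path, so $\polsol{}{0}=\exsol{}$. Integrating $\partial_\rho\polsolstd{t}$ over $\rho\in[0,\rho_0]$ then yields the first identity, and the case $\rho_0=1$ gives (\ref{transform_error}). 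The only bookkeeping point of substance is the boundary correction on the last subinterval $(\bitime{N},t]$, which is precisely what the $\bitimelast{\cdot}$ notation and the truncated product definition of $\polJacobistd{t}$ are designed to absorb; beyond this careful deterministic telescoping there is no probabilistic or analytic obstacle.
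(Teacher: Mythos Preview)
Your proof is correct and follows essentially the same approach as the paper: differentiate the recursive definition (\ref{definitionY}) in $\rho$ to obtain the one-step linear recurrence $\partial_\rho\polsolstd{t}=\Jacobiunit{t}\partial_\rho\polsolstd{\bitime{r}}+\epsilon_t^{(m)}$, solve it by iteration, and integrate. You are in fact more thorough than the paper's proof, which does not spell out the positivity argument from the definition of $\Omega^{(m)}$ or the identification $\polsol{}{0}=\exsol{}$.
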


\begin{proof}
By definition, for $\timeinunit$, $\polsolstd{t}$ is differentiable with respect to $\rho$ and 
\begin{equation*}
\partial_\rho \polsolstd{t}=\Jacobiunit{t}\partial_\rho \polsolstd{\bitime{r}}+\epsilon_t^{(m)}.
\end{equation*}
Hence, we can see:
\begin{align*}
\partial_\rho \polsolstd{t}=&\sum_{r=0}^{ \lceil 2^m t\rceil-1} \left(\prod_{r_1=r+1}^{ \lceil 2^m t\rceil-1}\Jacobiunit{\bitimelast{r_1+1}}\right)\epsilon_{\bitimelast{r+1}}^{(m)}\\
=&\polJacobistd{t}\sum_{r=0}^{ \lceil 2^m t\rceil-1} (M_{\bitimelast{r+1}}^{(m,\rho)})^{-1}\epsilon_{\bitimelast{r+1}}^{(m)}.
\end{align*}

Integrating both sides of this equality, we can see that
\begin{equation*}
\polsol{t}{\rho_0}-\exsol{t}=\int_0^{\rho_0} \polJacobistd{t}\sum_{r=0}^{ \lceil 2^m t\rceil-1} (M_{\bitimelast{r+1}}^{(m,\rho)})^{-1}\epsilon_{\bitimelast{r+1}}^{(m)}d\rho.
\end{equation*}
\end{proof}

\subsection{Decomposition of the error term}\label{subsection:decompose_error}

In this subsection, we decompose (\ref{transform_error}) into estimable form.
For simplicity, we set 
\begin{align}
Z_t^{(m,\rho)}\coloneqq (\polJacobistd{t})^{-1}\partial_\rho \polsolstd{t}, && N_t^{(m,\rho)}\coloneqq(\polJacobistd{t})^{-1}\partial_\rho \polJacobistd{t}.\label{definitionZN}
\end{align}
As preparation, let 
\[(\polJacobistd{t})^{-1}=(M_t^{(m,0)})^{-1}+((\polJacobistd{t})^{-1}-(M_t^{(m,0)})^{-1})\eqqcolon V_t^{1,(m)}+V_t^{2,(m,\rho)}.\]

First, we decompose $Z_t^{(m,\rho)}$ into $Z_t^{1,(m)}$ and $Z_t^{2,(m,\rho)}$.
\begin{defi}
We define $Z_t^{1,(m)}$ and $Z_t^{2,(m,\rho)}$ as follows:
\begin{align*}
Z_t^{1,(m)}\coloneqq \mathcal{I}_t^{(m)}(V_{\cdot+}^{1,(m)},d(\epsilon^{(m)})).&&Z_t^{2,(m,\rho)}\coloneqq \mathcal{I}_t^{(m)}(V_{\cdot+}^{2,(m,\rho)},d(\epsilon^{(m)})).
\end{align*}

\end{defi}

Next, we split $Z_t^{1,(m)}$ into $Z_t^{1+,(m)}$ and $Z_t^{1-,(m)}$.

\begin{defi}
We define the following stochastic processes  $Z_t^{1+,(m)}$ and $Z_t^{1-,(m)}$. 
\begin{align*}
Z_t^{1+,(m)}\coloneqq \mathcal{I}_t^{(m)}(J_{\cdot+}^{-1},d(\epsilon^{(m)})),&&Z_t^{1-,(m)}\coloneqq \mathcal{I}_t^{(m)}(V_{\cdot+}^{1,(m)}-J_{\cdot+}^{-1},d(\epsilon^{(m)})).
\end{align*}

\end{defi}

Next, we split $Z_t^{1+,(m)}$ into $Z_t^{1,i,(m,A)}\ (i=1,2,3)$.
In order to this, let $\epsilon_t^{(m)}=\epsilon_t^{1,(m,A)}+\epsilon_t^{2,(m,A)}+\epsilon_t^{3,(m,A)}$
\begin{defi}\label{def:e}

Given $A>1$, for $\timeinunit$, we decompose $\epsilon_t^{(m)}$ as follows:
\begin{align*}
\epsilon_t^{(m)}=&\epsilon_t^{1,(m,A)}+\epsilon_t^{2,(m,A)}+\epsilon_t^{3,(m,A)},\\
\epsilon_t^{1,(m,A)}&\coloneqq \overline{f}_q(\exsoldis{r})\fBmitinunit^{(q)}+\sum_{\hat{\Gamma}\in \hat{\mathbb{G}}_{1,A}} f_{\hat{\Gamma}}(\exsoldis{r})\fBmitinunit^{\hat{\Gamma}}-\sum_{\Gamma\in \mathbb{G}_{1,A}}f_{\Gamma}(\exsoldis{r})\fBmitinunit^{\Gamma},\\
\epsilon_t^{2,(m,A)}&\coloneqq (\overline{f}_q(\nusoldis{r})-\overline{f}_q(\exsoldis{r}))\fBmitinunit^{(q)} +\sum_{\hat{\Gamma}\in \hat{\mathbb{G}}_{1,A}} (f_{\hat{\Gamma}}(\nusoldis{r})-f_{\hat{\Gamma}}(\exsoldis{r}))\fBmitinunit^{\hat{\Gamma}},\\
\epsilon_t^{3,(m,A)}&\coloneqq \hat{\epsilon}_{\bitime{r},t}^{(m,A)}-\epsilon_{\bitime{r},t}^{(A)},
\end{align*}
where $\mathbb{G}_{1,A},\hat{\mathbb{G}}_{1,A}$, defined in (\ref{subscriptA}) and (\ref{subscripthatA}).
Additionally, we set $e_t^{(m)}\coloneqq \mathcal{I}_t^{(m)}(d(\epsilon^{(m)}))$ and $e_t^{i,(m,A)}\coloneqq \mathcal{I}_t^{(m)}(d(\epsilon^{i,(m,A)}))$.
Clearly, $e_t^{(m)}=e_t^{1,(m,A)}+e_t^{2,(m,A)}+e_t^{3,(m,A)}$.
\end{defi}

\begin{rmk}
Whereas $\epsilon_t^{(m)}$ is independent of $A$, $\epsilon_t^{1,(m,A)},\epsilon_t^{2,(m,A)}$ and $\epsilon_t^{3,(m,A)}$ is dependent on $A$.
\end{rmk}

\begin{defi}
Let $i=1,2,3$, we define $Z_t^{1,i,(m,A)}$ as $Z_t^{1,i,(m,A)}\coloneqq \mathcal{I}_t^{(m)}(J_{\cdot+}^{-1},d(\epsilon^{i,(m,A)}))$.
\end{defi}
Now, it is clear that $Z_t^{1+,(m)}=Z_t^{1,1,(m,A)}+Z_t^{1,2,(m,A)}+Z_t^{1,3,(m,A)}$.
By the above definition, we can see the following decomposition:

\begin{align*}
\nusol{t}-\exsol{t}=&\Jacobi{t} Z_t^{1,1,(m,A)}+\int_0^1 \polJacobistd{t}\left((1-\Jacobi{t}(\polJacobistd{t})^{-1})Z_t^{1,1,(m,A)}\right.\\
&+\left.Z_t^{1,2,(m,A)}+Z_t^{1,3,(m,A)}+Z_t^{1-,(m)}+Z_t^{2,(m,\rho)}\right)d\rho.
\end{align*}
Note that $Z_t^{1,1,(m,A)}=Z_t^{M,(A)}$ defined in (\ref{definition:ZMA}).

\section{Estimate of each term and whole remainder term.}\label{section:proof_of_maintheorem}

In this section, we aims to prove of Theorem \ref{theorem:maintheorem}, an estimate for the whole remainder term.
In first subsection, we will estimate for $\polsolstd{},\polJacobistd{},Z^{(m,\rho)},N^{(m,\rho)}$, defined as (\ref{definitionY}), (\ref{definition_M}) and (\ref{definitionZN}).
In second subsection, we will also estimate for $Z_t^{1,2,(m,A)},Z_t^{1,3,(m,A)},Z_t^{1-,(m)},Z_t^{2,(m,\rho)}$, defined in Section \ref{subsection:decompose_error}.
In third subsection, we will prove Theorem \ref{theorem:maintheorem}.
To simplify our discussion, we treat $\polJacobistd{}$ as a discrete-time process defined in $\mathbb{P}_m$.
Additionally, we assume $\nusol{}$ satisfies Condition ($\mathfrak{A}$) or ($\mathfrak{B}$) as in the previous chapter.

\subsection{Estimate of $Y,M,Z,N$}\label{subsection:yMN}

We will show fundamental estimates for
$\polsolstd{},\polJacobistd{},Z^{(m,\rho)},N^{(m,\rho)},\Jacobi{\cdot}(\polJacobistd{\cdot})^{-1}.$
Each estimate will be shown in Lemma \ref{lemma:Hoelder_of_y}, \ref{lemma:Hoelder_of_M}, \ref{lemma:apriori_estimate}, \ref{lemma:Hoelder_of_N}, \ref{lemma:1-JMrho}, respectively.

First, we will show that $\|\polsolstd{\cdot}\|_{H_-}\lesssim_{m,\rho}1$ and $\|\polJacobistd{\cdot}\|_{H_-}\lesssim_{m,\rho}1$. To avoid complicated notation, we firstly treat $\polsolstd{}$, and secondly see that we can treat $(\polsolstd{},\polJacobistd{})$ in the same way.
To show these estimates, we consider exact and numerical solutions of SDE on discrete time $\mathbb{P}_m$.

First, we define discrete controlled path and Stieltjes sum for contorolled path.
\begin{prp}[Discrete controlled path]\label{prp:young_sum}
Let $q\in \mathbb{Z}_{\geq 1}, \alpha \in ((q+1)^{-1}, q^{-1}]$ and $x\in C^{\alpha}([0,T];\mathbb{R})$.
Also, we set $R_{s,t}^{z,(k)}$ for $z = (z^{(1)},\cdots,z^{(q)})\in C^\alpha(\mathbb{R};\mathbb{R}^{d\times q})$, $1\leq k\leq q$ and $s,t\in [0,T]\cap \mathbb{P}_m$ to be

\begin{equation}\label{dcp_def}
R_{s,t}^{z,(k)}=z_t^{(k)}-z_s^{(k)}-\sum_{j=1}^{q-k}\frac{1}{j!}z_s^{(k+j)}\xunit^j
\end{equation}

When $z$ satisfies $R_{s,t}^{z,(k)}=O(|t-s|^{(q-k+1)\alpha})$ for all $k$, we call $z$ a discrete controlled path (of $x$).

Now, we set $\Xi_{s,t}^z=\sum_{l=1}^q \frac{1}{l!}z_s^{(l)}\xunit^l$ for arbitrary $s,t\in [0,T]$.

Then, we have
\begin{equation*}
\mathcal{I}_{s,t}^{(m)}(d(\Xi^z))-\Xi_{s,t}^z\lesssim_m |t-s|^{(q+1)\alpha}.
\end{equation*}
\end{prp}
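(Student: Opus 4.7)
The plan is a discrete analogue of Gubinelli's sewing lemma. Writing the three-point defect
\[
\delta\Xi_{s,u,t}^z := \Xi_{s,t}^z - \Xi_{s,u}^z - \Xi_{u,t}^z,
\]
the telescoping identity that expresses $\mathcal{I}_{s,t}^{(m)}(d(\Xi^z)) - \Xi_{s,t}^z$ as a sum of such defects over a tree of dyadic merges of adjacent subintervals reduces the proof to a single estimate of the form $|\delta\Xi_{s,u,t}^z|\lesssim_m |t-s|^{(q+1)\alpha}$. Since $(q+1)\alpha > 1$ by the choice $\alpha\in((q+1)^{-1},q^{-1}]$, this super-additive bound is strong enough to close the geometric sum coming from the dyadic iteration.

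First I would compute $\delta\Xi_{s,u,t}^z$ explicitly. Expanding $(\delta x_{s,t})^l = \sum_{k=0}^{l}\binom{l}{k}(\delta x_{s,u})^{l-k}(\delta x_{u,t})^k$ inside the definition of $\Xi_{s,t}^z$, the $k=0$ contribution is exactly $\Xi_{s,u}^z$. Swapping the order of summation and substituting $j=l-k$ in the remaining part yields
\[
\Xi_{s,t}^z - \Xi_{s,u}^z = \sum_{k=1}^q\frac{(\delta x_{u,t})^k}{k!}\sum_{j=0}^{q-k}\frac{z_s^{(k+j)}}{j!}(\delta x_{s,u})^j,
\]
and subtracting $\Xi_{u,t}^z = \sum_{k=1}^q \frac{1}{k!} z_u^{(k)}(\delta x_{u,t})^k$ the inner bracket becomes $-R_{s,u}^{z,(k)}$ by (\ref{dcp_def}). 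This produces the clean identity
\[
\delta\Xi_{s,u,t}^z = -\sum_{k=1}^q\frac{(\delta x_{u,t})^k}{k!}\,R_{s,u}^{z,(k)}.
\]
The controlled-path hypothesis $|R_{s,u}^{z,(k)}|\lesssim_m |u-s|^{(q-k+1)\alpha}$ together with $|\delta x_{u,t}|\le \|x\|_\alpha |t-u|^\alpha$ then gives the three-point bound $|\delta\Xi_{s,u,t}^z|\lesssim_m \sum_{k=1}^q |t-u|^{k\alpha}|u-s|^{(q-k+1)\alpha} \lesssim_m |t-s|^{(q+1)\alpha}$.

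Next I would iterate the cocycle bound via dyadic bisection. After reducing to $s,t\in\mathbb{P}_m$ (end-fragments $[s,\tau_{\lceil 2^m s\rceil}]$ and $[\tau_{\lfloor 2^m t\rfloor},t]$ contribute $\lesssim_m(t-s)^{(q+1)\alpha}$ directly from the definition of $\Xi^z$ and the $\alpha$-Hölder norm of $x$), write $[s,t]\cap\mathbb{P}_m=\{\tau_{r_0}<\cdots<\tau_{r_0+N}\}$. At each of $\lceil\log_2 N\rceil$ dyadic merging passes, level $k$ performs $\lesssim N 2^{-k-1}$ merges on subintervals of length $2^{k+1-m}$, each contributing a defect of size $\lesssim_m (2^{k+1-m})^{(q+1)\alpha}$. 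Summing across levels,
\[
\bigl|\mathcal{I}_{s,t}^{(m)}(d(\Xi^z)) - \Xi_{s,t}^z\bigr| \lesssim_m \sum_{k\ge 0} N 2^{-k-1}\,(2^{k+1-m})^{(q+1)\alpha},
\]
and since $(q+1)\alpha > 1$ the geometric series is dominated by its top term of order $(N 2^{-m})^{(q+1)\alpha} = (t-s)^{(q+1)\alpha}$. The resulting constant depends only on $q$, $\alpha$, and $\|x\|_\alpha$, hence is independent of $m$.

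The genuinely delicate step is the algebraic identity for $\delta\Xi_{s,u,t}^z$; once it is exhibited in the form (power of $\delta x_{u,t}$) times (controlled-path remainder on $[s,u]$), the Hölder exponents combine to give exactly $(q+1)\alpha$ and the dyadic sewing becomes a routine geometric-series computation. Any less surgical rearrangement of the binomial expansion would produce cross terms whose individual Hölder decay is weaker than $(q+1)\alpha$, so this precise factorisation is what makes the argument close.
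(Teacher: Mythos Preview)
Your proof is correct and follows exactly the standard sewing-lemma argument underlying Proposition~\ref{prp:continuous_rough_integral}; the paper in fact omits the proof entirely, stating only that this is the ``incomplete case'' of that proposition. The key algebraic identity $\delta\Xi_{s,u,t}^z = -\sum_{k=1}^q\frac{(\delta x_{u,t})^k}{k!}R_{s,u}^{z,(k)}$ you derive is the heart of the matter, and your dyadic merging bound is the routine way to sum the defects (note that the statement already takes $s,t\in\mathbb{P}_m$, so your end-fragment reduction is unnecessary but harmless).
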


This proposition is incomplete case of Proposition \ref{prp:continuous_rough_integral}, so that we omit the proof.

\begin{exap}\ 

\begin{itemize}
\item When a controlled path $z$ is restricted to $\mathbb{P}_m$, $z$ is discrete controlled path.
\item Let $z$ be a discrete controlled path and define $f(z)$ such as (\ref{pathcomposition}).
Then $f(z)$ is a discrete controlled path.
\end{itemize}

\end{exap}

Next, we aim to show that $\polsolstd{}$ is a discrete controlled path, so that we have to a few preparation.

\begin{defi}\label{definition:Asymptoticexpansion}

Let $\sigma\in C_b^{q+1},b\in C_b^2$. For $s,t\in \mathbb{P}_m$, we set $\Xi_{s,t}^{Y,(m,\rho)}$ and $R_{s,t}^{Y,(m,\rho)}$ as follows:
\begin{align*}
\Xi_{s,t}^{Y,(m,\rho)}\coloneqq& \Xi[\sigma,\polsolstd{},s,t]+b(\polsolstd{s})(t-s),\\
R_{s,t}^{Y,(m,\rho)}\coloneqq&\polsolstd{t}-\polsolstd{s}-\Xi_{s,t}^{Y,(m,\rho)}\label{Y_m_rho_Xi_R}.
\end{align*}
Next, we decompose $R_{s,t}^{Y,(m,\rho)}$ into $R_{s,t}^{Y,1,(m,\rho)}$ and $R_{s,t}^{Y,2,(m,\rho)}$, where
\begin{align*}
R_{s,t}^{Y,1,(m,\rho)}\coloneqq&\overline{f}_q(\polsolstd{s})\mathcal{I}_{s,t}^{(m)}(d(B^{(q)})),\\
R_{s,t}^{Y,2,(m,\rho)}\coloneqq&R_{s,t}^{Y,(m,\rho)}-R_{s,t}^{Y,1,(m,\rho)}.
\end{align*}

\end{defi}
We remark that $\delta R_{s,u,t}^{Y,2,(m,\rho)}=\delta R_{s,u,t}^{Y,(m,\rho)}=-\delta \Xi_{s,u,t}^{Y,(m,\rho)}$ by definition.
From Corollary \ref{corollary:BdiscreteHoelder}, we have $|R_{s,t}^{Y,1,(m,\rho)}|\lesssim_{m,\rho} (t-s)^{qH_-}$.
Also, if $|Y_{s,t}^{(m,\rho)}|\lesssim_{m,\rho} (t-s)^{H_-}$, we get $|R_{s,t}^{Y,2,(m,\rho)}|\lesssim_{m,\rho} (t-s)^{(q+1)H_-}$ using the sewing lemma.
 Naturally, if $\|\polsolstd{\cdot}\|_{H_-}\lesssim_{m,\rho} 1,$ and $\|R_\cdot^{Y,2,(m,\rho)}\|_{(q+1)H_-}\lesssim_{m,\rho} 1$, we have (\ref{dcp_def}) for $k=1$, $z^{(k)}=\polsolstd{}$. So, we will get these estimates.

\begin{prp}\label{proposition:discrete_sewing_for_rough_path}

Given $f\in C^{q+1},g\in C^1, x\in C^\alpha$ for $(q+1)^{-1}<\alpha\leq q^{-1}$, we set $Z_t,\Xi_{s,t}^Z, R_{s,t}^Z$ as
\begin{align*}
\Xi_{s,t}^Z=& \sum_{k=1}^q \frac{1}{k!}\mathcal{D}_f^{k-1}f(Z_s)\xunit^k+g(Z_s)(t-s),\\
R_{s,t}^Z=&Z_t-Z_s-\Xi_{s,t}^Z.
\end{align*}

Then, there exists constants $C_a$ and $C_b$ depend on $\|f\|_{C^{q+1}}, \|g\|_{C^1}$ and $\|x_\cdot\|_\alpha$ such that 

\begin{equation*}
|\delta \Xi_{s,u,t}^Z|\leq C_a(t-s)^{(q+1)\alpha}+C_b P_q(|R_{s,u}^Z|,(t-s)^{\alpha}),
\end{equation*}
where $P_q(x,y)$ is polynomial defined as
\begin{equation*}
P_q(x,y)=\sum_{k_1=1}^q\sum_{k_2=0}^{q^2} x^{k_1}y^{k_2}.
\end{equation*}

\end{prp}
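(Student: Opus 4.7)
The plan is to expand $\delta\Xi_{s,u,t}^Z = \Xi_{s,t}^Z - \Xi_{s,u}^Z - \Xi_{u,t}^Z$ explicitly and then separate a \emph{pure} contribution controlled by $(t-s)^{(q+1)\alpha}$ from an \emph{$R$-dependent} contribution fitting the polynomial $P_q(|R_{s,u}^Z|,(t-s)^\alpha)$. Writing $\phi_k \coloneqq \mathcal{D}_f^{k-1}f$ for brevity and applying the binomial identity $\delta x_{s,t}^k = (\delta x_{s,u} + \delta x_{u,t})^k$, one obtains
\begin{align*}
\delta\Xi_{s,u,t}^Z = &\sum_{k=1}^q \frac{\phi_k(Z_s)-\phi_k(Z_u)}{k!}\,\delta x_{u,t}^k + (g(Z_s)-g(Z_u))(t-u)\\
& + \sum_{k=2}^q \frac{\phi_k(Z_s)}{k!}\sum_{j=1}^{k-1}\binom{k}{j}\,\delta x_{s,u}^j\,\delta x_{u,t}^{k-j}.
\end{align*}

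Next I would Taylor-expand each $\phi_k(Z_u)-\phi_k(Z_s)$ to order $q-k$ in $\Delta_{s,u}Z \coloneqq Z_u - Z_s$, substitute $\Delta_{s,u}Z = \Xi_{s,u}^Z + R_{s,u}^Z$, and expand by the binomial theorem. This splits the first sum into a pure part, in which each occurrence of $\Delta_{s,u}Z$ is replaced by $\Xi_{s,u}^Z$, plus Taylor remainders of order $|\Delta_{s,u}Z|^{q-k+1}$, and an $R$-part in which at least one factor is $R_{s,u}^Z$. Expanding each pure factor $\Xi_{s,u}^Z$ further via the multinomial theorem in the basis $\{\delta x_{s,u},\,u-s\}$ produces monomials whose coefficients are products of the $\phi_j(Z_s)$'s and powers of $g(Z_s)$.

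The crux of the argument, and the main obstacle, is the algebraic identity showing that these pure-part monomials exactly cancel the combinatorial terms $\binom{k}{j}\phi_k(Z_s)/k!$ from the third sum above, for every $(k,j)$ whose combined order in $\delta x$ is at most $q$. Equivalently, one must verify that the formal polynomial $\Xi_{s,t}^Z$ satisfies $\Xi_{s,t}^Z \equiv \Xi_{s,u}^Z + \Xi_{u,t}^Z$ modulo $\delta x$-monomials of order strictly greater than $q$. This is the formal Taylor cocycle identity; it boils down to the chain-rule relation $\mathcal{D}_f\phi_k = \phi_{k+1}$ together with a Fa\`a di Bruno-type multinomial identity, which I plan to verify by induction on $q$. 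After cancellation, the pure-part residue consists of the Taylor remainders multiplied by $\delta x_{u,t}^k$, each bounded by $(u-s)^{(q-k+1)\alpha}(t-u)^{k\alpha}\lesssim (t-s)^{(q+1)\alpha}$, yielding the constant $C_a$ depending only on $\|f\|_{C^{q+1}},\|g\|_{C^1},\|x_\cdot\|_\alpha$.

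For the $R$-part, each surviving term is of schematic form (bounded coefficient) $\times |R_{s,u}^Z|^{k_1}\,|\delta x|^{k_2}$ with $1\le k_1\le q$ (counting powers of $R_{s,u}^Z$ coming from the binomial expansion and from the Taylor remainder) and $0\le k_2\le q^2$ (counting the factors of $\delta x$ coming from the up to $q$-fold product of $\Xi_{s,u}^Z$'s together with $\delta x_{u,t}^{k-j}$); after bounding $|\delta x|\lesssim(t-s)^\alpha$ this fits into $C_b P_q(|R_{s,u}^Z|,(t-s)^\alpha)$. The drift difference $(g(Z_s)-g(Z_u))(t-u)$ is controlled by $|g(Z_u)-g(Z_s)|\le \|g'\|_\infty(|\Xi_{s,u}^Z|+|R_{s,u}^Z|)$ together with $(u-s)^\alpha(t-u)\le(t-s)^{1+\alpha}\le(t-s)^{(q+1)\alpha}$ (using $\alpha\le 1/q$), distributing into the $C_a$ and $C_b$ contributions. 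Collecting everything yields the claimed bound with constants depending only on $\|f\|_{C^{q+1}}$, $\|g\|_{C^1}$, and $\|x_\cdot\|_\alpha$.
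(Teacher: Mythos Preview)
Your approach is correct and is essentially the standard rough-path argument that the paper invokes implicitly: the paper gives no proof of this proposition at all, saying only that ``this estimate appears in the proof of Proposition~\ref{prp:continuous_rough_integral}, so that we omit the proof.'' Your sketch fleshes out exactly that argument---binomial expansion of $\delta x_{s,t}^k$, Taylor expansion of $\phi_k(Z_u)-\phi_k(Z_s)$ in $\Delta_{s,u}Z=\Xi_{s,u}^Z+R_{s,u}^Z$, and isolation of the $R$-independent part whose cancellation to order $q$ is the Chen/cocycle identity underlying the rough integral.

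One remark on the cancellation step: rather than verifying the Fa\`a~di~Bruno identity by induction on $q$, you can also argue as the paper does in Lemma~\ref{lmm:Ymrho_bounded} and Lemma~\ref{lemma:estimate_log_remainder}: take a smooth driver $w$, let $y$ solve $dy=f(y)\,dw+g(y)\,dt$, and compare the two Taylor expansions of $y_t$ at $s$ (direct versus via $u$); uniqueness of power series then forces the required coefficient identities. This avoids the combinatorics entirely. Either route works, and the remaining bookkeeping---bounding the Taylor remainders by $(t-s)^{(q+1)\alpha}$ using $|\Xi_{s,u}^Z|\lesssim (u-s)^\alpha$ and $\alpha\le 1/q$, and sorting the $R$-dependent monomials into $P_q(|R_{s,u}^Z|,(t-s)^\alpha)$---is exactly as you describe.
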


This estimate appears in proof of Proposition \ref{prp:continuous_rough_integral}, so that we omit the proof.

Since we have already prepared to show that $\polsolstd{}$ satisfies (\ref{dcp_def}), we start to estimate.
Firstly, we will show a short-time range estimate of (\ref{dcp_def}) when $k=1$. Also, we obtain H\"older estimate of $\polsolstd{}$.
We use Davie's technique in \cite{Davie2008}.

\begin{lmm}\label{lemma:expansion_small_distance}
Let $\sigma\in C_b^{q+1},b\in C_b^2$. Also, we define $\|\cdot\|_{H_-,[s,t]}$ as the discrete H\"older norm restricted to the interval [s,t].

Then, there exists positive constants $\delta, C_0,C_1$ independent of $m,\rho, s, t$ such that

\begin{align}
|\polsolstd{s,t}|\leq C_0(t-s)^{H_-},&&|R_{s,t}^{Y,2,(m,\rho)}|\leq C_1(t-s)^{(q+1)H_-}\label{short_time_estimate}
\end{align}
for any $s,t\in \mathbb{P}_m$ satisfies $|t-s|<\delta$.
\end{lmm}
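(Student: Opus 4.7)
The plan is to prove both estimates simultaneously by a Davie-type bootstrap: combine the sewing-type bound of Proposition \ref{proposition:discrete_sewing_for_rough_path} with iterated dyadic subdivision of $[s,t]$ down to grid-adjacent pairs, starting from an explicit base case at the grid scale. Throughout, all constants must be kept uniform in $m$, $\rho$, and the interval endpoints.

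Step 1 (base case at the grid scale). For $s=\bitime{r}$, $t=\bitime{r+1}$, I read $Y_{s,t}^{(m,\rho)}$ directly off (\ref{definitionY}). Each term on the right side is controllable: $\Xi[\sigma,Y^{(m,\rho)},s,t]$ and $b(Y^{(m,\rho)}_s)(t-s)$ are bounded via $\|\sigma\|_{C^{q+1}}$, $\|b\|_{C^1}$ and $\|B_\cdot\|_{H_-}$; the noise part is controlled by the hypotheses on $\hat\epsilon^{(m,(q+1)H)}$ (Condition~\ref{condition:secondalycondition}) and $\epsilon^{((q+1)H)}$ (Proposition~\ref{proposition:taylorfory}); and, under Condition $(\mathfrak{A})$ the term $\overline{f}_q\equiv 0$ kills the remaining contribution while under Condition $(\mathfrak{B})$ it is absorbed into $R^{Y,1,(m,\rho)}$ modulo a Lipschitz correction. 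This yields $|Y_{s,t}^{(m,\rho)}|\lesssim 2^{-mH_-}$ and, after subtracting $\Xi^{Y,(m,\rho)}_{s,t}$ and $R^{Y,1,(m,\rho)}_{s,t}$, $|R_{s,t}^{Y,2,(m,\rho)}|\lesssim 2^{-m(q+1)H_-}$, both uniformly in $r,m,\rho$.

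Step 2 (sewing identity). For general $s<u<t$ in $\mathbb{P}_m$, additivity of the path $Y^{(m,\rho)}$ gives $\delta R^{Y,(m,\rho)}_{s,u,t}=-\delta\Xi^{Y,(m,\rho)}_{s,u,t}$, and Proposition~\ref{proposition:discrete_sewing_for_rough_path} with $Z=Y^{(m,\rho)}$, $f=\sigma$, $g=b$ bounds the right side by
\begin{equation*}
C_a(t-s)^{(q+1)H_-}+C_b\,P_q\bigl(|R^{Y,(m,\rho)}_{s,u}|,(t-s)^{H_-}\bigr).
\end{equation*}
A direct computation using $\mathcal{I}^{(m)}_{s,t}=\mathcal{I}^{(m)}_{s,u}+\mathcal{I}^{(m)}_{u,t}$ gives
\begin{equation*}
\delta R^{Y,1,(m,\rho)}_{s,u,t}=\bigl[\overline{f}_q(Y^{(m,\rho)}_s)-\overline{f}_q(Y^{(m,\rho)}_u)\bigr]\mathcal{I}^{(m)}_{u,t}(d(B^{(q)})),
\end{equation*}
which is bounded by $C\,|Y^{(m,\rho)}_{s,u}|\,(t-u)^{qH_-}$ using Lipschitz continuity of $\overline{f}_q$ together with the discrete H\"older control on $\mathcal{I}^{(m)}(d(B^{(q)}))$ that follows from Corollary~\ref{corollary:BdiscreteHoelder} (and from the direct pointwise bound $|\delta B^q|\lesssim (t-s)^{qH_-}$). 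Subtracting the two identities yields the desired bound on $|\delta R^{Y,2,(m,\rho)}_{s,u,t}|$.

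Step 3 (dyadic bootstrap). Set
\begin{equation*}
A_\delta:=\sup_{\substack{s,t\in\mathbb{P}_m\\0<t-s<\delta}}\frac{|Y^{(m,\rho)}_{s,t}|}{(t-s)^{H_-}},\qquad B_\delta:=\sup_{\substack{s,t\in\mathbb{P}_m\\0<t-s<\delta}}\frac{|R^{Y,2,(m,\rho)}_{s,t}|}{(t-s)^{(q+1)H_-}}.
\end{equation*}
For any admissible $(s,t)$, I iterate the identity $R^{Y,2,(m,\rho)}_{s,t}=R^{Y,2,(m,\rho)}_{s,u}+R^{Y,2,(m,\rho)}_{u,t}+\delta R^{Y,2,(m,\rho)}_{s,u,t}$ with $u$ taken near the grid midpoint, down to adjacent grid pairs where Step~1 applies. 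The sewing errors from Step~2, weighted by $2^n$ at subdivision level $n$, sum to a convergent geometric series because $(q+1)H_->1$, leaving closed inequalities of the form $B_\delta\le C_1+C\delta^{\kappa_1}(A_\delta^{q+1}+B_\delta)$ and $A_\delta\le C_0+C\delta^{\kappa_2}B_\delta$ for some $\kappa_1,\kappa_2>0$. Taking $\delta$ sufficiently small (depending only on $\sigma$, $b$, $\|B\|_{H_-}$ and the global constants) closes the loop and produces the claimed uniform bounds.

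The main obstacle is controlling the polynomial self-reference of $A_\delta$ and $B_\delta$ without losing uniformity in $(m,\rho)$: the $P_q(\cdot,\cdot)$ from the sewing bound is nonlinear in $|R^{Y,(m,\rho)}_{s,u}|\le |R^{Y,1,(m,\rho)}_{s,u}|+|R^{Y,2,(m,\rho)}_{s,u}|$, and the first summand is not itself small at grid scale, so the bootstrap must be organised so that the dangerous $|R^{Y,1}|^{k}$ contributions are paid for by extra factors of $(t-s)^{H_-}$ coming from the polynomial degree matching $qH_-+H_->1$. A subsidiary subtlety is the grid-scale mismatch between $\overline{f}_q(\hat Y^{(m)}_{\bitime{r}})$ in (\ref{definitionY}) and $\overline{f}_q(Y^{(m,\rho)}_s)$ in the definition of $R^{Y,1,(m,\rho)}$, which must be absorbed into $R^{Y,2,(m,\rho)}$ via Lipschitz continuity of $\overline{f}_q$ plus an inductive control on $|\hat Y^{(m)}_{\bitime{r}}-Y^{(m,\rho)}_{\bitime{r}}|$ built up together with $A_\delta$ and $B_\delta$.
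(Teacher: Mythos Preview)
Your approach is the same Davie-type bootstrap the paper uses: same base case at adjacent grid points, same sewing identity via Proposition~\ref{proposition:discrete_sewing_for_rough_path}, same uniform $qH_-$-control of $R^{Y,1,(m,\rho)}$ through Corollary~\ref{corollary:BdiscreteHoelder}, and the same identification of the nonlinearity in $P_q$ as the crux.

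The one substantive gap is how Step~3 actually closes. A polynomial inequality of the form $B_\delta\le C_1+C\delta^{\kappa}\,\mathrm{Poly}(B_\delta)$ has a large branch as well as a small one; ``taking $\delta$ small'' does not by itself select the small branch, and for fixed $m$ your $B_\delta$ is finite only because $\mathbb{P}_m$ is finite, so nothing in the outline prevents it from sitting on the large branch as $m$ grows. The paper resolves this by replacing the $A_\delta,B_\delta$ suprema with an explicit induction on the number $k$ of grid steps in $[s,t]$: it sets constants $C_0^{(m,k)},C_1^{(m,k)}$ realising (\ref{short_time_estimate}) for $t-s\le k\,2^{-m}$, passes from $k$ to $k{+}1$ via the three-point split $s<u<v<t$ with $u$ the grid midpoint and $v=u+2^{-m}$ (so $[s,u]$ and $[v,t]$ fall under the inductive hypothesis and $[u,v]$ under the base case), and obtains a recursion $C_1^{(m,k+1)}\le 2^{-((q+1)H_--1)}C_1^{(m,k)}+C_8^{(m,k)}+\text{const.}$ with $C_8^{(m,k)}$ coming from $P_q$. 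It then fixes targets $C_0,C_1$ and chooses $\delta$ so small that $C_1\delta^{H_-}<1$, which tames $P_q$ and makes $C_8^{(m,k)}\le C_8$ whenever $C_1^{(m,k)}\le C_1$; the induction then shows $C_i^{(m,k)}\le C_i$ persists for all $k$ with $k\,2^{-m}<\delta$. This induction-on-$k$ structure is precisely what selects the small branch, and your Step~3 needs it (or an equivalent discrete continuity argument) to be complete.
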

\begin{proof}
Throughout this proof, we use $C_l$ for integer $l$ as some constant independent of $m$ and $\rho$.

Firstly, we aim to define $C_0^{(m,k)}$ and $C_1^{(m,k)}$ for all positive integers $m$ and $k$ such that

\begin{align}
|\polsolstd{s,t}|\leq C_0^{(m,k)}(t-s)^{H_-},&&|R_{s,t}^{Y,2,(m,\rho)}|\leq C_1^{(m,k)}(t-s)^{(q+1)H_-}\label{short_time_estimate_local}
\end{align}
for any discrete times $s,t\in \mathbb{P}_m$ satisfying that $0<t-s\leq k\cdot 2^{-m}$.

First, when $k=1$, by definition,
\begin{align}
Y_{s,t}^{(m,\rho)}=\Xi_{s,t}^{Y,(m,\rho)}+R_{s,t}^{Y,2,(m,\rho)},&&R_{s,t}^{Y,2,(m,\rho)}=R_{s,t}^{Y,(m,\rho)}=\rho\hat{\epsilon}_{s,t}^{(m)}+(1-\rho)\epsilon_{s,t}\label{Rymrho_small}.
\end{align}
Hence, we can set constants $C_0^{(1)},C_1^{(1)}$ such that  $C_0^{(m,1)}=C_0^{(1)},C_1^{(m,1)}=C_1^{(1)}$.

Next, suppose that we have already set $C_0^{(m,k)}$ and $C_1^{(m,k)}$ for fixed $k$.
Take $s,t\in \mathbb{P}_m$ such that $t-s=(k+1)2^{-m}$ arbitrary.
In order to estimate $R_{s,t}^{Y,2,(m,\rho)}$, we take $u$ as the largest discrete time in $\mathbb{P}_m$ such that $|u-s|\leq (t-s)/2$. Also, we denote $u+2^{-m}$ by $v$. Now, we decompose $R_{s,t}^{Y,2,(m,\rho)}$ as

\begin{equation*}
R_{s,t}^{Y,2,(m,\rho)}=R_{s,u}^{Y,2,(m,\rho)}+R_{u,v}^{Y,2,(m,\rho)}+R_{v,t}^{Y,2,(m,\rho)}-\delta R_{s,u,t}^{Y,2,(m,\rho)}-\delta R_{u,v,t}^{Y,2,(m,\rho)}.
\end{equation*}
We estimate each term on the right-hand side.

\begin{itemize}
\item For $R_{s,u}^{Y,2,(m,\rho)}$, by assumption, we have $|R_{s,u}^{Y,2,(m,\rho)}|\leq C_1^{(m,k)}(u-s)^{(q+1)H_-}$.
Since $(u-s)\leq (t-s)/2$, we obtain $|R_{s,u}^{Y,2,(m,\rho)}|\leq 2^{(q+1)H_-}C_1^{(m,k)}(t-s)^{(q+1)H_-}$.
\item For $R_{u,v}^{Y,2,(m,\rho)}$, since $|v-u|=2^{-m}$, $|R_{u,v}^{Y,2,(m,\rho)}|\leq C_0^{(1)}2^{-m(q+1)H_-}$.
\item For $R_{v,t}^{Y,2,(m,\rho)}$, such as $R_{s,u}^{Y,2,(m,\rho)}$, $|R_{v,t}^{Y,2,(m,\rho)}|\leq 2^{(q+1)H_-}C_1^{(m,k)}(t-s)^{(q+1)H_-}$.
\item For $\delta R_{s,u,t}^{Y,2,(m,\rho)}$, by definition, we have $\delta R_{s,u,t}^{Y,2,(m,\rho)}=-\delta \Xi_{s,u,t}^{Y,(m,\rho)}-\delta R_{s,u,t}^{Y,1,(m,\rho)}$.

From Corollary \ref{corollary:BdiscreteHoelder}, $\|R_\cdot^{Y,1,(m,\rho)}\|_{qH_-}\leq \|\overline{f}_q\|_\infty\|\mathcal{I}_\cdot^{(m)}(d(B^{(q)}))\|_{qH_-}\lesssim_m 1$.
Hence, we set $C_3$ such that $\|R_\cdot^{Y,1,(m,\rho)}\|_{qH_-}<C_3$.

Also, from Proposition \ref{proposition:discrete_sewing_for_rough_path}, we set $C_7$ such as
\begin{equation*}
|\delta\Xi_{s,u,t}^{Y,(m,\rho)}|\leq C_7((t-u)^{(q+1)H_-}+P_q(|R_{s,u}^{Y,(m,\rho)}|,(t-s)^{H_-})).
\end{equation*}

For $R_{s,u}^{Y,(m,\rho)}$, we have
\begin{equation*}
|R_{s,u}^{Y,(m,\rho)}|\leq |R_{s,u}^{Y,1,(m,\rho)}|+|R_{s,u}^{Y,1,(m,\rho)}|\leq C_3(u-s)^{qH_-}+C_1^{(m,k)}(u-s)^{(q+1)H_-}.
\end{equation*}
Naturally, we can estimate $\delta\Xi_{s,u,t}^{Y,(m,\rho)}$ again as follows:
\begin{align*}
|\delta\Xi_{s,u,t}^{Y,(m,\rho)}|\leq& C_7((t-s)^{(q+1)H_-}+(t-s)^{H_-}\\
&P_q(C_3(u-s)^{qH_-}+C_1^{(m,k)}(u-s)^{(q+1)H_-},(t-s)^{H_-})).
\end{align*}
So, we define $C_8^{(m,k+1)}$ as
\begin{eqnarray*}
&C_7(1+((k+1)\cdot 2^{-m})^{-qH_-}\\
&\cdot P_q(((k+1)\cdot 2^{-m})^{qH_-}C_3+((k+1)\cdot 2^{-m})^{(q+1)H_-}C_1^{(m,k)},((k+1)\cdot 2^{-m})^{H_-})),
\end{eqnarray*}
so that $|\delta R_{s,u,t}^{Y,2,(m,\rho)}|\leq C_8^{(m,k+1)}(t-s)^{(q+1)H_-}$.
\item For $\delta R_{u,v,t}^{Y,2,(m,\rho)}$, by definition, we have $\delta R_{u,v,t}^{Y,2,(m,\rho)}=-\delta \Xi_{u,v,t}^{Y,(m,\rho)}-\delta R_{u,v,t}^{Y,1,(m,\rho)}$.
Combining Proposition \ref{proposition:discrete_sewing_for_rough_path} and (\ref{Rymrho_small}), we can set $C_6$ such that
\begin{equation*}
|\delta \Xi_{u,v,t}^{Y,(m,\rho)}|\leq C_a(t-u)^{(q+1)H_-}+C_bP_q(|R_{u,v}^{Y,(m,\rho)}|,(t-v)^{H_-})\leq C_6(t-u)^{(q+1)H_-}.
\end{equation*}
\end{itemize}

Now, we can estimate $R_{s,t}^{Y,2,(m,\rho)}$ as follows:
\begin{align*}
|R_{s,t}^{Y,2,(m,\rho)}|=&|R_{s,u}^{Y,2,(m,\rho)}|+|R_{u,v}^{Y,2,(m,\rho)}|+|R_{v,t}^{Y,2,(m,\rho)}|+|\delta R_{s,u,t}^{Y,2,(m,\rho)}|+|\delta R_{u,v,t}^{Y,2,(m,\rho)}|\\
\leq& (2^{-((q+1)H_--1)}C_1^{(m,k)}+C_8^{(m,k)}+C_0^{(1)}+C_6)(t-s)^{(q+1)H_-}.
\end{align*}

On the other hand, by a priori estimate, we set $C_2$ such that $\|\Xi^{Y,(m,\rho)}\|_{H_-}\leq C_2$.
Then, for $Y_{s,t}^{(m,\rho)}$, we have
\begin{align*}
Y_{s,t}^{(m,\rho)}=&\Xi_{s,t}^{Y,(m,\rho)}+R_{s,t}^{Y,(m,1)}+R_{s,t}^{Y,2,(m,\rho)},\\
|Y_{s,t}^{(m,\rho)}|\leq& C_2(t-s)^{H_-}+C_3(t-s)^{qH_-}+C_1^{(m,k+1)}(t-s)^{(q+1)H_-}.
\end{align*}

Next, we define $C_0^{(m,k+1)}$ and $C_1^{(m,k+1)}$:
\begin{eqnarray*}
C_0^{(m,k+1)}\coloneqq&C_2+C_3((k+1)2^{-m})^{(q-1)H_-}+C_1^{(m,k+1)}((k+1)2^{-m})^{qH_-},\\
C_1^{(m,k+1)}\coloneqq&2^{-((q+1)H_--1)}C_1^{(m,k)}+C_8^{(m,k)}+C_0^{(1)}+C_6.
\end{eqnarray*}

Using the estimates for $R_{s,t}^{Y,2,(m,\rho)}$ and $Y_{s,t}^{(m,\rho)}$, we have (\ref{short_time_estimate_local}) for any $s,t$ such that $t-s\leq (k+1)2^{-m}$.

Subsequently, we can take $C_0^{(m,k)}$ and $C_1^{(m,k)}$ such that (\ref{short_time_estimate_local}) for any $m$ and $k$.

Secondly, we aim to discover constants $C_0$, $C_1$ and $\delta$, independent of $m$ and $\rho$, such that $C_0^{(m,k)}\leq C_0$ and $C_1^{(m,k)}\leq C_1$ for any $m,k$ such that $k\cdot 2^{-m}< \delta$.

We set the constants $C_0,C_1,C_8$ as follows:
\begin{align*}
C_8&\coloneqq C_7\left(1+P_q(C_3+2,1)\right),&
C_1&\coloneqq\left(\frac{C_0^{(1)}+C_6+C_8}{1-2^{(q+1)H_--1}}\right)\vee C_1^{(1)},\\
C_0&\coloneqq (C_2+C_3+C_1)\vee C_0^{(1)}.
\end{align*}
Also, we choose the constant $\delta\in (0,1)$ such that $C_1\delta^{H_-}<1,C_4\delta^{qH_-}<\frac{1}{2}$.
When $k=1$, $C_1^{(m,1)}\leq C_1,C_0^{(m,1)}\leq C_0$ by definition.

Next, for some $k$ such that $(k+1)\cdot 2^{-m}<\delta$, we suppose that $C_1^{(m,j)}\leq C_1,C_0^{(m,j)}\leq C_0$ for any $1\leq j\leq k$. Then,
\begin{align*}
C_8^{(m,k+1)}=&C_7(1+((k+1)\cdot 2^{-m})^{-qH_-}P_q(((k+1)\cdot 2^{-m})^{qH_-}C_3\\
&+((k+1)\cdot 2^{-m})^{(q+1)H_-}C_1^{(m,k)},((k+1)\cdot 2^{-m})^{H_-}))\\
\leq& C_7(1+((k+1)\cdot 2^{-m})^{-qH_-}\\
&P_q(((k+1)\cdot 2^{-m})^{qH_-}C_3+((k+1)\cdot 2^{-m})^{(q+1)H_-}C_1,1))\\
\leq& C_7(1+((k+1)\cdot 2^{-m})^{-qH_-}P_q(((k+1)\cdot 2^{-m})^{qH_-}(C_3+1),1))\\
\leq& C_7(1+P_q(C_3+1,1))=C_8.
\end{align*}
Also,
\begin{align*}
C_1^{(m,k+1)}=&2^{-((q+1)H_--1)}C_1^{(m,k)}+C_8^{(m,k)}+C_0^{(1)}+C_6\\
\leq& 2^{-((q+1)H_--1)}C_1+C_8^{(m,k)}+C_0^{(1)}+C_6\\
\leq& 2^{-((q+1)H_--1)}C_1+C_8+C_0^{(1)}+C_6\leq C_1.
\end{align*}
Moreover,
\begin{align*}
C_0^{(m,k+1)}=&C_2+C_3((k+1)2^{-m})^{(q-1)H_-}+C_1^{(m,k+1)}(((k+1)2^{-m})^{qH_-}.\\
\leq& C_2+C_3+C_1\leq C_0.
\end{align*}

By the induction, we can conclude $C_0^{(m,k)}\leq C_0,C_1^{(m,k)}\leq C_1$ if $k\cdot 2^{-m}<\delta$.

Hence, we conclude (\ref{short_time_estimate}) for any $s,t$ such that $|t-s|<\delta$.
\end{proof}

Secondly, we get the long-time range estimate from the short one.

\begin{lmm}\label{lemma:partition_remainder}

Let $\mathscr{P}=\{t_0,\cdots,t_n\}$ be the partition on $\mathbb{P}_m$.
Suppose that for any $1\leq i\leq n-1$,
\begin{align*}
|Y_{t_{i+1},t_i}^{(m,\rho)}|\leq C_{10}(t_{i+1}-t_i)^{H_-},&&|R_{t_i,t_{i+1}}^{Y,2,(m,\rho)}|\leq C_{11}(t_{i+1}-t_i)^{(q+1)H_-}
\end{align*}
with the constant $C_{10},C_{11}$.

Then, the following inequality holds:

\begin{align*}
|Y_{t_0,t_n}^{(m,\rho)}|\leq nC_{10}(t_n-t_0)^{(q+1)H_-},&&|R_{t_0,t_n}^{Y,2,(m,\rho)}|\leq (C_{11}+(n-1)C_{12})(t_n-t_0)^{(q+1)H_-},
\end{align*}
where $C_{12}$ is a constant depends on $C_{10}$ and $C_{11}$.
\end{lmm}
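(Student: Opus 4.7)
The plan is to treat the two estimates separately, both exploiting the additive/cocycle structure of the error process. For the bound on $\polsolstd{t_0,t_n}$, I telescope:
\[
\polsolstd{t_0,t_n}=\sum_{i=0}^{n-1}\polsolstd{t_i,t_{i+1}},
\]
apply the triangle inequality, and use $(t_{i+1}-t_i)^{H_-}\leq(t_n-t_0)^{H_-}$ to obtain $nC_{10}(t_n-t_0)^{H_-}$ term by term. (I read the exponent $(q+1)H_-$ in the stated first conclusion as a typo for $H_-$.)

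For the bound on $R_{t_0,t_n}^{Y,2,(m,\rho)}$, I induct on $n$. The case $n=1$ is exactly the hypothesis. For the inductive step, I use the cocycle relation
\[
R_{t_0,t_n}^{Y,2,(m,\rho)}=R_{t_0,t_{n-1}}^{Y,2,(m,\rho)}+R_{t_{n-1},t_n}^{Y,2,(m,\rho)}+\delta R_{t_0,t_{n-1},t_n}^{Y,2,(m,\rho)}.
\]
The first two summands are bounded by the induction hypothesis and by the per-step assumption, respectively. For the coboundary, I invoke the identity $\delta R_{s,u,t}^{Y,2,(m,\rho)}=-\delta \Xi_{s,u,t}^{Y,(m,\rho)}$ recorded just after Definition \ref{definition:Asymptoticexpansion}, and apply the discrete sewing estimate of Proposition \ref{proposition:discrete_sewing_for_rough_path} at $(s,u,t)=(t_0,t_{n-1},t_n)$, which gives
\[
|\delta \Xi_{t_0,t_{n-1},t_n}^{Y,(m,\rho)}|\leq C_a(t_n-t_0)^{(q+1)H_-}+C_b P_q(|R_{t_0,t_{n-1}}^{Y,(m,\rho)}|,(t_n-t_0)^{H_-}).
\]
The factor $|R_{t_0,t_{n-1}}^{Y,(m,\rho)}|$ I split as $|R_{t_0,t_{n-1}}^{Y,1,(m,\rho)}|+|R_{t_0,t_{n-1}}^{Y,2,(m,\rho)}|$, where the first piece is uniformly $\lesssim_m(t_{n-1}-t_0)^{qH_-}$ by Corollary \ref{corollary:BdiscreteHoelder} and the second piece is controlled by the induction hypothesis.

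The main obstacle is to define $C_{12}$ genuinely independently of $n$. Since the induction hypothesis contributes a coefficient of $|R_{t_0,t_{n-1}}^{Y,2,(m,\rho)}|$ that is linear in $n$, the polynomial $P_q$ might a priori produce $n$-dependent contributions. The way out is to exploit that $t_n-t_0\leq T$ is bounded and that every occurrence of $|R^{Y,2}|$ inside $P_q$ comes paired with an extra $(t_{n-1}-t_0)^{(q+1)H_-}$ that overcompensates the missing $H_-$ powers needed to match $(q+1)H_-$; combined with the uniform $R^{Y,1}$ estimate, the ratio $|\delta\Xi_{t_0,t_{n-1},t_n}^{Y,(m,\rho)}|/(t_n-t_0)^{(q+1)H_-}$ is bounded by a constant depending only on $C_{10}$, $C_{11}$ and the fixed data $\sigma, b, B, T$, which I take as $C_{12}$. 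The simple recursion $A_n\leq A_{n-1}+C_{11}+C_{12}$ for $A_n\coloneqq C_{11}+(n-1)C_{12}$ then closes the induction. This is the same bookkeeping that appeared at the end of the proof of Lemma \ref{lemma:expansion_small_distance}, one level simpler because here no refinement of the partition is needed.
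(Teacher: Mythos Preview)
Your telescoping argument for $|\polsolstd{t_0,t_n}|$ is fine and matches the paper. The problem is in the inductive step for $R^{Y,2}$: you peel off the \emph{last} subinterval and write
\[
R_{t_0,t_n}^{Y,2,(m,\rho)}=R_{t_0,t_{n-1}}^{Y,2,(m,\rho)}+R_{t_{n-1},t_n}^{Y,2,(m,\rho)}+\delta R_{t_0,t_{n-1},t_n}^{Y,2,(m,\rho)}.
\]
Proposition~\ref{proposition:discrete_sewing_for_rough_path} bounds $|\delta\Xi_{s,u,t}^{Y}|$ through $P_q(|R_{s,u}^{Y}|,(t-s)^{H_-})$, so with your splitting the input is $|R_{t_0,t_{n-1}}^{Y}|$, a \emph{multi-step} remainder. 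Splitting it as $|R^{Y,1}|+|R^{Y,2}|$ and invoking the induction hypothesis feeds $A_{n-1}\coloneqq C_{11}+(n-2)C_{12}$ into $P_q$. Already the lowest-order term $k_1=1,k_2=0$ of $P_q$ contributes $C_b A_{n-1}(t_{n-1}-t_0)^{(q+1)H_-}$, so $|\delta R_{t_0,t_{n-1},t_n}^{Y,2}|/(t_n-t_0)^{(q+1)H_-}$ is at least of order $C_b A_{n-1}$, not a constant. Your recursion becomes $A_n\le (1+C_b)A_{n-1}+\text{const}$ rather than $A_n\le A_{n-1}+C_{12}$, and no choice of an $n$-independent $C_{12}$ closes it. The claim that ``every occurrence of $|R^{Y,2}|$ in $P_q$ comes paired with an extra $(t_{n-1}-t_0)^{(q+1)H_-}$'' is true but irrelevant: that factor is bounded above by $T^{(q+1)H_-}$, not below, so it does nothing to suppress the growing coefficient $A_{n-1}$.

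The paper avoids this by peeling off the \emph{first} subinterval instead: with $(s,u,t)=(t_0,t_1,t_n)$, Proposition~\ref{proposition:discrete_sewing_for_rough_path} requires only $|R_{t_0,t_1}^{Y}|$, which is a single-step quantity bounded directly by the hypothesis (and the uniform $R^{Y,1}$ estimate) with constants involving only $C_{10},C_{11}$. This lets one fix $C_{12}$ once and for all before the induction, and the induction hypothesis is used only for $R_{t_1,t_n}^{Y,2}$, where it enters additively. The fix to your argument is therefore a one-line change: split at $t_1$ rather than at $t_{n-1}$.
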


\begin{proof}

The proof is based on induction.

At first, the conclusion equals the assumption in the $n=1$ case.

Next, we assume the conclusion holds in the $n=k$ case. Let $\mathscr{P}=\{t_0,\cdots,t_{k+1}\}$. In this case, we can see

\begin{align*}
Y_{t_0,t_n}^{(m,\rho)}=&Y_{t_0,t_1}^{(m,\rho)}+Y_{t_1,t_n}^{(m,\rho)},\\
R_{t_0,t_n}^{Y,2,(m,\rho)}=&R_{t_0,t_1}^{Y,2,(m,\rho)}+R_{t_1,t_n}^{Y,2,(m,\rho)}+\delta R_{t_0,t_1,t_n}^{Y,2,(m,\rho)}.
\end{align*}
Since $\delta R_{t_0,t_1,t_n}^{Y,2,(m,\rho)}=-\delta \Xi_{t_0,t_1,t_n}^{Y,2,(m,\rho)}$, we can set a constant $C_{12}$ depends on $C_{10}$ and $C_{11}$ such that $|\delta R_{t_0,t_1,t_n}^{Y,2,(m,\rho)}|\leq C_{12}(t_{k+1}-t_0)^{(q+1)H_-}$ from Proposition \ref{proposition:discrete_sewing_for_rough_path}.
Now, $\mathscr{P}'=\{t_1,\cdots,t_{k+1}\}$ is the $k$-partition of the interval $[t_1,t_{k+1}]$. Then, by the assumption of induction, we can show the following estimates:

\begin{align*}
|Y_{t_0,t_n}^{(m,\rho)}|\leq& |Y_{t_0,t_1}^{(m,\rho)}|+|Y_{t_1,t_{k+1}}^{(m,\rho)}|\\
\leq& C_{10}(t_1-t_0)^{H_-}+kC_{10}(t_{k+1}-t_1)^{H_-}\leq (k+1)C_{10}(t_{k+1}-t_0)^{H_-},
\end{align*}

\begin{align*}
|R_{t_0,t_n}^{Y,2,(m,\rho)}|\leq& |R_{t_0,t_1}^{Y,2,(m,\rho)}|+|R_{t_1,t_n}^{Y,2,(m,\rho)}|+|\delta R_{t_0,t_1,t_n}^{Y,2,(m,\rho)}|\\
\leq& C_{11}(t_1-t_0)^{(q+1)H_-}+(C_{11}+(k-1)C_{12})(t_{k+1}-t_1)^{(q+1)H_-}\\
&+C_{12}(t_{k+1}-t_0)^{(q+1)H_-}\\
\leq& (C_{11}+kC_{12})(t_{k+1}-t_0)^{(q+1)H_-}.
\end{align*}
In other words, the conclusion holds in the $n=k+1$ case.

By induction, we complete the proof.
\end{proof}

Combining Lemma \ref{lemma:expansion_small_distance} and \ref{lemma:partition_remainder}, we get long-range estimate of H\"older norm of $\polsolstd{}$ and $R^{Y,2(m,\rho)}$ in (\ref{dcp_def}).

\begin{lmm}\label{lemma:Hoelder_of_y}
Let $\sigma\in C_b^{q+1},b\in C_b^2$.
Then,
\begin{align}
\|\polsolstd{\cdot}\|_{H_-}\lesssim_{m,\rho} 1,&&\|R_\cdot^{Y,2,(m,\rho)}\|_{(q+1)H_-}\lesssim_{m,\rho} 1.\label{Ymrho_Holder}
\end{align}

\end{lmm}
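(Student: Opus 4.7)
The plan is to combine the two preceding lemmas: Lemma \ref{lemma:expansion_small_distance} supplies the short-range estimates with constants $C_0, C_1, \delta$ independent of $m$ and $\rho$, while Lemma \ref{lemma:partition_remainder} allows patching short-range estimates across any partition of $\mathbb{P}_m$. So the task reduces to choosing a good partition and tracking that the resulting long-range constants still depend only on $T$ and $\delta$.

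First I would fix arbitrary $s, t \in \mathbb{P}_m$ with $s < t$. If $t - s < \delta$, then Lemma \ref{lemma:expansion_small_distance} immediately gives $|\polsolstd{s,t}| \leq C_0 (t-s)^{H_-}$ and $|R_{s,t}^{Y,2,(m,\rho)}| \leq C_1 (t-s)^{(q+1)H_-}$, which is the desired bound on that scale. If instead $t - s \geq \delta$, I would choose a partition $\mathscr{P} = \{s = t_0 < t_1 < \cdots < t_n = t\} \subset \mathbb{P}_m$ with $n \leq \lceil T/\delta\rceil + 1$ and each $t_{i+1} - t_i < \delta$. On every subinterval Lemma \ref{lemma:expansion_small_distance} applies with the same constants $C_0, C_1$, so the hypothesis of Lemma \ref{lemma:partition_remainder} is satisfied with $C_{10} = C_0$ and $C_{11} = C_1$.

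Applying Lemma \ref{lemma:partition_remainder} then yields
\begin{align*}
|\polsolstd{s,t}| &\leq n C_0 (t-s)^{H_-}, \\
|R_{s,t}^{Y,2,(m,\rho)}| &\leq \bigl(C_1 + (n-1) C_{12}\bigr)(t-s)^{(q+1)H_-},
\end{align*}
where $C_{12}$ is the constant from Lemma \ref{lemma:partition_remainder} depending only on $C_0, C_1$ (and on $\|\sigma\|_{C^{q+1}}, \|b\|_{C^2}, \|B\|_{H_-}$ through Proposition \ref{proposition:discrete_sewing_for_rough_path}). Since $n \leq \lceil T/\delta \rceil + 1$ is a constant depending only on $T$ and $\delta$, these bounds are uniform in $m$ and $\rho$. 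Taking the supremum over $s, t \in \mathbb{P}_m$ and dividing by the appropriate power of $|t-s|$ gives exactly (\ref{Ymrho_Holder}).

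The potential obstacle is verifying that the constants truly remain independent of $m$ and $\rho$ when gluing the two regimes. This is not an issue because Lemma \ref{lemma:expansion_small_distance} was stated with $\delta, C_0, C_1$ independent of $m, \rho, s, t$, and the partition count $n$ depends only on $T$ and $\delta$; the induction constant $C_{12}$ from Lemma \ref{lemma:partition_remainder} is built from $C_0, C_1$ via Proposition \ref{proposition:discrete_sewing_for_rough_path} and thus inherits the same uniformity. Once both cases $|t-s|<\delta$ and $|t-s|\geq \delta$ are handled, combining them gives a single discrete Hölder bound, which is the conclusion of the lemma.
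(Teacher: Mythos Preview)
Your approach is exactly the paper's: invoke Lemma~\ref{lemma:expansion_small_distance} for short intervals and patch via Lemma~\ref{lemma:partition_remainder}. There is, however, a small gap in the partition step. You claim you can choose $\mathscr{P}\subset\mathbb{P}_m$ with each $t_{i+1}-t_i<\delta$, but when $2^{-m}>\delta$ no such partition of $\mathbb{P}_m$ exists, since even consecutive grid points are already $2^{-m}$ apart. The paper handles this by splitting into two cases: for $2^{-m}\leq\delta$ your argument works verbatim, while for $2^{-m}>\delta$ one takes $\mathscr{P}_{s,t}=\mathbb{P}_m\cap[s,t]$, uses the single-step bounds $C_0^{(1)},C_1^{(1)}$ from the $k=1$ case of Lemma~\ref{lemma:expansion_small_distance} (which hold for any $m$, not just $2^{-m}<\delta$), and then applies Lemma~\ref{lemma:partition_remainder}. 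The resulting constant depends on $m$, but there are only finitely many $m$ with $2^{-m}>\delta$, and for each such $m$ the bound is still uniform in $\rho$. Adding this case distinction completes your proof.
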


\begin{proof}

By Lemma \ref{lemma:expansion_small_distance}, there exists $\delta>0$ such that for all $|t-s|<\delta$,

\begin{align*}
|\polsolstd{s,t}|\leq C_0(t-s)^{H_-},&&|R_{s,t}^{Y,2,(m,\rho)}|\leq C_1(t-s)^{(q+1)H_-}
\end{align*}
with constants $C_0,C_1$.
Then, for any $s,t\in\mathbb{P}_m$ we choose a partition $\mathscr{P}_{s,t}$ as follows:
\begin{itemize}
\item If $2^{-m}\leq \delta$, we choose $\mathscr{P}_{s,t}$ of $[s,t]$ on $\mathbb{P}_m$ such that $|\mathscr{P}_{s,t}|<\delta$ and $\#\mathscr{P}_{s,t}\leq 2\lfloor (t-s)/\delta\rfloor$.

Using Lemma \ref{lemma:partition_remainder}, we can show that there exists $C_9>0$ such that

\begin{align*}
|Y_{s,t}^{(m,\rho)}|\leq 2\lfloor (t-s)/\delta\rfloor C_0(t-s)^{H_-},&&|R_{s,t}^{Y,2,(m,\rho)}|\leq 2\lfloor (t-s)/\delta\rfloor C_9(t-s)^{(q+1)H_-}.
\end{align*}
As a consequence, we have

\begin{align}
\|\polsolstd{\cdot}\|_{H_-}\leq 2 \lfloor T/\delta\rfloor C_1&\|R_\cdot^{Y,2,(m,\rho)}\|_{(q+1)H_-}\leq 2 \lfloor T/\delta\rfloor C_9\label{long_range_for_small_m}
\end{align}
\item If $2^{-m}>\delta$, we set $\mathscr{P}_{s,t}$ to be $\mathbb{P}_m\cap[s,t]$. 
Using Lemma \ref{lemma:partition_remainder}, we can estimate $\polsolstd{}$ and $R^{Y,2,(m,\rho)}$ for each $m$ similar to the case $2^{-m}\leq \delta$.
Remark that such estimate is independent of $\rho$.
\end{itemize}
When $2^{-m}\leq \delta$, the right-hand side of (\ref{long_range_for_small_m}) is independent of m and $\rho$.
Also, when $2^{-m}>\delta$, there are only a finite number of such $m$ and the estimate can be independent of $\rho$.

Hence, we get (\ref{Ymrho_Holder}).
\end{proof}

This lemma means that $\polsolstd{}$ satisfies (\ref{dcp_def}) for $k=1$ case.
Now, we can get (\ref{dcp_def}) for all $k$.

\begin{lmm}\label{lmm:Ymrho_bounded}

Let $\sigma \in C_b^{q+2}, b\in C_b^2$.

Then, $(\polsolstd{},\{\mathcal{D}^{k-1}\sigma(\polsolstd{})\}_{k=1}^{q-1})$ is discrete controlled path such that the remainder part defined in (\ref{dcp_def}) is bounded with respect to $m$ and $\rho$.
Additionally, for $f\in C^{q+1}$, $(\{\mathcal{D}^{k-1}f(\polsolstd{})\}_{k=1}^q)$ is also discrete controlled path bounded with respect to $m$ and $\rho$.
\end{lmm}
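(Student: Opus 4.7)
The plan is to verify the discrete controlled path condition (\ref{dcp_def}) for each component, propagating the $k=1$ level (already implicit in Lemma \ref{lemma:Hoelder_of_y}) upward by Taylor expanding $\mathcal{D}^{k-2}\sigma$ (respectively $\mathcal{D}^{k-1}f$) around $\polsolstd{s}$ and performing a Fa\`a di Bruno rearrangement. Every bound will be tracked to depend only on $\sigma,b,B,H,H_-$ and not on $m$ or $\rho$, relying on the uniform estimates already established in Lemma \ref{lemma:Hoelder_of_y} and Corollary \ref{corollary:BdiscreteHoelder}.

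The base case of the first statement ($k=1$, component $z^{(1)}=\polsolstd{}$) follows directly from Definition \ref{definition:Asymptoticexpansion}: the remainder
\[\polsolstd{t}-\polsolstd{s}-\sum_{j=1}^{q-1}\frac{1}{j!}\mathcal{D}^{j-1}\sigma(\polsolstd{s})\delta B_{s,t}^{j}\]
equals $\frac{1}{q!}\mathcal{D}^{q-1}\sigma(\polsolstd{s})\delta B_{s,t}^{q}+b(\polsolstd{s})(t-s)+R_{s,t}^{Y,(m,\rho)}$, each piece of which is $O(|t-s|^{qH_-})$ uniformly in $m,\rho$: the first via $\|B_\cdot\|_{H_-}$, the second using $qH_-\le 1$ on a bounded time interval, and the third by decomposing $R^{Y,(m,\rho)}=R^{Y,1,(m,\rho)}+R^{Y,2,(m,\rho)}$ and invoking Corollary \ref{corollary:BdiscreteHoelder} together with Lemma \ref{lemma:Hoelder_of_y}.

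For the remaining components (for $2\le k\le q$ in the first statement, and $1\le k\le q$ in the second statement), set $h=\mathcal{D}^{k-2}\sigma$ or $h=\mathcal{D}^{k-1}f$ respectively. By the assumed smoothness, $h$ is at least $C^{q-k+1}$ with bounded derivatives on the uniformly bounded range of $\polsolstd{}$. I would Taylor expand $h(\polsolstd{t})-h(\polsolstd{s})$ to order $q-k$, controlling the remainder by $\|h\|_{C^{q-k+1}}\|\polsolstd{\cdot}\|_{H_-}^{q-k+1}|t-s|^{(q-k+1)H_-}$ via Lemma \ref{lemma:Hoelder_of_y}. Substituting the base-case expansion for $\polsolstd{t}-\polsolstd{s}$ into each power $(\polsolstd{t}-\polsolstd{s})^{l}$, multiplying out, and regrouping, any cross term involving the base-case remainder, the drift factor $(t-s)$, or a power $\delta B_{s,t}^{j}$ with $j>q-k$ is absorbed into an $O(|t-s|^{(q-k+1)H_-})$ remainder. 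The clean coefficient of $\delta B_{s,t}^{j}$ for $1\le j\le q-k$ takes the form
\[\sum_{l=1}^{j}\frac{h^{(l)}(\polsolstd{s})}{l!}\sum_{\substack{j_1+\cdots+j_l=j\\ j_i\ge 1}}\prod_{i=1}^{l}\frac{\mathcal{D}^{j_i-1}\sigma(\polsolstd{s})}{j_i!},\]
which I would identify with $\frac{1}{j!}\mathcal{D}^{j}h(\polsolstd{s})$ to match the required prescription $z^{(k+j)}_s=\mathcal{D}^{k+j-2}\sigma(\polsolstd{s})$ (respectively $\mathcal{D}^{k+j-1}f(\polsolstd{s})$).

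The main obstacle is the combinatorial identification of the coefficient above with $\frac{1}{j!}\mathcal{D}^{j}h$. One can verify this by induction on $j$ using only the identity $\mathcal{D}g=\sigma g'$; the cases $j=1,2$ reduce to $\sigma h'$ and $\frac{1}{2}(\sigma\sigma' h'+\sigma^2 h'')$ respectively, matching $\frac{1}{j!}\mathcal{D}^{j}h$, and the general step is essentially the Fa\`a di Bruno formula. Equivalently, this is the discrete analogue of Proposition \ref{proposition:Operation for a controlled path}(4) specialized to $d=1$ and to the SDE-type controlled-path structure that the base case has just furnished for $\polsolstd{}$. Once the identification is in hand, uniformity in $m,\rho$ is automatic since every $m,\rho$-dependent ingredient, namely $\|\polsolstd{\cdot}\|_{H_-}$ and the remainders $R^{Y,i,(m,\rho)}$, is bounded independently of $m,\rho$ by the previous lemmas.
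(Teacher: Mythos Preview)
Your proposal is correct and follows the same overall structure as the paper: establish the $k=1$ remainder bound from Lemma \ref{lemma:Hoelder_of_y}, then Taylor-expand $h(\polsolstd{t})-h(\polsolstd{s})$, substitute the level-one expansion of $\polsolstd{t}-\polsolstd{s}$, and identify the coefficient of $\delta B_{s,t}^{j}$ with $\frac{1}{j!}\mathcal{D}^{j}h(\polsolstd{s})$. The only difference is in how that last algebraic identity is justified: you propose a direct inductive/Fa\`a di Bruno verification, while the paper sidesteps the combinatorics by passing to the auxiliary smooth ODE $dy_t=\sigma(y_t)\,dw_t$ with Lipschitz $w$, obtaining two independent power-series expansions of $f(y_t)-f(y_s)$ (one from iterating the chain rule, one from Taylor-expanding $f$ and substituting), and matching coefficients by uniqueness of power series; since the resulting identity is purely pointwise in $\sigma,h$ and their derivatives, it transfers to $\polsolstd{}$ verbatim. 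Both routes are valid; the paper's device is shorter, yours is more self-contained.
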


\begin{proof}

For Lipschitz continuous path $w$ and $\sigma\in C_b^{q+2}$, let $y_t$ is solution of $dt=\sigma(y_t)dw_t;y_0:$fixed.
Then, for $0<s<t$ and $f\in C^{q+1}$, $f(y_t)=f(y_s)+\int_s^t f'\sigma(y_t)dw_t$, which is a degenerated case of Ito formula.
Therefore, we can expand $f(y_t)$ as 
\[f(y_t)=f(y_s)+\sum_{i=1}^p \frac{1}{i!}\mathcal{D}^if(y_s)\delta w_{s,t}^k+R_{s,t},\]
where $|R_{s,t}|\lesssim (\|w\|_{Lip}(t-s))^{p+1}$.

On the other hand, substituting $f$ for $f\equiv 1$, we have extension:
\[y_t=y_s+\sum_{i=1}^p\frac{1}{i!} \mathcal{D}^{i-1}\sigma(y_s)\delta w_{s,t}^i+R_{s,t}.\]
Hence, we substitute $z_2$ for $y_t$ and $z_1$ for $y_s$ in the Taylor expansion:
\[f(z_2)=f(z_1)+\sum_{i=1}^p \frac{1}{i!}f^{(i)}(z_1)(z_2-z_1)^i+R_{z_1,z_2}^*\]
Now, we can get explicit functions $\{g_i\}_{i=1}^p$ such that
\[f(y_t)=f(y_s)+\sum_{i=1}^p g_i(y_s)\delta w_{s,t}^i+R_{s,t}^{**},\]
where $|R_{s,t}^{**}|\lesssim (\|w\|_{Lip}(t-s))^{p+1}$.
Moving $t$ closer to $s$, we obtain $g_i(y_s)=\frac{1}{i!}\mathcal{D}^{i-1}f(y_s)$ by uniqueness of power series.

Substituting $f\equiv \mathcal{D}^{k-2}\sigma$ for $2\leq k\leq q$ and replacing $\delta x_{s,t}$ with linear approximation of it, we obtain (\ref{dcp_def}) for each $k$.
Immediately,  $(\polsolstd{},\{\mathcal{D}^{k-1}\sigma(\polsolstd{})\}_{k=1}^{q-1})$ is discrete controlled path.
For $(\{\mathcal{D}^{k-1}f(\polsolstd{})\}_{k=1}^q)$, we can obtain substituting $\mathcal{D}^kf$ for $f$.

\end{proof}

Next, we show that $\|\polJacobistd{\cdot}\|_{H_-}\lesssim_{m,\rho} 1$. To apply the technique of Lemma \ref{lemma:Hoelder_of_y}, we have to prove the boundedness of $M$ at first.

\begin{lmm}\label{lemma:estimate_log_remainder}
Given $\sigma\in C_b^{q+2},b\in C_b^2$, we define $R_{s,t}(x)$ as
\begin{equation}
R_{s,t}(x)=\log (1+\Xi'[\sigma,x,s,t]+b'(x)(t-s))-\Xi[\sigma',x,s,t]-b'(x)(t-s).,\label{definitionRexp}
\end{equation}
Then, $\|R_{s,t}\|_{C^1}\lesssim (t-s)^{(q+1)H_-}$.
\end{lmm}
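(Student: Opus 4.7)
The plan is to identify $R_{s,t}(x)$ with the remainder of a formal power-series identity coming from the Jacobian of a deterministic ODE, then to estimate that remainder after substituting the random increment $\delta B_{s,t}$. Consider the ODE $\dot y(u)=\sigma(y(u))$ with $y(0)=x$. The Lie-series expansion gives $y(u)=x+\sum_{k=1}^{\infty}\frac{u^k}{k!}\mathcal{D}^{k-1}\sigma(x)$, so differentiating in $x$,
\[
J(u)\coloneqq \partial_x y(u)=1+\sum_{k=1}^{\infty}\frac{u^k}{k!}(\mathcal{D}^{k-1}\sigma)'(x).
\]
At the same time, $J(u)=\exp\bigl(\int_0^u\sigma'(y(s))\,ds\bigr)$; applying the Lie series to $\sigma'(y(s))$ and integrating yields $\log J(u)=\sum_{k=1}^{\infty}\frac{u^k}{k!}\mathcal{D}^{k-1}\sigma'(x)$. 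Equating the two representations of $J(u)$ gives the formal identity
\[
\log\Bigl(1+\sum_{k=1}^{\infty}\tfrac{u^k}{k!}(\mathcal{D}^{k-1}\sigma)'(x)\Bigr)=\sum_{k=1}^{\infty}\tfrac{u^k}{k!}\mathcal{D}^{k-1}\sigma'(x).
\]

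Truncate both sides at order $q$: let $A_0(x,u)=\sum_{k=1}^{q}\frac{u^k}{k!}(\mathcal{D}^{k-1}\sigma)'(x)$ and $B_0(x,u)=\sum_{k=1}^{q}\frac{u^k}{k!}\mathcal{D}^{k-1}\sigma'(x)$. The formal identity forces the Taylor coefficients of $\log(1+A_0)$ and $B_0$ to agree through order $u^q$, so $\log(1+A_0(x,u))-B_0(x,u)$ is analytic in $u$ near zero with power series starting at $u^{q+1}$, and its coefficients are polynomial in the uniformly bounded functions $\{(\mathcal{D}^{k-1}\sigma)'(x)\}_{k\le q}$. Hence for $|u|$ bounded, $|\log(1+A_0(x,u))-B_0(x,u)|\lesssim |u|^{q+1}$ uniformly in $x$. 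Substituting $u=\delta B_{s,t}$ and using $|\delta B_{s,t}|\le\|B_\cdot\|_{H_-}(t-s)^{H_-}$ yields the pointwise estimate $|\log(1+\Xi'[\sigma,x,s,t])-\Xi[\sigma',x,s,t]|\lesssim (t-s)^{(q+1)H_-}$.

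To incorporate the drift, set $\tau=t-s$ and decompose
\[
R_{s,t}(x)=\bigl[\log(1+A_0)-B_0\bigr]+\Bigl[\log\Bigl(1+\tfrac{b'(x)\tau}{1+A_0}\Bigr)-b'(x)\tau\Bigr].
\]
The first bracket is handled above. For the second, expanding $\log(1+y)=y+O(y^2)$ at $y=b'(x)\tau/(1+A_0)$ produces $-b'(x)\tau\,A_0/(1+A_0)+O(\tau^2)=O(\tau(t-s)^{H_-})+O(\tau^2)$. Both are dominated by $(t-s)^{(q+1)H_-}$: the first because $qH_-<qH\le 1$ implies $1+H_->(q+1)H_-$, the second because $(q+1)H_-\le (q+1)H\le (q+1)/q\le 3/2$ (using $q\ge 2$ since $H<1/2$), which in particular is $<2$.

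The $C^1$ bound is obtained by running the same argument after differentiating in $x$. Differentiating the formal identity gives the analogous identity for $A_0'/(1+A_0)$ and $B_0'$, and the truncation remainder $A_0'/(1+A_0)-B_0'$ is again a formal series in $u$ starting at $u^{q+1}$, whose coefficients now involve derivatives of $\sigma$ up to order $q+2$ and of $b$ up to order $2$ — exactly what the hypotheses $\sigma\in C_b^{q+2}$, $b\in C_b^2$ provide. I expect the only mild obstacle to be the bookkeeping of which derivative orders appear in each coefficient; the analytic content is entirely contained in the Lie-series identity together with the elementary Taylor expansion of $\log(1+y)$.
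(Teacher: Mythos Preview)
Your argument is correct and follows essentially the same strategy as the paper: both proofs derive the key algebraic identity
\[
\log\Bigl(1+\sum_{k\ge1}\tfrac{u^k}{k!}(\mathcal{D}^{k-1}\sigma)'(x)\Bigr)=\sum_{k\ge1}\tfrac{u^k}{k!}\mathcal{D}^{k-1}\sigma'(x)
\]
by comparing two representations of the Jacobian of a flow, and then estimate the order-$(q+1)$ remainder after substituting $u=\delta B_{s,t}$. The only notable difference is in how the identity is obtained: the paper works with the actual SDE for $(Y,J)$ and compares the rough-path Taylor expansion of $J_tJ_s^{-1}$ with that of $\log J_t-\log J_s=\int_s^t\sigma'(Y)dB+\int_s^t b'(Y)du$, whereas you use the deterministic ODE $\dot y=\sigma(y)$ and its Lie series. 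Your route is slightly more elementary, since it isolates the purely combinatorial content from the stochastic machinery; the paper's route has the advantage of treating the drift simultaneously rather than via your separate factorisation $\log(1+A_0+b'\tau)=\log(1+A_0)+\log(1+b'\tau/(1+A_0))$. Both drift treatments produce cross terms of order $(t-s)^{1+H_-}$, dominated by $(t-s)^{(q+1)H_-}$ thanks to $qH_-<1$.

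Two small points worth tightening: (i) the phrase ``for $|u|$ bounded'' should really be ``for $|u|$ small enough that $|A_0(x,u)|\le 1/2$'', which is exactly guaranteed on $\Omega^{(m)}$ for $|t-s|\le 2^{-m}$; (ii) your remark that the $C^1$ bound requires derivatives of $\sigma$ up to order $q+2$ is slightly pessimistic---order $q+1$ already suffices for $\partial_x A_0$ and $\partial_x B_0$---but the hypothesis $\sigma\in C_b^{q+2}$ covers this comfortably.
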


\begin{proof}

At first, we set $\tilde{\sigma},\tilde{b}\in C_b(\mathbb{R}^2;\mathbb{R})$ such that
\begin{align*}
\tilde{\sigma}(x_1,x_2)=(\sigma(x_1),\sigma'(x_1)x_2),&&\tilde{b}(x_1,x_2)=(b(x_1),b'(x_1)x_2)
\end{align*}
for $x_1\in \mathbb{R}$ and $x_2\in [-\sup_{m,\rho}|\polJacobistd{}|,\sup_{m,\rho}|\polJacobistd{}|]$.

Recall that $J$ is defined as (\ref{definitionJ}). Then, from Proposition \ref{proposition:Itoformula}, we can see that $(y,J)$ satisfies
\begin{equation*}
d(y,J)=\tilde{\sigma}(y,J)dB_t+\tilde{b}(y,J)dt.
\end{equation*}
Hence, we have
\begin{equation}
(\exsol{t},\Jacobi{t})=(\exsol{s},J_s)+\sum_{k=1}^q \frac{1}{k!}\mathcal{D}_{\tilde{\sigma}}^{k-1}\tilde{\sigma}(\exsol{s},J_s)\fBminc{s,t}^k+\tilde{b}(\exsol{s},J_s)(t-s)+O(|t-s|^{(q+1)H_-})\label{yJexpand}.
\end{equation}

We recall that $\mathcal{D}_{\tilde{\sigma}}=\sigma(x_1)\partial_1+x_2\sigma'(x_1)\partial_2$.

We can check that $\mathcal{D}_{\tilde{\sigma}}(f'(x_1)x_2)=\sigma f''(x_1)x_2+\sigma'f(x_1)x_2=(\mathcal{D}f)'(x_1)x_2$, so
\begin{equation*}
\mathcal{D}_{\tilde{\sigma}}^{k}\tilde{\sigma}(x_1,x_2)=(\mathcal{D}^{k-1}\sigma(x_1),(\mathcal{D}^{k-1}\sigma)'(x_1)x_2).
\end{equation*}

Substituting this equation to (\ref{yJexpand}), we can easily see that
\begin{equation}
\Jacobi{t}(J_s)^{-1}=\sum_{k=1}^q \frac{1}{k!}(\mathcal{D}^{(k-1)}\sigma)'(\exsol{s})\fBminc{s,t}^k+O(|t-s|)\label{expansion_of_J}.
\end{equation}

For $s,t$ such that $|J_{s,t}-1|<1/2$, by Taylor expansion,
\begin{equation*}
\log \Jacobi{t}-\log J_s=\sum_{k=1}^q \frac{1}{k!}g_k(\exsol{s})\fBminc{s,t}^k+O(|t-s|),
\end{equation*}
where
\begin{equation*}
g_k\coloneqq k!\sum_{j=1}^k\sum_{\begin{smallmatrix}
i_1+\cdots+i_j=k,\\
1\leq i_l\leq q.
\end{smallmatrix}}\frac{(-1)^j}{j!}\prod_{l=1}^j (\mathcal{D}^{(i_l-1)}\sigma)'.
\end{equation*}
In other words, when we consider $\log \Jacobi{t}$ to be a rough path, we get $(\log \Jacobi{t})^{(k+1)}=g_k(\exsol{t})$ for $1\leq k\leq q-1$.

On the other hand, we can see that
\begin{equation*}
\log \Jacobi{t}-\log J_s=\int_s^t \sigma'(Y_u)dB_u+\int_s^t b'(Y_u)du=\sum_{k=1}^q\frac{1}{k!} (\mathcal{D}^{(k-1)}\sigma')(\exsol{s})\fBminc{s,t}^k+O(|t-s|).
\end{equation*}
Consequently, $(\log \Jacobi{t})^{(k+1)}=(\mathcal{D}^{(k-1)}\sigma')(\exsol{t})$ for $1\leq k\leq q-1$.

Comparing these two determinations, we have $g_l\equiv (\mathcal{D}^{(l-1)}\sigma')$.

Hence, we can transform $R_{s,t}(x)$ as follows:
\begin{equation}
R_{s,t}(x)=\sum_{k_1=q+1}^{q^2}\sum_{k_2=1}^q\tilde{g}_{k_1,k_2}(x)\fBmitin{s,t}^{k_1}(t-s)^{k_2}+h(\Xi'[\sigma,x,s,t]+b'(x)(t-s))
\end{equation}
where $\tilde{g}_{k_1,k_2}$ is $C_b^1$-class function determined by the Taylor expansion of $h$:

\begin{equation*}
h(x)\coloneqq \left(\log(1+x)-\sum_{k=1}^q\frac{(-1)^{k+1}}{k}x^k\right).
\end{equation*}

From Taylor's theorem, we get $x^{-(q+1)}h(x)$ is $C_b^\infty$-class function on $[-1/2,1/2]$.
At the same time, since $\sigma$ is $C_b^{q+2}$-class function, $\Xi[\sigma',x,s,t]$ is $C_b^2$-class.
So, $R_{s,t}\in C_b^2$ holds for sufficiently large $m$.
Moreover, we can estimate that $\|R_{s,t}\|_{C^1}\lesssim (t-s)^{(q+1)H_-}$ by estimating each term.
\end{proof}

\begin{cor}\label{corollary:M-1J}
Let $\sigma\in C_b^{q+2},b\in C_b^2$.

Then, $\|\Jacobi{\cdot}(\polJacobi{\cdot}{0})^{-1}\|_1\lesssim 2^{-m((q+1)H_--1)}, \|M_\cdot^{(m,0)}J_\cdot^{-1}\|_1\lesssim 2^{-m((q+1)H_--1)}$.
\end{cor}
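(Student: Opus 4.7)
The plan is to compare $\log \polJacobi{\cdot}{0}$ to $\log \Jacobi{\cdot}$ via Lemma \ref{lemma:estimate_log_remainder}, and then transfer the bound to the ratio and to its $\|\cdot\|_1$ norm by exponentiation. Since $\polsol{\cdot}{0}=\exsol{\cdot}$, each factor in the product defining $\polJacobi{t}{0}$ is
\[
1+\Xi'[\sigma,\exsol{\cdot},\bitime{r},\bitimelast{r+1}]+b'(\exsoldis{r})(\bitimelast{r+1}-\bitime{r}),
\]
which is bounded away from $0$ on $\Omega^{(m)}$, so its logarithm is well defined. Taking logs and applying Lemma \ref{lemma:estimate_log_remainder},
\begin{equation*}
\log \polJacobi{t}{0}=\sum_{r}\bigl(\Xi[\sigma',\exsol{\cdot},\bitime{r},\bitimelast{r+1}]+b'(\exsoldis{r})(\bitimelast{r+1}-\bitime{r})\bigr)+\sum_{r}R_{\bitime{r},\bitimelast{r+1}}(\exsoldis{r}),
\end{equation*}
while $\log \Jacobi{t}=\int_0^t\sigma'(\exsol{u})dB_u+\int_0^t b'(\exsol{u})du$.

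By Lemma \ref{lmm:Ymrho_bounded}, $\sigma'(\exsol{\cdot})$ carries a natural controlled-path structure with bounds uniform in $m$, so Proposition \ref{prp:continuous_rough_integral} applied on the partition $\mathbb{P}_m$ yields
\begin{equation*}
\Bigl|\int_s^t\sigma'(\exsol{u})dB_u-\sum_{r:\bitime{r}\in[s,t)}\Xi[\sigma',\exsol{\cdot},\bitime{r},\bitimelast{r+1}]\Bigr|\lesssim (t-s)\cdot 2^{-m((q+1)H_--1)},
\end{equation*}
and the analogous Riemann-sum error for the $b'$-integral is $O((t-s)\cdot 2^{-m})$. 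Moreover, Lemma \ref{lemma:estimate_log_remainder} gives $|R_{\bitime{r},\bitimelast{r+1}}(\exsoldis{r})|\lesssim 2^{-m(q+1)H_-}$, so the aggregated remainder over $[s,t]$ is at most $2^m(t-s)\cdot 2^{-m(q+1)H_-}=(t-s)\cdot 2^{-m((q+1)H_--1)}$. Combining the three contributions,
\begin{equation*}
\bigl|(\log \Jacobi{t}-\log \Jacobi{s})-(\log \polJacobi{t}{0}-\log \polJacobi{s}{0})\bigr|\lesssim (t-s)\cdot 2^{-m((q+1)H_--1)}.
\end{equation*}

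Since $\Jacobi{0}=\polJacobi{0}{0}=1$ and both $\Jacobi{\cdot}$ and $\polJacobi{\cdot}{0}$ are uniformly bounded and bounded away from $0$, the mean value theorem applied to $\exp$ and to $x\mapsto x^{-1}$ transfers this increment bound to $\|\Jacobi{\cdot}(\polJacobi{\cdot}{0})^{-1}\|_1\lesssim 2^{-m((q+1)H_--1)}$, and symmetrically for $\polJacobi{\cdot}{0}\Jacobi{\cdot}^{-1}$. The main obstacle is the controlled-path/sewing-type estimate for the Stieltjes sum: one must secure the precise rate $2^{-m((q+1)H_--1)}$ uniformly in $m$ and uniformly in the interval $[s,t]$, but the $m$-uniform controlled-path bounds for $\sigma'(\exsol{\cdot})$ supplied by Lemma \ref{lmm:Ymrho_bounded} are exactly what Proposition \ref{prp:continuous_rough_integral} needs as input.
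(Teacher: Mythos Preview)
Your proof is correct and follows essentially the same route as the paper: both arguments compare $\log J$ to $\log M^{(m,0)}$ via Lemma~\ref{lemma:estimate_log_remainder} and the controlled-path (one-step) expansion of $\log J_{\bitime{r+1}}-\log J_{\bitime{r}}$, then exponentiate. The only cosmetic difference is packaging: the paper works directly with the per-step ratio $J_{\bitime{r+1}}J_{\bitime{r}}^{-1}(\mu_{\bitime{r+1}}^{(m,0)})^{-1}$ and writes $J_t(M_t^{(m,0)})^{-1}$ as a single exponential of the telescoped sum of $R$-terms plus $O(2^{-m(q+1)H_-})$ corrections, whereas you separately bound the Stieltjes-sum error for $\int\sigma'(Y)dB$, the Riemann-sum error for $\int b'(Y)\,dt$, and the aggregated $R$-remainder---the resulting estimates are identical.
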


\begin{proof}
To avoid repetition, we will prove the former.
By definition,

\begin{align*}
\Jacobi{t}(M_t^{(m,0)})^{-1}=&\prod_{r=0}^{ \lfloor 2^m t\rfloor-1 } J_{\bitime{r+1}}J_{\bitime{r}}^{-1}(M_{\bitime{r+1}}M_{\bitime{r}}^{-1})^{-1}\\
=&\prod_{r=0}^{ \lfloor 2^m t\rfloor-1 } \exp(\Xi[\sigma',\exsoldis{r},\bitime{r},\bitime{r+1}]+b'(\exsoldis{r})2^{-m}+O(2^{-m(q+1)H_-}))\\
&(1+\Xi'[\sigma,\exsoldis{r},\bitime{r},\bitime{r+1}]+b'(\exsoldis{r})2^{-m})^{-1}\\
=&\prod_{r=0}^{ \lfloor 2^m t\rfloor-1 } \exp(-R_{\bitime{r},\bitime{r+1}}(\exsoldis{r})+O(2^{-m(q+1)H_-}))\\
=& \exp\left(-\left(\sum_{r=0}^{ \lfloor 2^m t\rfloor-1 } (R_{\bitime{r},\bitime{r+1}}(\exsoldis{r})+O(2^{-m(q+1)H_-}))\right)\right).
\end{align*}

As a result, we can estimate as
\begin{align*}
&\Jacobi{t}(M_t^{(m,0)})^{-1}-J_s(M_s^{(m,0)})^{-1}\\
=&\left(\exp\left(-\left(\sum_{r= \lfloor 2^ms\rfloor}^{ \lfloor 2^m t\rfloor-1 } (R_{\bitime{r},\bitime{r+1}}(\exsoldis{r})+O(2^{-m(q+1)H_-}))\right)\right)-1\right)\\
&\exp\left(-\left(\sum_{r=0}^{ \lfloor 2^m s\rfloor-1 } (R_{\bitime{r},\bitime{r+1}}(\exsoldis{r})+O(2^{-m(q+1)H_-}))\right)\right).
\end{align*}

From Lemma \ref{lemma:estimate_log_remainder}, we get the conclusion.
\end{proof}

\begin{cor}\label{corollary:Minfinity}

Let $\sigma\in C_b^{q+2},b\in C_b^2$. Then, $\|\polJacobistd{\cdot}\|_\infty \lesssim_{m,\rho}1 $ and $\|(\polJacobistd{\cdot})^{-1}\|_\infty\lesssim_{m,\rho}1$.

\end{cor}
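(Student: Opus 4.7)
The plan is to bound $\log \polJacobistd{t}$ uniformly in $(m,\rho,t)$ and then exponentiate. Since $\Jacobiunit{t}>0$ on $\Omega^{(m)}$ by Lemma \ref{lemma:representation of error}, and the quantity $\Xi'[\sigma,\polsolstd{},\bitime{r},\bitime{r+1}\wedge t]+b'(\polsolstd{\bitime{r}})(\bitime{r+1}\wedge t-\bitime{r})$ lies inside the domain of the expansion used in Lemma \ref{lemma:estimate_log_remainder} (this is exactly the content of the defining inequality of $\Omega^{(m)}$), one can telescope $\log \polJacobistd{t}=\sum_{r=0}^{\lceil 2^m t\rceil-1}\log \Jacobiunit{\bitimelast{r+1}}$ and apply Lemma \ref{lemma:estimate_log_remainder} termwise to get
\begin{equation*}
\log \polJacobistd{t}=\sum_r \Xi[\sigma',\polsolstd{},\bitime{r},\bitime{r+1}\wedge t]+\sum_r b'(\polsolstd{\bitime{r}})(\bitime{r+1}\wedge t-\bitime{r})+\sum_r R_{\bitime{r},\bitime{r+1}\wedge t}(\polsolstd{\bitime{r}}),
\end{equation*}
with $|R_{s,t}(x)|\lesssim (t-s)^{(q+1)H_-}$.

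The three sums are then estimated separately. The drift sum is trivially bounded by $T\|b'\|_\infty$. The remainder sum is bounded by $C\cdot 2^m\cdot T\cdot 2^{-m(q+1)H_-}=CT\cdot 2^{-m((q+1)H_--1)}$, which is bounded (indeed vanishing) since $H_->1/(q+1)$. For the Stieltjes-type sum, Lemma \ref{lmm:Ymrho_bounded} applied with $f=\sigma'\in C^{q+1}$ (this is where the hypothesis $\sigma\in C_b^{q+2}$ enters) implies that $(\mathcal{D}^{k-1}\sigma'(\polsolstd{}))_{k=1}^q$ is a discrete controlled path with controlled-path norms bounded uniformly in $(m,\rho)$. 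Proposition \ref{prp:young_sum} then yields
\begin{equation*}
\left|\sum_{r}\Xi[\sigma',\polsolstd{},\bitime{r},\bitime{r+1}\wedge t]-\int_0^t \sigma'(\polsolstd{s})dB_s\right|\lesssim_{m,\rho} 2^{-m((q+1)H_--1)},
\end{equation*}
and the rough integral on the right is itself uniformly bounded in $(m,\rho)$ via a standard sewing-lemma estimate, using the uniform H\"older bound $\|\polsolstd{\cdot}\|_{H_-}\lesssim_{m,\rho}1$ from Lemma \ref{lemma:Hoelder_of_y} together with the boundedness of $\sigma'$ and its derivatives.

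Combining the three estimates gives $|\log \polJacobistd{t}|\lesssim_{m,\rho}1$ uniformly in $t$, and both conclusions $\|\polJacobistd{\cdot}\|_\infty\lesssim_{m,\rho}1$ and $\|(\polJacobistd{\cdot})^{-1}\|_\infty\lesssim_{m,\rho}1$ follow at once by exponentiating. The only step that goes beyond a direct citation of the preceding lemmas is the uniform-in-$(m,\rho)$ boundedness of the rough integral $\int_0^t\sigma'(\polsolstd{s})dB_s$; I regard this as the main (and only mildly delicate) technical point, and it reduces immediately to the uniform boundedness of the controlled-path data for $\sigma'(\polsolstd{})$ provided by Lemmas \ref{lemma:Hoelder_of_y} and \ref{lmm:Ymrho_bounded}.
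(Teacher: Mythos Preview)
Your approach is essentially the same as the paper's: write $\log\polJacobistd{t}$ as the Stieltjes sum of $\Xi[\sigma',\polsolstd{},\cdot,\cdot]$, a drift sum, and the remainder $R_{\cdot,\cdot}(\polsolstd{})$, then bound each piece using Lemma~\ref{lemma:estimate_log_remainder} and Lemma~\ref{lmm:Ymrho_bounded}. The one unnecessary detour is your passage through the continuous rough integral $\int_0^t\sigma'(\polsolstd{s})\,dB_s$: Proposition~\ref{prp:young_sum} does not compare the discrete sum to this integral but rather to the single germ $\Xi_{0,t}^z=\sum_{k=1}^q\frac{1}{k!}\mathcal{D}^{k-1}\sigma'(\polsolstd{0})\fBminc{0,t}^k$, which is already bounded uniformly in $(m,\rho)$ directly from $\sigma\in C_b^{q+2}$ and $\|B\|_{H_-}<\infty$. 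The paper stops there, and so can you; invoking the continuous integral would also require checking that $\polsolstd{}$ is a controlled path in the continuous sense, which the paper deliberately avoids (it treats $\polsolstd{}$ and $\polJacobistd{}$ as discrete processes on $\mathbb{P}_m$ throughout Section~\ref{section:proof_of_maintheorem}).
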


\begin{proof}

To avoid repetition, we prove only former inequality.
By definition of $R_{s,t}(x)$ (\ref{definitionRexp}), we obtain
\begin{equation*}
\polJacobistd{t}=\exp\left(\mathcal{I}_t^{(m)}(d(\Xi[\sigma',\polsolstd{},\cdot_1,\cdot_2]+b'(Y_{\cdot_1}^{(m,\rho)})(\cdot_2-\cdot_1)+R_{\cdot_1,\cdot_2}(Y_{\cdot_1}^{(m,\rho)})))\right).
\end{equation*}
From Lemma \ref{lmm:Ymrho_bounded}, we have $Y^{(m,\rho)}$ is discrete controlled path bounded with respect to $m$ and $\rho$, and we also have
\begin{equation*}
\mathcal{I}_t^{(m)}(d(\Xi[\sigma',\polsolstd{},\cdot_1,\cdot_2]+b'(Y_{\cdot_1}^{(m,\rho)})(\cdot_2-\cdot_1)))
\end{equation*}
is bounded in the same sense.
From Lemma \ref{lemma:estimate_log_remainder} and an estimate of Stieltjes sum, $\mathcal{I}_t^{(m)}(d(R_{\cdot_1,\cdot_2}(Y_{\cdot_1}^{(m,\rho)})))$ is also bounded.
Combining these, we obtain the former inequality.
For the latter, we can obtain as same as former.
\end{proof}

\begin{lmm}\label{lemma:Hoelder_of_M}
Let $\sigma\in C_b^{q+2},b\in C_b^2$. Then, $\|\polJacobistd{\cdot}\|_{H_-}\lesssim_{m,\rho}1$. Moreover, $(\polsolstd{},\polJacobistd{})$ is discrete controlled path.

Additionally, $(\{\mathcal{D}_{\tilde{\sigma}}^{k-1} f(\polsolstd{},\polJacobistd{})\}_{k=1}^q)$ is discrete controlled path for $f\in C^{q+1}$.
\end{lmm}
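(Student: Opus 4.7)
The plan is threefold: first to show $\|\polJacobistd{\cdot}\|_{H_-}\lesssim_{m,\rho}1$; second to produce the discrete controlled path expansion of $(\polsolstd{},\polJacobistd{})$; third to derive the composition statement for $f$.

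For the Hölder bound I use the exponential representation from the proof of Corollary \ref{corollary:Minfinity},
\begin{equation*}
\polJacobistd{t}/\polJacobistd{s}=\exp\bigl(\mathcal{I}_{s,t}^{(m)}(d(\Xi[\sigma',\polsolstd{},\cdot_1,\cdot_2]+b'(\polsolstd{\cdot_1})(\cdot_2-\cdot_1)+R_{\cdot_1,\cdot_2}(\polsolstd{\cdot_1})))\bigr).
\end{equation*}
The Stieltjes sum of the $\Xi$-integrand is $H_-$-Hölder with bounded norm by Proposition \ref{prp:young_sum} applied to the controlled path $\{\mathcal{D}^{l-1}\sigma'(\polsolstd{})\}_{l=1}^q$ (bounded uniformly in $m,\rho$ by Lemma \ref{lmm:Ymrho_bounded} with $f=\sigma'$); the $b'$-piece is trivially $O(t-s)$; and the $R$-piece is $\lesssim 2^m(t-s)\cdot 2^{-m(q+1)H_-}\lesssim(t-s)^{H_-}$ because $(q+1)H_->1$ for $H_-$ sufficiently close to $H$. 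Combined with $|e^x-1|\leq e^{|x|}|x|$ and the $L^\infty$ bound on $\polJacobistd{}$ from Corollary \ref{corollary:Minfinity}, this gives $|\polJacobistd{t}-\polJacobistd{s}|\lesssim_{m,\rho}(t-s)^{H_-}$.

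For the controlled-path expansion I set $z^{(1)}=(\polsolstd{},\polJacobistd{})$ and $z^{(k)}=\mathcal{D}_{\tilde\sigma}^{k-1}\tilde\sigma(\polsolstd{},\polJacobistd{})$ for $2\leq k\leq q$, where $\tilde\sigma,\tilde b$ are as in Lemma \ref{lemma:estimate_log_remainder}. The $\polsolstd{}$-component of the $k=1$ expansion is supplied by Lemma \ref{lmm:Ymrho_bounded}. For the $\polJacobistd{}$-component I imitate the Davie-style short-time/long-time argument of Lemmas \ref{lemma:expansion_small_distance}--\ref{lemma:Hoelder_of_y} on the pair: on the short-time scale, the multiplicative structure $\polJacobistd{t}=\polJacobistd{s}\prod_r\Jacobiunit{\bitime{r+1}}$ together with the logarithm--exponential expansion of Lemma \ref{lemma:estimate_log_remainder} and the identity $g_k=\mathcal{D}^{k-1}\sigma'$ derived inside its proof yield the expected coefficients $\mathcal{D}_{\tilde\sigma}^{k-1}\tilde\sigma(\polsolstd{s},\polJacobistd{s})$, with a residual drift $\tilde b(\polsolstd{s},\polJacobistd{s})(t-s)$ plus a sewing error of order $(t-s)^{(q+1)H_-}$; since $qH_-\leq qH\leq 1$ both residuals are absorbed into $O((t-s)^{qH_-})$, which is what the $k=1$ discrete-controlled-path condition demands. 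Lemma \ref{lemma:partition_remainder} then lifts the short-time expansion to arbitrary intervals, and the expansions of $z^{(k)}$ for $k\geq 2$ follow from smoothness of $\tilde\sigma$ together with the Hölder bounds already obtained for $(\polsolstd{},\polJacobistd{})$. Finally, the composition statement for $\mathcal{D}_{\tilde\sigma}^{k-1}f(\polsolstd{},\polJacobistd{})$ is immediate from Proposition \ref{proposition:Operation for a controlled path}(4) applied to $f\in C^{q+1}$ and the controlled path $(\polsolstd{},\polJacobistd{})$ just produced. The main obstacle is the coefficient-matching in the second step: we must verify that the Taylor coefficients generated by passing through $\log$ and then $\exp$ coincide term-by-term with the second component of $\mathcal{D}_{\tilde\sigma}^{k-1}\tilde\sigma$, which relies crucially on the chain-rule identity inside the proof of Lemma \ref{lemma:estimate_log_remainder}.
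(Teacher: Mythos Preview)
Your proposal is correct in substance but takes a more roundabout route than the paper. The paper's proof is a one-line reduction: it observes that on a single step the pair $(\polsolstd{t},\polJacobistd{t})$ satisfies a recursion of \emph{exactly} the same form as $\polsolstd{t}$ alone, namely
\[
(\polsolstd{t},\polJacobistd{t})=(\polsolstd{\bitime{r}},\polJacobistd{\bitime{r}})+\sum_{k=1}^q\frac{1}{k!}\mathcal{D}_{\tilde\sigma}^{k-1}\tilde\sigma(\polsolstd{\bitime{r}},\polJacobistd{\bitime{r}})\fBmunit^k+\tilde b(\cdots)(t-\bitime{r})+(\overline{f}_q\fBmunit^q,0)+O,
\]
which is immediate from the multiplicative definition $\polJacobistd{t}=\polJacobistd{\bitime{r}}\Jacobiunit{t}$ together with the identity $\mathcal{D}_{\tilde\sigma}^{k-1}\tilde\sigma(x_1,x_2)=(\mathcal{D}^{k-1}\sigma(x_1),(\mathcal{D}^{k-1}\sigma)'(x_1)x_2)$ from Lemma~\ref{lemma:estimate_log_remainder}. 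After cutting off $\tilde\sigma,\tilde b$ to be bounded (legitimate because $\polJacobistd{}$ is a priori bounded by Corollary~\ref{corollary:Minfinity}), Lemmas~\ref{lemma:expansion_small_distance}--\ref{lmm:Ymrho_bounded} apply verbatim to the two-dimensional system and yield all three conclusions at once.

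Your approach separates the H\"older bound (via the exponential representation, which is fine and even slightly more elementary) from the controlled-path structure (via a log--exp coefficient matching, which is an unnecessary detour: the one-step $M$-expansion is \emph{exact}, no logarithm needed). Two small points to tighten: (i) when you re-run the Davie argument on the pair you should mention the cut-off of $\tilde\sigma$ explicitly, since Proposition~\ref{proposition:discrete_sewing_for_rough_path} requires $C_b$ coefficients and $\tilde\sigma$ is linear in $x_2$; (ii) your appeal to Proposition~\ref{proposition:Operation for a controlled path}(4) for the composition statement is for continuous controlled paths, whereas here you need the discrete analogue---the paper covers this by re-invoking the argument of Lemma~\ref{lmm:Ymrho_bounded} for the pair. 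Both fixes are routine, and the overall argument stands.
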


\begin{proof}

By definition of $Y^{(m,\rho)}$ and $M^{(m,\rho)}$, for $\timeinunit $, we have

\begin{align*}
(\polsolstd{t},\polJacobistd{t})=&(\polsolstd{\bitime{r}},M_{\bitime{r}}^{(m,\rho)})+\sum_{k=1}^q \frac{1}{k!}\mathcal{D}_{\tilde{\sigma}}^{k-1}\tilde{\sigma}(\polsolstd{\bitime{r}},M_{\bitime{r}}^{(m,\rho)})\fBmunit^k\\
&+\tilde{b}(\polsolstd{\bitime{r}},M_{\bitime{r}}^{(m,\rho)})\timeunit+(\overline{f}_q(\nusoldis{r})\fBmunit^{q},0)\\
&+O(\timeunit^{(q+1)H_-}).
\end{align*}

$\polJacobistd{}$ is pathwisely bounded with respect to $m$ and $\rho$.
Hence, we can cut off $\tilde{\sigma},\tilde{b}$ such that they become bounded and coincides to $\tilde{\sigma},\tilde{b}$ in the range of $(\polsolstd{},\polJacobistd{})$.
Hence, we can apply the same discussion in Lemma \ref{lemma:Hoelder_of_y} and \ref{lmm:Ymrho_bounded} replacing $R_{s,t}^{Y,m,\rho,i}$ with $(R_{s,t}^{Y,m,\rho,i},0)$ for $i = 1,2,3$. 

Now, we conclude that $\|\polJacobistd{\cdot}\|_{H_-}\lesssim_{m,\rho}1$.
\end{proof}

Next, we obtain the elementary estimate of $Z^{(m,\rho)}$.

\begin{lmm}\label{lemma:apriori_estimate}
Let $\sigma\in C_b^{q+3},b\in C_b^2$.

Then,  $Z^{(m,\rho)}$ be estimated as follows:
\begin{enumerate}
\item Under Condition ($\mathfrak{A}$), $\|Z_\cdot^{(m,\rho)}\|_{1}\lesssim_{\rho} 2^{-((q+1)H_--1)m}$.
\item Under Condition ($\mathfrak{B}$), $\|Z_\cdot^{(m,\rho)}\|_{H^c}\lesssim_{\rho} 2^{-(qH_--H^c)m}$.
\end{enumerate}
Especially, $\|\nusol{}-Y\|_1\lesssim_\rho 2^{-((q+1)H_--1)m}$(resp.$\|\nusol{}-Y\|_{H^c}\lesssim_\rho 2^{-(qH_--H^c)m}$) under Condition ($\mathfrak{A}$)\ (resp.Condition ($\mathfrak{B}$)).

\end{lmm}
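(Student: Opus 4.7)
The plan is to exploit the explicit formula obtained in the proof of Lemma \ref{lemma:representation of error},
\[
Z^{(m,\rho)}_t = \sum_{r=0}^{\lceil 2^m t\rceil - 1}(M^{(m,\rho)}_{\bitimelast{r+1}})^{-1}\,\epsilon^{(m)}_{\bitimelast{r+1}},
\]
and to estimate increments of this discrete sum on dyadic pairs $s<t\in \mathbb{P}_m$. Two inputs drive every estimate: Corollary \ref{corollary:Minfinity}, giving $\|(M^{(m,\rho)}_\cdot)^{-1}\|_\infty\lesssim_{m,\rho}1$, and the decomposition $\epsilon^{(m)}_t = \overline{f}_q(\nusoldis{r})\fBmitinunit^{(q)} + \hat{\epsilon}^{(m,(q+1)H)}_{\bitime{r},t} - \epsilon^{((q+1)H)}_{\bitime{r},t}$ for $\bitime{r}<t\le \bitime{r+1}$, in which the two remainder pieces are $\lesssim (t-\bitime{r})^{(q+1)H+\kappa}$ for some $\kappa>0$ by Condition \ref{condition:secondalycondition} and Proposition \ref{proposition:taylorfory}.

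Under Condition ($\mathfrak{A}$), $\overline{f}_q\equiv 0$, so $|\epsilon^{(m)}_{\bitimelast{r+1}}|\lesssim 2^{-m((q+1)H+\kappa)}$ on every subinterval. Adding the roughly $2^m(t-s)$ terms of the sum and absorbing the uniform bound on $(M^{(m,\rho)})^{-1}$ yields $|Z^{(m,\rho)}_t - Z^{(m,\rho)}_s|\lesssim_\rho (t-s)\cdot 2^{-m((q+1)H+\kappa-1)}$. Since $H>H_-$ and $\kappa>0$, this is subsumed by the desired rate $2^{-m((q+1)H_- - 1)}$, proving (1).

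Under Condition ($\mathfrak{B}$), $q$ is odd but $\overline{f}_q\not\equiv 0$, so I split $\epsilon^{(m)} = \epsilon^{\mathrm{main}} + \epsilon^{\mathrm{rem}}$ with $\epsilon^{\mathrm{main}}_t = \overline{f}_q(\nusoldis{r})\fBmitinunit^{(q)}$. The contribution of $\epsilon^{\mathrm{rem}}$ is controlled exactly as in case ($\mathfrak{A}$), giving $(t-s)\cdot 2^{-m((q+1)H+\kappa-1)}$, which is bounded by $(t-s)^{H^c}\cdot 2^{-m(qH_- - H^c)}$ thanks to $H^c+H_->1$ (a consequence of $H^c+H'>1$ and $H'\le H_-$). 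The $\epsilon^{\mathrm{main}}$-contribution becomes a Stieltjes sum with integrand $h_{\bitime{r}} = (M^{(m,\rho)}_{\bitime{r+1}})^{-1}\overline{f}_q(\nusoldis{r})$ against $d(B^{(q)})$. Since $\sigma\in C_b^{q+3}$ makes $\overline{f}_q$ at least $C^1_b$, Lemmas \ref{lemma:Hoelder_of_y} and \ref{lemma:Hoelder_of_M}, together with Corollary \ref{corollary:Minfinity}, deliver a uniform discrete $H'$-H\"older bound on $h$. A sewing-type estimate (legitimate because $H'+H^c>1$) combined with Corollary \ref{corollary:BdiscreteHoelder}, which gives $\|\mathcal{I}^{(m)}(d(B^q))\|_{H^c}\lesssim 2^{-(qH-H^c)m}$ for odd $q$, then bounds the main-term contribution in $H^c$-H\"older norm by $\lesssim_\rho 2^{-(qH_- - H^c)m}$, completing (2).

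The ``especially'' statement is immediate: from $\nusol{\cdot} - \exsol{\cdot} = Y^{(m,1)}_\cdot - Y^{(m,0)}_\cdot = \int_0^1 M^{(m,\rho)}_\cdot Z^{(m,\rho)}_\cdot\, d\rho$ and the uniform H\"older bound on $M^{(m,\rho)}$ supplied by Lemma \ref{lemma:Hoelder_of_M}, the bound on $Z^{(m,\rho)}$ transfers directly. I expect the main obstacle to be the sewing-type step in case ($\mathfrak{B}$): one must verify carefully that the discrete integrand $h=(M^{(m,\rho)})^{-1}\overline{f}_q(\nusol{\cdot})$ has the requisite $H'$-H\"older bound uniform in $m$ and $\rho$, and then close the sewing decomposition using $H'+H^c>1$ so that the factor $\|\mathcal{I}^{(m)}(d(B^q))\|_{H^c}$ can be extracted cleanly without accumulating loss in $m$.
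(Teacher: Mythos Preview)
Your proof is correct and follows essentially the same strategy as the paper's: bound $(M^{(m,\rho)})^{-1}$ uniformly (via Corollary~\ref{corollary:Minfinity} and Lemma~\ref{lemma:Hoelder_of_M}), control the accumulated error $e^{(m)}=\mathcal{I}^{(m)}(d(\epsilon^{(m)}))$ through Corollary~\ref{corollary:BdiscreteHoelder}, and close with a Young-type Stieltjes-sum estimate. The only organizational difference is that the paper treats both cases uniformly by writing $Z^{(m,\rho)}=\mathcal{I}^{(m)}((M^{(m,\rho)})^{-1},d(\epsilon^{(m)}))$ and invoking the Young estimate directly on this pair, whereas you handle case~($\mathfrak{A}$) more elementarily via $\|(M^{(m,\rho)})^{-1}\|_\infty$ and a termwise bound, and in case~($\mathfrak{B}$) you fold $\overline{f}_q(\hat{Y}^{(m)})$ into the integrand $h$ before sewing rather than into the integrator $e^{(m)}$; neither change affects the substance of the argument.
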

\begin{proof}

From Lemma \ref{lemma:Hoelder_of_M}, $\|\polJacobistd{\cdot}\|_{H_-}\lesssim_{m,\rho} 1$ holds.

Next, by definition of $e$ (Definition \ref{def:e}) and Corollary \ref{corollary:BdiscreteHoelder}, $\|e_\cdot^{(m)}\|_{1}\lesssim 2^{-((q+1)H_--1)m}$ on the Condition ($\mathfrak{A}$) and $\|e_\cdot^{(m)}\|_{H^c}\lesssim 2^{-(qH_--H^c)m}$ on the Condition ($\mathfrak{B}$).
Since $Z^{(m,\rho)}=\mathcal{I}^{(m)} ((\polJacobistd{})^{-1},d(\epsilon^{(m)}))$, we get the conclusion by the estimate of Young sum.
\end{proof}
Next we obtain estimates of $N^{(m,\rho)}$.

\begin{lmm}\label{lemma:Hoelder_of_N}
Let $\sigma\in C_b^{q+3},b\in C_b^2$.
Also, we define

\begin{equation*}
\overline{N}_\cdot^{(m,\rho)}\coloneqq \mathcal{I}_\cdot^{(m)}\left(\polJacobistd{},d(\Xi'[\sigma',\polsolstd{},\cdot_1,\cdot_2]+b''(Y_{\cdot_1}^{(m,\rho)})(\cdot_2-\cdot_1)\right).
\end{equation*}
Then, it holds that
\begin{equation}
N_t^{(m,\rho)}=\mathcal{I}_t^{(m)}(Z^{(m,\rho)},d(\overline{N}_{\cdot_1,\cdot_2}^{(m,\rho)}+R_{\cdot_1,\cdot_2}'(Y_{\cdot_1}^{(m,\rho)})).\label{integral_form_N}
\end{equation}
Also, $\|\overline{N}_\cdot^{(m,\rho)}\|_{H_-}\lesssim_{m,\rho}1$. 
Moreover, $ \|N_\cdot^{(m,\rho)}\|_{H_-}\lesssim \|Z_\cdot^{(m,\rho)}\|_{H^c}\lesssim_{m,\rho} 1$.
\end{lmm}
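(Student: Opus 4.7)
The plan is to start from the product representation of $\polJacobistd{t}$, take logarithm, apply Lemma \ref{lemma:estimate_log_remainder} pointwise to each factor, and then differentiate in $\rho$ via the chain rule. Concretely, write
\[
\log \polJacobistd{t}=\sum_{r=0}^{\lceil 2^m t\rceil-1}\log \Jacobiunit{\bitimelast{r+1}},
\]
and use Lemma \ref{lemma:estimate_log_remainder} to obtain, on each interval,
\[
\log \Jacobiunit{\bitimelast{r+1}}=\Xi[\sigma',\polsolstd{},\bitime{r},\bitimelast{r+1}]+b'(\polsolstd{\bitime{r}})(\bitimelast{r+1}-\bitime{r})+R_{\bitime{r},\bitimelast{r+1}}(\polsolstd{\bitime{r}}).
\]
Differentiating in $\rho$ and using $\partial_\rho \polsolstd{}=\polJacobistd{} Z^{(m,\rho)}$ (which follows directly from (\ref{definitionZN})), the chain rule extracts a common factor $M_{\bitime{r}}^{(m,\rho)} Z_{\bitime{r}}^{(m,\rho)}$ from each summand, with accompanying differentials $\Xi'[\sigma',\polsolstd{},\bitime{r},\bitimelast{r+1}]+b''(\polsolstd{\bitime{r}})(\bitimelast{r+1}-\bitime{r})+R_{\bitime{r},\bitimelast{r+1}}'(\polsolstd{\bitime{r}})$. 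Packaging the first two of these three differentials, together with $\polJacobistd{}$, into the Stieltjes sum $\overline{N}^{(m,\rho)}$ produces the representation (\ref{integral_form_N}).

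For the Hölder estimate on $\overline{N}^{(m,\rho)}$, the integrand $\polJacobistd{}$ is uniformly bounded by Corollary \ref{corollary:Minfinity}, and the two-index differential has modulus $\lesssim (t-s)^{H_-}$ thanks to Lemma \ref{lemma:Hoelder_of_y} (which controls $\|\polsolstd{\cdot}\|_{H_-}$) combined with the smoothness of $\sigma$ and $b$; specifically, the $\Xi'[\sigma',\polsolstd{},s,t]$ piece contributes at order $\|\polsolstd{\cdot}\|_{H_-}\cdot (t-s)^{H_-}$ and $b''(Y_s^{(m,\rho)})(t-s)$ is $(t-s)$-controlled. A discrete Young-type Stieltjes estimate in the spirit of Proposition \ref{prp:young_sum} then yields $\|\overline{N}^{(m,\rho)}\|_{H_-}\lesssim_{m,\rho}1$.

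For $N^{(m,\rho)}$, I apply a Young/Stieltjes bound to (\ref{integral_form_N}): since $H^c+H_->1$, the Stieltjes sum of $Z^{(m,\rho)}$ (controlled in $H^c$) against a differential of $H_-$-regularity produces an $H_-$-Hölder output, with
\[
\|N^{(m,\rho)}\|_{H_-}\lesssim \|Z^{(m,\rho)}\|_{H^c}\bigl(\|\overline{N}^{(m,\rho)}\|_{H_-}+\|R'(Y^{(m,\rho)})\|_{H_-}\bigr).
\]
The $R'$ term is uniformly controlled because Lemma \ref{lemma:estimate_log_remainder} gives $\|R_{s,t}\|_{C^1}\lesssim(t-s)^{(q+1)H_-}$, so $R_{s,t}'(\polsolstd{s})$ is in fact much better than $H_-$-Hölder. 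Combined with $\|\overline{N}^{(m,\rho)}\|_{H_-}\lesssim_{m,\rho}1$ from Step 2 and $\|Z^{(m,\rho)}\|_{H^c}\lesssim_{m,\rho}1$ from Lemma \ref{lemma:apriori_estimate}, one obtains $\|N^{(m,\rho)}\|_{H_-}\lesssim \|Z^{(m,\rho)}\|_{H^c}\lesssim_{m,\rho}1$.

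The main obstacle is the careful discrete Young/Stieltjes estimate in the presence of the mixed contributions to the differential: $\overline{N}^{(m,\rho)}$ is exactly of order $H_-$ while $R'(Y^{(m,\rho)})$ is of higher regularity, and one must be sure $H^c+H_->1$ is used at the critical step to keep everything uniform in $m,\rho$. A secondary subtlety is that under Condition ($\mathfrak{A}$) Lemma \ref{lemma:apriori_estimate} a priori provides $\|Z^{(m,\rho)}\|_1\lesssim 2^{-((q+1)H_--1)m}$ rather than an $H^c$-bound directly; the required bound $\|Z^{(m,\rho)}\|_{H^c}\lesssim 1$ is recovered by interpolating via Proposition \ref{proposition:degeneratediscreteHoelder}.
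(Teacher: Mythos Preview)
Your derivation of (\ref{integral_form_N}) and the Young-type bound for $N^{(m,\rho)}$ in terms of $Z^{(m,\rho)}$ match the paper's argument. The gap is in your estimate $\|\overline{N}^{(m,\rho)}\|_{H_-}\lesssim_{m,\rho}1$. You argue that a bounded integrand $\polJacobistd{}$ together with a two-index differential of size $(t-s)^{H_-}$ suffices, invoking Proposition \ref{prp:young_sum}. But the differential $\Xi'[\sigma',\polsolstd{},s,t]$ is \emph{not} the increment of a one-variable path, so no ordinary Young bound applies; and the naive triangle inequality over the $2^m(t-s)$ terms of the Stieltjes sum yields only $(t-s)\,2^{m(1-H_-)}$, which blows up uniformly in $m$. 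Proposition \ref{prp:young_sum} requires a genuine discrete controlled path structure on the integrand side, and nothing in your argument establishes that.

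The paper supplies exactly this missing structure. It sets $\tilde{N}^{(k)}=(\mathcal{D}^{k-1}\sigma')'(\polsolstd{})\,\polJacobistd{}$ and observes, via the identity $\mathcal{D}_{\tilde\sigma}(g'(x_1)x_2)=(\mathcal{D}g)'(x_1)x_2$, that $\tilde{N}^{(k)}=\mathcal{D}_{\tilde\sigma}^{k-1}f(\polsolstd{},\polJacobistd{})$ with $f(x_1,x_2)=\sigma''(x_1)x_2$. Lemma \ref{lemma:Hoelder_of_M} (the two-dimensional discrete controlled path result for $(\polsolstd{},\polJacobistd{})$) then certifies $\tilde{N}$ as a discrete controlled path uniformly in $m,\rho$, and one checks that $M_s^{(m,\rho)}\,\Xi'[\sigma',\polsolstd{},s,t]=\Xi^{\tilde{N}}_{s,t}$. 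Only now does Proposition \ref{prp:young_sum} apply, giving $|\overline{N}^{(m,\rho)}_{s,t}-\Xi^{\tilde{N}}_{s,t}|\lesssim(t-s)^{(q+1)H_-}$ and hence $\|\overline{N}^{(m,\rho)}\|_{H_-}\lesssim_{m,\rho}1$. This appeal to the joint controlled-path structure of $(\polsolstd{},\polJacobistd{})$ is the essential idea you are missing.
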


\begin{proof}
First, we show (\ref{integral_form_N}).
By definition of $R_{s,t}(x)$ (\ref{definitionRexp}), we obtain

\begin{equation*}
\polJacobistd{t}=\exp \left(\mathcal{I}_t^{(m)}\left(d(\Xi[\sigma',\polsolstd{},\cdot_1,\cdot_2]+b'(Y_{\cdot_1}^{(m,\rho)})(\cdot_2-\cdot_1)+R_{\cdot_1,\cdot_2}(Y_{\cdot_1}^{(m,\rho)}))\right)\right).
\end{equation*}
Then, from $N_t^{(m,\rho)} =(\polJacobistd{t})^{-1} \partial_\rho \polJacobistd{t}$, we can see
\begin{align*}
N_t^{(m,\rho)}=&(\polJacobistd{t})^{-1} \partial_\rho \polJacobistd{t}\\
=&\mathcal{I}_t^{(m)}\left(\partial_\rho \polsolstd{},d(\Xi'[\sigma',\polsolstd{},\cdot_1,\cdot_2]+b''(\polsolstd{})(\cdot_2-\cdot_1)+R_{\cdot_1,\cdot_2}'(Y_{\cdot_1}^{(m,\rho)}))\right)\\
=&\mathcal{I}_t^{(m)}(Z^{(m,\rho)},d(\overline{N}_{\cdot_1,\cdot_2}^{(m,\rho)}+M_{\cdot_1}^{(m,\rho)}R_{\cdot_1,\cdot_2}'(Y_{\cdot_1}^{(m,\rho)})).
\end{align*}

Next,  we show that $\|\overline{N}_\cdot^{(m,\rho)}\|_{H_-}\lesssim_{m,\rho}1$.
We introduce $\tilde{N}$:
\begin{equation*}
\tilde{N}\coloneqq (\sigma''(\polsolstd{})\polJacobistd{},\cdots,(\mathcal{D}^{(q-1)}\sigma')'(\polsolstd{})\polJacobistd{}).
\end{equation*}
To estimate $\overline{N}^{(m,\rho)}$, we prove $\tilde{N}$ is a discrete controlled path. 
As shown in Lemma \ref{lemma:Hoelder_of_M}, $(\{\mathcal{D}_{\tilde{\sigma}}^{k-1} f(\polsolstd{},\polJacobistd{})\}_{k=1}^q)$ is a discrete controlled path for sufficiently smooth $f$.
Since $\mathcal{D}_{\tilde{\sigma}}(f'(x_1)x_2)=(\mathcal{D}f)'(x_1)x_2$, $\tilde{N}$ is discrete controlled path. As a result, we get $\|\overline{N}_\cdot^{(m,\rho)}\|_{H_-}\lesssim_{m,\rho}1$.

Next, we show that $ \|N_\cdot^{(m,\rho)}\|_{H_-}\lesssim \|Z_\cdot^{(m,\rho)}\|_{H^c}\lesssim_{m,\rho} 1$.
By Lemma \ref{lemma:apriori_estimate}, we have $\|Z_\cdot^{(m,\rho)}\|_{H^c}\lesssim_{m,\rho} 1$.
At the same time, by Lemma \ref{lemma:estimate_log_remainder} and Corollary \ref{corollary:Minfinity}, we get $|R_{\bitime{r},t}'(\polsolstd{\bitime{r}})M_{\bitime{r}}^{(m,\rho)}|\lesssim_{m,\rho} \timeunit.$

By the above discussion and Young integral, we conclude that
\begin{equation*}
\|N_\cdot^{(m,\rho)}\|_{H_-}\lesssim \|Z_\cdot^{(m,\rho)}\|_{H^c}\lesssim_{m,\rho} 1.
\end{equation*}
\end{proof}

At last, we show that $\|\Jacobi{\cdot}(\polJacobistd{\cdot})^{-1}\|_{H_-}\lesssim_{\rho} 2^{-m((q+1)H_--1)}$.
To estimate $\Jacobi{\cdot}(\polJacobistd{\cdot})^{-1}$, we show an elementary lemma.

\begin{lmm}\label{lemma:norm_space_convergence}
We set a norm space $(V,\|\cdot \|)$ and sequences $\{v^{(m)}\},\{v_0^{(m)}\}$ on $V$.
Next, given $\lambda_1,\kappa$ and monotonically increasing sequence $R_0(m)$, if the following condition holds:
\begin{align*}
\|v_0^{(m)}\|\lesssim R_0(m),&&\|v^{(m)}\|\lesssim 2^{-\lambda_1m},&&\|v^{(m)}-v_0^{(m)}\|\lesssim 2^{-\kappa m}\|v^{(m)}\|+2^{-\kappa m}R_0(m).
\end{align*}

Then, we have
\begin{align}
\|v^{(m)}-v_0^{(m)}\|\lesssim 2^{-\kappa m}R_0(m),&&\|v^{(m)}\|\lesssim R_0(m).\label{norm_state}
\end{align}
\end{lmm}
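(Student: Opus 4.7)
The plan is a standard bootstrap/absorption argument, so I would keep the proof short. First, I would apply the triangle inequality together with the three hypotheses to get a constant $C$ independent of $m$ such that
\begin{equation*}
\|v^{(m)}\| \leq \|v_0^{(m)}\| + \|v^{(m)} - v_0^{(m)}\| \leq C R_0(m) + C \cdot 2^{-\kappa m}\|v^{(m)}\| + C \cdot 2^{-\kappa m} R_0(m).
\end{equation*}
The role of the a priori bound $\|v^{(m)}\| \lesssim 2^{-\lambda_1 m}$ is exactly to make this legal: it guarantees that each $\|v^{(m)}\|$ is finite so that the term on the right may be moved to the left.

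Next, I would pick $M_0$ large enough that $C \cdot 2^{-\kappa M_0} \leq 1/2$. Absorbing the $\|v^{(m)}\|$ term on the right into the left for all $m \geq M_0$ yields
\begin{equation*}
\tfrac{1}{2}\|v^{(m)}\| \leq C R_0(m) + C\cdot 2^{-\kappa m} R_0(m) \leq 2C R_0(m),
\end{equation*}
hence $\|v^{(m)}\| \leq 4C R_0(m)$ in that range. For the finitely many $m < M_0$, the hypothesis $\|v^{(m)}\| \lesssim 2^{-\lambda_1 m}$ bounds $\|v^{(m)}\|$ by an absolute constant; since $R_0$ is (positive and) monotonically increasing, $R_0(m) \geq R_0(1)$, and each of these finitely many $\|v^{(m)}\|$ is also bounded by a constant times $R_0(m)$. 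Combining the two ranges gives the second conclusion $\|v^{(m)}\| \lesssim R_0(m)$.

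Finally, I would substitute this sharpened bound back into the third hypothesis:
\begin{equation*}
\|v^{(m)} - v_0^{(m)}\| \lesssim 2^{-\kappa m}\|v^{(m)}\| + 2^{-\kappa m} R_0(m) \lesssim 2^{-\kappa m} R_0(m),
\end{equation*}
which yields the first conclusion in (\ref{norm_state}). There is really no main obstacle; the lemma is a quantitative incarnation of the elementary fact that a scalar inequality $x \leq A + \varepsilon x + B$ with $\varepsilon < 1$ forces $x \leq (A+B)/(1-\varepsilon)$, and the only care needed is to separate the $m \geq M_0$ regime (where absorption works) from the finitely many small $m$ (handled by the a priori bound).
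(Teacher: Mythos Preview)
Your proof is correct. The paper takes a slightly different route: instead of a one-shot absorption, it runs an iterated bootstrap, setting $R^{(1)}(m)=2^{-\lambda_1 m}$ and $R^{(k+1)}(m)=\max\{R_0(m),\,2^{-\kappa m}R^{(k)}(m)\}$, and shows inductively that $\|v^{(m)}\|\lesssim R^{(k)}(m)$ for every $k$; since for a fixed sufficiently large $n$ one has $R^{(n)}(m)=R_0(m)$ for all $m$ (this is where the positivity and monotonicity of $R_0$ are used), the second conclusion follows, and the first is then obtained exactly as you do by substituting back into the third hypothesis. Your direct absorption for $m\geq M_0$ combined with the a~priori bound for the finitely many $m<M_0$ is shorter and more transparent; the paper's iteration avoids the case split on $m$ but at the cost of carrying an auxiliary sequence through an induction. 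Both arguments invoke the monotonicity of $R_0$ at essentially the same place---you to bound $R_0(m)$ below by $R_0(1)$ for small $m$, the paper to guarantee that the iteration stabilises at $R_0(m)$ uniformly in $m$.
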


\begin{proof}
For $n,m\in \mathbb{N}_{>0}$, we define positive integer $R^{(n)}(m)$ inductively.

First, when $n=1$, we set $R^{(1)}(m)=2^{-\lambda_1m}$.

Next, when $R^{(n)}(m)$ is defined if $n=k$, we set $R^{(k+1)}(m)=\max\{R_0(m),2^{-\kappa m}R^{(k)}(m)\}$.

By the assumption, $\|v^{(m)}\|\lesssim R^{(1)}(m)$.
If $\|v^{(m)}\|\lesssim R^{(k)}(m)$ for fixed $k$, then we have

\begin{align*}
\|v^{(m)}\|&\leq \|v^{(m)}-v_0^{(m)}\|+\|v_0^{(m)}\|\lesssim 2^{-\kappa m}\|v^{(m)}\|+2^{-\kappa m}R_0(m)+R_0(m)\\
&\lesssim \max\{R_0(m),2^{-\kappa m}R^{(k)}(m)\}=R^{(k+1)}(m).
\end{align*}

Inductively, $\|v^{(m)}\|\lesssim R^{(n)}(m)$ for all $n$. Also, $R^{(n)}(m)=R_0(m)$ for sufficiently large $n$. 
Hence, we have (\ref{norm_state})
\end{proof}

Now, we estimate $\Jacobi{\cdot}(\polJacobistd{\cdot})^{-1}$.
\begin{lmm}\label{lemma:1-JMrho}
Let $\sigma\in C_b^{q+3},b\in C_b^2$, then $\|\Jacobi{\cdot}(\polJacobistd{\cdot})^{-1}\|_{H_-}\lesssim_{\rho} 2^{-m((q+1)H_--1)}$.
\end{lmm}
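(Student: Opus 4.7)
The plan is to reduce to the case $\rho=0$, which is handled by Corollary \ref{corollary:M-1J}, by differentiating the interpolation in $\rho$. From the definition (\ref{definitionZN}) we have $\partial_\rho \polJacobistd{t} = \polJacobistd{t} N_t^{(m,\rho)}$, whence $\partial_\rho (\polJacobistd{t})^{-1} = -(\polJacobistd{t})^{-1} N_t^{(m,\rho)}$. Integrating in $\rho$ and multiplying by $\Jacobi{t}$ yields the key identity
\[
\Jacobi{t}(\polJacobistd{t})^{-1} - \Jacobi{t}(\polJacobi{t}{0})^{-1} = -\int_0^\rho \Jacobi{t}(\polJacobi{t}{\rho'})^{-1} N_t^{(m,\rho')}\, d\rho'.
\]

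I then estimate the $H_-$-seminorm of the right-hand side using the bilinear estimate $\|fg\|_{H_-} \le \|f\|_\infty\|g\|_{H_-} + \|g\|_\infty\|f\|_{H_-}$. The factor $\Jacobi{\cdot}(\polJacobi{\cdot}{\rho'})^{-1}$ is uniformly bounded in $L^\infty$ (boundedness of $J$ combined with Corollary \ref{corollary:Minfinity}), while $N_\cdot^{(m,\rho')}$ is controlled by combining Lemma \ref{lemma:Hoelder_of_N} with the $Z$-estimates of Lemma \ref{lemma:apriori_estimate} to get $\|N_\cdot^{(m,\rho')}\|_{H_-} \lesssim_{\rho'} 2^{-\kappa m}$ for some $\kappa>0$; since $N_0^{(m,\rho')} = 0$, the inequality $\|N\|_\infty \le T^{H_-}\|N\|_{H_-}$ transfers the rate to $L^\infty$. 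These ingredients, combined with Corollary \ref{corollary:M-1J} (which bounds $\|\Jacobi{\cdot}(\polJacobi{\cdot}{0})^{-1}\|_{H_-} \lesssim R_0(m) \coloneqq 2^{-m((q+1)H_--1)}$ via the embedding $\|\cdot\|_{H_-} \lesssim \|\cdot\|_1$) and Lemma \ref{lemma:Hoelder_of_M} (which gives the a priori bound $\|\Jacobi{\cdot}(\polJacobistd{\cdot})^{-1}\|_{H_-} \lesssim_{m,\rho} 1$), produce a contraction-type inequality of the shape required by Lemma \ref{lemma:norm_space_convergence}. A direct application of that lemma then bootstraps the a priori bound to the claimed rate.

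The main obstacle is to ensure that the rate $\kappa$ appearing in the $N$-estimate is at least as large as $(q+1)H_- - 1$, so that the standalone term in the product estimate $\|fg\|_{H_-}$ is absorbed by $2^{-\kappa m}R_0(m)$ as required by Lemma \ref{lemma:norm_space_convergence}. Under Condition $(\mathfrak{A})$ this is immediate because Lemma \ref{lemma:apriori_estimate} gives $\|Z^{(m,\rho')}\|_1 \lesssim 2^{-((q+1)H_--1)m}$ exactly. Under Condition $(\mathfrak{B})$ the naive bound yields only $2^{-(qH_- - H^c)m}$; to recover the sharp rate one further isolates the leading $\overline{f}_q(\nusoldis{r})\fBmitinunit^{(q)}$ contribution to $N$, invokes the Hermite decomposition of Proposition \ref{proposition:decomposition_from_power_to_Hermite} on $\fBmitinunit^{(q)}$, and exploits the cancellation from oddness of $q$ via Corollary \ref{corollary:BdiscreteHoelder} to gain the missing factor. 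After this refinement the bootstrap closes and the lemma follows.
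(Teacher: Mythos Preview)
Your overall approach—differentiate in $\rho$, reduce to $\rho=0$ via Corollary~\ref{corollary:M-1J}, control the derivative through the product $J(M^{(m,\rho')})^{-1}N^{(m,\rho')}$, and bootstrap with Lemma~\ref{lemma:norm_space_convergence}—is exactly the paper's proof, and your bilinear product estimate is the correct way to make precise what the paper records more tersely as $\|fg\|_{H'}\lesssim\|f\|_{H'}\|g\|_{H'}$.

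Your identification of the obstacle under Condition~$(\mathfrak{B})$ is correct, but your proposed resolution does not work. The odd-$q$ cancellation you want to invoke is already the content of Corollary~\ref{corollary:BdiscreteHoelder} for odd $k$, and it is precisely what produces the rate $qH_--H^c$ for $\|Z^{(m,\rho)}\|_{H^c}$ in Lemma~\ref{lemma:apriori_estimate}; re-expanding $\fBmunit^q$ in Hermite polynomials yields no further improvement in the $H^c$-norm of $Z$, and hence none in $\|N\|_{H_-}$ via Lemma~\ref{lemma:Hoelder_of_N}. Since $H^c+H_-\ge H^c+H'>1$ forces $qH_--H^c<(q+1)H_--1$, the standalone term $\|J(M)^{-1}\|_\infty\|N\|_{H_-}$ genuinely caps the bootstrap at $2^{-(qH_--H^c)m}$ under $(\mathfrak{B})$; the paper's proof, by writing the seminorm product inequality without the cross term, glosses over the same point. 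That said, only a bound of the form $2^{-\kappa m}$ for \emph{some} $\kappa>0$ is actually used when this lemma is applied in the proof of Theorem~\ref{theorem:maintheorem}, so the downstream argument is unaffected.
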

\begin{proof}

we derive and integrate $\Jacobi{\cdot}(\polJacobistd{\cdot})^{-1}$ with $\rho$ as follows:
\begin{align*}
\Jacobi{\cdot}(\polJacobistd{\cdot})^{-1}=&\Jacobi{\cdot} (\polJacobi{\cdot}{0})^{-1}-\Jacobi{\cdot}\int_0^\rho \partial_{\rho_1} (M_\cdot^{(m,\rho_1)})^{-1}d\rho_1\\
=&\Jacobi{\cdot} (\polJacobi{\cdot}{0})^{-1}-\Jacobi{\cdot}\int_0^\rho (M_\cdot^{(m,\rho_1)})^{-2}\partial_{\rho_1} (M_\cdot^{(m,\rho_1)})d\rho_1\\
=&\Jacobi{\cdot} (\polJacobi{\cdot}{0})^{-1}-\int_0^\rho \Jacobi{\cdot}(M_\cdot^{(m,\rho_1)})^{-1}(N_\cdot^{(m,\rho_1)})d\rho_1.
\end{align*}
Hence, we have
\begin{equation*}
\sup_\rho \|\Jacobi{\cdot}(\polJacobistd{\cdot})^{-1}-\Jacobi{\cdot} (\polJacobi{\cdot}{0})^{-1}\|_{H'}\lesssim \sup_\rho \|\Jacobi{\cdot}(\polJacobistd{\cdot})^{-1}\|_{H'}\sup_\rho\|N_\cdot^{(m,\rho)}\|_{H'}.
\end{equation*}
Estimating $N^{(m,\rho)}$ by Lemma \ref{lemma:apriori_estimate} and \ref{lemma:Hoelder_of_N}, it follows for some $\kappa>0$ that
\begin{equation*}
\sup_\rho \|\Jacobi{\cdot}(\polJacobistd{\cdot})^{-1}-\Jacobi{\cdot} (\polJacobi{\cdot}{0})^{-1}\|_{H'}\lesssim 2^{-\kappa m}\sup_\rho \|\Jacobi{\cdot}(\polJacobistd{\cdot})^{-1}\|_{H'}.
\end{equation*}
At the same time, from Corollary \ref{corollary:M-1J}, we get $\|\Jacobi{\cdot}(\polJacobi{\cdot}{0})^{-1}\|_1\lesssim 2^{-m((q+1)H_--1)}$.

Using Lemma \ref{lemma:norm_space_convergence}, we get the conclusion.
\end{proof}
\subsection{Estimate of the decomposed terms}

In this subsection, we will estimate $Z_t^{1,2,(m,A)},Z_t^{1,3,(m,A)},Z_t^{1-,(m)},Z_t^{2,(m,\rho)}$ as Lemma \ref{lemma:m,1,2,A}, \ref{lemma:m,1,3,A}, \ref{lemma:m,1-} and \ref{lemma:m,2} using Section \ref{subsection:yMN}, respectively.

First, we check $Z_t^{1,2,(m,A)}$.

\begin{lmm}\label{lemma:m,1,2,A}
Fix $A>1$. Let $\sigma\in C_b^{q+2},b\in C_b^2$.
Assume that $\hat{f}_{\hat{\Gamma}}$ is Lipschitz continuous for any $\hat{\Gamma}\in \hat{\mathbb{G}}_{1,A}$.

Then, there exists $\kappa>0$ such that $\|Z_\cdot^{1,2,(m,A)}\|_{H'}\lesssim 2^{-m\kappa}\|Z_\cdot^{(m,\rho)}\|_{H'}$.
\end{lmm}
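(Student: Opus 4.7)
The plan is to bound the discrete Stieltjes sum $Z_t^{1,2,(m,A)} = \sum_{r<\lfloor 2^m t\rfloor}\Jacobi{\bitime{r+1}}^{-1}\epsilon^{2,(m,A)}_{\bitime{r},\bitime{r+1}}$ term-by-term, combining three ingredients: (i) the Lipschitz continuity of each $\hat{f}_{\hat{\Gamma}}$ for $\hat{\Gamma}\in\hat{\mathbb{G}}_{1,A}$, which replaces the coefficient differences $\hat{f}_{\hat{\Gamma}}(\nusoldis{r})-\hat{f}_{\hat{\Gamma}}(\exsoldis{r})$ by $|\nusoldis{r}-\exsoldis{r}|$; (ii) the pointwise control $|\nusoldis{r}-\exsoldis{r}|\lesssim \sup_\rho\|Z^{(m,\rho)}_\cdot\|_{H'}$, obtained by differentiating the interpolating process $\polsolstd{}$ in $\rho$; and (iii) the strict inequality $|\hat{\Gamma}|>1$ for every $\hat{\Gamma}\in\hat{\mathbb{G}}_{1,A}$, which provides the headroom needed to absorb the factor $2^m$ coming from the number of summands.

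Concretely, from $\partial_{\rho_1}\polsol{t}{\rho_1}=\polJacobi{t}{\rho_1}Z_t^{(m,\rho_1)}$ together with $\polsol{t}{0}=\exsol{t}$ and $\polsol{t}{1}=\nusol{t}$, integration over $\rho_1\in[0,1]$ yields
\[
\nusoldis{r}-\exsoldis{r}=\int_0^1 \polJacobi{\bitime{r}}{\rho_1}Z_{\bitime{r}}^{(m,\rho_1)}\,d\rho_1.
\]
Since $\polsolstd{0}=Y_0$ for every $\rho$ one has $Z_0^{(m,\rho)}=0$, hence $\|Z^{(m,\rho)}_\cdot\|_\infty\leq T^{H'}\|Z^{(m,\rho)}_\cdot\|_{H'}$, and together with Corollary~\ref{corollary:Minfinity} this gives $|\nusoldis{r}-\exsoldis{r}|\lesssim \sup_{\rho_1}\|Z^{(m,\rho_1)}_\cdot\|_{H'}$. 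Using Lipschitz continuity of $\hat{f}_{\hat{\Gamma}}$, boundedness of $\Jacobi{}^{-1}$ (via Lemma~\ref{lemma:1-JMrho} and Corollary~\ref{corollary:Minfinity}), and $|B^{\hat{\Gamma}}_{\bitime{r},\bitime{r+1}}|\lesssim 2^{-m|\hat{\Gamma}|}$, each summand has size $\lesssim 2^{-m|\hat{\Gamma}|}\sup_\rho\|Z^{(m,\rho)}_\cdot\|_{H'}$. For $s<t\in\mathbb{P}_m$ the interval $[s,t]$ contains at most $2^m(t-s)$ summands, and the finite index set $\hat{\mathbb{G}}_{1,A}$ contributes only a universal combinatorial factor, so
\[
|Z^{1,2,(m,A)}_{s,t}|\lesssim 2^{m(1-|\hat{\Gamma}_{\min}|)}(t-s)\sup_\rho\|Z^{(m,\rho)}_\cdot\|_{H'},\qquad |\hat{\Gamma}_{\min}|:=\min_{\hat{\Gamma}\in\hat{\mathbb{G}}_{1,A}}|\hat{\Gamma}|>1.
\]
Dividing by $(t-s)^{H'}$ and using $(t-s)^{1-H'}\leq T^{1-H'}$ completes the estimate with $\kappa=|\hat{\Gamma}_{\min}|-1>0$.

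The main obstacle is the remaining term $(\overline{f}_q(\nusoldis{r})-\overline{f}_q(\exsoldis{r}))B^{(q)}_{\bitime{r},\bitime{r+1}}$ in $\epsilon^{2,(m,A)}$, which involves the index $[q]=(q,0)$ with $|[q]|=qH$. Under Condition ($\mathfrak{A}$) one has $\overline{f}_q\equiv 0$ and the term simply disappears; under Condition ($\mathfrak{B}$), however, $qH\leq 1$ and the naive counting bound yields no gain. In that case I would recast the corresponding partial sum as a discrete Stieltjes integral driven by $\mathcal{I}^{(m)}_\cdot(d(B^{(q)}))$, exploit the cancellation-driven estimate $\|\mathcal{I}^{(m)}_\cdot(d(B^{(q)}))\|_{H^c}\lesssim 2^{-(qH-H^c)m}$ from Corollary~\ref{corollary:BdiscreteHoelder} (crucially using that $q$ is odd under ($\mathfrak{B}$)), and apply a Young-type bound. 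The integrand $\Jacobi{\cdot+}^{-1}(\overline{f}_q(\nusol{})-\overline{f}_q(\exsol{}))$ inherits its $H'$-regularity from the same differentiation-in-$\rho$ representation used above, so that its H\"older size remains controlled by $\sup_\rho\|Z^{(m,\rho)}_\cdot\|_{H'}$, and the standing relations $H^c+H'>1$ and $qH_->H^c$ guarantee that the Young pairing closes with a positive $\kappa$.
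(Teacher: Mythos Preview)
Your proposal is correct and follows essentially the same route as the paper: split $\epsilon^{2,(m,A)}$ into the finitely many $\hat{\Gamma}\in\hat{\mathbb{G}}_{1,A}$ pieces (handled by Lipschitz continuity, the interpolation identity $\nusoldis{r}-\exsoldis{r}=\int_0^1 M^{(m,\rho_1)}_{\bitime{r}}Z^{(m,\rho_1)}_{\bitime{r}}\,d\rho_1$, and the crude bound $|\hat{\Gamma}|>1$) plus the $\overline{f}_q$ piece, which vanishes under Condition~($\mathfrak{A}$) and under Condition~($\mathfrak{B}$) is treated as a Young sum against $\mathcal{I}^{(m)}(d(B^{(q)}))$ using Corollary~\ref{corollary:BdiscreteHoelder}. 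The paper carries out exactly these two cases, with the same $\kappa$'s ($(q+1)H_--1$ and $qH_--H^c$ respectively). One point worth making explicit in your write-up: for the $\overline{f}_q$ term the Young bound needs the \emph{$H'$-H\"older norm} of the integrand $J_{\cdot+}^{-1}(\overline{f}_q(\nusol{})-\overline{f}_q(\exsol{}))$, not just its sup-norm; the paper obtains this by writing $\overline{f}_q(\nusol{s})-\overline{f}_q(\exsol{s})=\int_0^1 \overline{f}_q'(\polsol{s}{\rho_1})M^{(m,\rho_1)}_s Z^{(m,\rho_1)}_s\,d\rho_1$ and invoking $\overline{f}_q\in C^2$ together with the uniform H\"older bounds on $\polsolstd{}$ and $\polJacobistd{}$ from Lemmas~\ref{lemma:Hoelder_of_y}--\ref{lemma:Hoelder_of_M}. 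Your phrase ``inherits its $H'$-regularity'' is pointing at precisely this, but the reader will want to see the product-rule estimate spelled out.
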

\begin{proof}\

\begin{itemize}
\item At first, by definition of $Z^{1,2,(m,A)}$, we check that
\begin{align*}
\|Z_\cdot^{1,2,(m,A)}\|_{H'}\leq& \|\mathcal{I}_\cdot^{(m)}(J_{\cdot+}^{-1}\overline{f}_q(\nusol{\cdot})-\overline{f}_q(\exsol{\cdot}), d(B^{(q)}))\|_{H'}\\
&+\sum_{\hat{\Gamma}\in \mathbb{G}_{1,A}}\left\|\mathcal{I}_\cdot^{(m)}\left(J_{\cdot+}^{-1}(\hat{f}_{\hat{\Gamma}}(\nusol{\cdot})-\hat{f}_{\hat{\Gamma}}(\exsol{\cdot} )),d\left(B^{\Gamma}\right)\right)\right\|_{H'}.
\end{align*}

\item Next, we will estimate right-hand side for each $\hat{\Gamma}\in \mathbb{G}_{1,A}$.
By assumption, $\hat{f}_{\hat{\Gamma}}$ is Lipschitz, so it holds that
\begin{equation*}
\|\hat{f}_{\hat{\Gamma}}(\nusol{\cdot})-\hat{f}_{\hat{\Gamma}}(\exsol{\cdot})\|_\infty\leq \|\hat{f}_{\hat{\Gamma}}\|_1\|\error{\cdot}\|_\infty.
\end{equation*}

At the same time, from (\ref{transform_error}) we can estimate $\nusol{s}-\exsol{s}$ as
\begin{equation*}
\|\error{\cdot}\|_\infty\leq \sup_\rho \|\polJacobistd{\cdot}\|_\infty\sup_\rho \|Z_\cdot^{(m,\rho)}\|_\infty.
\end{equation*}

As a result, we can show that
\begin{align*}
&\left\|\mathcal{I}_\cdot^{(m)}\left(J_{\cdot+}^{-1}(\hat{f}_{\hat{\Gamma}}(\nusol{\cdot})-\hat{f}_{\hat{\Gamma}}(\exsol{\cdot} )),d\left(B^{\Gamma}\right)\right)\right\|_1\\
\lesssim& \|J_{\cdot+}^{-1}(\hat{f}_{\hat{\Gamma}}(\nusol{\cdot})-\hat{f}_{\hat{\Gamma}}(\exsol{\cdot} ))\|_{\infty}\left\|\mathcal{I}_\cdot^{(m)}(d(B^{\Gamma}))\right\|_1\\
\lesssim& \|\Jacobi{\cdot}^{-1}\|_\infty\|\hat{f}_{\hat{\Gamma}}(\nusol{\cdot})-\hat{f}_{\hat{\Gamma}}(\exsol{\cdot} )\|_{\infty}\left\|\mathcal{I}_\cdot^{(m)}(d(B^{\Gamma}))\right\|_1\\
\lesssim& \|\Jacobi{\cdot}^{-1}\|_\infty\|\hat{f}_{\hat{\Gamma}}\|_1\|\error{\cdot}\|_\infty\left\|\mathcal{I}_\cdot^{(m)}(d(B^{\Gamma}))\right\|_1\\
\lesssim& \sup_\rho\|\Jacobi{\cdot}^{-1}\|_{H_-}\|\hat{f}_{\hat{\Gamma}}\|_1\|\polJacobistd{\cdot}\|_{H_-}\|Z_\cdot^{(m,\rho)}\|_{H'}\left\|\mathcal{I}_\cdot^{(m)}(d(B^{\Gamma}))\right\|_1.
\end{align*}
By definition, $\|J_\cdot^{-1}\|_{H_-}<\infty$. Next, by the assumption, $\|\hat{f}_{\hat{\Gamma}}\|_1<\infty$. From Section \ref{subsection:yMN}, we have $\|\polJacobistd{\cdot}\|_{H_-}\lesssim_{m,\rho} 1$. As a priori estimate, $\|\mathcal{I}_\cdot^{(m)}(d(B^{\Gamma}))\|_1\lesssim 2^{-(|\hat{\Gamma}|-1)m}$.
So, we conclude $\|Z_\cdot^{1,2,(m,A)}\|_{H'}\lesssim 2^{-((q+1)H_--1)m}\|Z_\cdot^{(m,\rho)}\|_{H'}$.
\item At last, when $\nusol{}$ satisfies Condition ($\mathfrak{B}$), $\overline{f}_q\in C_2^q$ because $\overline{f}_q$ is a polynomial of $\{\sigma^{(k)}\}_{k=0}^{q}$.
Now, we can rewrite $\overline{f}_q(\nusol{s})-\overline{f_q}(\exsol{s})$ as
\begin{equation*}
\overline{f}_q(\nusol{s})-\overline{f}_q(\exsol{s})=\int_0^1 f'(\polsolstd{s}) \partial_\rho \polsolstd{s}d\rho=\int_0^1 \overline{f}_q'(\polsolstd{s}) M_s^{(m,\rho)} Z_s^{(m,\rho)}d\rho.
\end{equation*}
Hence, we get the estimate that
\begin{equation*}
\|\overline{f}_q(\nusol{\cdot})-\overline{f}_q(\exsol{\cdot})\|_{H'}\lesssim \sup_\rho\|\overline{f}_q''\|_\infty \|\polsolstd{\cdot}\|_{H_-}\|\polJacobistd{\cdot}\|_{H_-}\|Z_\cdot^{(m,\rho)}\|_{H'}.
\end{equation*}
As a result, we can show that
\begin{align*}
&\left\|\mathcal{I}_\cdot^{(m)}\left(J_{\cdot+}^{-1}(\overline{f}_q(\nusol{\cdot})-\overline{f}_q(\exsol{\cdot} )),d\left(B^{(q)}\right)\right)\right\|_{H^c}\\
\lesssim& \|J_{\cdot+}^{-1}(\overline{f}_q(\nusol{\cdot})-\overline{f}_q(\exsol{\cdot} ))\|_{H'}\left\|\mathcal{I}_\cdot^{(m)}(d(B^{(q)}))\right\|_{H^c}\\
\lesssim& \|\Jacobi{\cdot}^{-1}\|_{H'}\|\overline{f}_q(\nusol{\cdot})-\overline{f}_q(\exsol{\cdot} )\|_{H'}\left\|\mathcal{I}_\cdot^{(m)}(d(B^{(q)}))\right\|_{H^c}\\
\lesssim& \sup_\rho\|\Jacobi{\cdot}^{-1}\|_{H_-}\|\overline{f}_q''\|_\infty \|\polsolstd{\cdot}\|_{H_-}\|\polJacobistd{\cdot}\|_{H_-}\|Z_\cdot^{(m,\rho)}\|_{H'}\left\|\mathcal{I}_\cdot^{(m)}(d(B^{(q)}))\right\|_{H^c}.
\end{align*}
By estimates used for $\hat{f}_{\hat{\Gamma}}$ and $\left\|\mathcal{I}_\cdot^{(m)}(d(B^{(q)}))\right\|_{H^c}\lesssim 2^{-(qH_--H^c)m}$, which follows from Corollary \ref{corollary:BdiscreteHoelder}, we conclude $\|Z_\cdot^{1,2,(m,A)}\|_{H'}\lesssim 2^{-(qH_--H^c)m}\|Z_\cdot^{(m,\rho)}\|_{H'}$.
\end{itemize}

Combining estimates for each term, we obtain the statement.
\end{proof}

Next, we check $Z_t^{1,3,(m,A)}$.

\begin{lmm}\label{lemma:m,1,3,A}
Let $\sigma\in C_b^{q+2},b\in C_b^2$. Then, $\|Z_\cdot^{1,3,(m,A)}\|_1\lesssim 2^{-(A-1+\kappa)m}$ for some $\kappa>0$.
\end{lmm}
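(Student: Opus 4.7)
The plan is to exploit the fact that $\epsilon^{3,(m,A)}$ is, by construction, the difference of two pointwise Taylor-type remainders, so that the backward Stieltjes sum defining $Z^{1,3,(m,A)}$ can be estimated term by term, with no sewing argument or cancellation required.

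Concretely, I would first unpack the definition of the backward Stieltjes sum to obtain
\[
Z_t^{1,3,(m,A)} = \sum_{r=0}^{\lfloor 2^m t\rfloor-1} \Jacobi{\bitime{r+1}}^{-1}\, \epsilon_{\bitime{r},\bitime{r+1}}^{3,(m,A)}.
\]
Two ingredients are then needed: (i) Condition \ref{condition:secondalycondition} applied to $\hat{\epsilon}^{(m,A)}$ together with Proposition \ref{proposition:taylorfory} applied to $\epsilon^{(A)}$, which give
\[
|\epsilon_{\bitime{r},\bitime{r+1}}^{3,(m,A)}| \leq |\hat{\epsilon}_{\bitime{r},\bitime{r+1}}^{(m,A)}| + |\epsilon_{\bitime{r},\bitime{r+1}}^{(A)}| \lesssim 2^{-m(A+\kappa)}
\]
for some $\kappa>0$; and (ii) the pathwise bound $\|\Jacobi{\cdot}^{-1}\|_\infty \lesssim 1$, which is immediate from the explicit exponential representation (\ref{definitionJ}) of $\Jacobi{t}$ together with the pathwise continuity of $\Jacobi{\cdot}$ on the compact interval $[0,T]$.

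Combining these two facts, for any $s<t$ in $\mathbb{P}_m$ the discrete increment $\delta Z_{s,t}^{1,3,(m,A)}$ is a sum of exactly $(t-s)2^m$ summands, each bounded in absolute value by a constant times $2^{-m(A+\kappa)}$. Hence $|\delta Z_{s,t}^{1,3,(m,A)}| \lesssim (t-s)\cdot 2^{-m(A-1+\kappa)}$, and dividing by $(t-s)$ and passing to the supremum over $s,t\in\mathbb{P}_m$ yields the claimed estimate $\|Z_\cdot^{1,3,(m,A)}\|_1 \lesssim 2^{-m(A-1+\kappa)}$.

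There is essentially no substantive obstacle in this step: unlike $Z^{1,2,(m,A)}$, whose estimate in Lemma \ref{lemma:m,1,2,A} requires controlling $\nusol{t}-\exsol{t}$ in terms of $Z^{(m,\rho)}$, or $Z^{1-,(m)}$, which requires analyzing the difference $V^{1,(m)} - \Jacobi{\cdot+}^{-1}$, the term $Z^{1,3,(m,A)}$ isolates purely the higher-order remainders of the two Taylor-type expansions, so the naive triangle-inequality bound already produces the sharp $(A-1)$-gain in the exponent of $2^{-m}$.
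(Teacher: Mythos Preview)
Your proposal is correct and follows essentially the same approach as the paper: both bound $|\epsilon_{\bitime{r},\bitime{r+1}}^{3,(m,A)}|\lesssim 2^{-m(A+\kappa)}$ directly from the remainder estimates in Condition~\ref{condition:secondalycondition} and Proposition~\ref{proposition:taylorfory}, then use the boundedness of $\Jacobi{\cdot}^{-1}$ and sum the $(t-s)2^m$ terms. The paper just packages the last step as $\|Z_\cdot^{1,3,(m,A)}\|_1\leq \|\Jacobi{\cdot}^{-1}\|_{H_-}\|e_\cdot^{3,(m,A)}\|_1$ rather than writing the sum out explicitly.
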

\begin{proof}
By definition, $\|\epsilon_\cdot^{3,(m,A)}\|_1\lesssim 2^{-(A+\kappa)m}$ holds.
Clearly, $\|e_\cdot^{3,(m,A)}\|_1\lesssim 2^{-(A-1+\kappa)m}$.

From $\|\Jacobi{\cdot}^{-1}\|_{H_-}<\infty$, we can see that
\begin{equation*}
\|Z_\cdot^{1,3,(m,A)}\|_1=\|\mathcal{I}_\cdot^{(m)}(J_{\cdot+}^{-1},d(e^{3,(m,A)}))\|_1\leq \|\Jacobi{\cdot}^{-1}\|_{H_-}\|e^{3,(m,A)}\|_1\lesssim 2^{-(A-1+\kappa)m}.
\end{equation*}
\end{proof}

Next, we check $Z_t^{1-,(m)}$.
\begin{lmm}\label{lemma:m,1-}
Let $\sigma\in C_b^{q+2},b\in C_b^2$. Then, $\|Z_\cdot^{1-,(m)}\|_{H'}\lesssim 2^{-m((q+1)H_--1)}\|Z_\cdot^{(m,\rho)}\|_{H'}$.
\end{lmm}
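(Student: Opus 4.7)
The plan is to re-express $Z^{1-,(m)}$ as a Stieltjes sum driven by $Z^{(m,\rho)}$ rather than by $\epsilon^{(m)}$, and then control it via a discrete Young-type estimate together with the smallness of $J(M^{(m,0)})^{-1}-1$ established in Corollary \ref{corollary:M-1J}.

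First, I would invert the roles of integrand and integrator. From the formula for $\partial_\rho\polsolstd{t}$ derived in Lemma \ref{lemma:representation of error}, one has
\begin{equation*}
Z^{(m,\rho)}_t=\sum_{r<\lfloor 2^m t\rfloor}(M^{(m,\rho)}_{\bitime{r+1}})^{-1}\epsilon^{(m)}_{\bitime{r},\bitime{r+1}},
\end{equation*}
so that $\epsilon^{(m)}_{\bitime{r},\bitime{r+1}}=M^{(m,\rho)}_{\bitime{r+1}}\,\delta Z^{(m,\rho)}_{\bitime{r},\bitime{r+1}}$. Substituting into the definition of $Z^{1-,(m)}$ and using $V^{1,(m)}=(M^{(m,0)})^{-1}$ yields
\begin{equation*}
Z^{1-,(m)}_t=\mathcal{I}^{(m)}_t\bigl(g_+,\,dZ^{(m,\rho)}\bigr),\qquad g\coloneqq\bigl((M^{(m,0)})^{-1}-J^{-1}\bigr)M^{(m,\rho)}.
\end{equation*}

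Second, I would estimate $g$ in the combined norm $\|\cdot\|_\infty+\|\cdot\|_{H_-}$. The algebraic identity $(M^{(m,0)})^{-1}-J^{-1}=J^{-1}\bigl(J(M^{(m,0)})^{-1}-1\bigr)$, combined with Corollary \ref{corollary:M-1J} (giving $\|J(M^{(m,0)})^{-1}\|_{1}\lesssim 2^{-m((q+1)H_--1)}$) and the initial condition $J(M^{(m,0)})^{-1}|_{t=0}=1$, produces
\begin{equation*}
\|J(M^{(m,0)})^{-1}-1\|_\infty+\|J(M^{(m,0)})^{-1}-1\|_{H_-}\lesssim 2^{-m((q+1)H_--1)}.
\end{equation*}
Together with the boundedness of $J^{-1}$ and of $M^{(m,\rho)}$ in both the sup-norm and the $H_-$-Hölder norm (Corollary \ref{corollary:Minfinity}, Lemma \ref{lemma:Hoelder_of_M}), the Leibniz rule for Hölder norms then gives $\|g\|_\infty+\|g\|_{H_-}\lesssim_\rho 2^{-m((q+1)H_--1)}$.

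Third, I would apply the discrete Young-type bound on the Stieltjes sum. Writing
\begin{equation*}
\mathcal{I}^{(m)}_{s,t}(g_+,dZ^{(m,\rho)})=g_s\,\delta Z^{(m,\rho)}_{s,t}+\sum_{r:\,s\leq \bitime{r}<t}\delta g_{s,\bitime{r+1}}\,\delta Z^{(m,\rho)}_{\bitime{r},\bitime{r+1}},
\end{equation*}
and using $H_-+H'>1$ to sum the second term in the standard Young/sewing fashion yields
\begin{equation*}
\|\mathcal{I}^{(m)}(g_+,dZ^{(m,\rho)})\|_{H'}\lesssim\bigl(\|g\|_\infty+\|g\|_{H_-}\bigr)\|Z^{(m,\rho)}\|_{H'},
\end{equation*}
from which the desired estimate follows.

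The main obstacle I anticipate is step three: a naive term-by-term bound on $\sum_r|\delta g_{s,\bitime{r+1}}\,\delta Z^{(m,\rho)}_{\bitime{r},\bitime{r+1}}|$ diverges as $m\to\infty$, so one must use the sewing structure and genuinely exploit $H_-+H'>1$, in the same manner used elsewhere in this paper. Once this Young-type bound is secured, the algebraic rearrangement in step one reduces the problem to the smallness of $J(M^{(m,0)})^{-1}-1$, which is already available.
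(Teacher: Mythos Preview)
Your overall strategy is the same as the paper's: rewrite $Z^{1-,(m)}$ as a backward Stieltjes sum against $dZ^{(m,\rho)}$ and control it by a Young-type bound, with the smallness supplied by Corollary~\ref{corollary:M-1J}. The gap is in step three. You invoke $H_-+H'>1$, but this inequality is \emph{false} throughout the regime $H<1/2$ treated in the paper: by definition $H'\le H_-<H\le 1/2$, hence $H_-+H'<1$, and the sewing/Young estimate with integrand regularity $H_-$ and driver regularity $H'$ cannot close.

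The reason your integrand $g=\bigl((M^{(m,0)})^{-1}-J^{-1}\bigr)M^{(m,\rho)}$ is only $H_-$-H\"older is that both $J^{-1}$ and $M^{(m,\rho)}$ are only $H_-$-H\"older; the Lipschitz bound from Corollary~\ref{corollary:M-1J} applies to $J(M^{(m,0)})^{-1}$, not to $g$ itself. The paper sidesteps this by a slightly different factorisation,
\[
(M^{(m,0)})^{-1}-J^{-1}=\bigl(1-M^{(m,0)}J^{-1}\bigr)(M^{(m,0)})^{-1},
\]
and absorbs the factor $(M^{(m,0)})^{-1}$ into the driver, so that the integrand becomes $1-M^{(m,0)}J^{-1}$. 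This quantity \emph{is} Lipschitz, with small norm $\lesssim 2^{-m((q+1)H_--1)}$ by Corollary~\ref{corollary:M-1J}, and the Young estimate then relies only on $1+H'>1$. Note that your own expression specialises to exactly this when $\rho=0$, since then $g=1-J^{-1}M^{(m,0)}$; for general $\rho$ the extra factor $(M^{(m,0)})^{-1}M^{(m,\rho)}$ is what spoils the required regularity of the integrand.
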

\begin{proof}
At first, we transform $Z_\cdot^{1-,(m)}$ as follows:
\begin{align*}
Z_\cdot^{1-,(m)}=&\mathcal{I}_\cdot^{(m)}(((M_{\cdot+}^{(m,0)})^{-1}-J_{\cdot+}^{-1}),d(\epsilon^{(m)}))\\
=&\mathcal{I}_\cdot^{(m)}((1-J_{\cdot+}^{-1}M_{\cdot+}^{(m,0)})(M_{\cdot+}^{(m,0)})^{-1},d(\epsilon^{(m)}))\\
=&\mathcal{I}_\cdot^{(m)}((1-M_{\cdot+}^{(m,0)}J_{\cdot+}^{-1}),d(Z^{(m,\rho)})
\end{align*}
At the same time, Corollary \ref{corollary:M-1J}, we have
\begin{equation*}
\|1-\polJacobi{\cdot}{0}\Jacobi{\cdot}^{-1}\|_1=\|\polJacobi{\cdot}{0}\Jacobi{\cdot}^{-1}\|_1\lesssim 2^{-m((q+1)H_--1)}.
\end{equation*}
Hence, using estimate of Young-Riemann sum, we can derive the following estimate:
\begin{align*}
\|Z_\cdot^{1-,(m)}\|_{H'}\leq& \|1-\polJacobi{\cdot}{0}\Jacobi{\cdot}^{-1}\|_1\|Z_\cdot^{(m,\rho)}\|_{H'}\lesssim 2^{-m((q+1)H_--1)}\|Z_\cdot^{(m,\rho)}\|_{H_-}.
\end{align*}

\end{proof}

Finally, we check $Z_t^{2,(m,\rho)}$.

\begin{lmm}\label{lemma:m,2}
Let $\sigma\in C_b^{q+3},b\in C_b^2$. Then, $\left\|Z_\cdot^{2,(m,\rho)}\right\|_{H'}\lesssim \|Z_\cdot^{(m,\rho)}\|_{H'}\|Z_\cdot^{(m,\rho)}\|_{H^c}$.
\end{lmm}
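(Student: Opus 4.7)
The plan is to differentiate $Z^{(m,\rho)}$ with respect to $\rho$, rewrite the derivative by a discrete integration-by-parts that turns it into a Young-type integral of $N$ against $Z$, and then apply the sewing estimate already exploited in Section~\ref{subsection:yMN}.

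First, I would observe from the formulas
\[
Z_t^{(m,\rho)} = \sum_{r < \lceil 2^m t \rceil}(M_{\bitime{r+1}\wedge t}^{(m,\rho)})^{-1}\epsilon_{\bitime{r},\bitime{r+1}\wedge t}^{(m)},
\qquad V_t^{2,(m,\rho)} = (M_t^{(m,\rho)})^{-1}-(M_t^{(m,0)})^{-1}
\]
that $Z^{2,(m,\rho)} = Z^{(m,\rho)} - Z^{(m,0)}$, so it suffices to estimate $\int_0^\rho \partial_{\rho_1} Z^{(m,\rho_1)} \, d\rho_1$. Differentiating termwise using the elementary identity $\partial_{\rho_1}(M_t^{(m,\rho_1)})^{-1} = -(M_t^{(m,\rho_1)})^{-1} N_t^{(m,\rho_1)}$ (which follows from the definition \eqref{definitionZN} of $N$), I get
\[
\partial_{\rho_1} Z_t^{(m,\rho_1)} = -\mathcal{I}_t^{(m)}\!\left((M^{(m,\rho_1)})^{-1}_{\cdot+} N^{(m,\rho_1)}_{\cdot+},\,d(\epsilon^{(m)})\right).
\]
The crucial rearrangement is that $(M_{\bitime{r+1}}^{(m,\rho_1)})^{-1}\epsilon^{(m)}_{\bitime{r},\bitime{r+1}} = \delta Z^{(m,\rho_1)}_{\bitime{r},\bitime{r+1}}$ by the very definition of $Z^{(m,\rho_1)}$, so the inner Stieltjes sum simplifies to $\mathcal{I}_t^{(m)}(N_{\cdot+}^{(m,\rho_1)}, d(Z^{(m,\rho_1)}))$. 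This yields the representation
\[
Z_t^{2,(m,\rho)} = -\int_0^\rho \mathcal{I}_t^{(m)}\!\left(N_{\cdot+}^{(m,\rho_1)},\,d(Z^{(m,\rho_1)})\right) d\rho_1,
\]
in which the two factors of $Z^{(m,\rho_1)}$ (one inside $N$, one in the differential) will produce the desired bilinear bound.

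Second, I would apply the discrete Young/sewing estimate to the inner sum. The pairing needs $H_- + H^c > 1$, which is guaranteed since $H' \leq H_-$ and $H^c + H' > 1$. This gives
\[
\bigl|\mathcal{I}^{(m)}_{s,t}(N^{(m,\rho_1)}_{\cdot+}, dZ^{(m,\rho_1)}) - N^{(m,\rho_1)}_s \delta Z^{(m,\rho_1)}_{s,t}\bigr|
\lesssim \|N^{(m,\rho_1)}\|_{H_-}\,\|Z^{(m,\rho_1)}\|_{H^c}\,(t-s)^{H_-+H^c}.
\]
For the leading term I bound $|\delta Z^{(m,\rho_1)}_{s,t}| \leq \|Z^{(m,\rho_1)}\|_{H'}(t-s)^{H'}$ and use $N_0^{(m,\rho_1)} = 0$ so that $|N_s^{(m,\rho_1)}| \leq \|N^{(m,\rho_1)}\|_{H_-} T^{H_-}$. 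Absorbing the factor $(t-s)^{H_-+H^c-H'}$ in the error into the $T$-dependent constant of $\lesssim$, and finally inserting Lemma~\ref{lemma:Hoelder_of_N}'s bound $\|N^{(m,\rho_1)}\|_{H_-} \lesssim \|Z^{(m,\rho_1)}\|_{H^c}$, produces the desired product structure. Integrating in $\rho_1 \in [0,\rho]$ and taking the supremum over $\rho$ finishes the argument.

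The main obstacle is bookkeeping the Hölder exponents so that one factor of $Z$ enters through the $H'$-norm and the other through the $H^c$-norm, rather than both through $H^c$. The only place where $|\delta Z^{(m,\rho_1)}_{s,t}|$ appears to the first power is the leading term of the sewing estimate, so it is essential to keep the sewing-error power $(t-s)^{H_-+H^c}$ separate and let the higher regularity $H^c$ interact only there, while the $H'$-regularity controls the principal increment. Everything else — the finiteness of $\|M^{(m,\rho)}\|_\infty$ and $\|(M^{(m,\rho)})^{-1}\|_\infty$, the a priori estimate $\|N^{(m,\rho)}\|_{H_-} \lesssim \|Z^{(m,\rho)}\|_{H^c}$, and the sewing estimate itself — is already contained in Subsection~\ref{subsection:yMN}, so the remaining work is purely the exponent manipulation just described.
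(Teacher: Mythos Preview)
Your representation
\[
Z_t^{2,(m,\rho)} \;=\; -\int_0^\rho \mathcal{I}_t^{(m)}\!\bigl(N_{\cdot+}^{(m,\rho_1)},\,d(Z^{(m,\rho_1)})\bigr)\,d\rho_1
\]
is correct and coincides with the paper's. The gap is in the Young step. In the sewing error
\[
\bigl|\mathcal{I}^{(m)}_{s,t}(N_{\cdot+},dZ)-N_s\,\delta Z_{s,t}\bigr|
\;\lesssim\;\|N\|_{H_-}\,\|Z\|_{H^c}\,(t-s)^{H_-+H^c},
\]
both factors already carry $\|Z\|_{H^c}$ once you insert $\|N\|_{H_-}\lesssim\|Z\|_{H^c}$, so after dividing by $(t-s)^{H'}$ this term contributes $\|Z\|_{H^c}^2$, not $\|Z\|_{H'}\|Z\|_{H^c}$. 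You cannot repair this by choosing $\beta=H'$ for the integrator, because $H_-+H'<2H<1$ and the sewing estimate is unavailable there. Since $H^c>H'$, one has $\|Z\|_{H^c}\gtrsim\|Z\|_{H'}$, so $\|Z\|_{H^c}^2$ is genuinely weaker than the stated bound.

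The paper avoids this by unfolding $N$ via \eqref{integral_form_N} and then applying summation by parts to the \emph{inner} integral:
\[
\mathcal{I}^{(m)}(Z,d\overline{N})\;=\;Z\,\overline{N}\;-\;\mathcal{I}^{(m)}(\overline{N}_{\cdot+},dZ).
\]
The point is that the second piece $\Psi:=\mathcal{I}^{(m)}(\overline{N}_{\cdot+},dZ)$ now has $Z$ as \emph{integrator}, so $\Psi$ inherits $H^c$-regularity with $\|\Psi\|_{H^c}\lesssim\|\overline{N}\|_{H_-}\|Z\|_{H^c}$. In the outer integral $\mathcal{I}^{(m)}(\Psi_{\cdot+},dZ)$ one can therefore run Young with the pair $(H^c,H')$ (which does satisfy $H^c+H'>1$), and the sewing error becomes $\|\Psi\|_{H^c}\|Z\|_{H'}\lesssim\|\overline{N}\|_{H_-}\|Z\|_{H^c}\|Z\|_{H'}$. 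The first piece $\mathcal{I}^{(m)}((Z\overline{N})_{\cdot+},dZ)$ is harmless because $\|Z\overline{N}\|_{H'}\lesssim\|Z\|_{H'}$. The summation by parts is thus not cosmetic: it is precisely what transfers the $H^c$-regularity from the integrator slot to the integrand slot so that the second copy of $Z$ can be measured in $H'$.
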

\begin{proof}
By the fundamental theorem of calculus,
\begin{align*}
V_t^{2,(m,\rho)} =& (\polJacobistd{t})^{-1}-(M_t^{(m,0)})^{-1}=\int_0^\rho \partial_\rho (M_t^{(m,\rho_1)})^{-1}d\rho_1\\
=&-\int_0^\rho (M_t^{(m,\rho_1)})^{-2}\partial_\rho (M_t^{(m,\rho_1)})d\rho_1=-\int_0^\rho (M_t^{(m,\rho_1)})^{-1}N_t^{(m,\rho_1)}d\rho_1
\end{align*}
As shown in Lemma \ref{lemma:Hoelder_of_N}, we have
\begin{align*}
V_t^{2,(m,\rho)} =& -\int_0^\rho (M_t^{(m,\rho_1)})^{-1}\mathcal{I}_t^{(m)}(Z_\cdot^{(m,\rho_1)},d(\overline{N}_\cdot^{(m,\rho_1)}))d\rho_1.
\end{align*}
From these equality, we can transform $Z_t^{(m,\rho)}$ as follows:
\begin{align*}
Z_t^{2,(m,\rho)}=&-\int_0^\rho \mathcal{I}_t^{(m)}((M_{\cdot+}^{(m,\rho_1)})^{-1}\mathcal{I}_{\cdot+}^{(m)}(Z_\cdot^{(m,\rho_1)},d(\overline{N}_\cdot^{(m,\rho_1)})),d(\epsilon^{(m,\rho)})) d\rho_1\\
=&-\int_0^\rho \mathcal{I}_t^{(m)}(\mathcal{I}_{\cdot+}^{(m)}(Z_\cdot^{(m,\rho_1)},d(\overline{N}_\cdot^{(m,\rho_1)}))),d(Z_\cdot^{(m,\rho)})) d\rho_1.
\end{align*}
By the formula of summation by parts, we obtain
\begin{equation*}
\mathcal{I}_t^{(m)}(Z_\cdot^{(m,\rho)},d(\overline{N}_\cdot^{(m,\rho)}))=Z_t^{(m,\rho)}\overline{N}_t^{(m,\rho)}-\mathcal{I}_t^{(m)}(\overline{N}_{\cdot+}^{(m,\rho)},d(Z_\cdot^{(m,\rho)})).
\end{equation*}
Using estimate of Young-Riemann sum, we can state the following inequality:
\begin{align*}
\left\|Z_\cdot^{2,(m,\rho)}\right\|_{H'}\lesssim& \|Z_\cdot^{(m,\rho)}\|_{H'}\|\overline{N}_\cdot^{(m,\rho)}\|_{H_-}\|Z_\cdot^{(m,\rho)}\|_{H^c}.
\end{align*}

From Lemma \ref{lemma:Hoelder_of_N}, we deduce the conclusion.
\end{proof}
\subsection{Proof of Theorem \ref{theorem:maintheorem}}

We are now ready to prove Theorem \ref{theorem:maintheorem}.
In this subsection, we will establish the proof using the estimates discussed above.

\begin{proof}[Proof of Theorem \ref{theorem:maintheorem}]
We can prove this theorem in the same way under both conditions ($\mathfrak{A}$) and ($\mathfrak{B}$).
To avoid repetition, we write the proof for the Condition ($\mathfrak{B}$).
To get a proof under Condition ($\mathfrak{A}$), it suffices to replace $\|\cdot\|_{H'}$ and $\|\cdot\|_{H^c}$ with $ \|\cdot\|_{\infty}$ and $\|\cdot\|_{1}$.

At first, we rewrite $\error{t}-\Jacobi{t} Z_t^{1,1,(m,A)}$ for $\timeinunit$ as
\begin{equation}
\error{t}-\Jacobi{t} Z_t^{1,1,(m,A)}=\int_0^1\polJacobistd{\bitime{r}}[Z_{\bitime{r}}^{(m,\rho)}-J_{\bitime{r}} (M_{\bitime{r}}^{(m,\rho)})^{-1} Z_{\bitime{r}}^{1,1,(m,A)}]d\rho+\epsilon_t^{(m)}.\label{decomposeYYJZ}
\end{equation}
By assumption $2^{-m(A-1)}=O(R(m))$, we can see $R(m)^{-1}\|\epsilon^{(m)}\|_\infty\to 0$, so that it suffices to estimate $Z_t^{(m,\rho)}-\Jacobi{t} (\polJacobistd{t})^{-1} Z_t^{1,1,(m,A)}$.
Now, we decompose $Z_t^{(m,\rho)}-\Jacobi{t} (\polJacobistd{t})^{-1} Z_t^{1,1,(m,A)}$ as follows:
\begin{align*}
&Z_t^{(m,\rho)}-\Jacobi{t} (\polJacobistd{t})^{-1} Z_t^{1,1,(m,A)}\\
=&Z_t^{1,2,(m,A)}+Z_t^{1,3,(m,A)}+Z_t^{1-,(m)}+Z_t^{2,(m,\rho)}+(1-\Jacobi{t}(\polJacobistd{t})^{-1})Z_t^{1,1,(m,A)}.
\end{align*}
Using Lemma \ref{lemma:m,1,2,A},\ref{lemma:m,1,3,A},\ref{lemma:m,1-},\ref{lemma:m,2} and \ref{lemma:1-JMrho} for each terms of right-hand side, respectively, we can get
\begin{align*}
&\|Z_\cdot^{(m,\rho)}-\Jacobi{\cdot} (\polJacobistd{\cdot})^{-1} Z_\cdot^{1,1,(m,A)}\|_{H'}\\
\leq& \|Z_\cdot^{1,2,(m,A)}\|_{H'}+\|Z_\cdot^{1,3,(m,A)}\|_{H'}+\|Z_\cdot^{1-,(m)}\|_{H'}\\
&+\|Z_\cdot^{2,(m,\rho)}\|_{H'}+\|1-\Jacobi{\cdot}(\polJacobistd{\cdot})^{-1}\|_{H'}\|Z_\cdot^{1,1,(m,A)}\|_{H'}\\
\lesssim& 2^{-m\kappa}\|Z_\cdot^{(m,\rho)}\|_{H'}+2^{-(A-1+\kappa)m}+2^{-m((q+1)H_--1)}\|Z_\cdot^{(m,\rho)}\|_{H'}\\
&+\|Z_\cdot^{(m,\rho)}\|_{H'}\|Z_\cdot^{(m,\rho)}\|_{H^c}+2^{-m((q+1)H_--1)}\|Z_\cdot^{1,1,(m,A)}\|_{H'}.
\end{align*}

By assumption (\ref{main_theorem_assumption_lambda}), it holds for any $\lambda>0$ that $\|Z_\cdot^{1,1,(m,A)}\|_{H'}\lesssim 2^{m\lambda}R(m)$ and $2^{-(A-1)m}\lesssim R(m)$.
At the same time, from Lemma \ref{lemma:apriori_estimate}, we have $\|Z_\cdot^{(m,\rho)}\|_{H^c}\lesssim 2^{-m\kappa}$ for some $\kappa>0$.
Hence, for some $\kappa>0$, we get
\begin{equation*}
\|Z_\cdot^{(m,\rho)}-\Jacobi{\cdot} (\polJacobistd{\cdot})^{-1} Z_\cdot^{1,1,(m,A)}\|_{H'}\lesssim_{\rho} 2^{-\kappa m}(\|Z_\cdot^{(m,\rho)}\|_{H',}+R_{\lambda}(m))\ a.s,
\end{equation*}
where we denote $2^{\lambda m}R(m)$ by $R_{\lambda}(m)$.

Next, combining Lemma \ref{lemma:apriori_estimate} and assumption, we get the assumption of Lemma \ref{lemma:norm_space_convergence}. Hence, we can get
\begin{equation*}
\|Z_\cdot^{(m,\rho)}-\Jacobi{\cdot} (\polJacobistd{\cdot})^{-1} Z_\cdot^{1,1,(m,A)}\|_{H'}\lesssim_{\rho} 2^{-\kappa m}R_{\lambda}(m)\ a.s.
\end{equation*}
From this inequality and (\ref{decomposeYYJZ}), we can estimate $\error{\cdot} -\Jacobi{\cdot} Z_\cdot^{1,1,(m,A)}$ as
\begin{equation*}
\|\error{\cdot} -\Jacobi{\cdot} Z_\cdot^{1,1,(m,A)}\|_\infty \lesssim 2^{-\kappa m}R_{\lambda}(m).
\end{equation*}
When we take $\lambda$ such that $\lambda<\kappa$, we can conclude that
\begin{equation*}
R(m)^{-1}\|\error{\cdot} -\Jacobi{\cdot} Z_\cdot^{1,1,(m,A)}\|_\infty \to 0\ a.s.
\end{equation*}
\end{proof}
\section{Calculation of asymptotic error}\label{section:calculation}

In this section, we will calculate the asymptotic errors of the ($k$)-Milstein scheme when $0<H<1/2$ and the Crank-Nicolson scheme when $1/4<H<1/2$.
Theorem \ref{theorem:maintheorem} shows that asymptotic distributions coincides with the weighted limit of $Z_t^{M,(A)}=Z^{1,1,(m,A)}$.
Hence, it suffices to check these schemes satisfy the assumption and to calculate the limit of the main term.

The main term $Z^{1,1,(m,A)}$ is in the forms of backward Stieltjes sum, so that we have to transform it as:
\begin{equation*}
Z^{M,(A)}=\mathcal{I}^{(m)} (J_{\cdot+}^{-1},d(\epsilon^{1,(m,A)}))=\mathcal{I}^{(m)} (\Jacobi{\cdot}^{-1},d(J_{\cdot_1}J_{\cdot_2}^{-1}\epsilon_{\cdot_1,\cdot_2}^{1,(m,A)})+O(|\cdot_2-\cdot_1|^{A+\kappa})).
\end{equation*}

At the same time, we expand $J_s\Jacobi{t}^{-1}$ as the inverse of (\ref{expansion_of_J}):

\begin{equation*}
J_s\Jacobi{t}^{-1}=1+\sum_{l=1}^q \frac{1}{l!}f_l(\exsol{s})\fBminc{s,t}^l+O(|t-s|^{(q+1)H_-}),
\end{equation*}
where $f_1\equiv -\sigma',\ f_2\equiv (\sigma')^2-\sigma''\sigma,\cdots$.

So, we can split $Z^{M,(A)}$ to $\mathcal{I}^{(m)}(\Jacobi{\cdot}^{-1},d(\epsilon^{1,*,(m,A)}+o(2^{mA})))$, where $\epsilon_{\bitime{r},t}^{1,*,(m,A)}$ is defined as

\begin{equation}
\epsilon_{\bitime{r},t}^{1,*,(m,A)}=\sum_{\begin{smallmatrix}\hat{\Gamma}\in \hat{\mathbb{G}}_{0,A},l\in \mathbb{N}_{\geq 0}\\
|\hat{\Gamma}|+lH\leq A
\end{smallmatrix}} g_k\hat{f}_{\Gamma}(\exsoldis{r} ) \fBmunit^l \fBmitinunit^{\hat{\Gamma}}-\sum_{\begin{smallmatrix}\Gamma\in \mathbb{G}_{0,A},l\in \mathbb{N}_{\geq 0}\\
|\Gamma|+lH\leq A
\end{smallmatrix}} g_l f_{\Gamma}(\exsoldis{r} ) \fBmunit^l \fBmitinunit^{\Gamma}\label{definiton_of_m11A*}.
\end{equation}

In the first subsection, we will enumerate $\epsilon^{1,*,(m,A)}$ of the ($k$)-Milstein scheme and the Crank-Nicolson scheme for all cases.
In the second subsection, we will calculate the limit of individual terms of $\epsilon^{1,*,(m,A)}$.
In the third subsection, we get the asymptotic error as the summation of individual limits.

\subsection{Decomposition of $\epsilon^{1,(m,A)}$}\label{subsection:decompose_epsilon}

In this subsection, we will calculate $\epsilon_{\bitime{r},t}^{1,*,(m,A)}$ of the ($k$)-Milstein scheme and the Crank-Nicolson scheme.
Results are classified by $H,A$, whether $b$ vanishes or not, whether each scheme holds Condition ($\mathfrak{A}$) or ($\mathfrak{B}$).

First, we have to expand that $\epsilon_{\bitime{r},t}^{((q+1)H)}$ of exact solution $\exsol{}$.
By Proposition \ref{proposition:taylorfory}, for any integer $K>0$ and any $s<t$, we have
\begin{align*}
\exsol{t}=&\exsol{s}+\sum_{k=1}^K \frac{1}{k!}\mathcal{D}^{k-1}\sigma (\exsol{s})\fBminc{s,t}^k+b(\exsol{s})(t-s)+b\sigma'(Y_{s}) \fBmitin_{s,t}^{10}+\sigma b'(Y_{s})\fBmitin_{s,t}^{01}\\
&+b(\sigma'\sigma)'(Y_{s}) \fBmitin_{s,t}^{110}+\sigma(b\sigma')'(Y_{s})\fBmitin_{s,t}^{101}+\sigma (\sigma b')'(Y_{s})\fBmitin_{s,t}^{011}\\
&+O(|t-s|^2)+O(|t-s|^{1+3H_-})+O(|t-s|^{(K+1)H_-}).
\end{align*}
Naturally, if $b$ does not vanish, when $0<H<1/2$ and $A=2H+1$, we get the following result:
\begin{align*}
\epsilon_{s,t}^{((q+1)H)}=&\frac{1}{(q+1)!}\mathcal{D}^q\sigma(Y_{s}) \fBminc{s,t}^{q+1}+\frac{1}{(q+2)!}\mathcal{D}^{q+1}\sigma(Y_{s})\fBminc{s,t}^{q+2}\\
&+b\sigma'(Y_{s}) \fBmitin_{s,t}^{10}+\sigma b'(Y_{s})\fBmitin_{s,t}^{01}\\
&+b(\sigma'\sigma)'(Y_{s}) \fBmitin_{s,t}^{110}+\sigma(b\sigma')'(Y_{s})\fBmitin_{s,t}^{101}+\sigma (\sigma b')'(Y_{s})\fBmitin_{s,t}^{011}+\epsilon_{s,t}^{(2H+1)}.
\end{align*}
In the same way, if $b$ vanishes, when $0<H< 1/2$, with $A=(K+1)H$, we get
\begin{align*}
\epsilon_{s,t}^{((q+1)H)}=&\sum_{j=q+1}^K\frac{1}{j!}\mathcal{D}^{j-1}\sigma(Y_{s}) \fBminc{s,t}^j+\epsilon_{s,t}^{((K+1)H)}.
\end{align*}

Next, for the ($k$)-Milstein scheme, we expand $\hat{\epsilon}_{\bitime{r},t}^{(m,(q+1)H)}$ for each $H$.
From the definition (\ref{defMilstein}), we get the following results:
\begin{itemize}
\item when $0<H<1/2$ and $k=q-1$, with $A=2H+1$,

\begin{align*}
\hat{\epsilon}_{\bitime{r},t}^{(m,(q+1)H)}=&-\frac{1}{q!}\mathcal{D}^{q-1}\sigma(\nusoldis{r}) \fBmunit^q+\frac{1}{2}(b\sigma'+\sigma b')(\nusoldis{r}) \timeunit \fBmunit +\hat{\epsilon}_{\bitime{r},t}^{(m,A)},
\end{align*}
\item when $0<H<1/2$ and $k=q$, with $A=2H+1$,

\begin{align*}
\hat{\epsilon}_{\bitime{r},t}^{(m,(q+1)H)}=&\frac{1}{2}(b\sigma'+\sigma b')(\nusoldis{r}) \timeunit \fBmunit +\hat{\epsilon}_{\bitime{r},t}^{(m,A)},
\end{align*}
\item when $0<H<1/2$ and $k=q+1$, with $A=2H+1$,

\begin{equation*}
\hat{\epsilon}_{\bitime{r},t}^{(m,(q+1)H)}=\frac{1}{(q+1)!}\mathcal{D}^q\sigma(\nusoldis{r}) \fBmunit^{q+1}+\frac{1}{2}(b\sigma'+\sigma b')(\nusoldis{r}) \timeunit \fBmunit +\hat{\epsilon}_{\bitime{r},t}^{(m,A)},
\end{equation*}
\item when $0<H<1/2$ and $k\geq q+2$, with $A=2H+1$,

\begin{align*}
\hat{\epsilon}_{\bitime{r},t}^{(m,(q+1)H)}=&\frac{1}{(q+1)!}\mathcal{D}^q\sigma(\nusoldis{r}) \fBmunit^{q+1}+\frac{1}{(q+2)!}\mathcal{D}^{q+1}\sigma(\nusoldis{r})\fBmunit^{q+2}\\
&+\frac{1}{2}(b\sigma'+\sigma b')(\nusoldis{r}) \timeunit \fBmunit +\hat{\epsilon}_{\bitime{r},t}^{(m,A)}.
\end{align*}
\end{itemize}
Also, if $b$ vanishes,  when $k\geq q$, we have
\begin{align*}
\hat{\epsilon}_{\bitime{r},t}^{((q+1)H)}=&\sum_{j=q+1}^k\frac{1}{j!}\mathcal{D}^{j-1}\sigma(Y_{s}) \fBminc{s,t}^j.
\end{align*}
In addition, when $k=q-1$, we have
\begin{align*}
\hat{\epsilon}_{\bitime{r},t}^{((q+1)H)}=&-\frac{1}{q!}\mathcal{D}^{q-1}\sigma(Y_{s}) \fBminc{s,t}^q.
\end{align*}

Now, we calculate $\epsilon_{\bitime{r},t}^{1,*,(m,A)}$ for the ($k$)-Milstein scheme. Let
\begin{align*}
\tilde{g}_{j,1}^{M}=&-\frac{1}{(j-1)!}\mathcal{D}^{j-1}\sigma,&\tilde{g}_{j,2}^{M}=\tilde{g}_{j,1}^{M}-\sigma'\tilde{g}_{j-1,1}^{M},\\
\tilde{g}_{j,3}^{M}=&\tilde{g}_{j,1}^{M}-\sigma'\tilde{g}_{j-1,1}^{M}-\frac{1}{2}(\sigma''\sigma-2(\sigma')^2)\tilde{g}_{j-2,1}^{M}.
\end{align*}
Then, we write $\epsilon_{\bitime{r},t}^{1,*,(m,A)}$ explicitly.
If $b$ does not vanish, with $A=2H+1$,
\begin{itemize}
\item when $0<H< 1/2$ and $k=q-1$,
\begin{align*}
\epsilon_{\bitime{r},t}^{1,*,(m,A)}=&\tilde{g}_{q,1}^M(\exsoldis{r}) \fBmunit^q+\tilde{g}_{q+1,2}^M(\exsoldis{r}) \fBmunit^{q+1}+\tilde{g}_{q+2,3}^M(\exsoldis{r})\fBmunit^{q+2}\\
&+(\sigma b'-b\sigma')(\exsoldis{r}) \fBmitinunit^{10*}\\
&-b(\sigma'\sigma)'(\exsoldis{r}) \fBmitinunit^{110}-\sigma(b\sigma')'(\exsoldis{r})\fBmitinunit^{101}-\sigma (\sigma b')'(\exsoldis{r})\fBmitinunit^{011},
\end{align*}
\item when $0<H< 1/2$ and $k=q$,

\begin{align*}
\epsilon_{\bitime{r},t}^{1,*,(m,A)}=&\tilde{g}_{q+1,1}^M(\exsoldis{r}) \fBmunit^{q+1}+\tilde{g}_{q+2,2}^M(\exsoldis{r})\fBmunit^{q+2}+(\sigma b'-b\sigma')(\exsoldis{r}) \fBmitinunit^{10*}\\
&-b(\sigma'\sigma)'(\exsoldis{r}) \fBmitinunit^{110}-\sigma(b\sigma')'(\exsoldis{r})\fBmitinunit^{101}-\sigma (\sigma b')'(\exsoldis{r})\fBmitinunit^{011},
\end{align*}
\item when $0<H< 1/2$ and $k=q+1$,

\begin{align*}
\epsilon_{\bitime{r},t}^{1,*,(m,A)}=&\tilde{g}_{q+2,1}^M\sigma(\exsoldis{r})\fBmunit^{q+2}+(\sigma b'-b\sigma')(\exsoldis{r}) \fBmitinunit^{10*}\\
&-b(\sigma'\sigma)'(\exsoldis{r}) \fBmitinunit^{110}-\sigma(b\sigma')'(\exsoldis{r})\fBmitinunit^{101}-\sigma (\sigma b')'(\exsoldis{r})\fBmitinunit^{011},
\end{align*}
\item when $0<H< 1/2$ and $k\geq q+2$,

\begin{align*}
\epsilon_{\bitime{r},t}^{1,*,(m,A)}=&(\sigma b'-b\sigma')(\exsoldis{r}) \fBmitinunit^{10*}\\
&-b(\sigma'\sigma)'(\exsoldis{r}) \fBmitinunit^{110}-\sigma(b\sigma')'(\exsoldis{r})\fBmitinunit^{101}-\sigma (\sigma b')'(\exsoldis{r})\fBmitinunit^{011}.
\end{align*}
\end{itemize}
On the other hand, if $b$ vanishes, when $0<H<1/2$ and $A=(k+2)H$,
\begin{align*}
\epsilon_{\bitime{r},t}^{1,*,(m,A)}=&-\tilde{g}_{k+1,1}^M\sigma(Y_{\bitime{r}})\fBminc{\bitime{r},t}^{k+1}-\tilde{g}_{k+1,2}^M(Y_{\bitime{r}})\fBminc{\bitime{r},t}^{k+2}\\
\end{align*}

Next, for the the Crank-Nicolson scheme, we expand $\hat{\epsilon}_{\bitime{r},t}^{(m,(q+1)H)}$.
In the same way to (\ref{expansionCN}), we have
\begin{align*}
\nusol{t}=&\nusoldis{r}+\sigma(\nusoldis{r})\fBmunit +b(\nusoldis{r})\timeunit\\
&+\frac{1}{2}\sigma\sigma'(\nusoldis{r})\fBmunit^2+\frac{1}{2}(\sigma b'+b\sigma')(\nusoldis{r})\fBmunit \timeunit\\
&+\frac{1}{4}(\sigma(\sigma\sigma')')(\nusoldis{r})\fBmunit^3+\left(\frac{1}{8}\sigma(\sigma')^3+\frac{3}{8}\sigma^2\sigma'\sigma''+\frac{1}{12}\sigma^3\sigma'''\right)\fBmunit^4\\
&+O(\timeunit^{2})+O(\timeunit^{1+2H_-})+O(\timeunit^{5H_-}).
\end{align*}
From this expansion, we define
\begin{align*}
\hat{g}_3^{CN}\coloneqq \frac{1}{4}\mathcal{D}^2\sigma,&&\hat{g}_4^{CN}\coloneqq \frac{1}{8}\sigma(\sigma')^3+\frac{3}{8}\sigma^2\sigma'\sigma''+\frac{1}{12}\sigma^3\sigma'''
\end{align*}
such that
\begin{align*}
\nusol{t}=&\nusoldis{r}+\sigma(\nusoldis{r})\fBmunit +b(\nusoldis{r})\timeunit+\frac{1}{2}bb'(\nusoldis{r})\timeunit^2\\
&+\frac{1}{2}\sigma\sigma'(\nusoldis{r})\fBmunit^2+\frac{1}{2}(\sigma b'+b\sigma')(\nusoldis{r})\fBmunit \timeunit\\
&+\hat{g}_3^{CN}(\nusoldis{r})\fBmunit^3+\hat{g}_4^{CN}(\nusoldis{r})\fBmunit^4\\
&+O(\timeunit^{2})+O(\timeunit^{5H_-})+O(\timeunit^{1+2H_-}).
\end{align*}
Then, when $1/4<H<1/2$, taking $A=3H+1/2$, we have
\begin{align*}
\hat{\epsilon}_{\bitime{r},t}^{(m,(q+1)H)}=&\hat{g}_3^{CN}\sigma(\nusoldis{r}) \fBmunit^3+\hat{g}_4^{CN}(\nusoldis{r})\fBmunit^4+\frac{1}{2}(b\sigma'+\sigma b')(\nusoldis{r}) \timeunit \fBmunit .
\end{align*}

Now, we calculate $\epsilon_{\bitime{r},t}^{1,*,(m,A)}$ for the Crank-Nicolson scheme. Define 
\begin{align*}
\tilde{g}_3^{CN}=&\hat{g}_3^{CN}-\frac{1}{3!}\mathcal{D}^2\sigma=\frac{1}{12}(\sigma''\sigma^2+\sigma(\sigma')^2),\\
\tilde{g}_4^{CN}=&\left(\hat{g}_4^{CN}-\frac{1}{4!}\mathcal{D}^3\sigma\right)-\sigma'\tilde{g}_3^{CN}=\frac{1}{24}\left(3\sigma^2\sigma'\sigma''+\sigma^3\sigma'''\right).
\end{align*}
For $1/4<H<1/2$, let $A=3H+1/2$, we get
\begin{align*}
\epsilon_{\bitime{r},t}^{1,*,(m,A)}=&\tilde{g}_3^{CN}(\exsoldis{r}) \fBmunit^3+\tilde{g}_4^{CN}(\exsoldis{r})\fBmunit^4+(\sigma b'-b\sigma')(\exsoldis{r}) \fBmitinunit^{10*}.
\end{align*}
\subsection{Calculation of the limits of individual terms}\label{subsection:limit_of_individual_sum}

In this subsection, we will take the limit of $\mathcal{I}_\cdot^{(m)}(d(\epsilon^{1,*,(m,A)})$.

At first, we set
\begin{align*}
V_{s,t}^{10*}=&(t-s)^{-(H+1/2)}\fBmitin_{s,t}^{10*},&&&V_{s,t}^{(l)}=&(t-s)^{1/2}H_l((t-s)^{-H}\fBminc{s,t}),\\
V_{s,t}^{110}=&(t-s)^{-2H}\fBmitin_{s,t}^{110},&V_{s,t}^{101}=&(t-s)^{-2H}\fBmitin_{s,t}^{101},&V_{s,t}^{011}=&(t-s)^{-2H}\fBmitin_{s,t}^{011}.
\end{align*}
Then, the term to be calculated is expressed as a sum of terms of the form $\mathcal{I}_\cdot^{(m)}(J^{-1} f_*(Y),d(V^*))$.
Therefore, we will take the limits for each $V^*$ to derive asymptotic errors.

We remark that the limit of $\mathcal{I}_\cdot^{(m)}(f_*(B),d(V^*))$ is studied in detail and these studies are called Hermite variation (for example, \cite{Nourdin-Nualart-Tudor2010}).
We could use moment estimates such as these studies, and several studies on asymptotic errors use it.
However, in this case, we need the higher order Malliavin derivative of $J^{-1}f(Y)$ for the moment estimate, which is much more complicated than that of $f(B)$. 
Hence, for simplicity, we apply the methodology of Proposition 4.7 in \cite{Liu-Tindel2019} and estimate only pathwisely.

First, we take a limit for $V^{(l)}$.
\begin{prp}\label{proposition:Itointegrallower}
Given any $f\in C_b^2$ and $1/4<H<1/2$,
\begin{equation*}
\left(B_\cdot,\mathcal{I}_\cdot^{(m)}(J^{-1} f(Y),d(V^{(l)}))\right)\xrightarrow{d}\left(B_\cdot,C_{(k)}\int_0^\cdot \Jacobi{t}^{-1}f(\exsol{t})dW_t^{(l)}\right)
\end{equation*}
in the sense of the Skorokhod topology, where $C_{(l)}$ and $W^{(l)}$ is defined in Lemma \ref{lemma:standardBrown}.
Moreover, we can estimate for any $\epsilon_1,\epsilon_2>0$ that\newline$\sup_m 2^{m \epsilon_1}\|\mathcal{I}_\cdot^{(m)}(J^{-1} f(Y),d(V^{(k)}))\|_{(1/2-\epsilon_2)}<\infty\ a.s$.
\end{prp}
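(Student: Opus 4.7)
The plan is to adapt the pathwise strategy of Proposition 4.7 in \cite{Liu-Tindel2019}, which avoids Malliavin calculus on the weighted integrand by exploiting that $\Jacobi{\cdot}^{-1} f(\exsol{\cdot})$ is bounded and $H_-$-H\"older continuous (by Proposition \ref{prp:SDE}, pathwise positivity of $\Jacobi{\cdot}$, and $f \in C_b^2$), while concentrating the fine analysis on the driver $M^{(m)}_t := \mathcal{I}^{(m)}_t(d(V^{(l)}))$. Two ingredients supplied by Lemma \ref{lemma:standardBrown} carry the argument: the uniform $L^p$- and discrete-H\"older bounds on $M^{(m)}$, and the joint convergence $(B, M^{(m)}) \xrightarrow{d} (B, C_{(l)}W^{(l)})$ with $W^{(l)}$ independent of $B$.

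For the discrete H\"older estimate I will show $\|\mathcal{I}^{(m)}_{s,t}(\Jacobi{\cdot}^{-1}f(\exsol{\cdot}), d(V^{(l)}))\|_{L^p} \lesssim (t-s)^{1/2}$ uniformly in $m$ and $s, t \in \mathbb{P}_m$. Expanding $\Jacobi{\bitime{r}}^{-1}f(\exsoldis{r})$ as a controlled path of $B$ around $s$ decomposes the Stieltjes sum into a principal term $\Jacobi{s}^{-1}f(\exsol{s})\, \delta M^{(m)}_{s,t}$, several weighted chaos sums of the form $\sum_r c_s^{(k)}\, \fBminc{s,\bitime{r}}^k\, V^{(l)}_{\bitime{r},\bitime{r+1}}$ with $c_s^{(k)}$ an $\mathcal{F}_s$-measurable coefficient, and a pathwise remainder of order $(t-s)^{(q+1)H_-}$. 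The principal term inherits $\|\cdot\|_{L^p}\lesssim (t-s)^{1/2}$ from Lemma \ref{lemma:standardBrown}; the chaos-structured cross terms lie in a fixed Wiener chaos, so their $L^p$-moments reduce to $L^2$-moments by hypercontractivity (Proposition \ref{proposition:hypercontractivity}) and are controlled by the same covariance-decay analysis that underlies Lemma \ref{lemma:standardBrown}; and the remainder is smaller than $(t-s)^{1/2}$ because $(q+1)H > 1/2$. Corollary \ref{corollary:discreteGRRforwienerchaos} then upgrades the $L^p$-bound to the claimed almost-sure discrete H\"older estimate.

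For the convergence in law, the continuous dependence of $(\exsol{\cdot}, \Jacobi{\cdot})$ on $B$ upgrades the driver convergence to joint convergence $(B, \exsol{\cdot}, \Jacobi{\cdot}, M^{(m)}) \xrightarrow{d} (B, \exsol{\cdot}, \Jacobi{\cdot}, C_{(l)}W^{(l)})$; via Skorokhod representation I reduce to a.s.\ convergence. The target $C_{(l)}\int_0^\cdot \Jacobi{t}^{-1}f(\exsol{t})\, dW^{(l)}_t$ is a well-defined It\^o integral in the filtration generated by $(B, W^{(l)})$ since $\Jacobi{\cdot}^{-1}f(\exsol{\cdot})$ is $\sigma(B)$-adapted and bounded. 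I compare $\mathcal{I}^{(m)}_t(\Jacobi{\cdot}^{-1}f(\exsol{\cdot}), d(V^{(l)}))$ successively with $\sum_r \Jacobi{\bitime{r}}^{-1}f(\exsoldis{r})\, C_{(l)}\, \delta W^{(l)}_{\bitime{r},\bitime{r+1}}$ via summation by parts, and then with the It\^o integral by the standard Riemann-sum approximation. The main obstacle will be the first of these comparisons: a direct Young estimate is unavailable because $H_- + 1/2 > 1$ fails for $H \leq 1/2$, so instead I combine the uniform convergence $\|M^{(m)} - C_{(l)}W^{(l)}\|_\infty \to 0$ from Skorokhod representation with the summation-by-parts identity and the first-step H\"older bound, which together force the difference to vanish almost surely without any bounded-variation or Young-type input.
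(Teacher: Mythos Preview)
Your high-level strategy---expand $\Jacobi{\cdot}^{-1}f(\exsol{\cdot})$ as a controlled path and feed Lemma~\ref{lemma:standardBrown} into the machinery of Proposition~4.7 of \cite{Liu-Tindel2019}---is exactly the paper's (its ``proof'' is essentially a reference to those two inputs, with the detailed analogue written out in the proofs of Proposition~\ref{proposition:Hermitevariationvanish} and Lemma~\ref{lemma:integral10*}). However, two of your implementation steps do not go through as written.

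For the H\"older estimate you aim for $L^p$ bounds on the weighted sum and then invoke Corollary~\ref{corollary:discreteGRRforwienerchaos}. That corollary requires the process to lie in a \emph{fixed} Wiener chaos; neither your principal term $\Jacobi{s}^{-1}f(\exsol{s})\,\delta M^{(m)}_{s,t}$ nor your cross terms $c_s^{(k)}\sum_r\fBminc{s,\bitime{r}}^kV^{(l)}_{\bitime{r},\bitime{r+1}}$ qualify, since the prefactors $\Jacobi{s}^{-1}f(\exsol{s})$ and $c_s^{(k)}$ are general nonlinear functionals of $B$. Getting $L^p$ bounds on them would in any case require moment estimates on $\Jacobi{\cdot}^{-1}$ that the paper never establishes---this is precisely why the paper emphasizes that it ``estimate[s] only pathwisely''. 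The correct route is to apply Corollary~\ref{corollary:discreteGRRforwienerchaos} only to the genuinely chaos-valued objects ($M^{(m)}$ itself and the pure cross sums $\sum_r\fBminc{s,\bitime{r}}V^{(l)}_{\bitime{r},\bitime{r+1}}$), obtain almost-sure H\"older bounds for them, and then multiply pathwise by the $\omega$-wise bounded coefficients.

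For the convergence, summation by parts plus $\|M^{(m)}-C_{(l)}W^{(l)}\|_\infty\to 0$ is not enough: after integrating by parts you must control $\sum_r (M^{(m)}_{\bitime{r+1}}-C_{(l)}W^{(l)}_{\bitime{r+1}})\,\delta g_{\bitime{r},\bitime{r+1}}$, but $\sum_r|\delta g_{\bitime{r},\bitime{r+1}}|$ grows like $2^{m(1-H_-)}$ while Skorokhod representation gives no rate on the sup-norm. Your ``first-step H\"older bound'' cannot rescue this because $H_-+\tfrac12-\epsilon<1$ throughout the range $H<\tfrac12$, so no Young-type control is available on either side. The paper's argument (as in the proof of Lemma~\ref{lemma:integral10*}) instead runs a two-scale limit: for $m>n$ expand $g$ to second order at the left endpoint of each $\mathbb{P}_n$-interval; the principal piece converges by Lemma~\ref{lemma:standardBrown}, the first-order cross piece is $O(2^{-(H-\epsilon)m})$ almost surely by its own chaos analysis and hence vanishes as $m\to\infty$ for fixed $n$, and the second-order remainder admits a genuine Young/sewing estimate since $2H_-+\tfrac12-\epsilon>1$ on the range $H>\tfrac14$, so it vanishes as $n\to\infty$ uniformly in $m$.
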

Proposition 4.7 in \cite{Liu-Tindel2019}, where each term is treated as $L^p$-integrable random variables.
However, we can apply this proposition when each term has a real value.

This proposition follows from Lemma \ref{lemma:standardBrown} and 
At the same time, we remark that Proposition \ref{proposition:Itointegrallower} corresponds to Theorem 4.3 in \cite{Aida-Naganuma2020}.

\begin{prp}\label{proposition:Hermitevariationvanish}

Given any $f\in C_b^2,0<H\leq 1/4$, and $l\geq 3$,
\begin{equation*}
2^{-(1/2-H)m}\mathcal{I}_\cdot^{(m)}(J^{-1} f(Y),d(V^{(l)}))\xrightarrow{a.s.}0
\end{equation*}
in the sense of the Skorokhod topology.
Moreover, we can estimate that for any $\epsilon_1,\epsilon_2>0$,\newline$\limsup_m 2^{-(1/2-H)m}\|\mathcal{I}_\cdot^{(m)}(J^{-1} f(Y),d(V^{(l)}))\|_{1-H-\epsilon}=0\ a.s$ for any $\epsilon>0$.

\end{prp}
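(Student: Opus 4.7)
The plan is to reduce the Hölder claim to a sub-polynomial estimate on a weaker Hölder norm via the trade-off $\|X_\cdot^{(m)}\|_{\theta_1+\theta_2}\leq 2^{m\theta_1}\|X_\cdot^{(m)}\|_{\theta_2}$ of Proposition~\ref{proposition:degeneratediscreteHoelder}, and then to combine the pathwise control of $F^{(m)}:=\mathcal{I}^{(m)}(d(V^{(l)}))$ from Lemma~\ref{lemma:standardBrown} with the controlled-path structure of $\Phi_t:=\Jacobi{t}^{-1}f(\exsol{t})$ (which is controlled over $B$ by Proposition~\ref{prp:SDE} and Lemma~\ref{lemma:Hoelder_of_M}). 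The Skorokhod convergence follows immediately from the Hölder bound via $\|\mathcal{I}^{(m)}\|_\infty\leq T^{1-H-\epsilon}\|\mathcal{I}^{(m)}\|_{1-H-\epsilon}$, using $\mathcal{I}^{(m)}_0=0$.

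For the reduction, taking $\theta_2=1/2-\epsilon_2$ and $\theta_1=1/2-H-\epsilon+\epsilon_2$ in Proposition~\ref{proposition:degeneratediscreteHoelder} yields
$$2^{-(1/2-H)m}\|\mathcal{I}^{(m)}(\Phi,d(V^{(l)}))\|_{1-H-\epsilon}\leq 2^{-m(\epsilon-\epsilon_2)}\|\mathcal{I}^{(m)}(\Phi,d(V^{(l)}))\|_{1/2-\epsilon_2}.$$
Thus it is enough to prove $\|\mathcal{I}^{(m)}(\Phi,d(V^{(l)}))\|_{1/2-\epsilon_2}\lesssim 2^{m\epsilon_3}$ a.s.\ for arbitrary $\epsilon_3>0$, after which choosing $\epsilon_2,\epsilon_3$ with $\epsilon_2+\epsilon_3<\epsilon$ closes the argument.

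For that sub-polynomial bound, for $s,t\in\mathbb{P}_m$ I would use the decomposition
$$\mathcal{I}^{(m)}_{s,t}(\Phi,d(V^{(l)}))=\Phi_s F^{(m)}_{s,t}+\sum_r\delta\Phi_{\bitime{r},\bitime{r+1}}F^{(m)}_{\bitime{r+1},t}.$$
The leading piece is bounded by $\|\Phi\|_\infty\|F^{(m)}\|_{1/2-\epsilon_2}(t-s)^{1/2-\epsilon_2}=O(2^{m\epsilon_1}(t-s)^{1/2-\epsilon_2})$ with $\epsilon_1$ arbitrary, thanks to Lemma~\ref{lemma:standardBrown}. For the remainder I would insert the controlled-path expansion $\delta\Phi_{\bitime{r},\bitime{r+1}}=\sum_{k=1}^q\frac{1}{k!}\Phi^{(k)}_{\bitime{r}}\delta B_{\bitime{r},\bitime{r+1}}^k+O(2^{-m(q+1)H_-})$ and apply the Hermite-linearisation identity $\delta B_{s,t}^k V^{(l)}_{s,t}=(t-s)^{kH}\sum_j c_{k,l,j}V^{(j)}_{s,t}$ to recast every layer as $2^{-mkH}$ times a weighted Stieltjes sum against a lower Hermite-level variation $V^{(j)}$, the crucial gain that absorbs the $2^{m(1-H_-)}$ factor produced by a naive Hölder estimate.

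The hardest step will be that this reduction reintroduces objects $\mathcal{I}^{(m)}(\Phi^{(k)},d(V^{(j)}))$ of the same form as the one being estimated. I would close the loop by induction on $l$, the base cases $l\in\{0,1\}$ being elementary (trivially for $V^{(0)}\equiv(t-s)^{1/2}$, and by a Young integral against $B$ for $V^{(1)}$). A cleaner but heavier alternative is to prove $\|\mathcal{I}^{(m)}_{s,t}(\Phi,d(V^{(l)}))\|_{L^p}\lesssim(t-s)^{1/2}$ directly via Malliavin integration by parts, moving derivatives off $\Phi$ onto $V^{(l)}$, and then invoke Corollary~\ref{corollary:discreteGRRforwienerchaos} on each Wiener-chaos component of the resulting expansion to obtain the pathwise $2^{m\epsilon_3}$ bound.
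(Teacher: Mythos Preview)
Your reduction via Proposition~\ref{proposition:degeneratediscreteHoelder} to a sub-polynomial $(1/2-\epsilon_2)$-H\"older bound is sound, and the leading piece $\Phi_sF^{(m)}_{s,t}$ matches the paper's term $Z^{1,(s)}$. The gap is in the treatment of the remainder. Your Abel-summation identity gives
\[
\sum_r\delta\Phi_{\bitime{r},\bitime{r+1}}\,F^{(m)}_{\bitime{r+1},t},
\]
and after inserting the controlled-path expansion of $\delta\Phi_{\bitime{r},\bitime{r+1}}$ you obtain products $\delta B_{\bitime{r},\bitime{r+1}}^{\,k}\cdot F^{(m)}_{\bitime{r+1},t}$. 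These are \emph{not} of the form $\delta B_{s,t}^k V^{(l)}_{s,t}$ over a common interval, so the Hermite linearisation you quote does not apply: $F^{(m)}_{\bitime{r+1},t}$ is a sum of $V^{(l)}$ over future mesh intervals, not a single $V^{(l)}_{\bitime{r},\bitime{r+1}}$. Even if you had such same-interval products, the identity $x^kH_l(x)=\sum_j c_{k,l,j}H_j(x)$ generates indices $j\in\{l-k,l-k+2,\dots,l+k\}$, not only lower ones, so the proposed induction on $l$ would not close.

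The paper sidesteps both issues by expanding $\Phi_u$ to \emph{second} order around the left endpoint $s$ of $[s,t]$ (not around $\bitime{r}$), writing $\Phi_u=\Phi_s+J_s^{-1}\mathcal{V}f(Y_s)\,\delta B_{s,u}+\hat R_{s,u}$ with $|\hat R_{s,u}|\lesssim(u-s)^{2H_-}$. The cross term is then $\sum_u \delta B_{s,u}V^{(l)}_{u,v}$, a product over \emph{different} intervals handled by the Wiener-chaos product formula $I_1(\delta_{s,u})I_l(\delta_{u,v}^{\otimes l})=l\langle\delta_{s,u},\delta_{u,v}\rangle_{\mathfrak H} I_{l-1}(\cdots)+I_{l+1}(\cdots)$ together with covariance estimates (in the spirit of Lemma~\ref{lemma:integral10*}); this yields an extra $2^{-Hm}$ directly, with no induction. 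The remainder term $\sum_u\hat R_{s,u}V^{(l)}_{u,v}$ is controlled as a Young sum because $\hat R_{s,\cdot}$ is $2H_-$-H\"older and $F^{(m)}$ is $(1-H-\delta-\epsilon)$-H\"older with $2H_-+(1-H-\delta-\epsilon)>1$. If you want to salvage your route, replace the mesh-increment expansion by this macro expansion around $s$ and use the chaos product formula rather than the polynomial identity for $x^kH_l(x)$.
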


\begin{proof}

Following to Proposition 4.7 in \cite{Liu-Tindel2019}, we can decompose $J_u^{-1}f(Y_u)V_{u,v}^{(l)}$ as
\begin{align*}
J_u^{-1}f(Y_u)V_{u,v}^{(l)}=&J_s^{-1}f(\exsol{s})V_{u,v}^{(l)}+J_s^{-1}\mathcal{V}f(\exsol{s})\fBminc{s,u}V_{u,v}^{(l)}+\hat{R}_{s,u}V_{u,v}^{(l)}\\
\eqqcolon&Z_{u,v}^{1,(s)}+Z_{u,v}^{2,(s)}+Z_{u,v}^{3,(s)},
\end{align*}
where we define $\hat{R}_{s,u}$ as
\begin{equation}
\hat{R}_{s,u}\coloneqq J_u^{-1}f(Y_u)-J_s^{-1}f(\exsol{s})-J_s^{-1}\mathcal{V}f(\exsol{s})\fBminc{s,u}.\label{hatRdefinition}
\end{equation}
Then, $Z_{u,v}^{3,(s)}=\hat{R}_{s,u}V_{u,v}^{(l)}$.

Now, taking Stieltjes sum of both sides, we have
\begin{equation*}
\mathcal{I}_{s,t}^{(m)}(J^{-1}f(y),d(V^{(l)}))=\mathcal{I}_{s,t}^{(m)}(d(Z^{1,(s)}))+\mathcal{I}^{(m)}(d(Z^{2,(s)}))+\mathcal{I}^{(m)}(d(Z^{3,(s)})).
\end{equation*}

\begin{itemize}
\item Firstly, we estimate $\mathcal{I}^{(m)}(d(Z^{1,(s)}))$. From Lemma \ref{lemma:standardBrown}, we have
\begin{align*}
\sup_m2^{-m\epsilon}\|\mathcal{I}_\cdot^{(m)}(d(Z^{1,(s)}))\|_{1/2-\epsilon}<\infty
\end{align*}
for any $\epsilon>0$.
\item Secondly, we estimate $\mathcal{I}^{(m)}(d(Z^{2,(s)}))$. We can expand $\fBminc{s,u}V_{u,v}^{(l)}$ as
\begin{align*}
\fBminc{s,u}V_{u,v}^{(l)}=&(v-u)^{1/2-lH}I_1(\delta_{s,u})I_l(\delta_{u,v}^{\otimes l})\\
=&(v-u)^{1/2-lH}(l\langle \delta_{s,u},\delta_{u,v}\rangle_\mathscr{H}I_{l-1}(\delta_{u,v}^{\otimes l})+I_{l+1}(\delta_{s,u}\otimes \delta_{u,v}^{\otimes l})).
\end{align*}
In this case, following the way to Lemma \ref{lemma:integral10*}, we obtain
\begin{align*}
\sup_m 2^{(H-\epsilon)m}\|\mathcal{I}_\cdot^{(m)}(d(Z^{2,(s)}))\|_{1/2-\epsilon}<\infty.
\end{align*}
\item Lastly, we estimate $\mathcal{I}^{(m)}(d(Z^{3,(s)}))$. 

Because of the definition of $J$ (\ref{def_J}) and Proposition \ref{proposition:elementary path as a controlled path} and \ref{proposition:Operation for a controlled path}, $J^{-1}f(Y)$ is a controlled path and $(J^{-1}f(Y))^{(2)}=J^{-1} \mathcal{V}f(Y)$.
Hence, we have 
\begin{equation}
|\hat{R}_{u,v}|\lesssim (v-u)^{2H_-}\label{hatRestimate}
\end{equation}
Also, from Lemma \ref{lemma:standardBrown} and Proposition \ref{proposition:degeneratediscreteHoelder},
\begin{align*}
\sup_m 2^{-(1/2-H-\delta)m}\|\mathcal{I}_\cdot^{(m)}(d(V^{(l)}))\|_{1-H-\delta-\epsilon}<\infty
\end{align*}
So, using an estimate of Stieltjes sum, we have
\begin{align*}
\sup_m 2^{-(1/2-H-\delta)m}\|\mathcal{I}_\cdot^{(m)}(d(Z^{3,(s)}))\|_{1-H-\delta-\epsilon}<\infty
\end{align*}
for $\delta,\epsilon$ such that $\delta+\epsilon<H$.
\end{itemize}

Consequently, we have $\displaystyle\limsup_m 2^{-(1/2-H)m}\|\mathcal{I}_\cdot^{(m)}(J^{-1} f(Y),d(V^{(l)}))\|_{1-H-\epsilon}=0\ a.s$.

\end{proof}

Next, we take a limit for $V^{10*}$.
We require a modification to the discussion for Proposition \ref{proposition:Itointegrallower}, so that we will write our proof in detail.
\begin{lmm}\label{lemma:integral10*}
Given any $f\in C_b^2,0<H<1/2$,
\begin{equation}
2^{-(1/2-H)m}\mathcal{I}_\cdot^{(m)}(J^{-1} f(Y),d(V^{10*}))\to -\frac{1-2H}{4(1+2H)}\int_0^\cdot \Jacobi{t}^{-1}\mathcal{V} f(\exsol{t})dt\ a.s\label{10*limit}
\end{equation}
in the sense of the Skorokhod topology, where $\mathcal{V}:f\mapsto \sigma f'-\sigma'f$ is defined in Section \ref{section:notation}.
Moreover, we can estimate for any $\epsilon>0$ that
\begin{equation*}
\sup_m2^{-(1/2-H)m}\|\mathcal{I}_\cdot^{(m)}(J^{-1} f(Y),d(V^{10*}))\|_{1-H-\epsilon}<\infty\ a.s.
\end{equation*}
\end{lmm}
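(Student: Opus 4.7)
The plan is to follow the decomposition scheme used in the proof of Proposition \ref{proposition:Hermitevariationvanish}, adapted to extract the nonvanishing deterministic contribution coming from the third identity in Lemma \ref{lemma:estimateexpectedvalue}. For $s,t \in \mathbb{P}_m$ with $s<t$, I would use the controlled-path Taylor expansion
\[
J_u^{-1}f(Y_u) = J_s^{-1}f(Y_s) + J_s^{-1}\mathcal{V}f(Y_s)\,\delta B_{s,u} + \hat{R}_{s,u},
\]
with $|\hat{R}_{s,u}| \lesssim (u-s)^{2H_-}$ as in (\ref{hatRdefinition}), valid because $J^{-1}f(Y)$ is a controlled path with Gubinelli derivative $J^{-1}\mathcal{V}f(Y)$. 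Substituting into the Stieltjes sum gives
\[
\mathcal{I}_{s,t}^{(m)}(J^{-1}f(Y),d(V^{10*})) = J_s^{-1}f(Y_s)\,\Sigma^{(1)}_{s,t} + J_s^{-1}\mathcal{V}f(Y_s)\,\Sigma^{(2)}_{s,t} + \Sigma^{(3)}_{s,t},
\]
with $\Sigma^{(1)}_{s,t} = \mathcal{I}^{(m)}_{s,t}(d(V^{10*}))$, $\Sigma^{(2)}_{s,t} = \sum_{\bitime{r}\in[s,t)} \delta B_{s,\bitime{r}} V^{10*}_{\bitime{r},\bitime{r+1}}$, and $\Sigma^{(3)}_{s,t}=\sum_{\bitime{r}\in[s,t)}\hat{R}_{s,\bitime{r}}V^{10*}_{\bitime{r},\bitime{r+1}}$.

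Next I would show that $\Sigma^{(1)}$ and $\Sigma^{(3)}$ both contribute negligibly after rescaling by $2^{-(1/2-H)m}$. Proposition \ref{proposition:Hoelder_estimate_iteerated_integral} gives $\|\mathcal{I}^{(m)}(d(V^{10*}))\|_{1/2-\epsilon_2} \lesssim 2^{m\epsilon_1}$ for arbitrarily small $\epsilon_1,\epsilon_2>0$, so after rescaling this vanishes in every H\"older norm of exponent below $1-H$. For $\Sigma^{(3)}$, the pointwise bound $|\hat{R}_{s,\cdot}|\lesssim (\cdot-s)^{2H_-}$ combined with a Young-type summation and the same H\"older bound yield $|\Sigma^{(3)}_{s,t}|\lesssim 2^{m\epsilon_1}(t-s)^{2H_-+1/2-\epsilon_2}$, again vanishing after the rescaling.

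The heart of the argument is the analysis of $J_s^{-1}\mathcal{V}f(Y_s)\Sigma^{(2)}_{s,t}$, which I would split into its expectation and centered part. From the third estimate of Lemma \ref{lemma:estimateexpectedvalue},
\[
E[\delta B_{s,\bitime{r}} V^{10*}_{\bitime{r},\bitime{r+1}}] = -\frac{1-2H}{4(1+2H)}\,2^{-m(1/2+H)} + O\bigl(2^{-m(5/2-H)}/(\bitime{r}-s)^{2-2H}\bigr),
\]
and summing over $\bitime{r}\in[s,t)$ the principal term gives $-\frac{1-2H}{4(1+2H)}(t-s)\,2^{m(1/2-H)}$; the remainder is summable since $2-2H>1$ and becomes $O(2^{-m})$ after rescaling. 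The centered summand $\delta B_{s,\bitime{r}} V^{10*}_{\bitime{r},\bitime{r+1}} - E[\cdot]$ lies in the second Wiener chaos, so its $L^2$-norm can be estimated via Wick's formula in terms of products of the three covariance estimates in Lemma \ref{lemma:estimateexpectedvalue}; hypercontractivity (Proposition \ref{proposition:hypercontractivity}) then lifts this to $L^p$ for all $p$, and Corollary \ref{corollary:discreteGRRforwienerchaos} converts it into the required almost-sure H\"older bound of lower order than $2^{m(1/2-H)}$.

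Assembling these estimates on a macroscopic partition $\{s_j\}$ of $[0,T]$ and applying the three bounds piecewise gives, for each fixed $N=\#\{s_j\}$,
\[
2^{-(1/2-H)m}\mathcal{I}^{(m)}_\cdot(J^{-1}f(Y),d(V^{10*})) \to -\frac{1-2H}{4(1+2H)} \sum_j J_{s_j}^{-1}\mathcal{V}f(Y_{s_j})(s_{j+1}-s_j)
\]
as $m\to\infty$; refining the mesh then yields the claimed Riemann-integral limit (\ref{10*limit}), and the $\|\cdot\|_{1-H-\epsilon}$ estimate is obtained by tracking the uniform-in-$m$ H\"older constants at each step. \emph{The main obstacle} is the variance analysis of the centered part of $\Sigma^{(2)}$: the Wick expansion produces a double sum of $\sim 2^{2m}$ cross-covariances, and their joint summability hinges on the cubic-decay factor $(|t-s|/|s-s'|+|t'-s'|/|s-s'|)^3$ in the first estimate of Lemma \ref{lemma:estimateexpectedvalue}, while the singular $(\bitime{r}-s)^{-(2-2H)}$ factor in the mean remainder has to be absorbed through a partition-dependent estimate that remains uniform as both $m\to\infty$ and $N\to\infty$.
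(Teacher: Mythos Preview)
Your proposal is correct and follows essentially the same route as the paper: the same controlled-path expansion of $J^{-1}f(Y)$ into a leading term, a first-derivative term, and a remainder $\hat R$; the same Wiener-chaos splitting of the cross term $\delta B_{s,\cdot}\,B^{10*}$ into its expectation (yielding the constant $-\tfrac{1-2H}{4(1+2H)}$ via the third identity of Lemma~\ref{lemma:estimateexpectedvalue}) plus a second-chaos fluctuation controlled by Corollary~\ref{corollary:discreteGRRforwienerchaos}; and the same two-scale assembly over a coarse partition. The only cosmetic difference is that the paper phrases the bound on $\Sigma^{(3)}$ via the sewing lemma applied to the germ $\hat R_{s,u}\,\mathcal{I}^{(m)}_{u,v}(d(B^{10*}))$, whereas you invoke a Young-type estimate directly; both amount to the same increment computation and yield the same $(t-s)^{1+H_-}$-type control after rescaling.
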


\begin{proof}
As with above propositions, we split $J_u^{-1}f(Y_u)\fBmitin_{u,v}^{10*}$ to $Z_{u,v}^{1,(s)},Z_{u,v}^{2,(s)}$ and $Z_{u,v}^{3,(s)}$ as follows:
\begin{align*}
J_u^{-1}f(Y_u)\fBmitin_{u,v}^{10*}=&J_s^{-1}f(\exsol{s})\fBmitin_{u,v}^{10*}+J_s^{-1}\mathcal{V}f(\exsol{s})\fBminc{s,u}\fBmitin_{u,v}^{10*}+\hat{R}_{s,u}\fBmitin_{u,v}^{10*}\\
\eqqcolon&Z_{u,v}^{1,(s)}+Z_{u,v}^{2,(s)}+Z_{u,v}^{3,(s)},
\end{align*}
where $\hat{R}_{s,u}$ is defined as (\ref{hatRdefinition}).
With respect to this split, we can decompose $\mathcal{I}_{s,t}^{(m)}(J^{-1}f(y),d(V^{10*}))$ as follows:
\begin{align*}
&2^{-(1/2-H)m}\mathcal{I}_{s,t}^{(m)}(J^{-1}f(y),d(V^{10*}))\\
=&2^{2Hm}(\mathcal{I}_{s,t}^{(m)}(d(Z^{1,(s)}))+\mathcal{I}_{s,t}^{(m)}(d(Z^{2,(s)}))+\mathcal{I}_{s,t}^{(m)}(d(Z^{3,(s)}))).
\end{align*}
Next, we estim the H\"older continuity of  each term.

\begin{itemize}
\item Firstly, we estimate $\mathcal{I}_{s,t}^{(m)}(d(Z^{1,(s)}))$.
From Proposition \ref{proposition:Hoelder_estimate_iteerated_integral} and Corollary \ref{corollary:discreteGRRforwienerchaos}, we have these two statements:
\begin{align*}
2^{2Hm}\mathcal{I}_{s,t}^{(m)}(d(Z^{1,(s)}))\to 0,\ a.s,&&|2^{2Hm}\mathcal{I}_{s,t}^{(m)}(d(Z^{1,(s)}))|\lesssim_m (t-s)^{1-H-\epsilon}.
\end{align*}
\item Secondly, we estimate $\mathcal{I}^{(m)}(d(Z^{2,(s)}))$.
We have already written that we can regard $\fBminc{s,u}$ and $\fBmitin_{u,v}^{10*}$ as elements of first Wiener chaos $\mathscr{H}_1$ in Section \ref{subsection:discrete_Hoelder_norm}.
Additionally, $\mathscr{H}_1$ is isomorphic to Cameron-Martin space $\mathfrak{H}$ by Wiener-It\^{o} integral $I_1:\mathfrak{H}\to \mathscr{H}_1$.
Hence, we set $\delta_{s,t}, \epsilon_{s,t}\in \mathfrak{H}$ such that $\fBminc{s,t}=I_1(\delta_{s,t})$ and $\fBmitin_{s,t}^{10*}=I_1(\epsilon_{s,t})$.
For further details, please refer to \cite{Aida-Naganuma2020}, where we use $\epsilon$ in this paper instead of $\zeta$ used in \cite{Aida-Naganuma2020}.
Now, we decompose $\fBminc{s,u}\fBmitin_{u,v}^{10*}$ with respect to Wiener-chaos as follows:
\begin{equation*}
\fBminc{s,u}\fBmitin_{u,v}^{10*}=I_1(\delta_{s,u})I_1(\epsilon_{u,v})=\langle \delta_{s,u},\epsilon_{u,v}\rangle_\mathscr{H}+I_2(\delta_{s,u}\otimes \epsilon_{u,v}).
\end{equation*}
Following this decomposition, we can split $Z_{u,v}^{2,(s)}$ as below:
\begin{equation*}
Z_{u,v}^{2,(s)} = Z_{u,v}^{2,1,(s)}+Z_{u,v}^{2,2,(s)}+Z_{u,v}^{2,3,(s)}=J_s^{-1}\mathcal{V}f(\exsol{s})(\tilde{Z}_{u,v}^{2,1,(s)}+\tilde{Z}_{u,v}^{2,2,(s)}+\tilde{Z}_{u,v}^{2,3,(s)}),
\end{equation*}
where
\begin{align*}
\tilde{Z}_{u,v}^{2,1,(s)} = -\frac{H|t-u|^{1+2H}}{2(1+2H)},&&\tilde{Z}_{u,v}^{2,2,(s)}=\langle \delta_{s,u},\epsilon_{u,v}\rangle- \tilde{Z}_{u,v}^{2,1,(s)},&&\tilde{Z}_{u,v}^{2,3,(s)}=I_2(\delta_{s,u} \otimes \epsilon_{u,v}).
\end{align*}

Now, we estimate $\mathcal{I}_{s,t}^{(m)}(d(\tilde{Z}^{2,1,(s)})),\mathcal{I}_{s,t}^{(m)}(d(\tilde{Z}^{2,2,(s)}))$ and $\mathcal{I}_{s,t}^{(m)}(d(\tilde{Z}^{2,3,(s)}))$.

At first, it follows that $2^{2Hm}\mathcal{I}_{s,t}^{(m)}(d(\tilde{Z}^{2,1,(s)}))=-\frac{1-2H}{4(1+2H)}(t-s)$ immediately.

Next, due to Lemma \ref{lemma:estimateexpectedvalue}, we have $\tilde{Z}_{u,v}^{2,2,(s)}\lesssim \frac{|v-u|^3}{|u-s|^{2-2H}}$. Therefore, we can see that
\begin{equation*}
\mathcal{I}_{s,t}^{(m)}(d(\tilde{Z}^{2,2,(s)}))\lesssim \sum_{r= \lfloor 2^m s\rfloor+1}^{ \lfloor 2^m t\rfloor-1} \frac{|\bitime{r+1}-\bitime{r}|^3}{|\bitime{r}-s|^{2-2H}}\leq 2^{-(1+2H)m}(t-s)\sum_{n=1}^\infty n^{-(2-2H)}.
\end{equation*}
Since we have assumed $H<1/2$, immediately $2-2H>1$, so that we say that $\sum_{n=1}^\infty n^{-(2-2H)}<\infty$. Hence, we get $2^{2Hm}\mathcal{I}_{s,t}^{(m)}(d(\tilde{Z}^{2,2,(s)}))\lesssim 2^{-m}$.

At last, we estimate the variance of $\mathcal{I}(d(\tilde{Z}^{2,3,(s)}))$ using Lemma \ref{lemma:estimateexpectedvalue} as follows:
\begin{eqnarray*}
&&E[\mathcal{I}_{s,t}^{(m)}(d(\tilde{Z}^{2,3,(s)}))^2]\\
&=&\sum_{r_1,r_2= \lfloor 2^ms\rfloor}^{ \lfloor 2^m t\rfloor-1}\left(E[\fBminc{s,\bitime{r_1}}\fBmitin_{\bitime{r_2},\bitime{r_2+1}}^{10*}]E[\fBminc{s,\bitime{r_2}},\fBmitin_{\bitime{r_1}\bitime{r_1+1}}^{10*}]\right.\\
&&+\left.E[\fBminc{s,\bitime{r_1}}\fBminc{s,\bitime{r_2}}]E[\fBmitin_{\bitime{r_1},\bitime{r_1+1}}^{10*}\fBmitin_{\bitime{r_2},\bitime{r_2+1}}^{10*}]\right)\\
&\lesssim& 2^{-(2+4H)m}\sum_{r_1,r_2= \lfloor 2^ms\rfloor}^{ \lfloor 2^m t\rfloor-1}\left(\frac{1}{1\vee(r_1-2^ms)^{2-2H}}+\frac{1}{1\vee|r_2-r_1|^{2-2H}}\right)\\
&&\left(\frac{1}{1\vee(r_2-2^ms)^{2-2H}}+\frac{1}{1\vee |r_2-r_1|^{2-2H}}\right)\\
&&+(t-s)^{2H}2^{-(3+2H)m}\sum_{r_1,r_2= \lfloor 2^ms\rfloor}^{ \lfloor 2^m t\rfloor-1}\frac{1}{1\vee|r_2-r_1|^{3-2H}}\\
&\lesssim& 2^{-(2+4H)m}\left(\sum_{n=1}^\infty \frac{1}{n^{2-2H}}\right)^2+(t-s)^{1+2H}2^{-(2+2H)m}\sum_{n=1}^\infty \frac{1}{n^{3-2H}}\\
&\lesssim& (t-s)^{1+2H}2^{-(2+2H)m}.
\end{eqnarray*}
From Corollary \ref{corollary:discreteGRRforwienerchaos}, for any $\epsilon>0$, we get
\begin{equation*}
\mathcal{I}_{s,t}^{(m)}(d(\tilde{Z}^{2,3,(s)}))\lesssim (t-s)^{H+1/2-\epsilon}2^{-(1+H-\epsilon)m}\ a.s.
\end{equation*}

Combining these inequalities, we conclude that $2^{2Hm}\mathcal{I}_{s,t}^{(m)}(d(Z^{2,(s)}))\lesssim (t-s)$.
\item Thirdly, as we seen in (\ref{hatRestimate}), $\hat{R}_{u,v}\lesssim (v-u)^{2H_-}$.
We introduce $\overline{Z}_{u,v}^{3,(s)}$ as $\overline{Z}_{u,v}^{3,(s)}\coloneqq \hat{R}_{s,u}\mathcal{I}_{u,v}^{(m)}(V^{10*})$
such that $\mathcal{I}^{(m)}(d(\overline{Z}^{3,(s)}))=\mathcal{I}^{(m)}(d(Z^{3,(s)}))$.

Now, we will estimate $\delta (\overline{Z}^{3,(s)})_{u,v,t}$ for $s<u<v<t$.
By definition, we get $\delta (\overline{Z}^{3,(s)})_{u,v,t}=-\hat{R}_{u,v}\mathcal{I}_{v,t}^{(m)}(d(B^{10*}))$.
Because of the definition of $J$ (\ref{def_J}) and formula of operations for controlled paths (Proposition \ref{proposition:elementary path as a controlled path} and \ref{proposition:Operation for a controlled path}), $J^{-1}f(Y)$ is a controlled path and $(J^{-1}f(Y))^{(2)}=J^{-1} \mathcal{V}f(Y)$.
Hence, we have $|\hat{R}_{u,v}|\lesssim (v-u)^{2H_-}$.
At the same time, from Lemma \ref{lemma:standardBrown} and Corollary \ref{corollary:discreteGRRforwienerchaos}, we can say for any $\epsilon>0$ that
\begin{equation*}
2^{2Hm}\mathcal{I}_{v,t}^{(m)}(d(B^{10*}))\lesssim_m (t-v)^{1-H-\epsilon}\ \ a.s.
\end{equation*}
Combining the estimates for $\hat{R}_{u,v}$ and $\mathcal{I}_{v,t}^{(m)}(d(B^{10*}))$, for any $\epsilon>0$, we get
\begin{equation*}
2^{2Hm}\delta (\overline{Z}^{3,(s)})_{u,v,t}\lesssim_m (t-u)^{1+H-\epsilon}.
\end{equation*}
Applying the sewing lemma for $\overline{Z}^{3,(s)}$, we obtain \newline$2^{2mH}\mathcal{I}_{s,t}^{(m)}(d(Z^{3,(s)}))\lesssim_m (t-s)^{1+\delta}$ with some $\delta>0$.
\end{itemize}

Next, combining these estimates, we obtain the desired limit and H\"older estimate.
A H\"older estimate of $\mathcal{I}_{s,t}^{(m)}(J^{-1} f(Y),d(V^{10*}))$ follows from that of each term.
At the same time, we can see the following equation for $m>n$ and $s,t\in \mathbb{P}_n$:
\begin{align*}
&2^{-(1/2-H)m}\mathcal{I}_{s,t}^{(m)}(J^{-1} f(Y),d(V^{10*}))\\
=&2^{2Hm}\mathcal{I}_{s,t}^{(n)}(d(\mathcal{I}^{(m)}(d(Z^{1,(\cdot_1)}+Z^{2,1,(\cdot_1)}+Z^{2,2,(\cdot_1)}+Z^{2,3,(\cdot_1)}+Z^{3,(\cdot_1)})))).
\end{align*}

\begin{itemize}
\item Firstly, for $Z^{1,(\cdot_1)}$,
\begin{align*}
2^{(H+1/2)m}\mathcal{I}_{s,t}^{(n)}(d(\mathcal{I}_{\cdot_1,\cdot_2}^{(m)}(d(Z^{1,(\cdot_1)}))))\xrightarrow[d]{m\to\infty}& \mathcal{I}_{s,t}^{(n)}(J^{-1}f(y),d(W^{10*}))\\
\xrightarrow[d]{n\to\infty}&\int_s^t J_u^{-1}f(\exsol{t})dW_u^{10*}.
\end{align*}
So, we have $2^{2Hm}\mathcal{I}_{s,t}^{(n)}(d(\mathcal{I}_{\cdot_1,\cdot_2}^{(m)}(d(Z^{1,(\cdot_1)}))))$ convergent to $0$ a.s.
\item Secondly, for $Z^{2,1,(\cdot_1)}$,
\begin{align*}
&2^{2Hm}\mathcal{I}_{s,t}^{(n)}(d(\mathcal{I}_{\cdot_1,\cdot_2}^{(m)}(d(Z^{2,1,(\cdot_1)}))))=-\frac{1-2H}{4(1+2H)}\mathcal{I}_{s,t}^{(n)}(J^{-1}\mathcal{V}f(y),du)\\
\xrightarrow[a.s.]{n\to\infty}& -\frac{1-2H}{4(1+2H)}\int_s^t J_u^{-1}\mathcal{V} f(Y_u)du.
\end{align*}
\item Thirdly, for $Z^{2,2,(\cdot_1)}$,\newline$2^{2Hm}\mathcal{I}_{s,t}^{(n)}(d(\mathcal{I}_{\cdot_1,\cdot_2}^{(m)}(d(Z^{2,2,(\cdot_1)}))))\lesssim 2^{n-m}\leq 1$ and $2^{n-m}\xrightarrow[a.s.]{m\to\infty}0$.
\item Fourthly, for $Z^{2,3,(\cdot_1)}$,\newline$2^{2Hm}\mathcal{I}_{s,t}^{(n)}(d(\mathcal{I}_{\cdot_1,\cdot_2}^{(m)}(d(Z^{2,3,(\cdot_1)}))))\lesssim 2^{-2Hn}2^{-(1-H-\epsilon)m}\xrightarrow[a.s.]{m\to\infty}0$.
\item Lastly, for $Z^{3,(\cdot_1)}$, $2^{2Hm}\mathcal{I}_{s,t}^{(n)}(d(\mathcal{I}_{\cdot_1,\cdot_2}^{(m)}(d(Z^{3,(\cdot_1)}))))\lesssim_m 2^{-(2H+H^c-1)n}\xrightarrow[a.s.]{n\to\infty}0$.
\end{itemize}

Combining the above limits, we obtain the following two estimates. First, for any $\epsilon>0$, there exists $n_0=n_0(\omega)$ such that
\begin{align*}
\sup_{0<s<t<T}&\left|2^{2Hm}\mathcal{I}_{s,t}^{(n_0)}(d(\mathcal{I}_{\cdot_1,\cdot_2}^{(m)}(d(Z^{2,1,(\cdot_1)}))))\right.\\
&+\limsup_m2^{2Hm}\mathcal{I}_{s,t}^{(n_0)}(d(\mathcal{I}_{\cdot_1,\cdot_2}^{(m)}(d(Z^{3,(\cdot_1)}))))\\
&\left.+\frac{1-2H}{4(1+2H)}\int_s^t J_u^{-1}f(Y_u)du\right|\leq \frac{\epsilon}{2}.
\end{align*}
Next, we take $M_0$ satisfying that for any $m\geq M_0(\omega)$,
\begin{equation*}
\sup_{0<s<t<T}|2^{2Hm}\mathcal{I}_{s,t}^{(n_0)}(d(\mathcal{I}_{\cdot_1,\cdot_2}^{(m)}(d(Z^{1,(\cdot_1)}+Z^{2,2,(\cdot_1)}+Z^{2,3,(\cdot_1)}))))|<\frac{\epsilon}{2}.
\end{equation*}
If $m\geq M_0$, adding both sides of these two inequalities, we have
\begin{equation*}
\sup_{0<s<t<T}\left|2^{-(1/2-H)m}\mathcal{I}_{s,t}^{(m)}(J^{-1} f(Y),d(V^{10*}))+\frac{1-2H}{4(1+2H)}\int_s^t J_u^{-1}\mathcal{V} f(Y_u)du\right|<\epsilon.
\end{equation*}

In other words, we can conclude (\ref{10*limit}).

\end{proof}

At last, we take the limit of $\mathcal{I}_\cdot^{(m)}(J^{-1}f(Y),d(V^{\Gamma}))$, where $\Gamma$ is one of $110,101,011$.
\begin{lmm}\label{lemma:integral110}

Given any $f\in C_b^1$ and $0<H<1/2$,
\begin{align*}
\mathcal{I}_\cdot^{(m)}(J^{-1}f(Y),d(V^{110}))&\xrightarrow{a.s.}\frac{1}{2(1+2H)}\int_0^\cdot \Jacobi{t}^{-1}f(\exsol{t})dt,\\
\mathcal{I}_\cdot^{(m)}(J^{-1}f(Y),d(V^{101}))&\xrightarrow{a.s.}-\frac{1-2H}{2(1+2H)}\int_0^\cdot \Jacobi{t}^{-1}f(\exsol{t})dt,\\
\mathcal{I}_\cdot^{(m)}(J^{-1}f(Y),d(V^{011}))&\xrightarrow{a.s.}\frac{1}{2(1+2H)}\int_0^\cdot \Jacobi{t}^{-1}f(\exsol{t})dt
\end{align*}
in the sense of the Skorokhod topology.
Moreover, when $\Gamma$ is one of $110,101$ and $011$, we can see $\sup_m\|\mathcal{I}_\cdot^{(m)}(J^{-1} f(Y),d(V^{\Gamma}))\|_{1}<\infty,\ a.s$.
\end{lmm}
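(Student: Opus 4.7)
The plan is to exploit the modified iterated integrals $B^{110*}, B^{101*}, B^{011*}$ introduced in Section 2 to split each $V^{\Gamma}$ into a deterministic ``principal'' part of order $(t-s)$ and a ``fluctuation'' part of order $(t-s)^{-2H}B^{\Gamma*}_{s,t}$. The principal part produces a Riemann sum converging to the desired integral, while the fluctuation part is killed by the sharp discrete-H\"older estimate of Corollary \ref{corollary:estimate110}.

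Concretely, for consecutive discrete times the identity $B^{110}_{s,t}=B^{110*}_{s,t}+\frac{1}{2(1+2H)}(t-s)^{1+2H}$ gives
\[
V^{110}_{\tau_r,\tau_{r+1}}
= 2^{2Hm}\,B^{110*}_{\tau_r,\tau_{r+1}}+\frac{1}{2(1+2H)}\,2^{-m},
\]
and analogously for $\Gamma=101,011$ with constants $-(1-2H)/(2(1+2H))$ and $1/(2(1+2H))$ respectively. Thus I would write
\[
\mathcal{I}^{(m)}_t(J^{-1}f(Y),d(V^{110}))
= 2^{2Hm}\mathcal{I}^{(m)}_t(J^{-1}f(Y),d(B^{110*}))
+ \frac{1}{2(1+2H)}\sum_{r=0}^{\lfloor 2^mt\rfloor-1}J^{-1}_{\tau_r}f(Y_{\tau_r})\,2^{-m}.
\]

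The second sum is a left-endpoint Riemann sum of the continuous, bounded integrand $J^{-1}f(Y)$, so it converges a.s.\ uniformly on $[0,T]$ to $\frac{1}{2(1+2H)}\int_0^\cdot J^{-1}_sf(Y_s)\,ds$; the same argument handles the $V^{101}$ and $V^{011}$ constants. For the first, ``stochastic'', term I would apply the standard Young/sewing estimate for a discrete Stieltjes sum: since $J^{-1}f(Y)$ is $H_-$-H\"older (controlled path of $B$) and $S^{(m)}_\cdot:=\mathcal{I}^{(m)}_\cdot(d(B^{110*}))$ satisfies, by Corollary \ref{corollary:estimate110}, $\|S^{(m)}_\cdot\|_{1-\epsilon_1-\epsilon_2}\lesssim 2^{-(2H+\epsilon_1)m}$ a.s., one obtains
\[
\bigl\|\mathcal{I}^{(m)}_\cdot(J^{-1}f(Y),d(B^{110*}))\bigr\|_{1-\epsilon_1-\epsilon_2}
\lesssim \bigl(\|J^{-1}f(Y)\|_\infty+\|J^{-1}f(Y)\|_{H_-}\bigr)\,2^{-(2H+\epsilon_1)m}.
\]
Multiplying by $2^{2Hm}$ gives a bound of order $2^{-\epsilon_1 m}$, so the stochastic part converges to $0$ a.s.\ uniformly, which combined with the Riemann sum limit is exactly the stated Skorokhod-convergence (continuity of the limit upgrades Skorokhod to uniform). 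The cases $\Gamma=101,011$ are identical.

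For the uniform H\"older bound $\sup_m\|\mathcal{I}^{(m)}_\cdot(J^{-1}f(Y),d(V^\Gamma))\|_1<\infty$ a.s., the Riemann-sum piece is trivially Lipschitz with constant $\lesssim \|J^{-1}f(Y)\|_\infty$ (telescoping $2^{-m}$), and the stochastic piece is, by the Young-integral bound above, even $(1-\epsilon)$-H\"older with norm vanishing as $m\to\infty$. I expect no real obstacle: the only mildly delicate point is checking that the Young-integral estimate applies to the discrete backward Stieltjes sum with remainder bounded by $\|g\|_{H_-}\|S^{(m)}\|_{1-\epsilon}T^{H_-+1-\epsilon}$, which is exactly the type of estimate used throughout Section~5 and follows from a standard sewing-type argument on $\mathbb{P}_m$.
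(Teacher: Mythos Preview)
Your proposal is correct and follows essentially the same route as the paper. The paper's proof is terser but identical in content: it invokes Corollary~\ref{corollary:estimate110} together with a Young--Stieltjes estimate (under the constraint $\epsilon_1+\epsilon_2<H_-$, which you should state explicitly so that the H\"older exponents sum to $>1$) to write $\mathcal{I}_\cdot^{(m)}(J^{-1}f(Y),d(V^{\Gamma}))=c_\Gamma\,\mathcal{I}_\cdot^{(m)}(J^{-1}f(Y),dt)+O(2^{-\epsilon_1 m})$, which is exactly your principal/fluctuation split with the same constants; the Lipschitz bound is asserted via an ``a priori estimate'' in the paper, matching your Riemann-sum-plus-vanishing-stochastic-piece argument.
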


\begin{proof}

At first, by a priori estimate, we have $\sup_m\|\mathcal{I}_\cdot^{(m)}(d(V^{\Gamma}))\|_{1}<\infty\ a.s$. Immediately, the above Lipschitz estimate follows.

Combining Corollary \ref{corollary:estimate110} and estimate of Stieltjes sum, with $\epsilon_1,\epsilon_2$ such that $\epsilon_1+\epsilon_2<H_-$, we have

\begin{align*}
\mathcal{I}_\cdot^{(m)}(J^{-1}f(Y),d(V^{110}))=&\frac{1}{2(1+2H)}\mathcal{I}_\cdot^{(m)}(J^{-1}f(Y),dt)+O(2^{-\epsilon_1 m}),\\
\mathcal{I}_\cdot^{(m)}(J^{-1}f(Y),d(V^{101}))=&-\frac{1-2H}{2(1+2H)}\mathcal{I}_\cdot^{(m)}(J^{-1}f(Y),dt)+O(2^{-\epsilon_1 m}),\\
\mathcal{I}_\cdot^{(m)}(J^{-1}f(Y),d(V^{011}))=&\frac{1}{2(1+2H)}\mathcal{I}_\cdot^{(m)}(J^{-1}f(Y),dt)+O(2^{-\epsilon_1 m}).
\end{align*}
When $m$ goes to infinity, second terms of right-hand side vanish. Hence, we complete the proof.
\end{proof}

\subsection{Calculation of asymptotic error}
\label{subsection:calculation_of_distribution}
In this subsection, we will determine the asymptotic error of the ($k$)-Milstein scheme when $1/( 2\lfloor k/2\rfloor+2)<H< 1/2$ and of the Crank-Nicolson scheme when $1/4<H< 1/2$, respectively.

As we have seen in Section \ref{section:maintheorem}, when $1/(k+1)<H<1$, the ($k$)-Milstein scheme satisfies Condition ($\mathfrak{A}$).
Also, when $k$ is even and $1/(k+2)<H\leq 1/(k+1)$, it satisfies Condition ($\mathfrak{B}$).
Similarly, when $H>1/3$, the Crank-Nicolson scheme satisfies Condition ($\mathfrak{A}$) and when $1/4<H\leq 1/3$, it satisfies Condition ($\mathfrak{B}$).

When each scheme satisfies Condition ($\mathfrak{A}$), firstly we have to decompose $\mathcal{I}^{(m)}(\Jacobi{\cdot}^{-1},d(\epsilon^{1,*,(m,A)}))$, secondly to take the limit of each term, and  thirdly to sum the limits.
When schemes satisfy only Condition ($\mathfrak{B}$), other than the three steps, we have to check H\"{o}lder condition.

However, we have already seen the sufficient statements in Section \ref{subsection:decompose_epsilon}, \ref{subsection:limit_of_individual_sum}, except terms which form $\fBminc{s,t}^k$.
So, in this subsection, we need only to take the limits of

\begin{align*}
&\mathcal{I}_\cdot^{(m)}(J^{-1},d((\cdot_2-\cdot_1)^{-((2l+2)H-1)}(\tilde{g}_{2l+1,1}^{M}(Y_{\cdot_1}) \fBmdotunit^{2l+1}+\tilde{g}_{2l+2,2}^{M}(Y_{\cdot_1})\fBmdotunit^{2l+2}))),\\
&\mathcal{I}_\cdot^{(m)}(J^{-1},d((\cdot_2-\cdot_1)^{-(3H-1/2)}(\tilde{g}_3^{CN}(Y_{\cdot_1}) \fBmdotunit^3+\tilde{g}_4^{CN}(Y_{\cdot_1})\fBmdotunit^4))),\\
&\mathcal{I}_\cdot^{(m)}(J^{-1} f(Y),d((\sigma b'-b\sigma')(Y_{\cdot_1})\fBmitin_{\cdot_1,\cdot_2}^{10*}-b(\sigma\sigma')'(Y_{\cdot_1})\fBmitin_{\cdot_1,\cdot_2}^{110}\\
&-\sigma(b\sigma')'(Y_{\cdot_1})\fBmitin_{\cdot_1,\cdot_2}^{101}-\sigma'(\sigma b')'(Y_{\cdot_1})\fBmitin_{\cdot_1,\cdot_2}^{011})).
\end{align*}

\begin{lmm}\label{lemma:hermite_variation_of_polynomial}
Let $\tilde{g}_{k,1}^{M},\tilde{g}_{k,2}^{M},\ \tilde{g}_{3}^{CN},\ \tilde{g}_4^{CN}$ be defined in Section \ref{subsection:decompose_epsilon}. Also, we set
\begin{equation*}
\overline{g}_{2l+1}^M = \tilde{g}_{2l+2,2}^{M}-\frac{1}{2}\mathcal{V}\tilde{g}_{2l+1,1}^M,
\end{equation*}
where $\mathcal{V}:f\mapsto \sigma f'-\sigma'f$ defined in Section \ref{section:notation}.
Suppose each of them belongs to $C_b^2$.

Then, the following limits hold:

\begin{itemize}
\item when $1/(2l)<H<1/2$ and $l\geq 1$,
\begin{equation*}
\mathcal{I}_\cdot^{(m)}(\Jacobi{\cdot}^{-1},d(2^{(2lH-1)m}(\tilde{g}_{2l,1}^{M}(Y_{\cdot_1}) \fBmdotunit^{2l})))\xrightarrow{a.s.} \moment{2l} \int_0^\cdot \tilde{g}_{2l,1}^{M}(\exsol{t})dt,
\end{equation*}

\item when $1/(2l+2)<H<1/2$ and $l\geq 1$,

\begin{align}
&\mathcal{I}_\cdot^{(m)}(\Jacobi{\cdot}^{-1},d((\cdot_2-\cdot_1)^{-((2l+2)H-1)}(\tilde{g}_{2l+1,1}^{M}(Y_{\cdot_1}) \fBmdotunit^{2l+1}+\tilde{g}_{2l+2,2}^{M}(Y_{\cdot_1})\fBmdotunit^{2l+2})))\nonumber\\
\xrightarrow{a.s.}& \moment{2l+2}\int_0^\cdot\overline{g}_{2l+1,1}^{M}dt,\label{MB2l+1}
\end{align}

\item when $1/4<H<1/2$,

\begin{align}
&\mathcal{I}_\cdot^{(m)}(\Jacobi{\cdot}^{-1},d((\cdot_2-\cdot_1)^{-(3H-1/2)}(\tilde{g}_3^{CN}(Y_{\cdot_1}) \fBmdotunit^3+\tilde{g}_4^{CN}(Y_{\cdot_1})\fBmdotunit^4)))\nonumber\\
\xrightarrow{d}&C_{(3)}\int_0^\cdot \tilde{g}_3^{CN}(\exsol{t})dW_t^{(3)}\label{CNB3}
\end{align}

\end{itemize}
in the sense of the Skorokhod topology, where $W^{(3)}$ is standard Brownian motion independent of $B$ and independent of each other defined in Theorem \ref{theorem:Theorem_of_distribution_CN}.
Furthermore, discrete $H^c$-H\"older norm of left-hand side of (\ref{MB2l+1}) is uniformly bounded with respect to m.
Also, for arbitrary $\lambda>0$, discrete $(1/2-\epsilon)$-H\"older norm of left-hand side or (\ref{CNB3}) multiplied with $2^{-\lambda m}$ is uniformly bounded with respect to m.

\end{lmm}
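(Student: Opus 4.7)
The plan is to apply the Hermite-polynomial decomposition (Proposition~\ref{proposition:decomposition_from_power_to_Hermite}) to each $\fBmdotunit^k$ inside the Stieltjes sums and, after multiplying by the rescaling factor, to rewrite every integrand on $\mathbb{P}_m$ as a linear combination of the $V^{(k')}_{s,t}$'s together with a constant remainder proportional to $\moment{k}$ whenever $k$ is even. Each piece is then handled by one of three tools: (i) a Riemann-sum argument for the constant parts; (ii) the convergence and vanishing statements in Propositions~\ref{proposition:Itointegrallower} and~\ref{proposition:Hermitevariationvanish} for the $V^{(k')}$ pieces with $k'\geq 2$; and (iii) a rough-integral expansion of the forward Riemann sum $\sum_r J_r^{-1}\tilde g^M(Y_r)\fBminc{r,r+1}$ whenever a $V^{(1)}$ piece appears, which occurs only when the original power of $\delta B$ is odd.

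For Case~1 only tools~(i) and~(ii) are needed: on $\mathbb{P}_m$ one has $2^{(2lH-1)m}\fBmdotunit^{2l}=2^{-m}\moment{2l}+2^{-m/2}\sum_{j=0}^{l-1}c_jV^{(2l-2j)}$, and summation against $J^{-1}\tilde g_{2l,1}^M(Y)$ produces a Riemann sum converging a.s.\ to $\moment{2l}\int J^{-1}\tilde g_{2l,1}^M(Y)\,dt$, while the $V^{(k')}$ pieces with $k'\geq 2$ vanish because of the $2^{-m/2}$ prefactor. Case~2 treats $\fBmdotunit^{2l+2}$ the same way, yielding $\moment{2l+2}\int J^{-1}\tilde g_{2l+2,2}^M(Y)\,dt$; in the odd power $\fBmdotunit^{2l+1}$ all Hermite indices $k\geq 3$ come multiplied by the vanishing factor $2^{(H-1/2)m}$, leaving the $V^{(1)}$ contribution $\moment{2l+1}\cdot 2^{(2H-1)m}\sum_r J_r^{-1}\tilde g_{2l+1,1}^M(Y_r)\fBminc{r,r+1}$. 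Treating $J^{-1}\tilde g_{2l+1,1}^M(Y)$ as a controlled path of $B$ with Gubinelli derivative $J^{-1}\mathcal V\tilde g_{2l+1,1}^M(Y)$, the defining identity of the rough integral (Proposition~\ref{prp:continuous_rough_integral}) gives
\begin{equation*}
\sum_r J_r^{-1}\tilde g_{2l+1,1}^M(Y_r)\fBminc{r,r+1}=\int_0^\cdot J^{-1}\tilde g_{2l+1,1}^M(Y)\,dB-\tfrac12\sum_r J_r^{-1}\mathcal V\tilde g_{2l+1,1}^M(Y_r)\fBminc{r,r+1}^2+o(2^{(1-2H)m});
\end{equation*}
applying Proposition~\ref{proposition:decomposition_from_power_to_Hermite} to $\fBminc{r,r+1}^2$ isolates a constant piece that, after multiplication by $\moment{2l+1}\cdot 2^{(2H-1)m}$, contributes $-\tfrac12\moment{2l+1}\int J^{-1}\mathcal V\tilde g_{2l+1,1}^M(Y)\,dt$. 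Using $\moment{2l+1}=\moment{2l+2}$ and $\overline g_{2l+1,1}^M=\tilde g_{2l+2,2}^M-\tfrac12\mathcal V\tilde g_{2l+1,1}^M$ delivers the claimed limit.

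Case~3 proceeds along the same lines, with the $V^{(3)}$ piece of $\fBmdotunit^3$ giving the principal contribution $C_{(3)}\int J^{-1}\tilde g_3^{CN}(Y)\,dW^{(3)}$ by Proposition~\ref{proposition:Itointegrallower}; the new feature is that the $V^{(1)}$ piece now carries the prefactor $2^{(H-1/2)m}$ rather than $2^{(2H-1)m}$, so tool~(iii) produces a divergent term $-\tfrac32\cdot 2^{(1/2-H)m}\int J^{-1}\mathcal V\tilde g_3^{CN}(Y)\,dt$, while the constant $H_0$ piece of $\fBmdotunit^4$ contributes an equally divergent $3\cdot 2^{(1/2-H)m}\int J^{-1}\tilde g_4^{CN}(Y)\,dt$. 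A direct algebraic computation verifies $\tilde g_4^{CN}=\tfrac12\mathcal V\tilde g_3^{CN}$, so the two divergent pieces cancel exactly, leaving only the Brownian-integral limit. The discrete H\"older-norm bounds claimed in the lemma follow in each case from the uniform H\"older estimates in Propositions~\ref{proposition:Itointegrallower} and~\ref{proposition:Hermitevariationvanish}. The hard part of the proof is exactly this bookkeeping of $2^{(1-2H)m}$- and $2^{(1/2-H)m}$-sized counter-terms generated by tool~(iii): one must verify that every sub-leading remainder in the rough-path expansion---higher Gubinelli derivatives, the $V^{(2)}$ part of $\fBminc{r,r+1}^2$, the cubic sum $\sum_r h^{(3)}_r\fBminc{r,r+1}^3$, and drift contributions from $b$---vanishes with a positive power of $2^{-m}$ to spare, so that only the algebraically identified main terms contribute in the limit.
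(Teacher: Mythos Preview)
Your argument is correct and coincides with the paper's proof. Both proceed by the Hermite decomposition (Proposition~\ref{proposition:decomposition_from_power_to_Hermite}), reduce the even-power and higher-Hermite pieces to Propositions~\ref{proposition:Itointegrallower} and~\ref{proposition:Hermitevariationvanish}, and recognise that the only subtle piece is the $V^{(1)}$ contribution from the odd power. The paper handles that piece by explicitly adding and subtracting $\tfrac12\mathcal{V}\tilde g\,\fBmdotunit^2$ inside the integrand, so that the block $\tilde g\,\delta B+\tfrac12\mathcal{V}\tilde g\,\delta B^2$ appears as a single second-order compensated Riemann sum (your ``tool~(iii)'' rough-integral identity, written from the other side), while the subtracted $\tfrac12\mathcal{V}\tilde g\,\delta B^2$ is itself Hermite-decomposed and its constant part merged with the $\mu_{2l+2}$-constant of $\delta B^{2l+2}$. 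The algebraic identifications $\mu_{2l+1}=\mu_{2l+2}$ and $\tilde g_4^{CN}=\tfrac12\mathcal{V}\tilde g_3^{CN}$ are singled out in the paper exactly as you do.

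One small point: for the discrete $H^c$-H\"older bound of~\eqref{MB2l+1} the paper appeals to Corollary~\ref{corollary:BdiscreteHoelder} rather than to Propositions~\ref{proposition:Itointegrallower}/\ref{proposition:Hermitevariationvanish}. Since $H^c>1-H'>1/2$ while those propositions only give $(1/2-\epsilon)$-H\"older control, your final sentence would need an extra step (upgrading via Proposition~\ref{proposition:degeneratediscreteHoelder} and checking that the resulting powers of $2^m$ are absorbed by the prefactors in each line of the decomposition). Corollary~\ref{corollary:BdiscreteHoelder}, which is stated directly in the $H^c$-norm for $\mathcal{I}^{(m)}(d(B^k))$ and $\mathcal{I}^{(m)}(d(\tilde B^{(k)}))$, is the more natural reference here.
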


\begin{proof}
From (\ref{decompose_power_B}), we can decompose of the left-hand side of limits. For example, for (\ref{MB2l+1}), we can see that

\begin{align*}
&\tilde{g}_{2l+1,1}^{M}(\exsoldis{r})\fBmunit^{2l+1}+\tilde{g}_{2l+2,2}^{M}(\exsoldis{r})\fBmunit^{2l+2}\\
=&\timeunit^{(2l+1)H-1/2}\tilde{g}_{2l+1,1}^{M}(\exsoldis{r})\sum_{j=0}^{l-1}\frac{(2l+1)!}{2^j(2l+1-2j)!j!}V_{\bitime{r},t}^{(2l+1-2j)}\\
&+\timeunit^{(2l+2)H-1/2}\tilde{g}_{2l+2,1}^{M}(\exsoldis{r})\sum_{j=0}^{l}\frac{(2l+2)!}{2^j(2l+2-2j)!j!}V_{\bitime{r},t}^{(2l+2-2j)}\\
&+\timeunit^{2lH}\moment{2l+1}(\tilde{g}_{2l+1,1}^M(\exsoldis{r})\fBmunit +\frac{1}{2}\mathcal{V}\tilde{g}_{2l+1,1}^M(\exsoldis{r})\fBmunit^2)\\
&+\timeunit^{(2l+2)H}\moment{2l+1}(\tilde{g}_{2l+2,1}^M-\frac{1}{2}\mathcal{V}\tilde{g}_{2l+1}^M)(\exsoldis{r})\\
&-\timeunit^{(2l+1)H-1/2}\moment{2l+1}\frac{1}{2}\mathcal{V}\tilde{g}_{2l+1,1}^M(\exsoldis{r})V_{\bitime{r},t}^{(2)}.
\end{align*}

In the same way, we can obtain the decomposition for (\ref{CNB3}).
Then, we remark that $\tilde{g}_4^{CN}-\frac{1}{2}\mathcal{V}\tilde{g}_3^{CN}\equiv 0$.
Limits of each term is obtained in Section \ref{subsection:limit_of_individual_sum}, so that we get the limit immediately.

On the other hand, from Corollary \ref{corollary:BdiscreteHoelder}, we can see that discrete $(H^c)$- H\"older norm of the left-hand side of (\ref{MB2l+1}) and (\ref{CNB3}) is uniformly bounded with respect to $m$.
\end{proof}

\begin{lmm}\label{lmm:compound_coefficient}
For $\sigma,b\in C_b^3$, we have

\begin{align*}
2^{-2Hm}&(\mathcal{I}_\cdot^{(m)}(J^{-1} f(Y),d((\sigma b'-b\sigma')(Y_{\cdot_1})\fBmitin_{\cdot_1,\cdot_2}^{10*}))+\mathcal{I}_\cdot^{(m)}(J^{-1} f(Y),d(b(\sigma\sigma')'(Y_{\cdot_1})\fBmitin_{\cdot_1,\cdot_2}^{110}))\\
&+\mathcal{I}_\cdot^{(m)}(J^{-1} f(Y),d(\sigma(b\sigma')'(Y_{\cdot_1})\fBmitin_{\cdot_1,\cdot_2}^{101})+\mathcal{I}_\cdot^{(m)}(J^{-1} f(Y),d(\sigma'(\sigma b')'(Y_{\cdot_1})\fBmitin_{\cdot_1,\cdot_2}^{011})))\\
\xrightarrow{a.s.}&-\int_0^\cdot \Jacobi{t}^{-1}\left(\frac{6H-1}{4(1+2H)}(\sigma\sigma''b+\sigma\sigma'b')+\frac{3-2H}{4(1+2H)}((\sigma')^2b+\sigma^2b'')\right)dt
\end{align*}
\end{lmm}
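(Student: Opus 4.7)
The plan is to apply the single-term limit theorems Lemma \ref{lemma:integral10*} and Lemma \ref{lemma:integral110} to each of the four Stieltjes sums separately, and then combine the four drift integrals by collecting coefficients of the four monomials $\sigma\sigma'' b$, $\sigma\sigma' b'$, $(\sigma')^2 b$, $\sigma^2 b''$. Because the Stieltjes sum is linear in the integrand and the coefficient function $g(Y_{\cdot_1})$ multiplying $B^{\Gamma}_{\cdot_1,\cdot_2}$ sits at the left endpoint $\cdot_1$, each expression can be rewritten as $\mathcal{I}^{(m)}_{\cdot}(J^{-1}_\cdot (fg)(Y_\cdot),\,d(B^{\Gamma}))$, which is exactly the form to which the cited lemmas apply. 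The scaling factor is absorbed by rewriting $B^{\Gamma}_{s,t}=(t-s)^{|\Gamma|-\text{shift}} V^{\Gamma}_{s,t}$ and using the convergence of $\mathcal{I}^{(m)}(\cdot,d(V^{\Gamma}))$ established there.

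First I would treat the $B^{10*}$ term by observing the useful identity $\sigma b'-b\sigma'=\mathcal{V}b$, so that Lemma \ref{lemma:integral10*} gives its limit in the form $-\tfrac{1-2H}{4(1+2H)}\int_0^\cdot J_t^{-1}\mathcal{V}^2 b(\exsol{t})\,dt$ (with $f$ absent from the right-hand side, consistent with the stated target). Expanding $\mathcal{V}^2 b$ by the product rule yields $\sigma^2 b''-\sigma\sigma''b-\sigma\sigma'b'+(\sigma')^2 b$. Second, I would apply Lemma \ref{lemma:integral110} three times, once for each of the $B^{110}$, $B^{101}$, $B^{011}$ terms. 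The prefactors from that lemma are respectively $\tfrac{1}{2(1+2H)}$, $-\tfrac{1-2H}{2(1+2H)}$, $\tfrac{1}{2(1+2H)}$, and the product-rule expansions $(\sigma\sigma')'=(\sigma')^2+\sigma\sigma''$, $(b\sigma')'=b'\sigma'+b\sigma''$, $(\sigma b')'=\sigma'b'+\sigma b''$ convert each of these products into linear combinations of the same four basis monomials.

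Third, I would collect the four contributions with the signs $+,-,-,-$ that come from the expansion of $\epsilon^{1,*,(m,A)}$ in Section \ref{subsection:decompose_epsilon} and verify that the resulting coefficient of each monomial simplifies to the announced value; for instance the coefficient of $\sigma\sigma'' b$ becomes $\tfrac{1-2H}{4(1+2H)}-\tfrac{1}{2(1+2H)}+\tfrac{1-2H}{2(1+2H)}=-\tfrac{6H-1}{4(1+2H)}$, and analogous clean cancellations produce $-\tfrac{3-2H}{4(1+2H)}$ for $\sigma^2 b''$ and $(\sigma')^2 b$. The almost-sure Skorokhod convergence of the full sum is automatic since each summand converges almost surely with a uniform discrete Hölder bound by Lemma \ref{lemma:integral10*} and Lemma \ref{lemma:integral110}. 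The main obstacle is thus not analytic but purely bookkeeping: keeping the signs, the $\mathcal{V}$ versus plain product-rule expansions, and the four prefactors straight so that the combinatorial identities collapse correctly; no new estimate or new limit theorem is needed beyond the two single-term lemmas.
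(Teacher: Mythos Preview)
Your proposal is correct and follows essentially the same route as the paper: apply Lemma \ref{lemma:integral10*} to the $B^{10*}$ term (after recognising $\sigma b'-b\sigma'=\mathcal{V}b$) and Lemma \ref{lemma:integral110} to the three remaining terms, then expand via the product rule and collect the coefficients of $\sigma\sigma''b$, $\sigma\sigma'b'$, $(\sigma')^2b$, $\sigma^2b''$. The paper's proof records exactly these four single-term limits as $f_{10},f_{110},f_{101},f_{011}$ and performs the same algebraic simplification; your observation that the signs $+,-,-,-$ come from the expansion of $\epsilon^{1,*,(m,A)}$ in Section \ref{subsection:decompose_epsilon} is what the paper is tacitly using as well.
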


\begin{proof}
From limit of each term, we have

\begin{align*}
2^{-2Hm}&(\mathcal{I}_\cdot^{(m)}(J^{-1} f(Y),d((\sigma b'-b\sigma')(Y_{\cdot_1})\fBmitin_{\cdot_1,\cdot_2}^{10*}))+\mathcal{I}_\cdot^{(m)}(J^{-1} f(Y),d(b(\sigma\sigma')'(Y_{\cdot_1})\fBmitin_{\cdot_1,\cdot_2}^{110}))\\
&+\mathcal{I}_\cdot^{(m)}(J^{-1} f(Y),d(\sigma(b\sigma')'(Y_{\cdot_1})\fBmitin_{\cdot_1,\cdot_2}^{101})+\mathcal{I}_\cdot^{(m)}(J^{-1} f(Y),d(\sigma'(\sigma b')'(Y_{\cdot_1})\fBmitin_{\cdot_1,\cdot_2}^{011})))\\
\to& \int_0^\cdot \Jacobi{t}^{-1}(f_{10}+f_{110}+f_{101}+f_{011})(\exsol{t})dt,
\end{align*}
where

\begin{align*}
f_{10}=&-\frac{1-2H}{4(1+2H)}\mathcal{V}(\sigma b'-b\sigma'),&f_{110}=&-\frac{1}{2(1+2H)}b(\sigma\sigma')',\\
f_{101}=&\frac{1-2H}{2(1+2H)}\sigma(b\sigma')',&f_{011}=&-\frac{1}{2(1+2H)}\sigma(\sigma b')'.
\end{align*}
In this case, we can expand each term as
\begin{align*}
\mathcal{V}(\sigma b'-\sigma'b)=&(\sigma \partial-\sigma')(\sigma b'-\sigma'b)\\
=&\sigma \sigma' b'+\sigma^2 b''-\sigma \sigma' b'-\sigma\sigma''b-\sigma \sigma'b'+(\sigma')^2b\\
=&\sigma^2 b''-\sigma\sigma''b-\sigma \sigma'b'+(\sigma')^2b,
\end{align*}
\begin{align*}
b(\sigma\sigma')'=\sigma\sigma''b+(\sigma')^2b,&&\sigma(b\sigma')'=\sigma \sigma''b+\sigma \sigma'b',&&\sigma(\sigma b')'=\sigma^2b''+\sigma\sigma'b'.
\end{align*}

Hence, we obtain
\begin{align*}
&f_{10}+f_{110}+f_{101}+f_{011}\\
=&-\frac{6H-1}{4(1+2H)}\sigma\sigma''b-\frac{3-2H}{4(1+2H)}(\sigma')^2b-\frac{6H-1}{4(1+2H)}\sigma\sigma'b'-\frac{3-2H}{4(1+2H)}\sigma^2b''.
\end{align*}

\end{proof}

\begin{proof}[Proof of Theorem \ref{theorem:Theorem_of_distribution_M} and \ref{theorem:Theorem_of_distribution_CN}]

For each case classified in Section \ref{subsection:decompose_epsilon}, we can check that

\begin{itemize}
\item the limit of $R(m)^{-1}\mathcal{I}^{(m)}(J^{-1},d(\epsilon^{1,*,(m,A)}))$ is shown per terms in Section \ref{subsection:limit_of_individual_sum} and Lemma \ref{lemma:hermite_variation_of_polynomial},
\item the limit multiplied with $\Jacobi{\cdot}$ coincides the right-hand side of corresponding cases stated in Theorem \ref{theorem:Theorem_of_distribution_M} and \ref{theorem:Theorem_of_distribution_CN},
\item $R(m)$ and $A$ satisfy that $2^{-m(A-1)}=O(R(m))$,
\item $\sigma$ and $b$ have sufficient smoothness,
\item and the H\"older estimate holds in the case of Condition ($\mathfrak{B}$).
\end{itemize}

Now, by Lemma \ref{lemma:integral10*}, \ref{lemma:integral110} we remark that if $B^{10*},B^{110},B^{101},B^{011}$ appear in main term, we have to require from $b$ the condition that it is in $C_b^3$.
By definition of $\epsilon^{1,*,(m,A)}$, each statements holds when we replace $\epsilon_{\cdot_1,\cdot_2}^{1,*,(m,A)}$ with $J_{\cdot_1}J_{\cdot_2}^{-1}\epsilon_{\cdot_1,\cdot_2}^{1,(m,A)}$.
From the fact that $R(m)\mathcal{I}^{(m)}(J^{-1},d(J_{\cdot_1}J_{\cdot_2}^{-1}\epsilon_{\cdot_1,\cdot_2}^{1,(m,A)}))=R(m)Z^{M,(A)}$, $\nusol{}$ satisfies the assumption of Theorem \ref{theorem:maintheorem}.
Substituting the limit in Lemma \ref{lemma:hermite_variation_of_polynomial} and \ref{lmm:compound_coefficient} with Theorem \ref{theorem:maintheorem}, we complete to prove.
\end{proof}

\bibliographystyle{plain}
\bibliography{C:/texlive/texmf-local/bibtex/bib/local/test}

\begin{thebibliography}{10}

\bibitem{Aida-Naganuma2020}
Shigeki Aida and Nobuaki Naganuma.
\newblock Error analysis for approximations to one-dimensional {SDE}s via the
  perturbation method.
\newblock {\em Osaka J. Math.}, 57(2):381--424, 2020.

\bibitem{Aida-Naganuma2023}
Shigeki Aida and Nobuaki Naganuma.
\newblock An approach to asymptotic error distributions of rough differential
  equations.
\newblock {\em arXiv:2302.03912}, 2024+.

\bibitem{Coutin-Qian2002}
Laure Coutin and Zhongmin Qian.
\newblock Stochastic analysis, rough path analysis and fractional {B}rownian
  motions.
\newblock {\em Probab. Theory Related Fields}, 122(1):108--140, 2002.

\bibitem{Davie2008}
A.~M. Davie.
\newblock Differential equations driven by rough paths: an approach via
  discrete approximation.
\newblock {\em Appl. Math. Res. Express. AMRX}, pages Art. ID abm009, 40, 2008.
\newblock [Issue information previously given as no. 2 (2007)].

\bibitem{Deya-Neuenkirch-Tindel2012}
A.~Deya, A.~Neuenkirch, and S.~Tindel.
\newblock A {M}ilstein-type scheme without {L}\'{e}vy area terms for {SDE}s
  driven by fractional {B}rownian motion.
\newblock {\em Ann. Inst. Henri Poincar\'{e} Probab. Stat.}, 48(2):518--550,
  2012.

\bibitem{Friz-Riedel2014}
Peter Friz and Sebastian Riedel.
\newblock Convergence rates for the full {G}aussian rough paths.
\newblock {\em Ann. Inst. Henri Poincar\'{e} Probab. Stat.}, 50(1):154--194,
  2014.

\bibitem{Gradinaru-Nourdin2009}
Mihai Gradinaru and Ivan Nourdin.
\newblock Milstein's type schemes for fractional {SDE}s.
\newblock {\em Ann. Inst. Henri Poincar\'{e} Probab. Stat.}, 45(4):1085--1098,
  2009.

\bibitem{Gubinelli2004}
M.~Gubinelli.
\newblock Controlling rough paths.
\newblock {\em J. Funct. Anal.}, 216(1):86--140, 2004.

\bibitem{Hu-Liu-Nualart2016}
Yaozhong Hu, Yanghui Liu, and David Nualart.
\newblock Rate of convergence and asymptotic error distribution of {E}uler
  approximation schemes for fractional diffusions.
\newblock {\em Ann. Appl. Probab.}, 26(2):1147--1207, 2016.

\bibitem{Hu-Liu-Nualart2021}
Yaozhong Hu, Yanghui Liu, and David Nualart.
\newblock Crank-{N}icolson scheme for stochastic differential equations driven
  by fractional {B}rownian motions.
\newblock {\em Ann. Appl. Probab.}, 31(1):39--83, 2021.

\bibitem{Jacod-Protter1998}
Jean Jacod and Philip Protter.
\newblock Asymptotic error distributions for the {E}uler method for stochastic
  differential equations.
\newblock {\em Ann. Probab.}, 26(1):267--307, 1998.

\bibitem{Liu-Tindel2019}
Yanghui Liu and Samy Tindel.
\newblock First-order {E}uler scheme for {SDE}s driven by fractional {B}rownian
  motions: the rough case.
\newblock {\em Ann. Appl. Probab.}, 29(2):758--826, 2019.

\bibitem{Naganuma2015}
Nobuaki Naganuma.
\newblock Asymptotic error distributions of the {C}rank-{N}icholson scheme for
  {SDE}s driven by fractional {B}rownian motion.
\newblock {\em J. Theoret. Probab.}, 28(3):1082--1124, 2015.

\bibitem{Neuenkirch-Nourdin2007}
Andreas Neuenkirch and Ivan Nourdin.
\newblock Exact rate of convergence of some approximation schemes associated to
  {SDE}s driven by a fractional {B}rownian motion.
\newblock {\em J. Theoret. Probab.}, 20(4):871--899, 2007.

\bibitem{Nourdin2005}
Ivan Nourdin.
\newblock Sch\'{e}mas d'approximation associ\'{e}s \`a une \'{e}quation
  diff\'{e}rentielle dirig\'{e}e par une fonction h\"{o}ld\'{e}rienne; cas du
  mouvement brownien fractionnaire.
\newblock {\em C. R. Math. Acad. Sci. Paris}, 340(8):611--614, 2005.

\bibitem{Nourdin2008}
Ivan Nourdin.
\newblock A simple theory for the study of {SDE}s driven by a fractional
  {B}rownian motion, in dimension one.
\newblock In {\em S\'{e}minaire de probabilit\'{e}s {XLI}}, volume 1934 of {\em
  Lecture Notes in Math.}, pages 181--197. Springer, Berlin, 2008.

\bibitem{Nourdin-Nualart-Tudor2010}
Ivan Nourdin, David Nualart, and Ciprian~A. Tudor.
\newblock Central and non-central limit theorems for weighted power variations
  of fractional {B}rownian motion.
\newblock {\em Ann. Inst. Henri Poincar\'{e} Probab. Stat.}, 46(4):1055--1079,
  2010.

\bibitem{Nualart}
David Nualart.
\newblock {\em The {M}alliavin calculus and related topics}.
\newblock Probability and its Applications (New York). Springer-Verlag, Berlin,
  second edition, 2006.

\bibitem{Russo-Vallois1993}
Francesco Russo and Pierre Vallois.
\newblock Forward, backward and symmetric stochastic integration.
\newblock {\em Probab. Theory Related Fields}, 97(3):403--421, 1993.

\bibitem{Yan2005}
Liqing Yan.
\newblock Asymptotic error for the {M}ilstein scheme for {SDE}s driven by
  continuous semimartingales.
\newblock {\em Ann. Appl. Probab.}, 15(4):2706--2738, 2005.

\end{thebibliography}

\end{document}